\documentclass[11pt,a4paper]{article}

\usepackage[T1]{fontenc}
\usepackage{amsmath,amsthm,amsopn,amsfonts,amssymb,dsfont,color}
\usepackage{mathrsfs}
\usepackage{statex}
\usepackage{graphicx}
\usepackage{float}
\usepackage{xcolor}
\usepackage{mathrsfs}
\usepackage{mathtools}
\usepackage{accents}
\usepackage{setspace}
\usepackage{times}
\usepackage{listings}
\usepackage{enumerate}
\usepackage{indentfirst}
\usepackage{a4,a4wide}
\usepackage{tikz}
%\usepackage{showlabels}
%\usepackage{refcheck}
%\usepackage[marginal]{footmisc}
%\usepackage{chngcntr}
%\usepackage{apptools}
%\AtAppendix{\counterwithin{lemma}{section}}

%\DeclareMathOperator{\vect}{span}
%\DeclareMathOperator{\supp}{supp}

\newtheorem{theorem}{\textbf{Theorem}}[section]
\newtheorem{lemma}[theorem]{\textbf{Lemma}}
\newtheorem{proposition}[theorem]{\textbf{Proposition}}
\newtheorem{claim}[theorem]{\textbf{Claim}}
\newtheorem{corollary}[theorem]{\textbf{Corollary}}

\newtheorem{remark}[theorem]{\textbf{Remark}}

\numberwithin{equation}{section}
\numberwithin{figure}{section}

\makeatletter
\g@addto@macro\th@plain{\thm@headpunct{}}
\makeatother

\newcommand\dps{\displaystyle}

\newcommand\bea{\begin{eqnarray}}
\newcommand\eea{\end{eqnarray}}
\newcommand\beaa{\begin{eqnarray*}}
\newcommand\eeaa{\end{eqnarray*}}

%%%%%%%%%%%%%%%%%%%%%%%%%%%%%%%%%%%%%%%%%%%%%%%%%%%%%%%%%%%%%%%%%%%%%%%%%%

%%%%%%%%%%%%%%%%%%%%%%%%%%%%%%%%%%%%%%%%%%%%%%%%%%%%%%%%%%%%%%%%%%%%%%%%%%
\title{Complete Classification of Traveling Waves and Resolution of Linear Conjecture in Monostable Systems}
\date{}
\author{}
%%%%%%%%%%%%%%%%%%%%%%%%%%%%%%%%%%%%%
\begin{document}

%%%%%%%%%%%%%%%%%%%%%%%%%%%%%%%%%%%%%
\maketitle
\vspace{-30pt}
\begin{center}
{\large\bf
Chang-Hong Wu
\footnote{Department of Applied Mathematics, National Yang Ming Chiao Tung University, Hsinchu, Taiwan.

e-mail: {\tt changhong@math.nctu.edu.tw}}
\footnote{National Center for Theoretical Sciences, Taipei, Taiwan},
Dongyuan Xiao\footnote{Advanced Institute for Materials Research, Tohoku University, Sendai, Japan.

e-mail: {\tt dongyuanx@hotmail.com}} and
Maolin Zhou\footnote{Chern Institute of Mathematics and LPMC, Nankai University, Tianjin, China.

e-mail: {\tt zhouml123@nankai.edu.cn}}
} \\
[2ex]
\end{center}

%%%%%%%%%%%%%%%%%%%%%%%%%%%%%%%%%%%%%
\begin{abstract}
In this paper, we present a complete classification of traveling wave solutions for monostable systems in a unified framework. To achieve this, we introduce a novel technique called the {\it{slicing method}}, which is based on the super- and sub-solution approach. Furthermore, it serves as a useful tool for addressing the linear conjecture in the Lotka–Volterra competition system, which remains a long-standing problem. 
\\

\noindent{\underline{Key Words:} competition-diffusion system, linear selection, traveling waves, Cauchy problem, long-time behavior.}\\

\noindent{\underline{AMS Subject Classifications:}  35K57 (Reaction-diffusion equations), 35B40 (Asymptotic behavior of solutions).}
\end{abstract}
%%%%%%%%%%%%%%%%%%%%%%%%%%%%%%%%%%%%%

\newpage
\tableofcontents

%%%%%%%%%%%%%%%%%%%%%%%%%%%%%%%%%%%%%%
\newpage
\section{Introduction}
%In the present paper, we first consider a scalar reaction-diffusion equation

The phenomenon of the front propagation into unstable states is a classical issue and has been discussed by many %physical researchers
physicists in early works; see, for instance, %e.g.,
\cite{Benguria Depassier1994,Benguria Depassier1996, van Saarloos1988,van Saarloos1989,van Saarloos2003}. A prototypical model to describe the transition from an unstable one to a stable one in reaction-diffusion equations is the well-known
Fisher-KPP equation
\begin{equation}\label{KPP eq}
w_t=w_{xx}+w(1-w),\ \ t>0,\ x\in\mathbb{R},
\end{equation}
which was independently proposed by Fisher \cite{Fisher} and Kolmogorov et al. \cite{KPP} in early 1937 to depict the spatial propagation of organisms such as dominant genes and invasive species in a homogeneous environment. It was shown in \cite{KPP} that for any $c\geq 2$, there exists a traveling wave solution with the particular form $W(\xi)=W(x-ct)=w(t,x)$ to \eqref{KPP eq} satisfying
\begin{equation}\label{KPP tw}
\left\{
\begin{aligned}
&W''+cW'+W(1-W)=0,\ \xi\in\mathbb{R},\\
&W(-\infty)=1,\ W(+\infty)=0,\ W'(\cdot)<0.
\end{aligned}
\right.
\end{equation}
Moreover, $c=2$ is not only the minimal traveling wave speed (denoted by $c^*$) of \eqref{KPP tw}, but also the propagation speed of \eqref{KPP eq} with compactly supported initial datum (see also \cite{Aronson Weinberger}). Furthermore, the minimal front has the following asymptotic behavior:
\beaa
W(\xi)\sim \xi e^{-\xi}\ \text{ as }\ \xi\to+\infty,\ \text{ if }\ c=2.
\eeaa

To illustrate a rich structure of decay rates in front propagation, we consider the following scalar reaction-diffusion equation with a parameter-dependent monostable nonlinearity
(see \cite{Hadeler Rothe}):
\begin{equation}\label{sample eq}
w_t=w_{xx}+w(1-w)(1+su),\ \ t>0,\ x\in\mathbb{R},
\end{equation}
where $s\geq0$ is a varying parameter. When $s=0$, the model reduces to the classical Fisher-KPP equation mentioned above. Thanks to the availability of explicit solutions, 
the propagation speed and the minimal traveling wave solution $W_*$ for \eqref{sample eq} can be explicitly characterized:
\begin{itemize}
  \item[(1)] in the case $0\leq s<2$, $c^*=2$ and $W_*(\xi)\sim \xi e^{-\xi}$ as $\xi\to +\infty$;
  \item[(2)] in the case $s=2$,  $c^*=2$ and $W_* (\xi)\sim e^{-\xi}$ as $\xi\to +\infty$;
  \item[(3)] in the case $s>2$, $c^*=\sqrt{\frac{2}{s}+\frac{s}{2}}$ and $W_*(\xi)\sim e^{-{\frac{c^*+\sqrt{(c^*)^2-4}}{2}\xi}}$ as $\xi\to +\infty$.
\end{itemize}
The emergence of the pure exponential decay rate $e^{-\xi}$ for $W_*$ exclusively at the critical case $s = 2$ is both surprising and significant. 
Although one can analyze this type of problem using phase plane techniques, the computation in the threshold case can become very intricate.

To better understand this interesting phenomenon, we introduce a novel technique referred to as the 
{\it{slicing method}}, which serves as an alternative to the classical phase plane approach. This method facilitates the construction of suitable comparison functions, especially in threshold regimes where standard techniques are less effective. The central idea is to perform a delicate “slicing” at the leading edge of the traveling front by introducing carefully designed perturbations, thereby enabling us to precisely capture subtle variations in the decay behavior.
In what follows, we consider three representative problems: two scalar equations with different diffusion operators and one classical competition system. Our main focus is to apply the aforementioned technique to understand the properties of the traveling wave solutions, which are the invariants of propagation phenomena, more deeply than before:
\begin{itemize}
\item Reaction-diffusion equation: to clarify why the pure exponential decay rate $e^{-\xi}$ arises only in the critical case in the above sample and generalize this observation;%it;
\item Nonlocal diffusion problem: to classify all %the 
traveling wave solutions by their asymptotic decay rates;
\item Two species Lotka-Volterra competition system: to solve the long-standing linear speed selection problem completely.
\end{itemize}

\subsection{Traveling waves of three typical monostable systems}

\bigskip
In the following, we will introduce three types of problems mentioned above. Firstly, we consider the reaction-diffusion equation of monostable type
\begin{equation}\label{scalar equation}
\left\{
\begin{aligned}
&w_t=w_{xx}+f(w),\ \ t>0,\ x\in\mathbb{R},\\
&w(0,x)=w_0(x),\ \ x\in\mathbb{R},
\end{aligned}
\right.
\end{equation}
where %$f\in C^2$
$f$ satisfies
\begin{equation}\label{monostable cd}
f(0)=f(1)=0,\ f'(0)>0>f'(1),\ \text{and}\ f(w)>0\ \text{for all}\ w\in(0,1).
\end{equation}
It is well-known, as shown in \cite{Fisher,KPP}, that under the KPP condition:
\begin{equation}\label{kpp condition}
f'(0)w\geq f(w)\ \text{for all}\ w\in[0,1],
\end{equation}
the spreading speed of \eqref{scalar equation} can be directly derived from the linearization at the invading state $w=0$:
$$w_t=w_{xx}+w.$$
For a long time, it had been widely conjectured that nonlinear differential equations for population spread always have the same velocity as their linear approximation.
This so-called {\it ``linear conjecture''}, developed over more than $80$ years from numerous instances, is stated explicitly by Bosch et al. \cite{Bosch Metz Diekmann} and Mollison \cite{Mollison}. However, it is now known that the conjecture does not hold in general, particularly for nonlinearities that deviate from the standard KPP-type structure.
This motivates a more detailed investigation of the general monostable equation, with attention to the structure of its traveling wave solutions.

For the general monostable equation, it is well-known that the global dynamics of \eqref{scalar equation} are highly related to the properties of traveling wave solutions,
which are particular solutions in the form
%having \red{an invariant} profile and moving at a constant speed $c$.
$w(t, x)=W(x-ct)=W(\xi)$ satisfying
\begin{equation}\label{def of scalar tw}
\left\{
\begin{aligned}
&W''+cW'+f(W)=0,\ \xi\in\mathbb{R},\\
&W(-\infty)=1,\ W(+\infty)=0,\ W'(\cdot)<0.
\end{aligned}
\right.
\end{equation}
It has been proved that (see \cite{Aronson Weinberger,Volpert}) there exists
$$c^*\geq 2\sqrt{f'(0)}>0$$
such that \eqref{def of scalar tw} admits a solution if and only if $c\geq c^*$. Thus, $c^*$ is called the minimal traveling wave speed. Moreover, Aronson and Weinberger \cite{Aronson Weinberger} showed the existence of a speed $c_w=c^*$ indicating the %propagation the front
spreading property
of the solution to the Cauchy problem \eqref{scalar equation}
as follows:
\begin{equation*}
\left\{
\begin{aligned}
&\lim_{t\to\infty}\sup_{|x|\ge ct}w(t,x)=0\ \ \mbox{for all} \ \ c>c_w;\\
&\lim_{t\to\infty}\sup_{|x|\le ct}|1-w(t,x)|=0\ \ \mbox{for all} \ \ c<c_w.
\end{aligned}
\right.
\end{equation*}
Therefore, the speed $c_w$ is called the asymptotic speed of spread (in short, spreading speed).
We remark that, in general, the value of the minimal speed $c^*$ depends on the shape of $f$ and cannot be characterized explicitly.

In the literature, the minimal traveling wave is classified into two types: {\em pulled front} and
{\em pushed front} \cite{Rothe1981,Stokes1976,van Saarloos2003}.
\begin{itemize}
    \item The minimal traveling wave $W$ with the speed $c^*$ is called a {\em pulled front} if $c^*=2\sqrt{f'(0)}$.
In this case, the front is pulled by the leading edge with speed determined by the linearized problem at the unstable state $w=0$. Therefore, the minimal speed $c^*$ is said to be linearly selected.
\item On the other hand, if $c^*>2\sqrt{f'(0)}$,
the minimal traveling wave $W$ with a speed $c^*$ is called a {\em pushed front} since
the spreading speed is determined by the whole wave, not only by the behavior of the leading edge. Thus
the minimal speed $c^*$ is said to be nonlinearly selected.
\end{itemize}

The asymptotic behavior of solutions to the Cauchy problem with compactly supported initial datum
differs significantly between these two cases. For pulled fronts, the wave speed coincides with the linear spreading speed, and the front location exhibits a logarithmic delay known as the Bramson correction (see, e.g., \cite{Bramson, Ebert van Saarloos, Giletti, Hamel etal, Lau, Uchiyama}). In contrast, pushed fronts propagate at faster speeds, and the solution converges to a traveling wave profile without any logarithmic correction (see \cite{Rothe1981}). We also refer to \cite{An etal2023-a, An etal2023-b, Avery Scheel} for results on convergence in shape to a traveling wave, and to recent works \cite{Alfaro Giletti Xiao, Berestycki etal} for discussions on the influence of the decay rate of the initial datum.

In the remarkable paper \cite{Aronson Weinberger}, the decay rates of pushed fronts and traveling wave solutions with speeds exceeding the minimal speed were studied using delicate phase plane analysis. However, such techniques are not applicable to nonlocal diffusion equations or competition systems. Subsequently, Hamel extended these results to spatially periodic media in \cite{Hamel}, and further generalizations were made by Guo in \cite{Guo}.

In \cite{Lucia Muratov Novaga},
Lucia, Muratov, and Novaga proposed a variational approach to rigorously establish a mechanism to determine  the linear selection and nonlinear selection on speed for the scalar monostable reaction-diffusion equations. Roughly speaking, the following two conditions are equivalent:
\begin{itemize}
\item[(i)] the minimal traveling wave speed of
$w_t=w_{xx}+f(w)$
is nonlinearly selected;
\item[(ii)] $\Phi_c[w]\leq 0$ holds for some $c>2\sqrt{f'(0)}$ and $w(\not\equiv 0)\in C_0^\infty(\mathbb{R})$, where
\beaa
\Phi_c[w]:=\int_\mathbb{R}e^{cx}\Big[\frac{1}{2}w_x^2-\int_0^wf(s)ds\Big]dx.
\eeaa
\end{itemize}
Roughly speaking, their result implies that the decay rate of the pulled and pushed front is crucial to fully understand the essence of the speed selection problem.

However, a precise description of the decay rate of pulled fronts—that is, the minimal traveling wave solutions in the case of linear selection—remains unclear in the absence of the KPP condition \eqref{kpp condition}. This gap motivates our study: to elucidate the behavior in the critical case. Once this is achieved, we will have a more complete understanding of all traveling wave solutions of \eqref{def of scalar tw}.

\bigskip
Secondly, in recent decades, the nonlocal diffusion problem
\begin{equation}\label{nl}
w_t= J\ast w-w +f(w)
\end{equation}
%widely appears 
has appeared widely in various applications ranging from population dynamics to the Ising model as seen in \cite{nl 1, nl 2, nl 3, nl 4, nl 5}. 
Here $J$ is a nonnegative dispersal kernel defined on $\mathbb{R}$, and $J\ast w$ is defined as
$$J\ast w(x):=\int_{\mathbb{R}}J(x-y)w(y)dy.$$ %That is,
For the simplicity of our discussion, throughout this paper, we always assume that the dispersal kernel satisfies
\begin{equation}\label{assumption on J}
J\ge 0\ \text{is compactly supported, symmetric, and}\ \int_{\mathbb{R}}J=1.
\end{equation}
When the nonlinear term $f(w)$ satisfies the KPP condition \eqref{kpp condition}, the traveling waves 
satisfying 
\begin{equation}\label{scalar nonlocal tw-parameter s}
\left\{
\begin{aligned}
&J\ast \mathcal{W}+c\mathcal{W}'+f(\mathcal{W})-\mathcal{W}=0,\quad  \xi\in\mathbb{R},\\
&\mathcal{W}(-\infty)=1,\ \mathcal{W}(+\infty)=0,\\
&\mathcal{W}'<0,\quad \xi\in\mathbb{R},
\end{aligned}
\right.
\end{equation}
have been constructed by \cite{nl 6,nl 7,nl 8} for any $c\ge c^*_{NL}$. %Later, 
Subsequently, the uniqueness of traveling wave solutions was established by Carr and Chmaj \cite{Carr Chmaj}, primarily through the application of Ikehara’s theorem. Building on this, Coville et al. \cite{Coville} extended the results to equations where the nonlinear term 
$f(\cdot)$ satisfies only the general monostable condition. They proved the existence of the minimal speed $c_{NL}^*$ such that equation \eqref{nl} admits a unique (up to translation) traveling wave solution $\mathcal{W}$ if and only if
$c\geq c_{NL}^*$. %\ge c_0^*$.
Furthermore, a lower bound for the minimal speed is given by $c_{NL}^*\geq c_0^*$,
where the critical speed $c_0^*$ is characterized by the following variational formula
\begin{equation}\label{formula of c_NL}
c_{0}^*:=\min_{\lambda>0}\frac{1}{\lambda}\Big(\int_{\mathbb{R}}J(x)e^{\lambda x}dx+f'(0)-1\Big),
\end{equation}
which derived from the linearization of \eqref{scalar nonlocal tw-parameter s} at the trivial state $\mathcal{W}= 0$.
If $f(\cdot)$ additionally satisfies the KPP condition \eqref{kpp condition}, then
$c_{NL}^*= c_0^*$. In this context, we call the case $c_{NL}^*= c_0^*$ as the linear selection on speed 
and the case $c_{NL}^*> c_0^*$ as the nonlinear selection on speed.

\begin{remark}\label{rm:lambda_0}
Let $h(\lambda)$ be defined by
$$h(\lambda):=\int_{\mathbb{R}}J(z)e^{\lambda z}dz-1+f'(0).$$
It is easy to check that $\lambda\mapsto h(\lambda)$ is an increasing, strictly convex, and sublinear function satisfying $h(0)=f'(0)>0$.
Therefore, there exist only one $\lambda_0>0$ satisfying $h(\lambda_0)=c_0^*\lambda_0$, and for $c>c_0^*$, the equation $h(\lambda)=c\lambda$ admits two different positive roots $\lambda^-(c)$ and $\lambda^+(c)$ satisfying $0<\lambda^-(c)<\lambda_0<\lambda^+(c)$.
\end{remark}

Additionally, it was shown in \cite{Carr Chmaj}, via Ikehara's theorem, that if $f(\cdot)$ satisfies the KPP condition \eqref{kpp condition}, then
\bea\label{decay-U-linear-1}
\mathcal{W}(\xi)=A\xi e^{-\lambda_0\xi}+Be^{-\lambda_0\xi}+o(e^{-\lambda_0\xi})\quad \mbox{as $\xi\to+\infty$},
\eea
where $A>0$ and $B\in \mathbb{R}$.
This asymptotic estimate %has been
has been extended to the general monostable case in
\cite{Coville}
with $A\geq0$ and $B\in \mathbb{R}$,
and $B>0$ if $A=0$.
However, we note that the proof provided in \cite[Theorem 1.6]{Coville} contains a gap, where the authors deduced that $A>0$ always holds in \eqref{decay-U-linear-1}.
We will fix the gap in Proposition~\ref{prop:correction-U-linear-decay} below. Moreover, in the general monostable case, the analysis of the decay rate for traveling waves with speed $c>c_0^*$ becomes substantially more intricate, as phase plane techniques are no longer applicable.

\bigskip
Finally, we turn our attention to the two-species Lotka-Volterra competition system
\begin{equation}\label{system}
\left\{
\begin{aligned}
&u_t=u_{xx}+u(1-u-av), & t>0,\ x\in\mathbb{R},\\
&v_t=dv_{xx}+rv(1-v-bu), & t>0,\ x\in \mathbb{R},
\end{aligned}
\right.
\end{equation}
where $u=u(t,x)$ and $v=v(t,x)$ represent the population densities of two competing species at the time $t$ and position $x$. Here, $d$ and $r$ represent the diffusion rate and intrinsic growth rate of $v$, respectively. $a$ and $b$ represent the competition coefficient of $v$ and $u$, respectively.

One of the main targets in this paper is to study the speed selection problem of \eqref{system} with the monostable structure, {\it i.e.}, 
$a$ and $b$ satisfy
\begin{itemize}
\item[{\bf(H)}] $0<a<1$ and $b>0$,
\end{itemize} 
which is of significant biological relevance \cite{Murray1993}.
In the long survey paper \cite{van Saarloos2003}, van Saarloos highlighted the practical significance of this problem, pointing out that it
 %this propagation problem
 is not only esoteric from purely academic interest but also plays an important role in reality,
as there are numerous important experimental examples
for which the fronts propagate rapidly into an unstable state.
Among other things, he also
emphasized the importance of the connection %link
between pulled fronts and pushed fronts, which is crucial in studying the speed selection problem of %the
front propagation. %phenomenon.

% In this system, $u=u(t,x)$ and $v=v(t,x)$ represent the population densities of two competing species at the time $t$ and position $x$; $d$ and $r$ stand for the diffusion rate and intrinsic growth rate of $v$, respectively; $a$ and $b$ represent the competition coefficient of $v$ and $u$, respectively. Here, all parameters are assumed to be positive and satisfy the monostable structure, {\it i.e.}, %In present paper, we consider the strong-weak competition case:
% $a$ and $b$ satisfy

% \begin{itemize}
% \item[{\bf(H)}] $0<a<1$ and $b>0$.
% \end{itemize}

Similar to the scalar equation, the spreading speed of the solution starting from the initial datum
\begin{equation}\label{initial datum}
u_0(x)\ge 0\ \ \text{compactly supported continuous function,}\ v_0(x)>0\ \ \text{uniformly positive},
\end{equation}
can be characterized by the minimal traveling wave speed $c_{LV}^{*}$ (see \cite{Lewis Li Weinberger 1}).
%The properties of traveling wave solutions of \eqref{system} will be stated later.
The linear and nonlinear selection of $c_{LV}^{*}$ can be defined as follows:
\begin{itemize}
\item
It is linearly selected if
$c_{LV}^{*}=2\sqrt{1-a}$ since the linearization of  \eqref{system}
at the unstable state $(u,v)=(0,1)$ results in the linear speed $2\sqrt{1-a}$.
This situation is also called {\em pulled front} case since the spreading speed is determined only by the leading edge of the distribution of the population.
\item
In the case $c_{LV}^{*}>2\sqrt{1-a}$, we say that the minimal traveling wave speed $c_{LV}^{*}$ is nonlinearly selected.
This situation is also called {\em pushed front} case since the spreading speed is not only determined by the behavior of the leading edge of the population distribution, but by the whole wave.
\end{itemize}
We also refer to the work of Roques et al. \cite{Roques et al 2015} that introduced another definition of the pulled front and the pushed front for \eqref{system}.

%Another terminology comes from Stokes \cite{Stokes1976}. We may say that $c^*=2\sqrt{1-a}$ is linearly selected while $c^*>2\sqrt{1-a}$ is nonlinearly selected.

Sufficient conditions for linear or nonlinear selection mechanism for \eqref{system} with
$0<a<1<b$ have been investigated
widely.  Okubo et al. \cite{Okubo etal} used a heuristic argument to conjecture that the minimal speed $c_{LV}^{*}$ is linearly selected, and applied it to study the competition between gray squirrels and red squirrels.
Hosono \cite{Hosono 1998} suggested that $c_{LV}^{*}$ can be nonlinearly selected in some parameter regimes.
%linear determinacy does not always hold.
It has been proved by Lewis, Li and Weinberger \cite{Lewis Li Weinberger 1} that linear selection holds when
\bea\label{LLW-cond}
0<d<2\quad {\rm and}\quad  r(ab-1)\leq (2-d)(1-a).
\eea
An improvement for the sufficient condition for linear selection was found by Huang \cite{Huang2010}:
\begin{equation}\label{huang-condition}
\frac{(2-d)(1-a)+r}{rb}\ge \max\Big\{a,\frac{d-2}{2|d-1|}\Big\}.
\end{equation}
Note that \eqref{LLW-cond} and \eqref{huang-condition} are equivalent when $d\leq 2$.
Although Huang \cite{Huang2010} strongly believed that the condition \eqref{huang-condition} is optimal for linear determinacy,
Roques et al. \cite{Roques et al 2015} numerically reported that the region of the parameter for linear determinacy can still be improved. %More precisely, they considered the parameter space $(a,1/d)$ and numerically proved that
%there exists a threshold value $a^*\in[0,1]$ such that the minimal speed $c_{LV}^{*}$ is nonlinear selected if and only if $a\in(a^*,1)$.
%More recently, Alhasanat and Ou \cite{Alhasanat Ou2019} provided a more optimal sufficient condition for linear selection: if $a<1/3$ and one of the following conditions holds
%\beaa
%\frac{(d-4)(1-a)}{4}<r<\frac{d(1-a)}{2b}\quad {\rm or}\quad \frac{d(1-a)}{2b}\leq r<\frac{(d+4)(1-a)}{4(b-1)}.
%\eeaa
For the minimal speed $c_{LV}^{*}$ being nonlinearly selected, Huang and Han \cite{HuangHan2011} constructed examples in which linear determinacy fails to hold.
%under the conditions: $r=d$ and $a$ is sufficiently close to $1$.
Holzer and Scheel \cite{Holzer Scheel 2012} showed that, for fixed $a$, $b$, and $r$,
 the minimal speed $c_{LV}^{*}$ becomes nonlinear selection as $d\to\infty$. %Alhasanat and Ou \cite{Alhasanat Ou2019}
%proved that $c_{LV}^{*}$ is nonlinear if
%\beaa
%\frac{(d+2)(1-a)+r}{rb}<1-2(1-a).
%\eeaa
For related discussions, we also refer to, e.g.,
\cite{Alhasanat Ou2019-1, Alhasanat Ou2019, Guo Liang, Hosono 1995, Hosono 2003} and the references cited therein.  Note that Proposition \ref{prop: implicit cond} in this paper implies that it may be impossible to solve the linear selection problem of Lotka-Volterra competiton system through explicit expression on parameters.

To the best of our knowledge,
%a comprehensive
the understanding of the sufficient and necessary condition of linear or nonlinear selection mechanism for \eqref{system}, under assumption {\bf(H)}, has not been completely achieved in the literature.
In particular, previous works on speed selection problems for \eqref{system} primarily focused on the strong-weak competition case ($0<a<1<b$). %Nevertheless,
 However, as we will demonstrate in Remark~\ref{rm: c nonlinear selection} below, %we provide
there are some cases that the speed $c_{LV}^{*}$ is nonlinearly selected for all $b>1$. These observations indicate that the speed selection problem for \eqref{system}
cannot be %well
fully explained by considering only the strong-weak competition case.

In this paper we will fix $a$, $r$, and $d$, and set the competition rate $b\in\mathbb{R}^+$ as a continuously %changing
varying
parameter.
By analyzing
the asymptotic behavior of the minimal traveling wave at $+\infty$ and constructing novel super-solutions,
we can establish the threshold behavior between the linear selection and nonlinear selection with respect to $b$.
Our result reveals the fundamental mechanism underlying the transition from the linear selection to nonlinear selection for the system \eqref{system}.

\subsection{Intuitive explanation on the 
%"tadpole-shaped perturbation" 
slicing method}
Before introducing the main results, we begin by providing an explanation of the core technique in this paper—the 
%"tadpole-shaped perturbation" 
slicing method. First, let us recall the classification of traveling wavefronts for \eqref{def of scalar tw}. We summarize the well-known results as follows:
\begin{proposition}\label{prop: classification scalar}
Assume $f(\cdot)$ satisfies the monostable condition \eqref{monostable cd}. The traveling wavefronts $(c,W)$, defined as in \eqref{def of scalar tw},
satisfies
\begin{itemize}
\item[(1)] there exists $(A,B)\in \mathbb{R}^+\times\mathbb{R}$ or $A=0,B>0$  such that $W(\xi)=A\xi e^{-\xi}+B e^{-\xi}+o(e^{-\xi})$ as $\xi\to+\infty$,
if and only if $c=c^*=2$;
\item[(2)] there exists $A>0$ such that $W(\xi)=Ae^{-\lambda^+(c)\xi}+o(e^{-\lambda^+(c)\xi})$ as $\xi\to+\infty$,
if and only if $c=c^*>2$;
\item[(3)] there exists $A>0$ such that $W(\xi)=Ae^{-\lambda^-(c)\xi}+o(e^{-\lambda^-(c)\xi})$ as $\xi\to+\infty$,
if and only if $c>c^*$.
\end{itemize}
Here, $\lambda^{\pm}(c)$ are defined as
\bea\label{lambda + - scalar}
\lambda^{\pm}(c):=\frac{c\pm\sqrt{c^2-4}}{2}>0.
\eea
\end{proposition}
Proposition \ref{prop: classification scalar}, originally established by Aronson and Weinberger \cite{Aronson Weinberger} via a delicate phase plane analysis, can alternatively be proved 
using the slicing method. Moreover, we can distinguish whether the coefficient $A$ in ~\eqref{assumption 1} vanishes or not.

We now present a heuristic argument to illustrate how one may derive part~(2) of Proposition~\ref{prop: classification scalar}.
Consider the pushed front case governed by 
\begin{equation}\label{non mono eq}
w_t=w_{xx}+f(w),\ \quad t>0,\ x\in\mathbb{R},
\end{equation}
where $f$ satisfies the monostable condition \eqref{monostable cd}. 
For simplicity, we assume $f'(0)=1$. Then the minimal traveling wave speed is known to be $c^*\ge 2$. For any $c\geq c^*$, there exists a unique traveling wave solution $W$ up to translation. Let us consider the case $c>2$. By linearizing the equation satisfied by $W$ (the equation in \eqref{def of scalar tw}) around the unstable state $W=0$,
we obtain the following linearized equation
\begin{equation}\label{lin mono eq}
W''+cW'+W=0.
\end{equation}
It is easy to check that \eqref{lin mono eq} only admits two distinct single roots $\lambda^{\pm}(c)$ since $c>2$.
%Then by Proposition \ref{prop: classification scalar}, we know the decay rates of traveling wave solutions:
%\begin{itemize}
%\item $c=c^*$: $W(\xi)\sim e^{-\frac{c^*+\sqrt{(c^*)^2-4}}{2}\xi}$,
%\item $c>c^*$: $W(\xi)\sim e^{-\frac{c-\sqrt{c^2-4}}{2}\xi}$.
%\end{itemize}

Next, we briefly explain, in the case $c^*>2$, how to prove the minimal traveling wave $W_*(\xi)\sim e^{-\lambda^+(c^*)\xi}$ through our slicing method. Assume by contradiction that $W_*(\xi)\sim e^{-\lambda^-(c^*)\xi}$. 
%then by omitting 
Neglecting all intermediate terms between $e^{-\lambda^-\xi}$ and $e^{-\lambda^+\xi}$, 
the asymptotic expansion then takes the form
\begin{equation}\label{assumption 1}
W_*(\xi)=A e^{-\lambda^-(c^*)\xi}+B e^{-\lambda^+(c^*)\xi}+o(1)e^{-\lambda^+(c^*)\xi},\ \text{where}\  A>0.
\end{equation}
Let us consider an auxiliary function of the form
$$\phi(\xi):=\max\{A e^{-\lambda^-(c^*)\xi}-C e^{-\lambda\xi},0\},$$
for $\xi\geq0$, where $C>0$ and $\lambda \in (\lambda^-(c^*),\lambda^+(c^*))$. Then we define the super-solution 
\beaa
\overline{w}(t,x):=W_*(x-(c^*-\delta)t)-\phi(x-(c^*-\delta)t)
\eeaa
for some small $\delta>0$, and thus
 $\overline{w}(t,x)\sim C e^{-\lambda\xi}$, where $\xi=x-(c^*-\delta)t$.
Note that $f'(0)=1$. Then we compute
\begin{equation*}
\overline{w}_t-\overline{w}_{xx}-f(\overline{w})\approx-Ce^{-\lambda\xi}[\lambda^2-(c^*-\delta)\lambda+1]\geq 0,\quad \xi\gg1,
\end{equation*}
which holds for sufficiently small $\delta\ll1$. Moreover, by the choice of $\lambda$, we have $W_*>\phi$. At this point, we have successfully constructed the front part of the super-solution $\overline{w}$, which is the most technically challenging part (i.e., $\xi\gg1$). Since the speed of $\overline{w}$ is equal to $(c^*-\delta)$, this construction implies that the propagation speed of the solution to \eqref{non mono eq} with compactly supported initial datum is at most $(c^*-\delta)$.  However, it is well known that the actual propagation speed equals the minimal traveling wave speed $c^*$. This contradiction 
shows the decay $e^{-\lambda^-\xi}$ must be excluded, 
 and the traveling wave necessarily decays at the faster decay $e^{-\lambda^+\xi}$.

We refer to this construction as a %“tadpole-shaped perturbation” 
slicing perturbation since it effectively slices off a thin portion of 
$W_*$ near the leading edge by introducing a designed auxiliary function $\phi$. In addition,
since $\lambda$ is chosen within the interval $(\lambda^-,\lambda^+)$, a process we describe as “finding the root in the middle”, which reflects the fact that the perturbation lies strictly between the two admissible exponential decay modes.
Another crucial feature of $\phi$ is that its derivative $\phi'$ changes sign exactly once. This sign change plays a key role in enabling a smooth transition between the front part and the left part of $\phi$.

This construction method offers a new and intuitive explanation for why the minimal traveling wave $W_*$ exhibits the decay rate $e^{-\lambda^+\xi}$, which distinguishes it from all other traveling wave solutions with speeds strictly greater than the minimal speed. For any traveling wave $W$ with speed $c \ge c^* > 2$ and decay rate $e^{-\lambda^- \xi}$, a carefully designed slicing perturbation allows the construction of another traveling wave with a slightly slower speed. This implies that the minimal wave $W_*$ must necessarily decay as $e^{-\lambda^+ \xi}$. Thus, the slicing method offers a novel perspective on the classification of decay rates for traveling waves—one that fundamentally departs from the classical phase plane analysis.

A related, though not identical, observation was made by Roquejoffre in \cite{Roquejoffre}. Under the same assumption as in \eqref{assumption 1}, for the case $c=c^*>2$, he showed that there exists a solution to the perturbed equation
$W''+(c^*-\delta)W'+f(W)=0$ of the form 
$$\Phi(\xi)=W_*(\xi)-Ae^{-\lambda^-(c^*)\xi}+e^{\lambda^-(c^*-\delta)\xi}+\phi(\xi),$$
where the unknown auxiliary function $\phi(\xi)$ is obtained by applying the implicit function theorem in a suitable weighted space. However, with our direct method, we can go further by providing a classification of the decay rates in the speed linearly selected case $c=c^*=2$.

Since our construction method relies solely on the linearized roots and the comparison principle, it can be extended to more general monotone dynamical systems where traditional phase plane analysis is not applicable. This includes, for example, nonlocal diffusion problems and spatially periodic problems in cylindrical domains. In this paper, beyond the reaction-diffusion equation, we apply and verify the proposed method for two representative cases: the nonlocal diffusion problem and the Lotka–Volterra competition system, a classical model in population dynamics. Due to the increased complexity of spatially periodic problems in higher-dimensional settings, we do not address such cases in this work, although some progress has been made in that direction.

In practical applications, the auxiliary function 
$\phi$ may take on a form far more complex and delicate than the simple example given above. Moreover, the construction of super- and sub-solutions may involve dividing the interval into more than three distinct regions. Nonetheless, the core idea behind remains consistent.
 Roughly speaking, the front of the auxiliary function $\phi$ usually has two features: (1) its decay rate is between two linearized roots; (2) $\phi'$ changes sign in the front once to connect the next part smoothly.

\begin{remark}

There is a natural question: 
for which classes of monotone dynamical systems does the above observation remain valid?
For the porous medium equation and the $p$-Laplacian equation, the minimal traveling wave solution exhibits a free boundary rather than decaying exponentially to zero. In contrast, the fractional Laplacian equation does not admit a finite propagation speed. Consequently, the types of diffusion operators relevant to our analysis are limited to the classical Laplacian and nonlocal diffusion operators with continuous convolution kernels.
\end{remark}

\begin{remark}
In this paper, we focus exclusively on the nondegenerate case, in which the linearized equation admits only exponential-type eigenfunctions. For the degenerate case 
$$u_t=u_{xx}+u^p(1-u)^q,\ \ p,q>1,\ \ t>0,x\in\mathbb{R},$$
it was shown in \cite{Hou Li Meyer} that traveling wave solutions of the form \eqref{KPP tw} exist for all speeds $c\ge c^*(p,q)$. Moreover, the asymptotic behavior of such waves is classified as follows:
\begin{itemize}
    \item [(1)] $W(\xi)\sim e^{-c\xi}$ as $\xi\to +\infty$, if $c=c^*(p,q)$;
    \item [(2)] $W(\xi)\sim \Big(\frac{c}{(p-1)\xi}\Big)^{\frac{1}{p-1}}$ as $\xi\to +\infty$, if $c>c^*(p,q)$.
\end{itemize}
We believe that a similar classification holds for the nonlocal diffusion equation as well, and that our slicing method is effectively applied to address this problem.
\end{remark}

\subsection{Outline of the paper}
The rest of this paper is organized as follows.

Section 2 is to introduce all the main results.

Sections 3 and 4 are devoted to the speed selection problem for scalar equations. In Section 3, we extend our argument to the scalar reaction-diffusion equation and complete the proof of Theorem \ref{th: threshold scalar equation}. In Section 4, we extend our analysis to the scalar nonlocal diffusion equation and complete the proof of Theorem \ref{th: threshold scalar equation nonlocal}. The proof for Theorem \ref{th: threshold scalar equation nonlocal} is more involved since the minimal traveling wave speed can not be computed explicitly, but is given by a variational formula for the nonlocal diffusion problem.

Sections 5 and 6 are devoted to the speed selection problem for the Lotka-Volterra competition system.
Section 5 is devoted to the results for the existence of traveling waves, and the asymptotic behavior of traveling waves of \eqref{tw solution weak} under condition ({\bf H}).  Particularly, the asymptotic behaviors at $-\infty$ differ for the cases $0<b<1$, $b=1$, and $b>1$, leading to different constructions of super-solutions in Section 6.
In Section 6, we study the speed selection mechanism
for the Lotka-Volterra competition system, where Theorem \ref{th:threshold} is established. The construction of a super-solution to prove the sufficient condition is the most involved part, while the necessary condition is proved by applying the sliding method.

In Section 7, we conclude our observations regarding the speed selection problem and provide a complete classification of
the asymptotic behavior of the minimal traveling wavefronts, {\it i.e.},  Theorem~\ref{th: classification scalar nonlocal} and Theorem~\ref{th: classification}.

\section{Main results}

The first part of this paper is dedicated to the speed selection problem of the scalar equations. We begin by revisiting the speed selection problem
for the minimal traveling wave speed of the scalar monostable reaction-diffusion equations.
We establish a new sufficient and necessary condition for determining the linear or nonlinear selection mechanism by considering a family of continuously varying nonlinearities.
By varying the parameter within the nonlinearity, we obtain a full understanding of how the decay rate of the minimal traveling wave
at infinity affects the minimal speed.
This approach provides insight into the essence underlying the transition from the linear selection to nonlinear selection.
The propagation phenomenon and the inside dynamics
of the front for more general scalar equations have been widely discussed in the literature.
We may refer to, e.g., \cite{Berestycki Hamel2012, Fife McLeod1977, Gardner et al 2012, Liang Zhao 2007, Roques et al 2012, Rothe1981, Stokes1976} and references cited therein.

Furthermore,
as noted in \cite{van Saarloos2003}, many natural elements such as
advection, nonlocal diffusion, and periodicity need to be considered in the propagation problem.  The variational approach, %limited by its nature,
as discussed in \cite{Lucia Muratov Novaga}, can %only
treat homogeneous scalar equations with the standard Laplace diffusion,
but it is difficult to handle parabolic systems with different diffusion speeds.
In contrast, our method can be applied to equations and systems as long as the comparison principle holds.
In this paper, we also extend our observation on the threshold behavior between linear selection and nonlinear selection for the scalar integro-differential equation, a type of nonlocal diffusion equation.

%A typical example
%for understanding the link between the pulled front and the pushed front of a scalar reaction-diffusion equation
%is as follows % with a parameter
%\cite{Hadeler Rothe}:
%\bea\label{Hadeler-Rothe eq}
%\partial_t w-w_{xx}=w(1-w)(1+sw),
%\eea
%where $s\geq 0$ is the continuously varying parameter.  Moreover, the KPP condition \eqref{kpp condition} is satisfied if and only if $0\le s\le 1$.
%If $s>1$, \eqref{kpp condition} is not satisfied, and such %$f_s$
%$f(w;s)$
%is called the weak Allee effect.
%The minimal traveling wave speed $c^*(s)$ is characterized in \cite{Hadeler Rothe} as:
%\begin{equation*}
%c^*(s)=
%\begin{cases}
%\dps 2&\quad\text{if}\quad 0\le s\le 2,\\
%\dps \sqrt{\frac{2}{s}}+\sqrt{\frac{s}{2}}&\quad\text{if}\quad s>2.
%\end{cases}
%\end{equation*}

%For the critical case $s=2$, we call the minimal traveling wave as the {\em pulled-to-pushed transition front} from linear selection to nonlinear selection.

\subsection{The scalar reaction-diffusion equation}

\noindent

The classification of traveling wavefronts for the scalar equation, provided in Proposition~\ref{prop: classification scalar}, is well-known. 
Our first main result concerns a refined understanding of 
(1) in Proposition \ref{prop: classification scalar}. In other words, we aim to determine under what conditions the coefficient 
$A$ vanishes. To do this, let us consider the following scalar equation
\beaa
w_t= w_{xx} +f(w;s),
\eeaa
where
$\{f(\cdot;s)\}\subset C^2$ is a one-parameter family of nonlinear functions satisfying monostable condition and varies continuously and monotonously on the parameter $s\in[0,\infty)$. %That is,
The assumptions on $f$ are as follows:
\begin{itemize}
    \item[(A1)](monostable condition) $f(\cdot;s)\in C^2([0,1])$, $f(0;s)=f(1;s)=0$, $f'(0;s):=\gamma_0>0>f'(1;s)$,  and $f(w;s)>0$ for all $s\in\mathbb{R}^+$ and $w\in(0,1)$.
    \item[(A2)] (Lipschitz continuity) $f(\cdot;s)$, $f'(\cdot;s)$, and $f''(\cdot;s)$ are Lipschitz continuous on $s\in\mathbb{R}^+$
    uniformly in $w$. In other words,
    there exists $L_0>0$ such that
    \beaa
    |f^{(n)}(w;s_1)-f^{(n)}(w;s_2)|\leq L_0|s_1-s_2|\quad \mbox{for all $\ $ $w\in[0,1]$ and $n=0,1,2$,}
    \eeaa
    where $f^{(n)}$ mean the $n$th derivative of $f$ with respect to $w$ for $n\in\mathbb{N}$, {\it i.e.}, $f^{(0)}=f$, $f^{(1)}=f'$, and $f^{(2)}=f''$.
    \item[(A3)] (monotonicity condition) $f(w;\hat{s})>f(w;s)$ for all $w\in (0,1)$ if $\hat{s}>s$, and $f''(0;\hat{s})> f''(0;s)$ if $\hat{s}>s$.
    %$f(\cdot;s)$ is strictly increasing on $s\in\mathbb{R}^+$,
\end{itemize}

\begin{remark}%\label{rm:A1}
Without loss of generality, we assume $\gamma_0=1$ in the assumption (A1) for the part concerned with the scalar reaction-diffusion equation, such that the linearly selected spreading speed is equal to $2$.
\end{remark}

\begin{remark}
Note that, in this paper, we always assume $\{f(\cdot;s)\}\subset C^2$ as that in the assumption (A1) for the simplicity of the proof. As a matter of fact,
our approach still works for weaker regularity of $f$, say
$\{f(\cdot;s)\}\subset C^{1,\alpha}$ for some $\alpha\in(0,1)$.
If we consider a higher degree of regularity for $f$, such as ${f(\cdot;s)}\subset C^k$ for some $k>2$, then the condition in the assumption  (A3) for $f''(0;\cdot)$ will be replaced by $f^{(i)}(0;\cdot)$ for some $1<i\le k$.
%for $\{f(\cdot;s)\}\subset C^{1,\alpha}$ with weaker regularity, our approach still works.
\end{remark}

%$f'(w;s)$ and $f''(w;s)$ mean the derivative of $f$ with respect to $w$.

Thanks to the assumption  (A1), there exists the minimal traveling wave speed for all $s\in[0,\infty)$, denoted by $c^*(s)$, such that
the system
\begin{equation}\label{scalar tw-parameter s}
\left\{
\begin{aligned}
&W''+cW'+f(W;s)=0,\quad  \xi\in\mathbb{R},\\
&W(-\infty)=1,\ W(+\infty)=0,\\
&W'<0,\quad \xi\in\mathbb{R},
\end{aligned}
\right.
\end{equation}
admits a unique (up to translations) solution $(c,W)$ if and only if  $c\geq c^*(s)$.

We further assume that linear (resp., nonlinear) selection mechanism can occur at some $s$.
More precisely, $f(\cdot;s)$ satisfies %some additional assumptions as follows:
\begin{itemize}
     %\item[(A1)] $f_s(w)$ is strictly increasing on $s$ for all $s\in\mathbb{R}^+$;
    \item[(A4)] there exists $s_1>0$ such that $f(w;s_1)$ satisfies KPP condition \eqref{kpp condition},
    %if and only if $s\in[0,s_1]$;
   and thus $c^*(s_1)=2$.
    \item[(A5)]  there exists  %$s_1<s_2<\infty$
     $s_2>s_1$ such that $c^*(s_2)>2$.
\end{itemize}

\begin{remark}\label{rk:a4+a5}
In view of the assumption (A3), a simple comparison yields that $c^*(\hat{s})\geq c^*(s)$ if $\hat{s}\geq s$.
Together with assumptions (A4), (A5) and the fact $c^*(s)\geq 2$ for all $s\geq0$, we see that:
\begin{itemize}
\item[(1)] $c^*(s)=2$ for all $0\leq s\leq s_1$;
\item[(2)] $c^*(s)>2$ for all $s\geq s_2$.
\end{itemize}
\end{remark}

\begin{remark}
It is easy to check that \eqref{sample eq} satisfies assumptions  (A1)-(A5) (see Figure \ref{Figure hr}). The minimal speed  $c^*(s)$ is linearly selected for $0<s\le2$, while it is nonlinearly selected for $s>2$. Note particularly that, for $s\in(1,2]$, the minimal speed $c^*(s)$ is still linearly selected
even though the KPP condition \eqref{kpp condition} is not satisfied.
In addition, we see that the pulled-to-pushed transition front for \eqref{sample eq} occurs when $s=2$.
\end{remark}

\begin{figure}
\begin{center}
\begin{tikzpicture}[scale = 0.8]
\draw[thick](-3,6)-- (-3,0)-- (3,0) node[right] {$w$};
\draw[dashed] [thick](-3,0)--(2.8,5.8) node[above] {$f'(0;s)$};
\node[right] at (-3,6) {$f(w;s)$};
\draw[dotted] [thick] (-3,0) to [out=40,in=180] (0,1.2) to [out=0,in=125] (3,0);
\node[below] at (0.5,2) {$f(w;0)$};
\node[below] at (0,1) {KPP equation};
%\node[below] at (0,-0.5) {{\bf{Figure 1}}: The transition from linear selection to nonlinear selection};
%\node[below] at (0.5,-1) {of the minimal traveling wave speed of \eqref{Hadeler-Rothe eq}.};
\draw [thick] (-3,0) to [out=45, in=220] (-0.5,1.7) to [out=45,in=220] (0.7,3.7) to [out=40,in=130] (3,0);
\node[below] at (1,3.4) {$f(w;1)$};
\draw [ultra thick] (-3,0) to [out=35, in=220] (-1.5,2) to [out=45,in=220] (0.5,5) to [out=40,in=120] (3,0);
\node[below] at (0.8,4.8) {$f(w;2)$};
\draw [thick] (-3,0) to [out=35, in=220] (-1,4) to [out=40,in=220] (0.7,6) to [out=40,in=110] (3,0);
\node[below] at (1.1,6) {$f(w;3)$};
\draw[dashed] [thick] (2,4)--(4,4) node[right] {threshold};
\end{tikzpicture}
\caption{The transition from linear selection to nonlinear selection of \eqref{sample eq}.}\label{Figure hr}
\end{center}
\end{figure}

%The key point is to completely characterize the evolution of the decay rate of the minimal traveling wave $W_s(\xi)$ with respect to $s$.
It is well known (\cite{Aronson Weinberger}) that if $c^*(s)=2$, then
\bea\label{decay-W-linear}
W_s(\xi)=A\xi e^{-\xi}+Be^{-\xi}+o(e^{-\xi})\quad \mbox{as $\xi\to+\infty$},
\eea
where %$A,B\geq0$ \red{Can B<0 when A>0?}
$A\geq0$ and $B\in \mathbb{R}$,
and $B>0$ if $A=0$.
%On the other hand, from \cite{Rothe1981}, we see that if $c^*(s)>2$, then
%\bea\label{decay-W-nonlinear}
%W_s(\xi)=Ae^{-\lambda_s^+\xi}+o(e^{-\lambda_s^+\xi})\quad \mbox{as $\xi\to+\infty$},
%\eea
%where $A>0$ and
%\bea\label{lambda-s}
%\lambda^{+}_{s}:=\frac{c^*(s)+\sqrt{(c^{*}(s))^2-4}}{2}>0.
%\eea
As we will see, the key point to understanding the speed selection problem is to determine the leading order of the decay rate of $W_s(\xi)$, {\it i.e.},
whether $A>0$ or $A=0$ in \eqref{decay-W-linear}.
%we figure out that

\begin{theorem}\label{th: threshold scalar equation}
Assume that assumptions (A1)-(A5) hold. Then there exists the threshold value $s^*\in[s_1,s_2)$ such that the minimal traveling wave speed of \eqref{scalar tw-parameter s} satisfies
\begin{equation}\label{def of threshold scalar}
c^*(s)=2\quad\text{for all}\ s\in[0,s^*];\quad c^*(s)>2\quad\text{for all}\ s\in(s^*,\infty).
\end{equation}
Moreover,
the minimal traveling wave $W_s(\xi)$ satisfies %,it holds
\begin{equation}
\mbox{(1) if }s<s^*,\ W_s(\xi)=A\xi e^{-\xi}+o(\xi e^{-\xi})\quad \mbox{as}\quad \xi\to+\infty\quad\text{for some}\quad A>0;
\end{equation}
 \begin{equation}\label{asy tw threshold scalar}
\mbox{(2) if }s=s^*,\ W_s(\xi)=Be^{-\xi}+o(e^{-\xi})\quad \quad \mbox{as} \quad \xi\to+\infty\quad\text{for some}\quad B>0.
\end{equation}

%with $B>0$ as $\xi\to+\infty$
\end{theorem}

\begin{remark}
\begin{itemize}
\item[(1)]
Note that \eqref{asy tw threshold scalar} in Theorem~\ref{th: threshold scalar equation} indicates that, as $\xi\to+\infty$, the leading order of the decay rate of $W_s(\xi)$ switches from $\xi e^{-\xi}$ to $e^{-\xi}$ as $s\to s^*$ from below.
\item[(2)] In our proof of \eqref{def of threshold scalar} and the sufficient condition for \eqref{asy tw threshold scalar},
the condition in the assumption  (A3) that $f''(0;\hat{s})> f''(0;s)$ for $\hat{s}>s$ is not required.
\end{itemize}
\end{remark}

\begin{remark}%\label{rm: asy of minimal tw nonlinear case}
The asymptotic behaviors of the pushed front are crucial for understanding the long-time behavior of the solution of the Cauchy problem (see \cite{Rothe1981} for the scalar reaction-diffusion equation and \cite{Wu Xiao Zhou} for the Lotka-Volterra competition-diffusion system).
\end{remark}

\medskip

%Combining \eqref{decay-W-linear}, Theorem \ref{th: threshold scalar equation}, and Proposition \ref{prop: classification scalar},
%we can fully understand how the decay rates of the minimal traveling wave depend on $s$, which is formulated as follows:
%
%%\begin{remark}
%\begin{proposition}\label{cor: different minimal tw}
%%Let $f(w;s)$ satisfy (A1)-(A5).
%Assume that assumptions (A1)-(A5) hold. Then asymptotic behaviors of the pulled front, pushed front, and pulled-to-pushed transition front are shown as follows.
%\begin{itemize}
%    \item[(1)] Pulled front: if $s\in[0,s^*)$, then $W_s(\xi)=A\xi e^{-\xi}+Be^{-\xi}+o(e^{-\xi})$ as $\xi\to+\infty$ with $A>0$ and $B\in \mathbb{R}$;
%    \item[(2)] Pulled-to-pushed transition front: if $s=s^*$, then $W_s(\xi)=Be^{-\xi}+o(e^{-\xi})$ as $\xi\to+\infty$ with $B>0$;
%    \item[(3)] Pushed front: if $s\in(s^*,\infty)$, then $W_s(\xi)=Ae^{-\lambda_s^+\xi}+o(e^{-\lambda_s^+\xi})$ as $\xi\to+\infty$ with $A>0$.%and
%        %\beaa
%        %\lambda_{s}:=\frac{c^*(s)+\sqrt{(c^{*}(s))^2-4\beta^2}}{2},
%        %\eeaa
%\end{itemize}
%Here $\lambda_{s}^+$ is defined as \eqref{lambda + - scalar} with speed $c=c^*(s)$.
%\end{proposition}

\subsection{The scalar nonlocal equation}\label{sec: intro nonlocal}

\noindent

Next, we  consider the following scalar integro-differential equation
\beaa%\label{nonlocal equation}
w_t= J\ast w-w +f(w;q),
\eeaa
where
$\{f(\cdot;q)\}\subset C^2$ is a one-parameter family of nonlinear functions satisfying assumptions (A1)-(A3) defined in \S 1.2.1 with $s=q$, %$q=s$.

We further assume that a linear (resp., nonlinear) selection mechanism can occur at some $q$.
More precisely, $f(\cdot;q)$ satisfies %some additional assumptions as follows:
\begin{itemize}
     %\item[(A1)] $f_s(w)$ is strictly increasing on $s$ for all $s\in\mathbb{R}^+$;
    \item[(A6)] there exists $q_1>0$ such that $f(w;q_1)$ satisfies KPP condition \eqref{kpp condition},
    %if and only if $s\in[0,s_1]$;
   and thus $c_{NL}^*(q_1)=c^*_0$.
    \item[(A7)]  there exists  %$s_1<s_2<\infty$
     $q_2>q_1$ such that $c_{NL}^*(q_2)>c^*_0$.
\end{itemize}

\begin{remark}\label{rk:a6+a7}
In view of the assumption  (A3), a simple comparison yields that $c_{NL}^*(\hat{q})\geq c_{NL}^*(q)$ if $\hat{q}\geq q$.
Together with assumptions (A6), (A7) and the fact
$c_{NL}^*(q)\geq c^*_0\ \ \text{for all}\ \ q\geq0,$
we see that
$$c_{NL}^*(q)=c^*_0\ \ \text{for all}\ \ 0\leq q\leq q_1\ \ \text{and}\ \ c_{NL}^*(q)>c^*_0\ \ \text{for all}\ \ q\geq q_2.$$
\end{remark}

One of our main results is 
the complete classification of the decay rates of the traveling wave solutions. 
We first establish a result for the nonlocal diffusion equation that is analogous to the classical diffusion case in Proposition~\ref{prop: classification scalar}.

\begin{theorem}\label{th: classification scalar nonlocal}
Assume that $f(\cdot)$ satisfies the monostable condition \eqref{monostable cd}. The traveling wavefronts $(c,\mathcal W)$, defined as \eqref{scalar nonlocal tw-parameter s}, satisfies
\begin{itemize}
\item[(1)] there exists $(A,B)\in \mathbb{R}^+\times\mathbb{R}$ or $A=0,B>0$  such that $\mathcal W(\xi)=A\xi e^{-\lambda_0\xi}+B e^{-\lambda_0\xi}+o(e^{-\lambda_0\xi})$ as $\xi\to+\infty$,
if and only if $c=c^*_{NL}=c^*_0$;
\item[(2)] there exists $A>0$ such that $\mathcal W(\xi)=Ae^{-\lambda^+(c)\xi}+o(e^{-\lambda^+(c)\xi})$ as $\xi\to+\infty$,
if and only if $c=c^*_{NL}>c^*_0$;
\item[(3)] there exists $A>0$ such that $\mathcal W(\xi)=Ae^{-\lambda^-(c)\xi}+o(e^{-\lambda^-(c)\xi})$ as $\xi\to+\infty$,
if and only if $c>c^*_{NL}$.
\end{itemize}
Here, $\lambda^{\pm}(c)$ are defined as that in Lemma \ref{rm:lambda_0} but independent on $q$.
\end{theorem}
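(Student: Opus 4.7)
The plan is to treat each of the three cases by establishing the correct asymptotic form at $+\infty$ for each speed regime and, conversely, showing that each asymptotic form forces the corresponding speed regime. As a preliminary step, I would use a standard exponential supersolution bound together with monotonicity and positivity of $\mathcal{W}$ to show that its decay rate $\lambda^{*}:=-\lim_{\xi\to+\infty}\xi^{-1}\log\mathcal{W}(\xi)$ exists in $(0,+\infty)$ and is a root of the characteristic equation
\[
c\lambda=\int_{\mathbb{R}}J(x)e^{\lambda x}\,dx+f'(0)-1.
\]
An Ikehara-type tauberian argument applied to a suitable Laplace transform of $\mathcal{W}$ (in the spirit of \cite{Carr Chmaj,Coville,WU XIAO ZHOU nonlocal}) then distinguishes a simple pole from a double pole at $z=\lambda^{*}$ and thereby rules out every asymptotic shape except the three listed in the theorem.

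Given this preliminary classification, the \emph{if} direction of case (1) is \cite[Proposition 3.3]{WU XIAO ZHOU nonlocal}, and the \emph{if} direction of case (3) is in \cite{Coville}. The remaining content boils down to a single key statement: when $c^{*}_{NL}>c^{*}_{0}$, the wave at speed $c^{*}_{NL}$ cannot possess the slow-decay asymptotic $e^{-\lambda_{q}^{-}(c^{*}_{NL})\xi}$. Once this is established, the preliminary step forces the fast-decay asymptotic at $c^{*}_{NL}$, yielding case (2); conversely, any wave with slow decay at a speed $c\ge c^{*}_{NL}$ must have $c>c^{*}_{NL}$, yielding the \emph{only if} of case (3). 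The pulled case in (1) then follows from $c^{*}_{0}=c^{*}_{NL}$ and the preliminary step.

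To establish the key statement I will argue by contradiction. Supposing the pushed front $\mathcal{W}^{*}$ at speed $c^{*}_{NL}>c^{*}_{0}$ has slow decay $Ae^{-\lambda_{q}^{-}(c^{*}_{NL})\xi}$ with $A>0$, I plan to exploit this slow tail to construct, for some $c'<c^{*}_{NL}$, a pair of ordered sub- and supersolutions of the traveling wave equation at speed $c'$ having the correct limits at $\pm\infty$. The subsolution is built by perturbing $\mathcal{W}^{*}$ through a positive profile with tail $\sim e^{-\lambda_{q}^{-}(c')\xi}$, truncated at zero, while the supersolution is taken as $1$ or a suitable shift of $\mathcal{W}^{*}$. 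A standard monotone iteration then produces a genuine traveling wave at speed $c'<c^{*}_{NL}$, contradicting the minimality of $c^{*}_{NL}$.

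The main obstacle is verifying the subsolution inequality in the transition zone where the perturbation becomes comparable to $\mathcal{W}^{*}$. Near $+\infty$ the slow decay of $\mathcal{W}^{*}$ matches the tail of the perturbation and the test reduces to a computation with the characteristic polynomial at $c'$, whose sign can be controlled by continuity and monotonicity of $\lambda_{q}^{-}(c)$ in $c$; on a large compact set the strict positivity of $\mathcal{W}^{*}$ absorbs the perturbation for a small coefficient. Gluing the two regimes through a Lipschitz truncation so that the candidate remains a subsolution globally in the nonlocal sense is the delicate point, and it relies on a careful choice of perturbation exploiting the strict convexity of $\lambda\mapsto\int_{\mathbb{R}}J(x)e^{\lambda x}\,dx$.
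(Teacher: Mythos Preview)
Your overall architecture matches the paper: parts (1) and the ``if'' of (3) are imported from the literature, and the heart of the matter is a contradiction argument showing that if the pushed front $\mathcal{W}_{*}$ at $c^{*}_{NL}>c^{*}_{0}$ had slow decay, one could manufacture a traveling wave at some speed $c^{*}_{NL}-\delta_{0}$ via ordered sub/super-solutions and monotone iteration. So far so good.

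The gap is in the actual sub/super pair you propose. At speed $c'=c^{*}_{NL}-\delta_{0}$, a shift of $\mathcal{W}_{*}$ satisfies $N_{1}[\mathcal{W}_{*}]=-\delta_{0}\mathcal{W}_{*}'>0$, so it is a \emph{sub}-solution, not a super-solution; and the constant $1$ is a super-solution but does not go to $0$ at $+\infty$, so the iteration limit need not connect to $0$. More seriously, the ordering you need is impossible in the configuration you describe: any super-solution at speed $c'$ that tends to $0$ must decay at least like $e^{-\lambda_{q}^{-}(c')\xi}$, which (since $\lambda_{q}^{-}$ is decreasing in $c$) is \emph{strictly faster} than the assumed decay $e^{-\lambda_{q}^{-}(c^{*}_{NL})\xi}$ of $\mathcal{W}_{*}$. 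Hence no such super-solution can lie above $\mathcal{W}_{*}$ (or a perturbation of it) near $+\infty$. This is precisely why the paper does \emph{not} use $\mathcal{W}_{*}$ as the sub-solution.

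The paper's resolution reverses the picture. The \emph{super}-solution is the object built from $\mathcal{W}_{*}$: it equals $\varepsilon_{1}e^{-\lambda_{*}\xi}$ with $\lambda_{*}\in(\lambda_{q}^{-}(c'),\lambda_{q}^{+}(c'))$ for large $\xi$, and equals $\mathcal{W}_{*}-\mathcal{R}_{w}$ on the rest of the line, with $\mathcal{R}_{w}$ a carefully designed piecewise $C^{0}$ correction (exponential, then sinusoidal, then negative exponential) so that all gluing angles are admissible. The slow-decay hypothesis on $\mathcal{W}_{*}$ enters exactly at the first matching point, to guarantee $\overline{\mathcal{W}}'(\xi_{1}^{+})<\overline{\mathcal{W}}'(\xi_{1}^{-})$. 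The \emph{sub}-solution is taken from outside: it is the semi-wave $\Phi^{c'}$ of the free-boundary problem of Du--Li--Zhou, which exists for every $c'<c^{*}_{NL}$, vanishes identically for $\xi\ge\xi_{1}$, and therefore sits below the super-solution automatically near $+\infty$. Your discussion of the ``delicate gluing in the transition zone'' is the right instinct, but it applies to the super-solution, and the missing ingredient on the sub-solution side is this semi-wave.
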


The second result is concerned with how the {\em pulled front} 
evolves to the pulled-to-pushed transition front in terms of the varying parameter $q$. Similar to Theorem \ref{th: threshold scalar equation}, the key point is to completely characterize the evolution of the decay rate of the minimal traveling wave $\mathcal{W}_q(\xi)$ with respect to $q$. It is natural to expect that, as $\xi\to+\infty$, the leading order of the decay rate of $\mathcal{W}_q(\xi)$ switches from $\xi e^{-\lambda_0\xi}$ to $e^{-\lambda_0\xi}$ as $q\to q^*$ from below. However, establishing this result requires a completely different construction of the super-solution.

\begin{theorem}\label{th: threshold scalar equation nonlocal}
Assume that assumptions (A1)-(A3) and (A6)-(A7) hold. Then there exists the threshold value $q^*\in[q_1,q_2)$ such that the minimal traveling wave speed of \eqref{scalar nonlocal tw-parameter s} satisfies
\begin{equation}\label{def of threshold scalar nonlocal}
c_{NL}^*(q)=c_0^*\quad\text{for all}\ q\in[0,q^*];\quad c_{NL}^*(q)>c_0^*\quad\text{for all}\ q\in(q^*,\infty).
\end{equation}
Moreover,
the minimal traveling wave $U_s(\xi)$ satisfies %,it holds
\begin{equation*}%\label{asy tw threshold scalar nonlocal}
\mbox{(1) if }q<q^*,\ \mathcal{W}_q(\xi)=A\xi e^{-\lambda_0\xi}+o(\xi e^{-\lambda_0\xi})\quad \mbox{as}\quad \xi\to+\infty\quad\text{for some}\quad A>0;
\end{equation*}
\begin{equation}\label{asy tw threshold scalar nonlocal}
\mbox{(2) if }q=q^*,\ \mathcal{W}_q(\xi)=Be^{-\lambda_0\xi}+o(e^{-\lambda_0\xi})\quad \quad \mbox{as}\quad \xi\to+\infty\quad\text{for some}\quad B>0.
\end{equation}
 %with $B>0$ as $\xi\to+\infty$
\end{theorem}

\begin{remark}
In our proof of \eqref{def of threshold scalar nonlocal} and the sufficient condition for \eqref{asy tw threshold scalar nonlocal},
the condition in the assumption  (A3) that $f''(0;\hat{q})> f''(0;q)$ for $\hat{q}>q$ is not required.
\end{remark}

%Furthermore, with Theorem \ref{th: threshold scalar equation nonlocal} and \ref{th: classification scalar nonlocal},
%we can fully understand how the decay rates of the minimal traveling wave depend on $q$, which is formulated as follows:
%\begin{proposition}\label{cor: different minimal tw nonlocal}
%%Let $f(w;s)$ satisfy (A1)-(A5).
%Assume that assumptions (A1)-(A3) and (A6)-(A7) hold. Then asymptotic behaviors of the pulled front, pushed front, and pulled-to-pushed transition front are shown as follows.
%\begin{itemize}
%    \item[(1)] Pulled front: if $q\in[0,q^*)$, then $\mathcal{W}_q(\xi)=A\xi e^{-\lambda_0\xi}+Be^{-\lambda_0\xi}+o(e^{-\lambda_0\xi})$ as $\xi\to+\infty$ with $A>0$ and $B\in \mathbb{R}$;
%    \item[(2)] Pulled-to-pushed transition front: if $q=q^*$, then $\mathcal{W}_q(\xi)=Be^{-\lambda_0\xi}+o(e^{-\lambda_0\xi})$ as $\xi\to+\infty$ with $B>0$;
%    \item[(3)] Pushed front: if $q\in(q^*,\infty)$, then $\mathcal{W}_q(\xi)=Ae^{-\lambda^+_q\xi}+o(e^{-\lambda^+_q\xi})$ as $\xi\to+\infty$ with $A>0$.%and
%\end{itemize}
%Here $\lambda^+_{q}$ is defined as that in Remark \ref{rm:lambda_0}.
%\end{proposition}

\begin{remark}%\label{rm: solved an open}
We remark that (3) of Theorem~\ref{th: classification scalar nonlocal} indicates that the pushed front always has a fast decay, which answers an open problem given in \cite{Bonnefon Coville Garnier Roques2014}.
%As one of the applications of our main methodology, we will show that the pushed front always has a fast decay, particularly for integro-differential equations.
More importantly, our approach is applicable to establish the decay rate of pushed fronts for more general monostable equations and systems as long as the comparison principle holds.
\end{remark}

\subsection{The Lotka-Volterra competition system}

\noindent

In this subsection, we focus on the two-species Lotka-Volterra competition system \eqref{system}. That is,
\begin{equation*}
\left\{
\begin{aligned}
&u_t=u_{xx}+u(1-u-av), & t>0,\ x\in\mathbb{R},\\
&v_t=dv_{xx}+rv(1-v-bu), & t>0,\ x\in \mathbb{R},
\end{aligned}
\right.
\end{equation*}
where all parameters are assumed to be positive, and 
$a$ and $b$ satisfy
\begin{itemize}
\item[{\bf(H)}] $0<a<1$ and $b>0$.
\end{itemize}

Depending on the different dynamics of the related ODE systems, the assumption {\bf(H)} can be classified into three cases:
\begin{itemize}
\item[(I)] $0<a<1<b$ {\em{(the strong-weak competition case)}};
\item[(II)] $0<a<1$ and $0<b<1$ {\em{(the weak competition case)}};
\item[(III)] $0<a<1$ and $b=1$ {\em{(the critical case)}}.
\end{itemize}
Regarding the traveling wave solution of \eqref{system} for the case (I),
Kan-on  \cite{Kan-on1997} showed
that there exists
the minimal traveling wave speed $c_{LV}^{*}\in[2\sqrt{1-a},2]$
such that \eqref{system} admits a positive solution
$(u,v)(x,t)=(U,V)(x-ct)$
 satisfying
\begin{equation*}%\label{tw solution}
\left\{
\begin{aligned}
&U''+cU'+U(1-U-aV)=0,\\
&dV''+cV'+rV(1-V-bU)=0,\\
&(U,V)(-\infty)=(1,0),\ (U,V)(\infty)=(0,1),\\
&U'<0,\ V'>0,
\end{aligned}
\right.
\end{equation*}
if and only if $c\ge c_{LV}^{*}$.
For the case (II),
it has been showed in \cite[Example 4.2]{Lewis Li Weinberger 2} that
there exists
the minimal traveling wave speed $c_{LV}^{*}>0$
such that \eqref{system} admits a positive solution
$(u,v)(x,t)=(U,V)(x-ct)$, connecting
$$(U,V)(-\infty)=(\frac{1-a}{1-ab},\frac{1-b}{1-ab})\quad \text{and}\quad (U,V)(+\infty)=(0,1),$$
if and only if $c\ge c_{LV}^{*}$.
Additionally, the existence of the minimal wave speed for Case (III) can be established by a certain approximation argument.
Moreover, by fixing parameters $a$, $d$, and $r$, the minimal traveling wave speed $c_{LV}^{*}$ is continuous on $b>0$.
Further details
are given in Section 2.
Note that,  we define
$$(u^*,v^*)=(1,0)\ \text{if}\ b\ge 1,\quad (u^*,v^*)=(\frac{1-a}{1-ab},\frac{1-b}{1-ab})\ \text{if}\ b<1,$$
and use
\begin{equation}\label{tw solution weak}
\left\{
\begin{aligned}
&U''+cU'+U(1-U-aV)=0,\\
&dV''+cV'+rV(1-V-bU)=0,\\
&(U,V)(-\infty)=(u^*,v^*),\ (U,V)(\infty)=(0,1),\\
&U'<0,\ V'>0,
\end{aligned}
\right.
\end{equation}
to indicate traveling wave solutions of \eqref{system} throughout this paper
whenever we consider the case (I), (II), or (III).

As seen in the literature, the minimal traveling wave speed depends on system parameters $d, r, a,$ and $b$, but whether linear selection holds is not completely understood until now.
In this paper, we always assume {\bf(H)} and fix $d,r>0$ and $a\in(0,1)$.
We choose the competition rate $b\in\mathbb{R}^+$
as a continuously varying parameter and establish a threshold behavior between the linear and nonlinear selection in terms of $b$.
To emphasize the dependence on parameter $b$, we denote the minimal traveling wave as $(c^*_{LV}(b), U_b,V_b)$.
We will show that there exists $b^*\in(0,+\infty)$ such that
$c_{LV}^{*}(b)$ is linearly selected for $0<b\leq b^*$ and is nonlinearly selected for $b>b^*$. %\blue{More importantly, we gain a full understanding of how the \red{decay rate of $U$-fronts at infinity impact the mechanism of speed.}}% providing a natural explanation for why the optimal range of the parameters for linear selection cannot be attained. See also Remark~\ref{rm: c nonlinear selection}.}

%In other words, $b^*$ is the threshold of linear selection and nonlinear selection of the minimal traveling wave speed.

A key role in characterizing the transition from linear selection to nonlinear selection is
the asymptotic behavior of the {\em pulled-to-pushed transition front} $U_{b^{*}}$ at $+\infty$. %In this case, $c_{LV}^{*}(b^*)=2\sqrt{1-a}$.
%Let $(c^*,U_{c^*},V_{c^*})$ be a solution of the system (\ref{tw solution}) with \red{$b=b^*>1$}. %To describe the asymptotic behavior of $(U,V)$ near $+\infty$, we consider the following characteristic equations:
%\begin{equation}
%c^*\lambda+\lambda^2+(1-a)=0,
%\end{equation}
%%\begin{equation}
%%c^*\lambda+d\lambda^2-r=0.
%%\end{equation}
%Since $c^*=2\sqrt{1-a}$, one has $\lambda_u=-\sqrt{1-a}<0$.
%%\ \lambda_v=\frac{-c^*-\sqrt{c^{*2}+4rd}}{2d}<0.$$
%By linearizing \eqref{tw solution} around $(u,v)=(0,1)$, it is easy to find that
%The asymptotic behavior of fronts near $\infty$ can be found in
It is well known that (see  \cite{Girardin Lam} or \cite{MoritaTachibana2009}) that, if $c^*_{LV}(b)=2\sqrt{1-a}$, then
\begin{equation}\label{1}
U_{b}(\xi)=A\xi e^{-\lambda_u\xi}+B e^{-\lambda_u\xi}+o(e^{-\lambda_u\xi})\ \ \mbox{as}\ \ \xi\to+\infty,
\end{equation}
where $\lambda_u:=\sqrt{1-a}>0$, $A\geq0$, $B\in\mathbb{R}$, and
if $A=0$, then $B>0$. We gain a full understanding of how the decay rate of $U$-fronts at infinity impacts the mechanism of speed selection by showing that $A=0$ occurs if and only if  $b=b^*$.
%\blue{However, whether $A=0$ occurs for $b=b^*$ is still open. Our main contribution is to show that for the threshold value $b^*$, $A=0$ holds true.
Namely, the leading order term of the decay rate of $U_{b^{*}}(\xi)$ at $\xi=+\infty$ is $e^{-\lambda_u\xi}$.

%\red{Can A>0 and B<0?}
%Our following result shows that,
%By a delicate construction of super-solution, we can show that, for the threshold value $b^*$,

We state our main result on the speed selection problem as follows.

\begin{theorem}\label{th:threshold}
For any $d>0$, $r>0$ and $a\in(0,1)$, there exists $b^*\in(0,+\infty)$
such that
\begin{equation*}
c^{*}_{LV}(b)=2\sqrt{1-a}\ \ \mbox{for}\ \ b\in(0,b^*];\quad c^{*}_{LV}(b)>2\sqrt{1-a} \ \ \mbox{for}\ \  b\in(b^*,+\infty).
\end{equation*}
%and
%\begin{equation*}
%c^{*}(b)=2\sqrt{1-a}\ \ %\mbox{for}\ \ b\in(0,b^*].
%\end{equation*}
Furthermore,
for the minimal traveling wave $(c^{*}_{LV}(b),U_b,V_b)$ %defined as in
satisfying \eqref{tw solution weak},
%it holds
the following three conditions are equivalent:
\begin{itemize}
\item[(i)] $b=b^*$;
%\item[(ii)] $(U_b,V_b)$ is the pulled-to-pushed transition front;
\item[(ii)] $U_b(\xi)=Be^{-\lambda_u\xi}+o(e^{-\lambda_u\xi})$ as $\xi\to+\infty$ for some $B>0$;
\item[(iii)] $\int_{-\infty}^{\infty} e^{\lambda_u\xi} U_b(\xi)[a(1-V_b)-U_b](\xi)d\xi=0$,
%\red{Does it still hold for $b\le 1$?}.
\end{itemize}
where $\lambda_u=\sqrt{1-a}$. 
%with $B>0$ if and only if $b=b^*>1$.
\end{theorem}

Note that the sub-solution for $U$-component constructed in \cite{Huang2010} has the asymptotic behavior
$\xi e^{-\lambda_u\xi}$ as $\xi\to\infty$, which cannot capture the transition front
$U_{b^*}$ with the asymptotic behavior $e^{-\lambda_u\xi}$ as $\xi\to\infty$ reported in Theorem~\ref{th:threshold}.
This observation gives a natural reason for why the condition \eqref{huang-condition} for linear selection can still be improved (see, e.g., \cite{Alhasanat Ou2019-1,Roques et al 2015}).
We formulate this as a corollary as follows.
\begin{corollary}%\label{cor:optimal issue}
The condition \eqref{huang-condition} for linear selection is not optimal.
\end{corollary}

\begin{remark}
We should not expect an explicit formula for the speed selection problem of the system \eqref{system}, as found in \cite{Huang2010,Lewis Li Weinberger 1}. Indeed, statements (1) and (2) of Theorem \ref{th:threshold} already suggest that the transition between linear and nonlinear selection is influenced by the entire traveling wave profile 
$(U,V)$, rather than solely by its leading edge. This dependence on the full structure of the wave makes it unlikely that an explicit expression for the speed can be obtained.
\end{remark}

Our second result provides a complete classification of the traveling wavefronts, which improved the related results given in \cite{Kan-On,MoritaTachibana2009}.
\begin{theorem}\label{th: classification}
Assume $d>0$, $r>0$, $a\in(0,1)$, and $b>0$. The traveling wavefronts $(c,U,V)$, defined as \eqref{tw solution weak}, satisfies
\begin{itemize}
\item[(1)] there exists $(A,B)\in \mathbb{R}^+\times\mathbb{R}$ or $A=0,B>0$  such that $U(\xi)=A\xi e^{-\lambda_u\xi}+B e^{-\lambda_u\xi}+o(e^{-\lambda_u\xi})$ as $\xi\to+\infty$,
if and only if $c=c^*_{LV}=2\sqrt{1-a}$;
\item[(2)] there exists $A>0$ such that $U(\xi)=Ae^{-\lambda^+_u(c)\xi}+o(e^{-\lambda^+_u(c)\xi})$ as $\xi\to+\infty$,
if and only if $c=c^*_{LV}>2\sqrt{1-a}$;
\item[(3)] there exists $A>0$ such that $U(\xi)=Ae^{-\lambda^-_u(c)\xi}+o(e^{-\lambda^-_u(c)\xi})$ as $\xi\to+\infty$,
if and only if $c>c^*_{LV}$.
\end{itemize}
Here, $\lambda^{\pm}_u(c)$ are eigenvalues defined in Lemma \ref{lm: behavior around + infty}.
\end{theorem}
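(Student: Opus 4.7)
The plan is to treat the three parts of Theorem \ref{th: classification} separately. Part (1), the pulled-front case at $c^*_{LV}=2\sqrt{1-a}$, is already in the literature (see \cite{Girardin Lam, MoritaTachibana2009}), so I would simply quote it. For parts (2) and (3), the common starting point is a perturbed linear ODE analysis at $\xi=+\infty$. Since $v=1$ is a linearly stable equilibrium of the second equation of \eqref{system}, one first shows that $\tilde V:=1-V$ decays exponentially at a rate $\mu_V>0$ strictly larger than $\lambda_u^{\pm}(c)$. Substituting into the first equation of \eqref{tw solution} gives
\begin{equation*}
U''+cU'+(1-a)U = U\bigl(U+a\tilde V\bigr),
\end{equation*}
whose right-hand side is exponentially negligible compared with the left as $\xi\to+\infty$. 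Variation of parameters combined with a standard bootstrap then shows that $U$ is asymptotic to a solution of $U''+cU'+(1-a)U=0$, so $U\sim A_+ e^{-\lambda_u^+(c)\xi}+A_- e^{-\lambda_u^-(c)\xi}$ when $c>2\sqrt{1-a}$, and $U\sim(A\xi+B)e^{-\lambda_u\xi}$ when $c=2\sqrt{1-a}$. The theorem thus reduces to identifying which coefficient vanishes in each regime.

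For part (3) ($c>c^*_{LV}$), I would argue by contradiction, assuming $A_-=0$ so that $U$ decays at the fast rate $\lambda_u^+(c)$. Pick $c'\in(c^*_{LV},c)$ sufficiently close to $c^*_{LV}$ that $\lambda_u^-(c')<\lambda_u^+(c)$, and take the corresponding wave $(U',V')$ at speed $c'$, whose decay is no faster than $\lambda_u^-(c')$. Pass to the cooperative variables $(u,1-v)$ (in which the comparison principle is valid) and slide $(U'(\cdot-\tau),1-V'(\cdot-\tau))$ from the right across $(U,1-V)$; since $U$ decays strictly faster than $U'$ at $+\infty$, the ordering can be realized for $\tau$ sufficiently large, and the critical translation at which first contact occurs produces an interior contact point. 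The strong maximum principle at that point forces $(U',V')\equiv(U(\cdot+\text{const}),V(\cdot+\text{const}))$, contradicting $c'\neq c$. Hence $A_->0$ and $U$ decays at the slow rate $\lambda_u^-(c)$.

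Part (2) ($c=c^*_{LV}>2\sqrt{1-a}$) is the main obstacle and is where the super-/sub-solution argument advertised in the introduction comes in. Assume by contradiction that the pushed front $(U,V)$ decays at the slow rate $\lambda_u^-(c^*_{LV})$, and fix $\epsilon>0$ small with $c_\epsilon:=c^*_{LV}-\epsilon>2\sqrt{1-a}$. Substituting the pushed front into the wave system at the reduced speed $c_\epsilon$ yields
\begin{equation*}
U''+c_\epsilon U'+U(1-U-aV) = -\epsilon U' > 0,
\end{equation*}
together with the analogous positive inequality in the $\tilde V=1-V$ component, so $(U,\tilde V)$ is a strict sub-solution of the cooperative wave system at speed $c_\epsilon$. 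A matching super-solution is built from the constant state $(u^*,1-v^*)$ on the left and a pure exponential tail $(\alpha,\beta)e^{-\lambda_u^-(c_\epsilon)\xi}$ on the right, where $(\alpha,\beta)$ is a positive eigenvector of the linearization at $(0,0)$ for the eigenvalue $\lambda_u^-(c_\epsilon)$; the key inequality $\lambda_u^-(c_\epsilon)>\lambda_u^-(c^*_{LV})$ ensures that, after a suitable horizontal shift, this tail dominates the slow-decaying tail of the sub-solution. A standard monotone iteration in the cooperative variables $(u,1-v)$ then produces a traveling wave at speed $c_\epsilon<c^*_{LV}$, contradicting the minimality of $c^*_{LV}$; therefore $A_-=0$ and $U$ decays at the fast rate $\lambda_u^+(c^*_{LV})$. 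The main technical difficulty I expect is verifying the super-solution inequality uniformly through the matching region while controlling the quadratic nonlinear corrections in both components simultaneously, for which a cutoff of the form $\min(\bar U,u^*)$ and careful patching across the transition region are needed.
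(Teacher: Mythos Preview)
Your plan for part (2) contains a decisive error in the ordering of the sub- and super-solution tails. You correctly observe that $\lambda_u^-(c_\epsilon)>\lambda_u^-(c^*_{LV})$ (since $\lambda_u^-(\cdot)$ is strictly decreasing), but you draw the wrong conclusion from it: a larger exponent means \emph{faster} decay, so your proposed super-solution tail $(\alpha,\beta)e^{-\lambda_u^-(c_\epsilon)\xi}$ is \emph{smaller} than the assumed sub-solution tail $A_0e^{-\lambda_u^-(c^*_{LV})\xi}$ for all large $\xi$, and no horizontal shift can repair this. More generally, any function $\bar U>0$ with $\bar U(\xi)\to0$ that satisfies $\bar U''+c_\epsilon\bar U'+(1-a)\bar U\le o(\bar U)$ must decay at a rate in $[\lambda_u^-(c_\epsilon),\lambda_u^+(c_\epsilon)]$, so a super-solution at speed $c_\epsilon$ can never dominate a profile decaying at the strictly slower rate $\lambda_u^-(c^*_{LV})$. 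The monotone-iteration scheme you sketch therefore cannot be set up; you also implicitly need $a\beta\le\alpha$ for the quadratic correction in $N_2$ to have the right sign, which is not automatic.

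The paper avoids this obstruction by abandoning the iteration altogether. It constructs \emph{only} a super-solution $(U_1,V_1)$ at speed $c^*_{LV}-\delta_0$, obtained by taking the pushed front itself and adding a carefully designed piecewise perturbation $(R_u,R_v)$: on the far right the tail of $U_*$ is replaced by a pure exponential $e^{-\lambda_*\xi}$ with $\lambda_*\in(\lambda_u^-,\lambda_u^+)$, and on the left $U_*$ is pushed slightly above $u^*$ (then truncated), with several intermediate pieces to glue these while keeping the differential inequalities $N_2\le0$, $N_3\ge0$. No sub-solution is needed because the contradiction is obtained differently: one starts the Cauchy problem \eqref{system} with \emph{compactly supported} $u(0,\cdot)$ and $v(0,\cdot)\equiv1$, which is dominated by a translate of $(U_1,V_1)$, so the spreading speed is at most $c^*_{LV}-\delta_0$; this contradicts the known result (from \cite{Wu Xiao Zhou}) that the spreading speed equals $c^*_{LV}$. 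Note that in the nonlocal scalar case (Section~2) the paper \emph{does} run an iteration, but the sub-solution there is a semi-wave with compact support on the right, coming from the free-boundary theory of \cite{Du Li Zhou}; no analogous tool is invoked for the Lotka--Volterra system.

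For part (3), your sliding argument is close in spirit to the paper's but not quite right as stated. You assert that the auxiliary wave at speed $c'\in(c^*_{LV},c)$ ``decays no faster than $\lambda_u^-(c')$'', which is exactly what you are trying to prove; what actually makes the comparison at $+\infty$ work is that $\lambda_u^+$ is increasing, so even if the $c'$-wave had fast decay it would still be slower than $\lambda_u^+(c)$. You also omit the comparison at $-\infty$ and in the $V$-component, which requires the asymptotics of Lemmas~\ref{lem:AS-infty:b>1}--\ref{lem:AS-infty:b=1}. The paper instead compares $(\hat U,\hat V)$ directly with the \emph{minimal} wave $(U_*,V_*)$ (whose decay is now known from part (2)), obtains the global ordering \eqref{U*>hat U} from the asymptotics at both ends, and then uses the time-dependent comparison principle for \eqref{system}: both profiles solve the same parabolic system, but move at different speeds, giving an immediate contradiction without any interior-touching argument.
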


%\bea
%&& b= b^*,\label{b=b*}\\
%&&U(\xi)=Be^{-\lambda_u\xi}+o(e^{-\lambda_u\xi})\quad \mbox{for some $B>0$ as $\xi\to+\infty$}, \label{threshold rate}\\
%&& \int_{-\infty}^{\infty} e^{\lambda_u\xi} U(\xi)[a(1-V)-U](\xi)d\xi=0,\label{implicit form}
%\eead

Theorem~\ref{th:threshold} indicates that $(U_{b^*},V_{b^*})$ is the
{\em pulled-to-pushed transition front}.
 Furthermore, with Theorem \ref{th: classification},
we can fully understand how the decay rates of the minimal traveling wave solution depend on $b$ and completely classify propagation fronts.
%\begin{proposition}
%\label{cor: different minimal tw system}
%Asymptotic behaviors of the pulled front, pushed front, and pulled-to-pushed transition front of \eqref{system} are shown as follows.
%\begin{itemize}
%    \item[(1)] Pulled front: if $b\in[0,b^*)$, then $U_b(\xi)=A\xi e^{-\lambda_u\xi}+Be^{-\lambda_u\xi}+o(e^{-\lambda_u\xi})$ as $\xi\to+\infty$ with $A>0$ and $B\in \mathbb{R}$;
%    \item[(2)] Pulled-to-pushed transition front: if $b=b^*$, then $U_{b^*}(\xi)=Be^{-\lambda_u\xi}+o(e^{-\lambda_u\xi})$ as $\xi\to+\infty$ with $B>0$;
%    \item[(3)] Pushed front: if $b\in(b^*,\infty)$, then $U_b(\xi)=Ae^{-\lambda^+_u\xi}+o(e^{-\lambda^+_u\xi})$ as $\xi\to+\infty$ with $A>0$.%and
%\end{itemize}
%Here $\lambda^+_{u}:=\lambda^+_u(c^*_{LV}(b))$ is defined as that in Lemma \ref{lm: behavior around + infty}.
%\end{proposition}

%Furthermore, we found that the

\begin{remark}\label{rm: c nonlinear selection}
%We would like to give some remarks for Theorem \ref{th:threshold} and \ref{th: classification}, and emphasize the main features of our results.
%\begin{itemize}
%\item[(1)] 
It is the first time to provide a sufficient and necessary condition for the speed selection problem of the Lotka-Volterra competition system under {\bf(H)}. %Moreover,
We have improved the understanding of this problem by considering a wide range of competition coefficients $0<a<1$ and $0<b<+\infty$,
%not only $0<a<1<b$.
not just the previously studied case of $0<a<1<b$.
In addition, we expect that
in some cases, $c^*_{LV}(b)>2\sqrt{1-a}$ for all $b>1$, indicating that the threshold $b^*$ may not be well-defined by only considering $b>1$.

For instance,  numerical simulations suggest that
for any fixed $0<a<1$ and $r>0$, there exists $d_0>0$ sufficiently large such that
$$c^*_{LV}(b)>2\sqrt{1-a}\ \ \text{for all}\ \ b>1\ \ \text{if}\ \ d>d_0.$$
In Figure \ref{Figure0},
%the horizontal axis is the time, the vertical axis is $x(t)/t$, and the orange line indicates the value $2\sqrt{1-a}=\sqrt{2}$.
we consider \eqref{system} with
$$a=b=1/2,\ r=1,\ v_0(x)\equiv2/3$$
 and $u_0(x)$ satisfying
$$u_0(x)=1\ \ \text{for}\ \ x\le 10,\quad u_0(x)=0\ \ \text{for}\ \ x> 10.$$
Set
$x(t):=\sup_{x\geq 0}\{x>0|\, u(t,x)=1/2\}$.
A numerical simulation suggests that
$$\liminf_{t\to\infty}[x(t)/t]>2\sqrt{1-a}=\sqrt{2}$$
when $d=50$. Together with the comparison principle, it indicates that
the spreading speed should be nonlinearly selected for all $b>1/2$ when $a=1/2$, $r=1$, and $d=50$.
\begin{figure}
\centering
\includegraphics[width=0.74\textwidth]{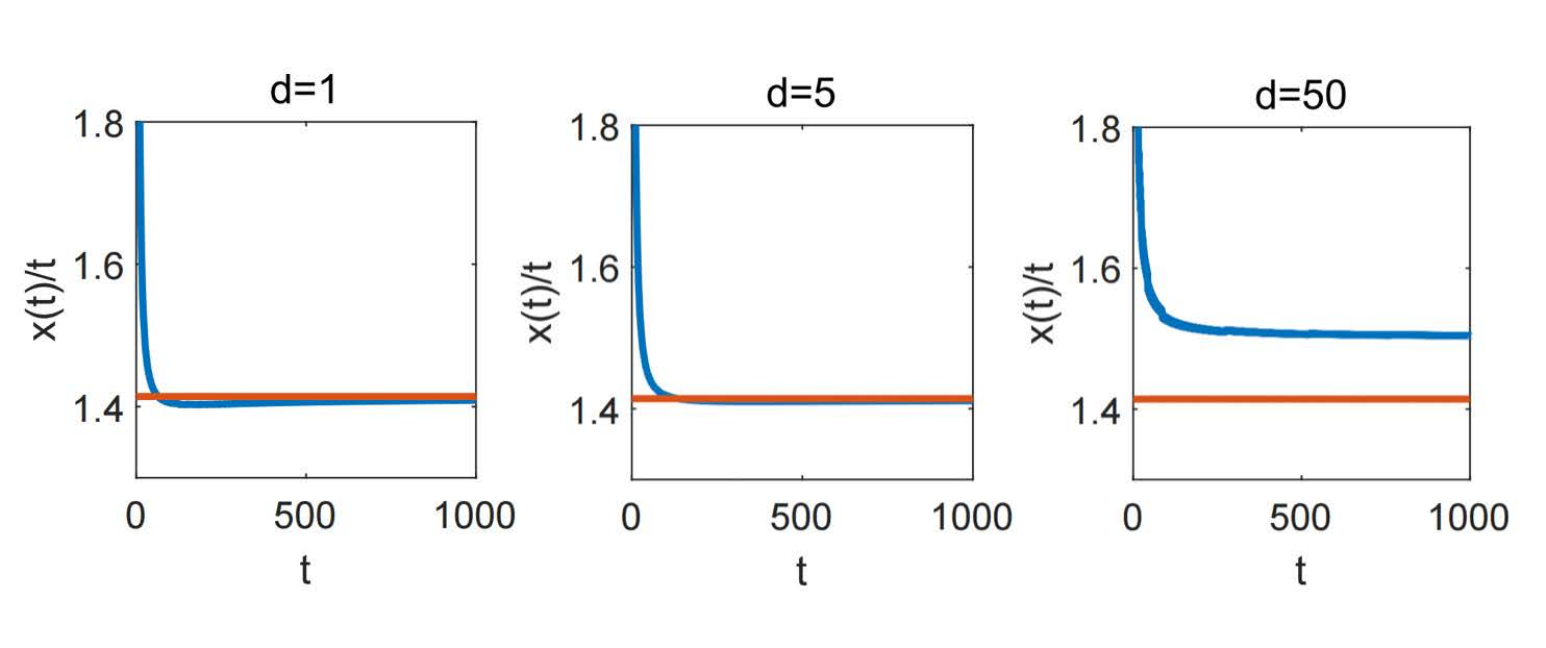}
\caption{The horizontal axis is the time; the vertical axis represents $x(t)/t$;
the orange line indicates the value $2\sqrt{1-a}=\sqrt{2}$, and
the blue curve represents the evolution of $x(t)/t$ on different $d$.} \label{Figure0}
\end{figure}

On the other hand, numerical simulations suggest that for any fixed $0<a<1$ and $d>0$,  there exists $r_0>0$ sufficiently small such that
$$c^*_{LV}(b)>2\sqrt{1-a}\ \ \text{for all}\ \ b>1\ \text{if}\ r<r_0.$$
In Figure \ref{Figure00}, we consider \eqref{system} with $a=b=1/2$, $d=1$,
and the initial datum $(u_0,v_0)$ is taken as the same as the one in Figure~\ref{Figure0}.
%$v_0(x)\equiv2/3$ and $u_0(x)$ satisfying
%$$u_0(x)=1\ \ \text{for}\ \ x\le 10,\quad u_0(x)=0\ \ \text{for}\ \ x> 10.$$
Together with the comparison principle, %it indicates that
it suggests that the wave speed should be
nonlinearly selected for all $b>1/2$ when $a=1/2$, $d=1$ and $r=0.00001$.
\begin{figure}
\centering
\includegraphics[width=0.78\textwidth]{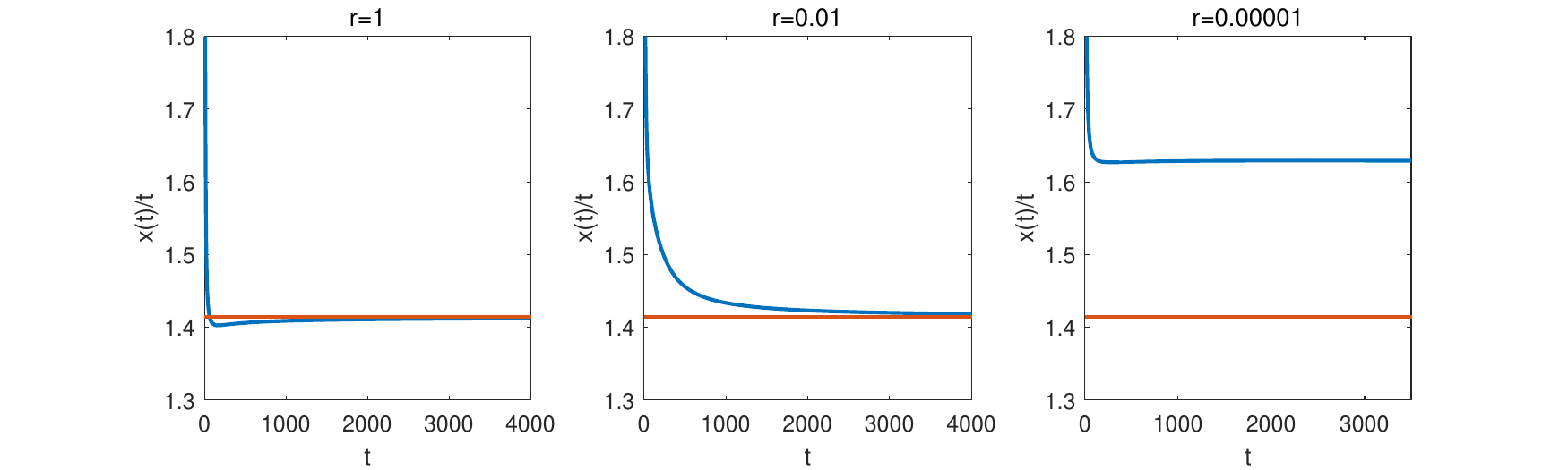}
\caption{
the blue curve represents the evolution of $x(t)/t$ on different $r$.} \label{Figure00}
\end{figure}
\end{remark}

%At the end of this paper, the Appendix section is devoted to the results for the existence of the minimal traveling wave speed of \eqref{system} and the asymptotic behavior of traveling wave at $\pm\infty$ under condition {\bf(H)}. It also includes proofs for results that have not been previously established in the literature.
\section{Threshold of the reaction-diffusion equation}\label{sec:threshold scalar}

In this section, we aim to prove Theorem~\ref{th: threshold scalar equation}.
%\bigskip
%\noindent{\bf Theorem \ref{th: threshold scalar equation}}
%{\it Assume that (A1)-(A5) hold. Then there exists the threshold value $s^*\in[s_1,s_2)$ such that the minimal traveling wave speed of \eqref{scalar tw-parameter s} satisfies
%\begin{equation*}%\label{def of threshold scalar}
%c^*(s)=2\quad\text{for all}\ s\in[0,s^*];\quad c^*(s)>2\quad\text{for all}\ s\in(s^*,\infty).
%\end{equation*}
%Moreover,
%the minimal traveling wave $W_s(\xi)$ satisfies %,it holds
%\begin{equation*}%\label{asy tw threshold scalar}
%W_s(\xi)=Be^{-\xi}+o(e^{-\xi})\quad \mbox{as}\quad \xi\to+\infty\quad\text{for some}\quad B>0,
%\end{equation*}
%with $B>0$ as $\xi\to+\infty$
%if and only if $s=s^*$.
%}
%\bigskip
First, it is well known that for each $s\geq0$, under the assumption  (A1), the minimal traveling wave is unique (up to a translation).
Together with the assumption (A2), one can use the standard compactness argument to conclude that $c^*(s)$ is continuous for all $s\ge 0$.
It follows from assumptions (A3)-(A5) and Remark~\ref{rk:a4+a5} that $c^*(s)$ is nondecreasing in $s$. Thus, we immediately obtain the following result.

\begin{lemma}\label{lem: th1-part1}
Assume that assumptions (A1)-(A5) hold. Then there exists a threshold $s^*\in[s_1,s_2)$ such that \eqref{def of threshold scalar} holds.
\end{lemma}

Thanks to Lemma~\ref{lem: th1-part1}, to prove Theorem~\ref{th: threshold scalar equation}, it suffices to show that
\eqref{asy tw threshold scalar} holds if and only if $s=s^*$.
Let $W_{s^*}$ be the minimal traveling wave satisfying \eqref{scalar tw-parameter s} with $s=s^*$
and $c^*(s^*)=2$. For simplicity, we denote $W_*:=W_{s^*}$.
The first and the most involved step is to show that if $s=s^*$, then \eqref{asy tw threshold scalar} holds.
%the second statement of Theorem \ref{th: threshold scalar equation}.
To do this, we shall use a contradiction argument.
Assume that \eqref{asy tw threshold scalar} is not true. Then, %by Lemma~\ref{lm: behavior around + infty}, we can find $A_0>0$ such that
it holds that (cf. \cite{Aronson Weinberger})
\bea\label{AS-U-infty-for-contradiction scalar}
\lim_{\xi\rightarrow+\infty}\frac{W_{*}(\xi)}{\xi e^{-\xi}}=A_0\quad \mbox{for some $A_0>0$.}
\eea

%Since $c^*(b^*)=2\sqrt{1-a}$ (Lemma~\ref{lm: existence b*}), we see that $\lambda_u>\lambda_v$ is equivalent to
%\beaa
%\frac{-2\sqrt{1-a}}{2}>\frac{-2\sqrt{1-a}-\sqrt{(2\sqrt{1-a}\,)^2+4rd}}{2d},
%\eeaa
%which implies that
%\begin{equation}\label{condition lambda_u>lambda_v}
%(d-2)(1-a)<r.
%\end{equation}
%Also, by Lemma~\ref{lm: behavior around + infty}, we have
%\bea\label{AS-V-infty-for-contradiction}
%\lim_{\xi\rightarrow+\infty}\frac{1-V_{*}(\xi)}{\xi^{} e^{\lambda_u\xi}}=C_0
%\eea
%for some $C_0>0$.

Under the condition \eqref{AS-U-infty-for-contradiction scalar},
we shall prove the following proposition.

\begin{proposition}\label{Prop-supersol}
Assume that assumptions (A1)-(A5) hold. In addition, if \eqref{AS-U-infty-for-contradiction scalar} holds,
then there exists an auxiliary continuous function $R_w(\xi)$ defined in $\mathbb{R}$ satisfying
\begin{equation}\label{decay rate of Rw}
R_w(\xi)=O(\xi e^{-\xi}) \quad \mbox{as $\xi\to\infty$},
\end{equation}
such that
$$\overline{W}(\xi):=\min\{W_{*}(\xi)-R_w(\xi),1\}\geq (\not\equiv) 0$$
is a super-solution
satisfying
\bea\label{tw super solution scalar}
&N_0[\overline W]:=\overline W''+2\overline W'+f(\overline W;s^*+\delta_0)\le0,\quad  \mbox{a.e. in $\mathbb{R}$},
\eea
%forms a super-solution  of \eqref{scalar tw-parameter s} with $s=s^*$.
%\begin{equation}\label{tw super solution scalar}
%\left\{
%\begin{aligned}
%&N_0[\overline W]:=\overline W''+2\beta\overline W'+f(\overline W;s^*+\delta_0)\le0,\quad  \mbox{a.e. in $\mathbb{R}$,}\\
%&\overline W(-\infty)=1,\ \overline W(+\infty)=0,\\
%&0\leq \overline W(\xi) \leq 1, \quad  \mbox{in $\mathbb{R}$,}\\
%&\overline W'(\xi)\leq 0,\quad   \mbox{a.e. in $\mathbb{R}$,}
%\end{aligned}
%\right.
%\end{equation}
for some small $\delta_0>0$,
%for all sufficiently small $\delta_0>0$,
where ${\overline W}'(\xi_0^{\pm})$ exists and
${\overline W}'(\xi_0^+)\leq {\overline W}'(\xi_0^-)$
if $\overline W'$ is not continuous at $\xi_0$.

\end{proposition}
%\blue{In general, we may call such $\overline W$ a {\em super-solution} of $N_0[W]=0$.}
%It is easy to see that such a $\overline{W}(\xi):=(W_{s^*}-R_w)(\xi)$ forms a super-solution  of \eqref{scalar tw-parameter s} with $s=s^*$.

Next, we shall go through a lengthy process to prove Proposition~\ref{Prop-supersol}.
Hereafter, assumptions (A1)-(A5) are always assumed.

%we now show that, for any sufficiently small $\sigma>0$, there exists an auxiliary function $R_w(\xi)$  with
%\begin{equation}\label{decay rate of Rw}
%R_w(\xi)=O(\xi e^{-\beta\xi}) \quad \mbox{as $\xi\to\infty$},
%\end{equation}
%such that $\overline{W}(\xi):=(W_*-R_w)(\xi)$ forms a super-solution satisfying
%\begin{equation}\label{tw super solution scalar}
%\left\{
%\begin{aligned}
%&N_0[\overline W]:=\overline W''+(2\beta+\sigma)\overline W'+f(\overline W;s^*+\delta_0)\le0,\\
%&\overline W(-\infty)=1,\ \overline W(+\infty)=0,\ \overline W'< 0,
%\end{aligned}
%\right.
%\end{equation}
%for \red{some small $\sigma>0$} and some sufficiently small $\delta_0>0$ which is independent of $\sigma$.

%%In order to state of our argument,
%\red{The construction of $R_w$ depends on the sign of $f''(0;s^*)0$; so
%we divide %the shapes of $f(w;s^*)$ around $0$
%our discussion into three cases: %(1) $f''(0;s^*)>0$; (2) $f''(0;s^*)<0$; (3) $f''(0;s^*)=0$.
%\bea
%\mbox{Case 1: $f''(0;s^*)>0$\quad  Case 2: $f''(0;s^*)>0$\quad  Case 3: $f''(0;s^*)=0$.}
%\eea
%}

From the assumption (A1), by shifting the coordinates, we can immediately obtain
the following lemma.
%%a lemma as follows.

\begin{lemma}\label{lm: divide R to 3 parts}
%%Let $M>0$ be an arbitrarily large constant and
Let $\nu_{1}>0$ be %some
an arbitrary constant. Then there exist
$$-\infty<\xi_2<0<\xi_1<+\infty\ \ \text{with}\ \ |\xi_1|,|\xi_2|\ \text{very large},$$
such that
the following hold:
\begin{itemize}
    \item[(1)]
    $\dps f(W_{*}(\xi);s^*)=W_{*}(\xi)+\frac{f''(0;s^*)}{2}W^2_{*}(\xi)+o(W^2_{*}(\xi))$ for all $\xi\in[\xi_1,\infty)$;
    \item[(2)]  $f'(W_{*}(\xi);s^*)<0$ for all $\xi\in(-\infty,\xi_2]$.
\end{itemize}
\end{lemma}

\subsection{Construction of the super-solution}\label{subsec-2-1}

\noindent

Let us define $R_w(\xi)$ as (see Figure \ref{Figure scalar})
\begin{equation}\label{definition of Rw}
R_w(\xi)=\begin{cases}
\varepsilon_1\sigma(\xi) e^{-\xi},&\ \ \mbox{for}\ \ \xi\ge\xi_{1}+\delta_1,\\
\varepsilon_2 e^{\lambda_1\xi},&\ \ \mbox{for}\ \ \xi_{2}+\delta_2\le\xi\le\xi_{1}+\delta_1,\\
\varepsilon_3\sin(\delta_4(\xi-\xi_2)),&\ \ \mbox{for}\ \ \xi_2-\delta_3\le\xi\le\xi_2+\delta_2,\\
-\varepsilon_4e^{\lambda_2\xi},&\ \ \mbox{for}\ \ \xi\le\xi_2-\delta_3,
\end{cases}
\end{equation}
where $\delta_{i=1,\cdots,4}>0$, $\lambda_{n=1,2}>0$, and $\sigma(\xi)>0$ will be
determined such that $\overline W(\xi)$ satisfies \eqref{decay rate of Rw} and \eqref{tw super solution scalar}. Moreover, we should choose
positive $\varepsilon_{j=1,\cdots,4}\ll A_0$ ($A_0$ is defined in \eqref{AS-U-infty-for-contradiction scalar}) such that $R_w(\xi)\ll W_*(\xi)$ and $\overline W(\xi)$ is continuous for all $\xi\in\mathbb{R}$.

Since $f(\cdot;s^*)\in C^2$, there exist $K_1>0$ and $K_2>0$ such that
\bea\label{K1K2}
|f''(W_*(\xi);s^*)|< K_1,\quad |f'(W_*(\xi);s^*)|< K_2\quad \mbox{for all}\quad\xi\in\mathbb{R}.
\eea
%$|f''(W_*(\xi);s^*)|< K_1$ and
%$|f'(W_*(\xi);s^*)|< K_2$ for all $\xi\in\mathbb{R}$.
We set $\lambda_1>0$ %($L_0$ is defined in (A2))
large enough such that
\begin{equation}\label{condition on lambda 1}
-2\lambda_1-\lambda_1^2+K_2<0\ \text{and}\ \lambda_1>K_2.
\end{equation}
Furthermore, there exists $K_3>0$ such that
\begin{equation}\label{condition on xi_2 scalar}
f'(W_*(\xi); s^*)\le -K_3<0\quad\text{for all}\quad\xi\le \xi_2.
\end{equation}
We set
$$0<\lambda_2<\lambda_w:=\sqrt{1-f'(1;s^*)}-1$$
sufficiently small
such that
\bea\label{lambda 2}
\lambda_2^2+2\lambda_2-K_3<0.
\eea

%Since $W_*(+\infty)=0$ and $W_*(-\infty)=1$, for any given small $\rho>0$, there exists $M>0$ large enough
%such that
%\begin{equation}\label{rho}
%\begin{cases}
%0< W_*(\xi)<\rho\quad\text{for all}\quad\xi\geq M\\
%1-\rho<W_*(\xi)<1\quad\text{for all}\quad\xi\geq -M.
%\end{cases}
%\end{equation}
%Moreover, up to enlarging $\xi_1$, we may assume $\xi_1>M$.

\begin{figure}
\begin{center}
\begin{tikzpicture}[scale = 1.1]
\draw[thick](-6,0) -- (7,0) node[right] {$\xi$};
\draw [semithick] (-6, -0.1) to [ out=0, in=150] (-4,-0.4) to [out=30, in=230] (-3.5,0)  to [out= 40, in=180] (-3,0.2) to [out=15, in=220] (1,1.6) to [out=70,in=190] (1.5,2) to [out=0,in=170] (7,0.3);
\node[below] at (1,1.2) {$\xi_{1}+\delta_1$};
%\node[below] at (0.8,0) {$0$};
\draw[dashed] [thick] (-3.5,0)-- (-3.5,0.3);
\node[above] at (-3.5,0.3) {$\xi_2$};
\draw[dashed] [thick] (-3,0.2)-- (-3,-0.5);
\draw[dashed] [thick] (-4,0)-- (-4,1);
\node[above] at (-4,0.8) {$\xi_2-\delta_3$};
\draw[dashed] [thick] (1,0)-- (1,0.65);
\draw[dashed] [thick] (1,1.15)-- (1,1.6);
\node[below] at (-3,-0.4) {$\xi_2+\delta_2$};
\draw [thin] (-3.85,-0.31) arc [radius=0.2, start angle=40, end angle= 140];
\node[above] at (-4,-0.36) {$\alpha_3$};
%\draw [thin] (-2.3,0.355) arc [radius=0.2, start angle=30, end angle= 175];
%\node[above] at (-2.5,0.35) {$\alpha_4$};
\draw [thin] (-2.8,0.255) arc [radius=0.2, start angle=30, end angle= 175];
\node[above] at (-3,0.3) {$\alpha_2$};
\draw [thin] (1.1,1.8) arc [radius=0.2, start angle=70, end angle= 220];
\node[above] at (1,1.8) {$\alpha_1$};
%\node[above] at (2.5,0.5) {\Large{$R_w$}};
%\node[below] at (1,-1.3) {Figure \ref{Figure scalar}: the construction of $R_w(\xi)$.};
\end{tikzpicture}
\caption{the construction of $R_w(\xi)$.}\label{Figure scalar}
\end{center}
\end{figure}

%Next, we will show that there exist positive $\mu_{n=1,2,3}$, $\delta_{i=0,\cdots,6}$, $\varepsilon_{j=1,\cdots,6}$ and %$\eta_{k=1,\cdots,6}$ such that
%$(W_u,W_v)$ satisfies \eqref{tw super solution case 2}.

\medskip

We now divide the proof into several steps.

\noindent{\bf{Step 1}:} We consider $\xi\in[\xi_1+\delta_1,\infty)$ where $\delta_1>0$ is small enough and will be determined in Step 2.
In this case, we have
\beaa
R_w(\xi)=\varepsilon_1\sigma(\xi)\ e^{-\xi}
\eeaa
for some small $\varepsilon_1\ll A_0$
such that $\overline W=W_*-R_w>0$ for $\xi\geq \xi_1+\delta_1$.

Note that $W_*$ satisfies \eqref{scalar tw-parameter s} with $c=2$.
By some  straightforward computations, we have
\begin{equation}\label{N0 inequality step 1}
\begin{aligned}
N_0[\overline W]=&-R''_w-2 R'_w-f(W_*;s^*)+f(W_*-R_w;s^*+\delta_0)\\
=&-R''_w-2 R'_w-f(W_*;s^*)+f(W_*-R_w;s^*)\\
&-f(W_*-R_w;s^*)+f(W_*-R_w;s^*+\delta_0).
\end{aligned}
\end{equation}
By the assumption (A1) and the statement (1) of Lemma \ref{lm: divide R to 3 parts}, since $W_*\ll 1$ and $R_w\ll W_*$ for $\xi\in[\xi_1+\delta_1,\infty)$, we have
\begin{equation}\label{estimate 1}
-f(W_*;s^*)+f(W_*-R_w;s^*)= -R_w+f''(0;s^*)(\frac{R^2_w}{2}-W_*R_w)+o((W_*)^2).
\end{equation}
By the assumption (A2) and the statement (1) of Lemma \ref{lm: divide R to 3 parts}, there exists $C_1>0$ such that
\begin{equation}\label{estimate 2}
-f(W_*-R_w;s^*)+f(W_*-R_w;s^*+\delta_0)\le C_1\delta_0(W_*-R_w)^2+o((W_*)^2).
\end{equation}
From \eqref{K1K2}, \eqref{N0 inequality step 1}, \eqref{estimate 1}, \eqref{estimate 2}, % and Lemma \ref{lm: divide R to 3 parts},
we have
\bea\label{estimate 3}
N_0[\overline W]\le -\varepsilon_1\sigma''e^{-\xi}+K_1(\frac{R_w^2}{2}+W_*R_w)+C_1\delta_0W_*^2+o((W_*)^2).
\eea

Now, we define
$$\sigma(\xi):=4e^{-\frac{1}{2}(\xi-\xi_1)}-4+4\xi-4\xi_1$$
which satisfies
$$\sigma(\xi_1)=0,\ \sigma'(\xi)=4-2e^{-\frac{1}{2}(\xi-\xi_1)},\ \sigma''(\xi)=e^{-\frac{1}{2}(\xi-\xi_1)}.$$
Moreover, $\sigma(\xi)= O(\xi)$ as $\xi\to\infty$ implies that $R_w$ satisfies \eqref{decay rate of Rw}.

Due to \eqref{AS-U-infty-for-contradiction scalar} and the equation of $W_*$, we may also assume
\bea\label{W-M}
W_*(\xi) \leq 2A_0\xi e^{-\xi}\quad \mbox{for all}\quad\xi\geq \xi_1.
\eea
Then, from \eqref{estimate 3}, up to enlarging $\xi_1$ if necessary, we always have %can find a small $\delta_0>0$ such that
\beaa
N_0[\overline W]&\le &-\varepsilon_1e^{-\frac{1}{2}(\xi-\xi_1)}e^{-\xi}+K_1(\frac{R_w^2}{2}+W_*R_w)+C_1\delta_0W_*^2+o((W_*)^2)\\
&\le& -\frac{\varepsilon_1}{2}e^{-\frac{1}{2}(\xi-\xi_1)}e^{-\xi}+C_1\delta_0W_*^2\\
\eeaa
for any $\delta_0>0$
since $R_w^2(\xi)$, $W_*R_w(\xi)$, and  $W_*^2(\xi)$ are $o(e^{-\frac{3}{2}\xi})$
%for $\xi\ge \xi_1$
as $\xi\to\infty$
by \eqref{W-M} and the definition of $R_w$.
Consequently, we find some $\delta_0(\varepsilon_1)\ll \varepsilon_1$, not depending on $\xi_1$ such that $N_0[\overline W]\leq 0$ for $\xi\ge \xi_1$.

\medskip

\noindent{\bf{Step 2:}} We consider $\xi\in[\xi_2+\delta_2,\xi_1+\delta_1]$ for $\xi_1+\delta_1$ fixed by Step 1, and small $\delta_1>0$ satisfying
\bea\label{cond delta 1 scalar}
1+3(1-e^{-\frac{\delta_1}{2}})-2\delta_1>0.
\eea
In this case, we have $R_w(\xi)=\varepsilon_2 e^{\lambda_1\xi}$
for some large $\lambda_1>0$ satisfying \eqref{condition on lambda 1}.
Note that, $\xi_1$ is decided in Step 1, and it is easy to check that $R'_w((\xi_1+\delta_1)^+)>0$ under the condition \eqref{cond delta 1 scalar}.

We first choose
\begin{equation}\label{condition on epsilon 2-RD}
\varepsilon_2=\varepsilon_2(\varepsilon_1,\delta_1)=\varepsilon_1\Big(4e^{-\frac{\delta_1}{2}}-4+4\delta_1\Big)e^{-(1+\lambda_1)(\xi_1+\delta_1)}
\end{equation}
such that $R_w(\xi)$ is continuous at $\xi=\xi_{1}+\delta_1$. Then, from \eqref{condition on epsilon 2-RD}, we have
$$R'_w((\xi_1+\delta_1)^{+})=\varepsilon_1\sigma'(\xi_1+\delta_1)e^{-(\xi_1+\delta_1)}- R_w(\xi_1+\delta_1)>R'_w((\xi_1+\delta_1)^{-})=\lambda_1R_w(\xi_1+\delta_1)$$
is equivalent to
\begin{equation}\label{d2}
1+(3+2\lambda_1)(1-e^{-\frac{\delta_1}{2}})>2(1+\lambda_1)\delta_1,
\end{equation}
which holds by taking $\delta_1$ sufficiently small.
This implies that $\angle\alpha_1<180^{\circ}$.

From now on, $\delta_1$ satisfying \eqref{cond delta 1 scalar} and \eqref{d2} is fixed. By some  straightforward computations, we have
\begin{equation*}
\begin{aligned}
N_0[\overline W]=&-(2\lambda_1+\lambda_1^2)R_w-f(W_*;s^*)+f(W_*-R_w;s^*+\delta_0)\\
=&-(2\lambda_1+\lambda_1^2)R_w-f(W_*;s^*)+f(W_*-R_w;s^*)\\
&-f(W_*-R_w;s^*)+f(W_*-R_w;s^*+\delta_0).
\end{aligned}
\end{equation*}
%For $\xi\in[\xi_2+\delta_2,\xi_1+\delta_1]$,
Thanks to \eqref{K1K2}, we have
$$-f(W_*;s^*)+f(W_*-R_w;s^*)< K_2R_w.$$
Moreover, by assumption (A2), %there exists $C_2>0$ such that
$$-f(W_*-R_w;s^*)+f(W_*-R_w;s^*+\delta_0)\le L_0\delta_0.$$
Then, %by setting
since
$\lambda_1$ satisfies \eqref{condition on lambda 1}, we have
\begin{equation}\label{d1}
L_0\delta_0<\varepsilon_2(\lambda_1^2+2\lambda_1-K_2) e^{\lambda_1(\xi_2+\delta_2)}
\end{equation}
for all sufficiently small $\delta_0>0$. Note that, since $\delta_1$ is fixed by the discussion in Step 2.
As a result, in Step 2, we find some $\delta_0(\varepsilon_1,\xi_2+\delta_2)$ such that $N_0[\overline W]\le 0$ for all $\xi\in[\xi_2+\delta_2,\xi_1+\delta_1]$.

\medskip

\noindent{\bf{Step 3:}} We consider $\xi\in[\xi_2-\delta_3,\xi_2+\delta_2]$ with $\xi_2+\delta_2$ fixed by Step 2 and some small $\delta_2,\delta_3>0$. $\delta_2$ is determined in Step 3, and $\delta_3$ will be determined in Step 4. In this case, $R_w(\xi)=\varepsilon_3\sin(\delta_4(\xi-\xi_2))$.
We first verify the following Claim.
\begin{claim}\label{cl scalar}
For any $\delta_2$ with $\delta_2>\frac{1}{\lambda_1}$, there exist
$\varepsilon_3>0$ and small $\delta_4>0$
such that
$$R_w((\xi_2+\delta_2)^+)=R_w((\xi_2+\delta_2)^-)$$
and $\angle\alpha_2<180^{\circ}$.
\end{claim}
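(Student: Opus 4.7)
The plan is to turn both conclusions of the claim into explicit constraints on the two free parameters $\varepsilon_3$ and $\delta_3$, and then to verify that the hypothesis $\delta_1 > 1/\lambda_1$ is exactly what makes these constraints simultaneously solvable. All the other parameters $\varepsilon_2$, $\lambda_1$, $\xi_2$, $\delta_1$ are already fixed from Steps 1--2, so the problem really has only two unknowns.

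First I would read off continuity at $\xi_2 + \delta_1$: the single equation
\[
\varepsilon_3 \sin(\delta_3 \delta_1) = \varepsilon_2 e^{\lambda_1(\xi_2+\delta_1)}
\]
prescribes $\varepsilon_3 = \varepsilon_3(\delta_3) > 0$ uniquely for every $\delta_3 \in (0, \pi/(2\delta_1))$. Next, following the convention already used for $\angle \alpha_1$ in Step 2 (where $\angle \alpha_1 < 180^\circ$ was rewritten as the derivative-jump condition $\overline{\mathcal{W}}'(\xi_1^+) < \overline{\mathcal{W}}'(\xi_1^-)$ needed for the Dirac mass contributed by $(c^*-\delta_0)\overline{\mathcal{W}}'$ to be non-positive), I would translate $\angle \alpha_2 < 180^\circ$ at the corner of $\overline{\mathcal{W}} = \mathcal{W}_* - \mathcal{R}_w$ into $\mathcal{R}_w'((\xi_2+\delta_1)^+) > \mathcal{R}_w'((\xi_2+\delta_1)^-)$; noting that $\mathcal{W}_*$ is smooth at this point, this is exactly
\[
\lambda_1 \varepsilon_2 e^{\lambda_1(\xi_2+\delta_1)} > \varepsilon_3 \delta_3 \cos(\delta_3 \delta_1).
\]
Substituting the continuity relation from the previous step, the $\varepsilon$'s cancel and the condition collapses to the single scalar inequality $\lambda_1 \tan(\delta_3 \delta_1) > \delta_3$.

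Finally I would verify this by Taylor expansion: $\tan(\delta_3 \delta_1) = \delta_3 \delta_1 + O(\delta_3^3)$, so after dividing by $\delta_3 > 0$ it becomes $\lambda_1 \delta_1 > 1 + O(\delta_3^2)$. Since the hypothesis gives exactly $\lambda_1 \delta_1 > 1$, the inequality is strict for all sufficiently small $\delta_3 > 0$; choosing any such $\delta_3$ together with the associated $\varepsilon_3$ from the first step proves the claim.

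I do not expect a genuine obstacle here. The claim is essentially a bookkeeping lemma whose sole role is to document that the threshold $\delta_1 > 1/\lambda_1$ imposed on the sine width in Step 2 is precisely the borderline value that permits the sinusoidal bridge to meet the exponential piece $\varepsilon_2 e^{\lambda_1 \xi}$ with the correct corner geometry; the Taylor expansion above makes the quantitative match transparent, and standard monotonicity arguments in $\delta_3$ could tighten it to an explicit upper bound on $\delta_3$ if that became useful later.
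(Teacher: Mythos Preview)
Your proof is correct and follows essentially the same route as the paper's: fix $\varepsilon_3$ via the continuity equation, reduce the angle condition to the derivative-jump inequality $R_w'((\xi_2+\delta_1)^+)>R_w'((\xi_2+\delta_1)^-)$, and then take $\delta_3\to 0$ so that the inequality becomes $\lambda_1\delta_1>1$. The only cosmetic difference is that you phrase the limit via the Taylor expansion of $\tan(\delta_3\delta_1)$, whereas the paper uses the equivalent fact $\dfrac{\delta_3\cos(\delta_3\delta_1)}{\sin(\delta_3\delta_1)}\to 1/\delta_1$.
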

\begin{proof}
Note that
$$R_w((\xi_2+\delta_2)^+)=\varepsilon_2e^{\lambda_1(\xi_2+\delta_2)},\ R_w((\xi_2+\delta_2)^-)=\varepsilon_3\sin(\delta_4\delta_2).$$
Therefore, by \eqref{condition on epsilon 2-RD}, we may take
\bea\label{epsilon 3}
\varepsilon_3= \varepsilon_3(\varepsilon_1,\delta_2,\delta_4)=\frac{\varepsilon_2e^{\lambda_1(\xi_2+\delta_2)}}{\sin(\delta_4\delta_2)}=\varepsilon_1\Big(4e^{-\frac{\delta_1}{2}}-4+4\delta_1\Big)\frac{e^{\lambda_1(\xi_2+\delta_2)-(1+\lambda_1)(\xi_1+\delta_1)}}{\sin(\delta_4\delta_2)}>0
\eea
such that $R_w((\xi_2+\delta_2)^+)=R_w((\xi_2+\delta_2)^-)$.

By some  straightforward computations, we have
$R'_w((\xi_2+\delta_2)^+)=\lambda_1\varepsilon_2e^{\lambda_1(\xi_2+\delta_2)}$ and
\beaa
R'_w((\xi_2+\delta_2)^-)=\varepsilon_3\delta_4\cos(\delta_4\delta_2)=\frac{\varepsilon_2e^{\lambda_1(\xi_2+\delta_2)}}{\sin(\delta_4\delta_2)}\delta_4\cos(\delta_4\delta_2),
\eeaa
which yields that
$$R'_w((\xi_2+\delta_2)^-)\rightarrow \varepsilon_2e^{\lambda_1(\xi_2+\delta_2)}/\delta_2\ \ \text{as}\ \ \delta_4\to0.$$
In other words, as $\delta_4\to0$,
\bea\label{delta 2}
R'_w((\xi_2+\delta_2)^+)>R'_w((\xi_2+\delta_2)^-)\ \ \text{is equivalent to}\ \ \delta_2>\frac{1}{\lambda_1}.
\eea
Therefore, we can choose $\delta_4>0$ sufficiently small so that $\angle\alpha_2<180^{\circ}$.
This completes the proof of Claim~\ref{cl scalar}.
\end{proof}

Next, we verify the differential inequality of $N_0[\overline W]$ for $\xi\in[\xi_2-\delta_3,\xi_2+\delta_2]$. By some  straightforward computations, we have
\begin{equation*}
\begin{aligned}
N_0[\overline W]=&\delta^2_4R_w-2\varepsilon_3\delta_4\cos(\delta_4(\xi-\xi_2))\\
&-f(W_*;s^*)+f(W_*-R_w;s^*)-f(W_*-R_w;s^*)+f(W_*-R_w;s^*+\delta_0).
\end{aligned}
\end{equation*}
The same argument as in Step 2 implies that
$$-f(W_*;s^*)+f(W_*-R_w;s^*)\le K_2R_w\ \ \text{and}\ \ -f(W_*-R_w;s^*)+f(W_*-R_w;s^*+\delta_0)\le L_0\delta_0,$$
which yields that
\beaa
N_0[\overline W]\leq\delta^2_4R_w-2\varepsilon_3\delta_4\cos(\delta_4(\xi-\xi_2))+K_2R_w+L_0\delta_0.
\eeaa
We first focus on $\xi\in[\xi_2,\xi_2+\delta_2]$. From now on, we fix 
$\delta_2\in(1/\lambda_1,1/K_2)$.
Then, by \eqref{epsilon 3}, \eqref{delta 2}, and the definition of $\lambda_1$ (see \eqref{condition on lambda 1}), 
\beaa
\min_{\xi\in[\xi_2,\xi_2+\delta_2]}\delta_4\varepsilon_3\cos(\delta_4(\xi-\xi_2))\to\frac{\varepsilon_2e^{\lambda_1(\xi_2+\delta_2)}}{\delta_2}
=\frac{R_w(\xi_2+\delta_2)}{\delta_2}> K_2R_w(\xi_2+\delta_2)
\quad\text{as}\ \delta_4\to0.
\eeaa
%We take $\delta_2<1/\lambda_1$ which is arbitrarily small since \eqref{condition on lambda 1}.
Thus, we have
\beaa%\label{Rv-estimate}
\min_{\xi\in[\xi_2,\xi_2+\delta_2]}\Big[\delta_4\varepsilon_3\cos(\delta_4(\xi-\xi_2))-(K_2+\delta_4^2)R_w(\xi)\Big]>0,
\eeaa
for all sufficiently small $\delta_4>0$.
%by taking $\delta_4$ sufficiently small.
Then, for all sufficiently small $\delta_0(\varepsilon_1,\delta_2,\delta_4)>0$, %sufficiently small,
we see that $N_0[\overline W]\le 0$ on the bounded interval $\xi\in[\xi_2,\xi_2+\delta_2]$.

For $\xi\in[\xi_2-\delta_3,\xi_2]$,
by setting $\delta_3>0$ small enough,
$N_0[\overline W]\le 0$ can be verified easier by the same argument since $R_w<0$. 
As a result, we find some $\delta_0(\varepsilon_1)>0$ such that $N_0[\overline W]\le 0$ for $\xi\in[\xi_2-\delta_3,\xi_2+\delta_2]$, by recalling that $\delta_2,\delta_3, \delta_4$ are fixed by the discussion in Step 3.

\medskip

\noindent{\bf{Step 4:}} We consider $\xi\in(-\infty,\xi_2-\delta_3]$ with $\xi_2-\delta_3$ determined in Step 3. In this case, we have $R_w(\xi)=-\varepsilon_4e^{\lambda_2\xi}<0$, and $\delta_3$ is decided in Step 3.
Recall that we choose $0<\lambda_2<\lambda_w$
and
$$1-W_*(\xi)\sim C_2 e^{\lambda_{w}\xi}\ \ \text{as}\ \ \xi\to-\infty.$$
Then, there exists $M>0$ such that
$$\overline W:=\min\{W_*-R_w,1\}\equiv 1\ \ \text{for all} \ \ \xi\le -M,$$
and thus $N_0[\overline W]\le 0$ for all $\xi\le -M$. Therefore, we only need to show
$$N_0[\overline W]\le 0\ \ \text{for all}\ \ -M\le\xi\le \xi_2-\delta_3.$$

From now on, we fix $\xi_2\le \delta_3-M$. Then, by \eqref{epsilon 3}, we choose
$$\varepsilon_4=\varepsilon_4(\varepsilon_1)=\varepsilon_3\frac{\sin(\delta_4\delta_3)}{e^{\lambda_2(\xi_2-\delta_3)}}=\varepsilon_1\Big(4e^{-\frac{\delta_1}{2}}-4+4\delta_1\Big)\frac{e^{\lambda_1(\xi_2+\delta_2)-(1+\lambda_1)(\xi_1+\delta_1)}\sin(\delta_4\delta_3)}{e^{\lambda_2(\xi_2-\delta_3)}\sin(\delta_4\delta_2)}$$
such that $R_w$ is continuous at $\xi_2-\delta_3$. It is easy to check that
$$R'_w((\xi_2-\delta_3)^+)>0>R'_w((\xi_2-\delta_3)^-),$$
 and hence $\angle\alpha_3< 180^{\circ}$.

By some  straightforward computations, we have
\begin{equation*}
\begin{aligned}
N_0[\overline W]=&-(\lambda_2^2+2\lambda_2)R_w-f(W_*;s^*)+f(W_*-R_w;s^*+\delta_0)\\
=&-(\lambda_2^2+2\lambda_2)R_w-f(W_*;s^*)+f(W_*-R_w;s^*)\\
&-f(W_*-R_w;s^*)+f(W_*-R_w;s^*+\delta_0).
\end{aligned}
\end{equation*}
From \eqref{condition on xi_2 scalar}, we have
$$-f(W_*;s^*)+f(W_*-R_w;s^*)< K_3R_w<0.$$
Together with the assumption  (A2), we have
\beaa
N_0[\overline W]\leq -(\lambda_2^2+2\lambda_2-K_3)R_w+L_0\delta_0\quad\text{for all}\quad \xi\in[-M,\xi_2-\delta_3].
\eeaa

In view of  \eqref{lambda 2},
we can assert that
$$N_0[\overline W]\le 0\ \ \text{for all}\ \ \xi\in[-M,\xi_2-\delta_3],$$
 provided that $\delta_0$ is sufficiently small. 
 We note that, from Step 1 to Step 4, the choice of 
$\delta_0$ depends on several parameters. However, all of them, except for
$\varepsilon_1$, are fixed throughout construction. Hence, in the end, it suffices to choose $\delta_0\ll \varepsilon_1$. 
This completes Step 4, and thereby the entire construction of $R_w$.
%from \eqref{lambda 2}, we have
%$$-(\lambda_2^2+2\beta\lambda_2)R_w-f(W_*;s^*)+f(W_*-R_w;s^*+\delta_0)\le -C_3 R_w$$
%with some $C_3>0$.
%Then, from (A2), we can assert that $N_0[\overline W]\le 0$ for all $\xi\in[-M,\xi_2-\delta_3]$, provided that $\delta_0$ is sufficiently small.

\subsection{Proof of Theorem \ref{th: threshold scalar equation}}

\noindent

We first complete the proof of Proposition~\ref{Prop-supersol}.
\begin{proof}[Proof of Proposition~\ref{Prop-supersol}]
From the discussion from Step 1 to Step 4 in \S \ref{subsec-2-1},
we are now equipped with a suitable function $R_w(\xi)$ %well defined for all
defined as in \eqref{definition of Rw} such that
$$\overline W (\xi)=\min \{W_*(\xi)-R_w(\xi),1\},$$
 which is independent of the choice of all sufficiently small $\delta_0>0$, forms
a super-solution satisfying \eqref{tw super solution scalar}.
%\red{For any given sufficiently small $\delta_0>0$}
%we constructed a suitable $R_w(\xi)$ %well defined for all
%defined as \eqref{definition of Rw} such that $\overline W (\xi)=\min \{W_*(\xi)-R_w(\xi),1\}$ forms
%a super-solution satisfying \eqref{tw super solution scalar} with $\sigma>0$ independent on $\delta_0$.
%%Moreover, by setting $|R_w|$ very small, we have $\overline W'(\xi)>0$ for all $\xi\in\mathbb{R}$.
%Finally, we have $\overline W'(\xi)\leq0$ for all $\xi\in\mathbb{R}$ expect finitely many points, if necessary, we may replace $R_w$ by $\varepsilon R_w$ for some small $\varepsilon>0$.
Therefore, we complete the proof of Proposition~\ref{Prop-supersol}.
\end{proof}

Now, we are ready to prove Theorem~\ref{th: threshold scalar equation} as follows.

\begin{proof}[Proof of Theorem~\ref{th: threshold scalar equation}]
%As a result, we have a super-solution $\overline W$ and $\overline W'\le 0$ for all $\xi\in\mathbb{R}$.
In view of Lemma~\ref{lem: th1-part1}, we have obtained \eqref{def of threshold scalar}.
It suffices to show that \eqref{asy tw threshold scalar} holds if and only if $s=s^*$.
First, we show that
\bea\label{th1:goal-1}
s=s^* \quad \Longrightarrow \quad \mbox{\eqref{asy tw threshold scalar} holds}.
\eea
Suppose that \eqref{asy tw threshold scalar} does not hold. Then $W_*$ satisfies \eqref{AS-U-infty-for-contradiction scalar}. In view of Proposition~\ref{Prop-supersol}, we can
choose $\delta_0>0$ sufficiently small such that
$$\overline W(\xi)= \min\{W_*(\xi)-R_w(\xi),1\}\geq(\not\equiv) 0$$
satisfies \eqref{tw super solution scalar}. Next,
we consider the following Cauchy problem with compactly supported initial datum $0\le w_0(x)\leq \overline W(x)$:
\begin{equation}\label{scalar cauchy problem}
\left\{
\begin{aligned}
&w_t=w_{xx}+f(w;s^*+\delta_0),\ t\ge 0,\ x\in\mathbb{R},\\
&w(0,x)=w_0(x),\ x\in\mathbb{R}.
\end{aligned}
\right.
\end{equation}
Then, in view of \eqref{def of threshold scalar}, we see that $c^*(s^*+\delta_0)>2$ (the minimal speed is nonlinearly selected). Therefore, we can apply Theorem 2 of \cite{Rothe1981}
to conclude that the spreading speed of the Cauchy problem \eqref{scalar cauchy problem} is strictly greater than $2$.

On the other hand, we
define $\overline{w}(t,x):=\overline W(x-2 t)$, and hence
$$\overline{w}(0,x)=\overline W(x)\geq w_0(x)\ \ \text{for all}\ \ x\in\mathbb{R}.$$
Since $\overline W$ satisfies \eqref{tw super solution scalar},
$\overline{w}$ forms a super-solution of \eqref{scalar cauchy problem}.
This immediately implies that the spreading speed of the solution, %\eqref{scalar cauchy problem}
namely $w(t,x)$, of \eqref{scalar cauchy problem} is slower than or equal to $2$, due to the comparison principle.
%\red{However,
%in view of \eqref{def of threshold scalar} and Theorem 2 of \cite{Rothe1981}, the spreading speed is strictly greater that $2\beta$.
This contradicts the spreading speed of the Cauchy problem \eqref{scalar cauchy problem}, which is strictly greater than $2$.
Thus, we obtain \eqref{th1:goal-1}.

%It is easily to check that $\overline W(x-2\beta t-C)$ with some large $C>0$ is a super-solution of \eqref{scalar cauchy problem} and satisfying $\overline W(x-C)\ge w_0(x)$. This immediately implies that the spreading speed of \eqref{scalar cauchy problem}  is slower than or equal to $2\beta$. This contradicts to the definition of the threshold $s^*$. The proof of statement (2) of Theorem \ref{th: threshold scalar equation} is complete.

%Next, we prove statement (1) of Theorem \ref{th: threshold scalar equation}.
Finally, we prove that
\bea\label{th1:goal-2}
\mbox{\eqref{asy tw threshold scalar} holds} \quad \Longrightarrow \quad s=s^*.
\eea
Note that for $s>s^*$, from \eqref{def of threshold scalar} we see that $c^{*}(s)>2$; so the asymptotic behavior of $W_s$ at $\xi\approx+\infty$ in Proposition \ref{prop: classification scalar} implies that
\eqref{asy tw threshold scalar} does not hold for any $s>s^*$. Therefore,
we only need to show that if $s<s^*$, then \eqref{asy tw threshold scalar} does not hold.
We assume by contradiction that there exists $s_0\in(0,s^*)$ such that
the corresponding minimal traveling wave satisfies
\bea\label{W-S0+infty}
W_{s_0}(\xi)=B_0 e^{-\xi}+o(e^{-\xi})\quad\text{as}\quad \xi\to+\infty
\eea
for some $B_0>0$. For $\xi\approx -\infty$, we have
\bea\label{W-S0-infty}
1-W_{s_0}(\xi)=C_0 e^{\hat\lambda\xi}+o(e^{\hat\lambda\xi})\quad\text{as}\quad \xi\to-\infty
\eea
for some $C_0>0$, where $\hat\lambda:=\sqrt{1-f'(1;s_0)}-1$.
Recall that the asymptotic behavior of $W_s^*$ at $\pm\infty$ satisfies
\begin{equation}\label{W-S*-pm-infty}
\begin{aligned}
W_{s^*}(\xi)=B e^{-\xi}+o(e^{-\xi})\ \text{as}\ \xi\to+\infty,\\
1-W_{s^*}(\xi)=C e^{\lambda_w\xi}+o(e^{\lambda_w\xi})\ \text{as}\ \xi\to-\infty,
\end{aligned}
\end{equation}
for some $B,C>0$, where $\lambda_w=\sqrt{1-f'(1;s^*)}-1$. In view of the assumption (A3), we have $\lambda_w> \hat\lambda$.
Combining \eqref{W-S0+infty}, \eqref{W-S0-infty}, and \eqref{W-S*-pm-infty}, there exists $L>0$ sufficiently large such that
$$W_{s^*}(\xi-L)> W_{s_0}(\xi)\ \ \text{for all}\ \ \xi\in\mathbb{R}.$$
Now, we define
\beaa
L^*:=\inf\{L\in\mathbb{R}\ |\ W_{s^*}(\xi-L)\ge W_{s_0}(\xi)\ \text{for all}\ \xi\in\mathbb{R}\}.
\eeaa
By the continuity, we have
$$W_{s^*}(\xi-L^*)\geq W_{s_0}(\xi)\ \ \text{for all}\ \ \xi\in\mathbb{R}.$$
 If there exists $\xi^*\in\mathbb{R}$ such that
$W_{s^*}(\xi^*-L^*)= W_{s_0}(\xi^*)$, by the strong maximum principle, we have $W_{s^*}(\xi-L^*)=W_{s_0}(\xi)$ for $\xi\in\mathbb{R}$,
which is impossible since $W_{s^*}(\cdot-L^*)$ and $W_{s_0}(\cdot)$ satisfy different equations. Consequently,
$$W_{s^*}(\xi-L^*)> W_{s_0}(\xi)\ \ \text{for all}\ \ \xi\in\mathbb{R}.$$
In particular, we have
\beaa
\lim_{\xi\to\infty}\frac{W_{s^*}(\xi-L^*)}{W_{s_0}(\xi)}\geq1.
\eeaa
Furthermore, we can claim that
\bea\label{limit=1}
\lim_{\xi\to\infty}\frac{W_{s^*}(\xi-L^*)}{W_{s_0}(\xi)}=1.
\eea
%\red{
%To prove this, we first claim that it either
%\beaa
%\lim_{\xi\to\infty}\frac{W_{s^*}(\xi-L^*)}{W_{s_0}(\xi)}=1\quad\text{or}\quad\lim_{\xi\to-\infty}\frac{1-W_{s^*}(\xi-L^*)}{1-W_{s_0}(\xi)}=1.
%\eeaa
Otherwise, if the limit in \eqref{limit=1} is strictly bigger than 1, together with
\beaa%\label{(1-W)-decay}
\lim_{\xi\to-\infty}\frac{1-W_{s^*}(\xi-L^*)}{1-W_{s_0}(\xi)}=0,
\eeaa
%we can use a standard argument (see, e.g., \cite{Hamel Roques}) to
we can easily  find $\varepsilon>0$ sufficiently small such that
$$W_{s^*}(\xi-(L^*+\varepsilon))> W_{s_0}(\xi)\ \ \text{for}\ \ \xi\in\mathbb{R},$$
which contradicts the definition of $L^*$.
%Notice that,
%\beaa
%\lim_{\xi\to-\infty}\frac{1-W_{s^*}(\xi-L^*)}{1-W_{s_0}(\xi)}=1
%\eeaa
%can never happen since $1-W_{s^*}\sim C_1e^{\nu_1\xi}$, $1-W_{s_0}\sim C_2e^{\nu_2\xi}$, and $\nu_1>\nu_2$  which follows from $s^*>s_0$ and the assumptions (A1)(A3).
As a result, from \eqref{W-S0+infty}, \eqref{W-S*-pm-infty} and \eqref{limit=1}, we obtain $B_0=Be^{L^*}$.

On the other hand, we set $\widehat{W}(\xi)=W_{s^*}(\xi-L^*)-W_{s_0}(\xi)$. Then $\widehat{W}(\xi)$ satisfies
\bea\label{W-hat-eq2}
\widehat{W}''+2\widehat{W}'+\widehat{W}+J(\xi)=0, \quad \xi\in\mathbb{R},
\eea
where
$$J(\xi)=f(W_{s^*};s^*)- W_{s^*}-f(W_{s_0};s_0)+ W_{s_0}.$$
By the assumption  (A1) and Taylor's Theorem, there exist $\eta_1\in(0, W_{s^*})$ and $\eta_2\in(0,W_{s_0})$ such that
\beaa
J(\xi)&=&f''(\eta_1;s^*)W^2_{s^*}-f''(\eta_2;s_0)W^2_{s_0}\\
      &=&f''(\eta_1;s^*)(W^2_{s^*}-W^2_{s_0})+[f''(\eta_1;s^*)-f''(\eta_2;s_0)]W^2_{s_0}\\
      &=&f''(\eta_1;s^*)(W_{s^*}+W_{s_0})\widehat{W}+[f''(\eta_1;s^*)-f''(\eta_2;s_0)]W^2_{s_0}.
\eeaa
Define
$$J_1(\xi):=f''(\eta_1;s^*)(W_{s^*}+W_{s_0})\widehat{W},$$
$$J_2(\xi):=[f''(\eta_1;s^*)-f''(\eta_2;s_0)]W^2_{s_0}.$$
 It is easy to see that $J_1(\xi)=o(\widehat{W})$ for $\xi\approx+\infty$. Next, we will show $J_2(\xi)=o(\widehat{W})$ for $\xi\approx+\infty$.

Since $f''(0;s^*)>f''(0;s_0)$ (from the assumption (A3)), we can find small $\delta>0$ such that
$$\min_{\eta\in[0,\delta]}f''(\eta;s^*)>\max_{\eta\in[0,\delta]}f''(\eta;s_0)$$
and thus
there exist $\kappa_1,\kappa_2>0$ such that
\bea\label{J-lower bound}
\kappa_1e^{-2\xi}\ge J_2(\xi)=[f''(\eta_1;s^*)-f''(\eta_2;s_0)]W^2_{s_0}(\xi)\ge \kappa_2 e^{-2\xi}\quad \mbox{for all large $\xi$}.
\eea

We now claim that $J_2(\xi)=o(\widehat{W})$ as $\xi\to+\infty$.
For contradiction, we assume that it is not true. Then there exists $\{\xi_n\}$ with
$\xi_n\to+\infty$ as $n\to\infty$ such that for some $\kappa_3>0$,
\bea\label{kappa3}
\frac{J_2(\xi_n)}{\widehat{W}(\xi_n)}\geq \kappa_3\quad \mbox{for all $n\in\mathbb{N}$.}
\eea
Set $\widehat{W}(\xi)=\alpha(\xi)e^{-2\xi}$, where $\alpha(\xi)>0$ for all $\xi$.
By substituting it into \eqref{W-hat-eq2},
we have
\bea\label{alpha-eq}
L(\xi):=(\alpha''(\xi)-2\alpha'(\xi)+\alpha(\xi))e^{-2\xi}+J_1(\xi)+J_2(\xi)=0\quad \mbox{for all large $\xi$}.
\eea
By \eqref{J-lower bound} and \eqref{kappa3}, we have
\bea\label{alpha-bdd}
0<\alpha(\xi_n)\leq \frac{\kappa_1}{\kappa_3}\quad  \mbox{for all $n\in\mathbb{N}$.}
\eea
Now, we will reach a contradiction by dividing the behavior of $\alpha(\cdot)$ into two cases:
\begin{itemize}
    \item[(i)] $\alpha(\xi)$ oscillates for all large $\xi$;
    \item[(ii)] $\alpha(\xi)$ is monotone for all large $\xi$.
\end{itemize}
For case (i), there exist local minimum points $\eta_n$ of $\alpha$ with $\eta_n\to\infty$ as $n\to\infty$ such that
\beaa
\alpha(\eta_n)>0,\quad \alpha'(\eta_n)=0,\quad \alpha''(\eta_n)\geq 0\quad  \mbox{for all $n\in\mathbb{N}$.}
\eeaa
Together with \eqref{J-lower bound} and $J_1(\xi)=o(\hat{\mathcal{W}}(\xi))$,
from \eqref{alpha-eq} we see that
\beaa
0=L(\eta_n)\geq \alpha(\eta_n)e^{-2\eta_n}+o(1)\alpha(\eta_n)e^{-2\eta_n}+\kappa_2e^{-2\eta_n}>0
\eeaa
for all large $n$, which reaches a contradiction.

For case (ii),
due to \eqref{alpha-bdd}, there exists $\alpha_0\in[0, \kappa_1/\kappa_3]$
such that $\alpha(\xi)\to \alpha_0$ as $\xi\to\infty$. Hence, we can find subsequence $\{\eta_j\}$ that tends to $\infty$ such that $\alpha'(\eta_j)\to0$, $\alpha''(\eta_j)\to0$ and
$\alpha(\eta_j)\to \alpha_0$ as $n\to\infty$.
From \eqref{alpha-eq} we deduce that
\beaa
0=L(\eta_j)\geq (o(1)+\alpha(\eta_j)+ \kappa_2)e^{-2\eta_j}>0
\eeaa
for all large $j$, which reaches a contradiction.
Therefore, we have proved that
$J_2(\xi)=o(\widehat{W})$ as $\xi\to\infty$.
Consequently, we have
\bea\label{J-small o}
J(\xi)=J_1(\xi)+J_2(\xi)=o(\widehat{W}(\xi))\quad \mbox{as $\xi\to\infty$.}
\eea

Thanks to \eqref{J-small o}, we can apply
%Then, applying
\cite[Chapter 3, Theorem 8.1]{CoddingtonLevison} to assert that
the asymptotic behavior of $\widehat{W}(\xi)$ at $\xi=+\infty$ satisfies
\beaa
\widehat{W}(\xi)=(C_1\xi+C_2)e^{-\xi}+o(e^{-\xi})\quad \mbox{as $\xi\to\infty$},
\eeaa
where  $C_1\geq0$, and $C_2>0$ if $C_1=0$. From \eqref{W-S0+infty} and \eqref{W-S*-pm-infty}, we see that $C_1=0$, and $C_2>0$. On the other hand, $B_0=Be^{ L^*}$ implies that $C_2=0$, which reaches a contradiction.
Therefore, \eqref{th1:goal-2} holds, and the proof is complete.
\end{proof}

%%%%%%%%%%%%%%%%%%%%%%%%%%%%%%%%%%%%%%%%%%%%%%%%%%%%%%%%%%%%%%%%%%%%%%%%%%

\section{Threshold of the nonlocal diffusion equation}\label{sec: threshold scalar nonlocal}

In this section, we aim to prove Theorem~\ref{th: threshold scalar equation nonlocal}.
%\bigskip
%\noindent{\bf Theorem \ref{th: threshold scalar equation nonlocal}}
%{\it Assume that (A1)-(A3) and (A6)-(A7) hold. Then there exists the threshold value $q^*\in[q_1,q_2)$ such that the minimal traveling wave speed of \eqref{scalar nonlocal tw-parameter s} satisfies
%\begin{equation*}%\label{def of threshold scalar nonlocal}
%c_{NL}^*(q)=c_0^*\quad\text{for all}\ q\in[0,q^*];\quad c_{NL}^*(q)>c_0^*\quad\text{for all}\ q\in(q^*,\infty).
%\end{equation*}
%Moreover,
%the minimal traveling wave $U_s(\xi)$ satisfies %,it holds
%\begin{equation*}%\label{asy tw threshold scalar nonlocal}
%\mathcal{W}_q(\xi)=Be^{-\lambda_0\xi}+o(e^{-\lambda_0\xi})\quad \mbox{as}\quad \xi\to+\infty\quad\text{for some}\quad B>0,
%\end{equation*}
 %with $B>0$ as $\xi\to+\infty$
%if and only if $q=q^*$.
%}
%\bigskip
The main idea follows the approach used in the proof of Theorem \ref{th: threshold scalar equation}, but the analysis here is more involved for two main reasons. First, the linearly selected speed $c^*_0$ cannot be computed explicitly, as it is characterized by a variational formula. Second, due to the nature of nonlocal diffusion, it is no longer possible to construct the super-solution pointwisely. In particular, when constructing the super-solution within a given interval, one must also account for its behavior outside that interval. Moreover, since the kernel $J$ has compact support, we may assume without loss of generality that $J \ge 0$ on $[-L, L]$ and $J = 0$ for $x \in (-\infty, -L] \cup [L, \infty)$. In fact, we believe that this approach can be extended to kernels with exponential decay.

\subsection{Preliminary}
We first introduce some propositions concerned with the asymptotic behavior of the minimal traveling wave of \eqref{scalar nonlocal tw-parameter s} as $\xi\to+\infty$ and $\xi\to-\infty$.
To obtain the asymptotic behavior at $\xi\to+\infty$, we will use specific linearized results established in \cite{Chen Fu Guo, Zhang_etal2012}.
\begin{proposition}[Proposition 3.7 in \cite{Zhang_etal2012}]\label{prop:Zhang-etal}
Assume that $c>0$ and $B(\cdot)$ is a continuous function having finite limits at infinity $B(\pm \infty):=\lim_{\xi\to\pm\infty}B(\xi)$.
Let $z(\cdot)$ be a measurable function satisfying
\beaa%\label{eq of z}
c z(\xi)=\int_{\mathbb{R}}J(y)e^{\int_{\xi-y}^{\xi}z(s)ds}dy+B(\xi),\quad \xi\in\mathbb{R}.
\eeaa
Then $z$ is uniformly continuous and bounded. Furthermore, $\omega^{\pm} = \lim_{\xi\to\pm\infty} z(\xi)$ exist and are real roots of the
characteristic equation
\beaa
c\omega=\int_{\mathbb{R}}J(y)e^{\omega y}dy+B(\pm\infty).
\eeaa
\end{proposition}

\begin{proposition}\label{prop:correction-U-linear-decay}
Assume that $c=c_{NL}^*(q)=c^*_0$. Let $\lambda_0$ be defined as that in Remark~\ref{rm:lambda_0}.  Then the minimal traveling wave $\mathcal{W}_q(\xi)$ satisfies
\bea\label{decay-U-linear}
\mathcal{W}_q(\xi)=A\xi e^{-\lambda_0\xi}+Be^{-\lambda_0\xi}+o(e^{-\lambda_0\xi})\quad \mbox{as $\xi\to+\infty$},
\eea
where $A\geq0$ and $B\in \mathbb{R}$,
and $B>0$ if $A=0$.
\end{proposition}
\begin{proof}
For convenience, we write $\mathcal{W}$ instead of $\mathcal{W}_q(\xi)$.
Let $z(\xi):=-\mathcal{W}'(\xi)/\mathcal{W}(\xi)$. Then, from \eqref{scalar nonlocal tw-parameter s} we have
\beaa
cz(\xi)=\int_{\mathbb{R}}J(y)e^{\int_{\xi-y}^{\xi}z(s)ds}dy+B(\xi),
\eeaa
where $B(\xi)=f(\mathcal{W})/\mathcal{W}-1$.
Since $\mathcal{W}(+\infty)=0$, we have $B(+\infty)=f'(0)-1$.
It follows from Proposition~\ref{prop:Zhang-etal} and Remark~\ref{rm:lambda_0} that
\bea\label{limit-z}
\lim_{\xi\to+\infty}\frac{\mathcal{W}'(\xi)}{\mathcal{W}(\xi)}=-\lim_{\xi\to+\infty} z(\xi)=-\lambda_0.
\eea

With \eqref{limit-z}, we can correct the proof of \cite[Theorem 1.6]{Coville} and obtain the desired result. To see this, we set
\bea\label{mathcal F}
\mathcal{F}(\lambda)=\int_0^{\infty}\mathcal{W}(\xi) e^{-\lambda \xi}d\xi.
\eea
Because of \eqref{limit-z},
$\mathcal{F}$ is well-defined for $\lambda\in\mathbb{C}$ with $-\lambda_0<{\rm Re}\lambda<0$.
From \eqref{scalar nonlocal tw-parameter s}, we can rewrite it as
\beaa
(c\lambda+h(\lambda))\int_{\mathbb{R}}\mathcal{W}(\xi)e^{-\lambda \xi}d\xi=
\int_{\mathbb{R}} e^{-\lambda \xi}[f'(0)\mathcal{W}(\xi)-f(\mathcal{W}(\xi))]d\xi=:Q(\lambda),
\eeaa
where $h(\lambda)=h(-\lambda)$ is defined in Remark~\ref{rm:lambda_0}.
Moreover, we see that $Q(\lambda)$ is well-defined for $\lambda\in\mathbb{C}$ with
$-2\lambda_0<{\rm Re} \lambda<0$
since
$$f(w)=f'(0)w+O(w^2)\ \ \text{as}\ \ w\to0.$$
Then, we have
\bea\label{F-form-nonlocal}
\mathcal{F}(\lambda)=\frac{Q(\lambda)}{c\lambda+h(\lambda)}-\int_{-\infty}^{0}\mathcal{W}(\xi)e^{-\lambda \xi}d\xi,
\eea
as long as $\mathcal{F}(\lambda)$ is well-defined.

To apply Ikehara's Theorem (Proposition~\ref{prop:ikehara}), we
rewrite \eqref{F-form-nonlocal} as
\beaa
\mathcal{F}(\lambda)=\frac{H(\lambda)}{(\lambda+\lambda_0)^{p+1}},
\eeaa
where $p\in\mathbb{N}\cup\{0\}$ and
\bea\label{Ikehara-form-nonlocal}
H(\lambda)=\frac{Q(\lambda)}{(c\lambda+h(\lambda))/(\lambda+\lambda_0)^{p+1}}
-(\lambda+\lambda_0)^{p+1}\int_{-\infty}^{0}e^{-\lambda \xi}\mathcal{W}(\xi) d\xi.
\eea
It is well known from (cf. \cite[p.2437]{Carr Chmaj}) that all roots of $c\lambda+h(\lambda)=0$ must be real.  Together with the assumption $c_{NL}^*=c^*_0$ and Remark~\ref{rm:lambda_0},
we see that $\lambda = -\lambda_0$ is the only (double) root of $c\lambda+h(\lambda)=0$.

Next, we will show $H$ is analytic in the strip $\{-\lambda_0\leq  {\rm Re}\lambda<0\}$ and $H(-\lambda_0)\neq 0$ with some $p\in\mathbb{N}\cup\{0\}$.
Note that the second term on the right-hand side of \eqref{Ikehara-form-nonlocal} is analytic on $\{{\rm Re}\lambda<0\}$.
Consequently, it is enough to deal with the first term.
\begin{itemize}
    \item[(i)] Assume that
$Q(-\lambda_0)\neq 0$. Then
 by setting $p=1$, we obtain $H(-\lambda_0)\neq 0$ (since $c\lambda+h(\lambda)=0$ has the double root $\lambda_0$), and thus
\beaa
\lim_{\xi\to+\infty}\frac{\mathcal{W}(\xi)}{\xi e^{-\lambda_0\xi}}=C_1
\eeaa
for some $C_1>0$ by Ikehara's Theorem (Proposition~\ref{prop:ikehara}).
\item[(ii)] Assume that
$Q(-\lambda_0)=0$. This means that $\lambda=-\lambda_0$ is a root of $Q(\lambda)$.
One can observe from \eqref{F-form-nonlocal} that the root $\lambda=-\lambda_0$ of $Q$ must be simple; otherwise, $\mathcal{F}(\lambda)$ has a removable singularity at $\lambda=-\lambda_0$ and thus can
be extended to exist over $\{ -\lambda_0-\epsilon\leq  {\rm Re} \lambda<0\}$
for some $\epsilon>0$. However, by \eqref{limit-z} and \eqref{mathcal F}, we see that $\mathcal{F}(\lambda)$ is divergent for
$\lambda$ with ${\rm Re} \lambda<-\lambda_0$,
which leads to a contradiction. Therefore, $\lambda=-\lambda_0$ is a simple root of $Q$.
By taking $p=0$ in \eqref{Ikehara-form-nonlocal}, we obtain $H(\lambda_0)\neq 0$, and thus
\beaa
\lim_{\xi\to+\infty}\frac{\mathcal{W}(\xi)}{e^{-\lambda_0\xi}}=C_2
\eeaa
for some $C_2>0$ by Ikehara's Theorem (Proposition~\ref{prop:ikehara}).
\end{itemize}
As a result, we obtain \eqref{decay-U-linear} in which $A$ and $B$ cannot be equal to $0$ at the same time.
\end{proof}

The third proposition provides the asymptotic behavior of the minimal traveling wave as $\xi\to-\infty$,
\begin{proposition}\label{prop: asy tw - infty nonlocal}
Let $\mathcal{W}_{q,c}$ be the traveling wave satisfying \eqref{scalar nonlocal tw-parameter s} with speed $c\ge c^*_0$ and $q\ge 0$.
We define $\mu_{q,c}$ as the unique positive root of
\begin{equation}\label{nonlocal root}
-c\mu=I_1(\mu):=\int_{\mathbb{R}}J(y)e^{-\mu y}dy+f'(1;q)-1.
\end{equation}
Then it holds
\begin{equation*}%\label{asy of nonlocal tw - infty}
1-\mathcal{W}_{q,c}(\xi)=O(e^{\mu_{q,c}\xi})\quad\text{as}\quad \xi\to-\infty.
\end{equation*}
\end{proposition}
By linearizing the equation of \eqref{scalar nonlocal tw-parameter s} near $\mathcal{W}=1$ and changing $1-\mathcal{W}=\hat{\mathcal{W}}$, we have
$$J\ast\hat{\mathcal{W}}-\hat{\mathcal{W}}+c\hat{\mathcal{W}}'+f'(1;q)\hat{\mathcal{W}}=0.$$
Define
$I_2(\mu)=\int_{\mathbb{R}}\hat{\mathcal{W}}e^{-\mu\xi}d\xi.$
Then, by multiplying $e^{-\mu \xi}$ and integral on $\mathbb{R}$, we obtain
$$I_2(\mu)\Big(1-f'(1;q)-\mu c-\int_{\mathbb{R}}J(y)e^{-\mu y}dy\Big)=0.$$
Notice that, $I_1(\mu)$ is a symmetric and convex function.
Since $\int_{\mathbb{R}}J(y)e^{-\mu y}dy=1$ when $\mu=0$, $\int_{\mathbb{R}}J(y)e^{-\mu y}dy\to \infty$ as $\mu\to\infty$, and $f'(1;q)<0$, \eqref{nonlocal root} admits the unique positive root. Then, the proof of Proposition \ref{prop: asy tw - infty nonlocal} follows from the similar argument as Theorem 1.6 in \cite{Coville}.

\subsection{Construction of the super-solution}\label{subsec-3-1}

\noindent

Under the assumption (A1) and \eqref{assumption on J}, from Theorem 1.6 in \cite{Coville}, for each $q\geq0$, there exists a unique minimal traveling wave(up to a translation), and the minimal speed $c_{NL}^*(q)$ is continuous for all $q\ge 0$ by the assumption (A2).
Moreover, it follows from the assumption (A3) that $c_{NL}^*(q)$ is nondecreasing on $q$. Thus, we immediately obtain the following result by assumptions (A6),(A7), and Remark~\ref{rk:a6+a7}.

\begin{lemma}\label{lem: th1-part1 nonlocal}
Assume that assumptions (A1)-(A3), (A6), and (A7) hold. Then there exists a threshold $q^*\in[q_1,q_2)$ such that \eqref{def of threshold scalar nonlocal} holds.
\end{lemma}

Thanks to Lemma~\ref{lem: th1-part1 nonlocal}, to prove Theorem~\ref{th: threshold scalar equation nonlocal}, it suffices to show that
\eqref{asy tw threshold scalar nonlocal} holds if and only if $q=q^*$.
Let $\mathcal{W}_{q^*}$ be the minimal traveling wave of \eqref{scalar nonlocal tw-parameter s} with $q=q^*$
and %$c_{min}=
speed $c_{NL}^*(q^*)=c_0^*$ defined as \eqref{formula of c_NL}. For simplicity, we denote $\mathcal{W}_*:=\mathcal{W}_{q^*}$.
Similar as the proof of Theorem \ref{th: threshold scalar equation}, the first and the most involved step is to show that if $q=q^*$, then \eqref{asy tw threshold scalar nonlocal} holds.
%the second statement of Theorem \ref{th: threshold scalar equation}.
To do this, we shall use the contradiction argument again.
Assume that \eqref{asy tw threshold scalar nonlocal} is not true. Then, from \eqref{decay-U-linear}  %by Lemma~\ref{lm: behavior around + infty}, we can find $A_0>0$ such that
it holds that
\bea\label{AS-U-infty-for-contradiction scalar nonlocal}
\lim_{\xi\rightarrow+\infty}\frac{\mathcal{W}_{*}(\xi)}{\xi e^{-\lambda_0\xi}}=A_0\quad \mbox{for some $A_0>0$,}
\eea
where $\lambda_0$ is defined in Remark \ref{rk:a6+a7}.

Under the condition \eqref{AS-U-infty-for-contradiction scalar nonlocal},
we shall prove the following proposition.

\begin{proposition}\label{Prop-supersol nonlocal}
Assume that assumptions (A1)-(A3), (A6), and (A7) hold. In addition, if \eqref{AS-U-infty-for-contradiction scalar nonlocal} holds,
then there exists an auxiliary continuous function $\mathcal{R}_w(\xi)$ defined in $\mathbb{R}$ satisfying
\begin{equation}\label{decay rate of Rw nonlocal}
\mathcal{R}_w(\xi)=O(\xi e^{-\lambda_0\xi}) \quad \mbox{as $\xi\to\infty$},
\end{equation}
such that  $\overline{\mathcal{W}}(\xi):=\min\{\mathcal{W}_{*}(\xi)-\mathcal{R}_w(\xi),1\}\geq (\not\equiv) 0$
satisfies
\bea\label{tw super solution scalar nonlocal}
\mathcal{N}_0[\overline{\mathcal{W}}]:=J\ast\overline{\mathcal{W}}-\overline{\mathcal{W}}+c^*_0\overline{\mathcal{W}}'+f(\overline{\mathcal{W}};q^*+\delta_0)\le0,\quad  \mbox{a.e. in $\mathbb{R}$},
\eea
for all sufficiently small $\delta_0>0$. 

\begin{remark}\label{rm: glue}
Unlike the definition of the super-solution for the reaction-diffusion equation given in Proposition \ref{Prop-supersol}, in the present setting, the discontinuity in the derivative at $\xi_0$, i.e., ${\overline{\mathcal{W}}}'(\xi_0^+)\neq {\overline{\mathcal{W}}}'(\xi_0^-)$, does not pose a problem. It suffices that ${\overline{\mathcal{W}}}(\xi)\in W^{1,1}(\mathbb{R})$ (see \S 2.2.1 in \cite{nonlocal book} ). This means that the angle at the junction is irrelevant. However, during the construction, we find that an appropriate choice of angle can significantly simplifies the computation near the gluing points such as $\xi_1-\delta_1$ and $\xi_2$ in \eqref{definition of Rw nonlocal}. Assume we have $\mathcal{N}_0[\overline{\mathcal{W}}_1]=\mathcal{N}_0[\mathcal{W}_*-\mathcal R_1]\le 0$ for $\xi\in [\xi_1-\delta_1,+\infty)$ and $\mathcal{N}_0[\overline{\mathcal{W}}_2]=\mathcal{N}_0[\mathcal{W}_*-\mathcal R_2]\le 0$ for $\xi\in [\xi_2,\xi_1-\delta_1]$. If further $\mathcal R_1\le \mathcal R_2$ for $\xi\in[\xi_1-\delta_1-L,\xi_1-\delta_1]$ and $\mathcal R_2\le \mathcal R_1$ for $\xi\in[\xi_1-\delta_1,\xi_1-\delta_1+L]$, then by $J\ge 0$ we obtain
\beaa
&&\mathcal{N}_0[\overline{\mathcal{W}}](\xi_1-\delta_1)\\
&=&\int_0^L J(y)\overline{\mathcal{W}}_2(\xi_1-\delta_1-y)dy+\int_{-L}^0 J(y)\overline{\mathcal{W}}_1(\xi_1-\delta_1-y)dy-\overline{\mathcal{W}}_1+c^*_0\overline{\mathcal{W}}_1'+f(\overline{\mathcal{W}}_1;q^*+\delta_0)\\
&\le & \int_{-L}^L J(y)\overline{\mathcal{W}}_1(\xi_1-\delta_1-y)dy-\overline{\mathcal{W}}_1+c^*_0\overline{\mathcal{W}}_1'+f(\overline{\mathcal{W}}_1;q^*+\delta_0)\le 0,
\eeaa
which implies $\overline{\mathcal{W}}_1$ and $\overline{\mathcal{W}}_2$ can be smoothly glued at $\xi_1-\delta_1$. 
\end{remark}

%where
%${\overline{\mathcal{W}}}'(\xi_0^{\pm})$ exists and
%${\overline{\mathcal{W}}}'(\xi_0^+)\leq {\overline{\mathcal{W}}}'(\xi_0^-)$
%if $\overline{\mathcal{W}}'$ is not continuous at $\xi_0$.
\end{proposition}

\begin{figure}
\begin{center}
\begin{tikzpicture}[scale = 1.1]
\draw[thick](-6,0) -- (7,0) node[right] {$\xi$};
\draw [semithick] (-6, -0.1) to [ out=0, in=150] (-4,-0.7) to [out=20, in=180] (0,-0.3)   to [out=70,in=190] (1.5,2) to [out=0,in=170] (7,0.3);
\node[below] at (0,1.5) {$\xi_{1}+\delta_1$};
%\node[below] at (0.8,0) {$0$};
%\draw[dashed] [thick] (-3.5,0)-- (-3.5,0.3);
%\node[above] at (-3.5,0.3) {$\xi_2$};
%\draw[dashed] [thick] (-3,0.2)-- (-3,-0.5);
\draw[dashed] [thick] (-4,-0.7)-- (-4,1);
\node[above] at (-4,0.9) {$\xi_2$};
\draw[dashed] [thick] (0,-0.3)-- (0,1.3);
%\node[below] at (-3,-0.4) {$\xi_2+\delta_2$};
%\draw [thin] (-3.85,-0.31) arc [radius=0.2, start angle=40, end angle= 140];
%\node[above] at (-4,-0.36) {$\alpha_3$};
%\draw [thin] (-2.3,0.355) arc [radius=0.2, start angle=30, end angle= 175];
%\node[above] at (-2.5,0.35) {$\alpha_4$};
%\draw [thin] (-2.8,0.255) arc [radius=0.2, start angle=30, end angle= 175];
%\node[above] at (-3,0.3) {$\alpha_2$};
%\draw [thin] (1.1,1.8) arc [radius=0.2, start angle=70, end angle= 220];
%\node[above] at (1,1.8) {$\alpha_1$};
%\node[above] at (2.5,0.5) {\Large{$R_w$}};
%\node[below] at (1,-1.3) {Figure \ref{Figure scalar nl}: the construction of $\mathcal{R}_w(\xi)$.};
\end{tikzpicture}
\caption{the construction of $\mathcal{R}_w(\xi)$.}\label{Figure scalar nl}
\end{center}
\end{figure}

In the following, assumptions (A1)-(A3), (A6), and (A7)  are always assumed.
We shall construct the auxiliary function $\mathcal{R}_w(\xi)$,  which differs in structure from the function $R_w$ in \S \ref{subsec-2-1}, as follows (see Figure \ref{Figure scalar nl}) :
\begin{equation}\label{definition of Rw nonlocal}
\mathcal{R}_w(\xi)=\begin{cases}
\mathcal{R}_1(\xi):=\varepsilon_1\sigma(\xi) e^{-\lambda_0\xi},&\ \ \mbox{for}\ \ \xi\ge\xi_{1}-\delta_1,\\
\mathcal{R}_2(\xi):=-\varepsilon_2 \Psi(\xi-\xi_1+\delta_1+\frac{L^*}{2}),&\ \ \mbox{for}\ \ \xi_{2}\le\xi\le\xi_{1}-\delta_1,\\
\mathcal{R}_3(\xi):=-\varepsilon_3e^{\lambda_1\xi},&\ \ \mbox{for}\ \ \xi\le\xi_2.
\end{cases}
\end{equation}
Here $\Psi(\xi)> 0$ is the eigenfunction corresponding to the principal eigenvalue $\nu_0>0$ of the following linear operator on the bounded interval $[-L^*, L^*]$:
\begin{equation}\label{eigenvalue}
-J\ast \Psi+\Psi-c^*_0\Psi'-f'(\mathcal{W}_*)\Psi=\nu_0 \Psi\ \ \text{for}\ \ \xi\in[-L^*,L^*].
\end{equation}
Since $\nu_0\to 0$ and $\Psi(\xi)\to -\mathcal W'_*(\xi)$ uniformly as $L^*\to\infty$, we choose sufficiently large $L^*$ such that 
\begin{equation}\label{est Psi'}
\Psi(\xi)\sim K_0\xi e^{-\lambda_0\xi}\ \ \text{and}\ \ \Psi'(\xi)\sim -\lambda_0K_0\xi e^{-\lambda_0\xi}\ \ \text{for}\ \ \xi\in[\frac{L^*}{4}-L,\frac{L^*}{2}+L],
\end{equation}
where $[-L,L]$ is the support of $J$.
We fix $\xi_1-\delta_1-\xi_2=L^*/4$. $\delta_1>0$ and $\sigma(\xi)>0$ will be
determined such that $\overline{\mathcal{W}}(\xi)$ satisfies \eqref{tw super solution scalar nonlocal}. Moreover, we should choose
$\varepsilon_{j=1,2,3}\ll A_0$ ($A_0$ is defined in \eqref{AS-U-infty-for-contradiction scalar nonlocal}) such that $\mathcal{R}_w(\xi)\ll {\mathcal{W}}_*(\xi)$ and $\overline{\mathcal{W}}(\xi)$ is continuous for all $\xi\in\mathbb{R}$.

Since $f(\cdot;q^*)\in C^2$, there exist $K_1>0$ and $K_2>0$ such that
\bea\label{K1K2 nonlocal}
|f''({\mathcal{W}}_*(\xi);q^*)|< K_1,\quad |f'({\mathcal{W}}_*(\xi);q^*)|< K_2\quad \mbox{for all}\quad\xi\in\mathbb{R}.
\eea
%$|f''(W_*(\xi);s^*)|< K_1$ and
%$|f'(W_*(\xi);s^*)|< K_2$ for all $\xi\in\mathbb{R}$.
%We set $\lambda_1>0$ %($L_0$ is defined in (A2))
%large enough such that
%\begin{equation}\label{condition on lambda 1 nonlocal}
%-c_0^*\lambda_1+1+K_2<0\ \text{and}\ \lambda_1>\frac{4K_2}{c_0^*}.
%\end{equation}
Furthermore, there exists $K_3<-f'(1; q^*)$ such that
\begin{equation}\label{condition on xi_2 scalar nonlocal}
f'({\mathcal{W}}_*(\xi); q^*)< -K_3<0\quad\text{for all}\quad\xi\le \xi_2.
\end{equation}
Then, by setting $\lambda_1\in(0,\mu_0)$, where $\mu_0=\mu_{q^*,c^*_0}$ is the unique positive root obtained from Proposition \ref{prop: asy tw - infty nonlocal} with $q=q^*$ and $c=c^*_0$,
 sufficiently small, we have
\bea\label{lambda 2 nonlocal}
1+K_3-e^{\lambda_1L}-c_0^*\lambda_1>0.
\eea

We now divide the proof into several steps. 

\noindent{\bf{Step 1}:} We consider $\xi\in[\xi_1-\delta_1,+\infty)$ where $\delta_1>0$ is determined in the end of this step.
In this case, we have
\beaa
\mathcal{R}_w(\xi)=\mathcal{R}_1(\xi)=\varepsilon_1\sigma(\xi)\ e^{-\lambda_0\xi}
\eeaa
for some small $\varepsilon_1\ll A_0$. 

%First, we show that there exists $\delta_1>0$ very small such that

Note that $\mathcal{W}_*$ satisfies \eqref{scalar nonlocal tw-parameter s} with $c=c_0^*$.
By some  straightforward computations, we have
\begin{equation}\label{N0 inequality step 1 nonlocal}
\begin{aligned}
\mathcal{N}_0[\overline{\mathcal{W}}]=&-J\ast \mathcal{R}_w+\mathcal{R}_w-c_0^*\mathcal{R}'_w-f(\mathcal{W}_*;q^*)+f(\mathcal{W}_*-\mathcal{R}_w;q^*+\delta_0)\\
=&-J\ast \mathcal{R}_w+\mathcal{R}_w-c_0^*\mathcal{R}'_w-f(\mathcal{W}_*;q^*)+f(\mathcal{W}_*-\mathcal{R}_w;q^*)\\
&-f(\mathcal{W}_*-\mathcal{R}_w;q^*)+f(\mathcal{W}_*-\mathcal{R}_w;q^*+\delta_0).
\end{aligned}
\end{equation}
By assumptions (A1) and (A2), and the statement (1) of Lemma \ref{lm: divide R to 3 parts}, since $\mathcal{R}_w\ll \mathcal{W}_*\ll 1$ for $\xi\in[\xi_1+\delta_1,\infty)$, we have
\begin{equation}\label{estimate 1 nonlocal}
-f(\mathcal{W}_*;q^*)+f(\mathcal{W}_*-\mathcal{R}_w;q^*)= -f'(0;q^*)\mathcal{R}_w+f''(0;q^*)(\frac{\mathcal{R}^2_w}{2}-\mathcal{W}_*\mathcal{R}_w)+o((\mathcal{W}_*)^2),
\end{equation}
\begin{equation}\label{estimate 2 nonlocal}
-f(\mathcal{W}_*-\mathcal{R}_w;q^*)+f(\mathcal{W}_*-\mathcal{R}_w;q^*+\delta_0)\le C_1\delta_0(\mathcal{W}_*-\mathcal{R}_w)^2+o((\mathcal{W}_*)^2).
\end{equation}

For $\xi\in [\xi_1+\delta_1+L,\infty)$, from \eqref{formula of c_NL}, \eqref{K1K2 nonlocal}, \eqref{N0 inequality step 1 nonlocal}, \eqref{estimate 1 nonlocal}, \eqref{estimate 2 nonlocal}, we have
\begin{equation}\label{estimate 4 nonlocal}
\begin{aligned}
\mathcal{N}_0[{\mathcal{W}}_*-\mathcal R_1]\le &-\varepsilon_1e^{-\lambda_0\xi}\Big(\int_{\mathbb{R}}J(y)[\sigma(\xi-y)-\sigma(\xi)] e^{\lambda_0y}dy\Big)-c^*_0\sigma'e^{-\lambda_0\xi}\\
&+K_1(\frac{\mathcal{R}_w^2}{2}+\mathcal{W}_*\mathcal{R}_w)+C_1\delta_0\mathcal{W}_*^2+o((\mathcal{W}_*)^2).
\end{aligned}
\end{equation}
Let $h(\lambda)$ be defined as that in Remark \ref{rm:lambda_0}. Since $(h(\lambda)/\lambda)'=0$ when $\lambda=\lambda_0$, from \eqref{formula of c_NL}, we get
\begin{equation}\label{c*0 formula}
c^*_0=\int_{\mathbb{R}}yJ(y)e^{\lambda_0y}dy.
\end{equation}
Then, it follows from \eqref{estimate 4 nonlocal} and \eqref{c*0 formula} that
\begin{equation}\label{estimate 3 nonlocal}
\begin{aligned}
\mathcal{N}_0[{\mathcal{W}}_*-\mathcal R_1]\le &-\varepsilon_1e^{-\lambda_0\xi}\int_{\mathbb{R}}J(y)[\sigma(\xi-y)-\sigma(\xi)+y\sigma'(\xi)] e^{\lambda_0y}dy\\
&+K_1(\frac{\mathcal{R}_w^2}{2}+\mathcal{W}_*\mathcal{R}_w)+C_1\delta_0\mathcal{W}_*^2+o((\mathcal{W}_*)^2).
\end{aligned}
\end{equation}

Now, we define
$$\sigma(\xi):=\frac{1}{\lambda_0^2}e^{-\frac{\lambda_0}{2l}(\xi-\xi_1)}-\frac{1}{\lambda_0^2}+\frac{\xi-\xi_1}{\lambda_0l}$$
which satisfies
$$\sigma(\xi_1)=0,\ \sigma'(\xi)=\frac{1}{\lambda_0l}-\frac{1}{2\lambda_0l}e^{-\frac{\lambda_0}{2l}(\xi-\xi_1)}.$$
 Moreover, $\sigma(\xi)= O(\xi)$ as $\xi\to\infty$ implies that $\mathcal{R}_w$ satisfies \eqref{decay rate of Rw nonlocal}.

By some straightforward computation, we have
\begin{equation*}
\int_{\mathbb{R}}J(y)[\sigma(\xi-y)-\sigma(\xi)+y\sigma'(\xi)] e^{\lambda_0y}dy=\frac{1}{\lambda_0^2}e^{-\frac{\lambda_0}{2l}(\xi-\xi_1)}\int_{\mathbb{R}}J(y)e^{\lambda_0 y}[e^{\frac{\lambda_0y}{2l}}-1-\frac{\lambda_0y}{2l}]dy.
\end{equation*}
Notice that, the function
$$g(y):=e^{\frac{\lambda_0y}{2l}}-1-\frac{\lambda_0y}{2l}\ge 0$$
 is convex and obtains minimum at $y=0$, and $J(y)=0$ for $|y|>L$. Therefore, we assert that there exists $K_4>0$ independent on $\xi_1$ such that
\begin{equation}\label{estimate K4 nonlocal}
-\varepsilon_1e^{-\lambda_0\xi}\int_{\mathbb{R}}J(y)[\sigma(\xi-y)-\sigma(\xi)+y\sigma'(\xi)] e^{\lambda_0y}dy\le -\varepsilon_1K_4e^{-\lambda_0\xi}e^{\frac{-\lambda_0(\xi-\xi_1)}{2l}}.
\end{equation}

%Due to \eqref{AS-U-infty-for-contradiction scalar nonlocal} and the equation of $\mathcal{W}_*$, we may also assume
%\bea\label{W-M nonlocal}
%\mathcal{W}_*(\xi) \leq 2A_0\xi e^{-\lambda_0\xi}\quad \mbox{for all}\quad\xi\geq \xi_1.
%\eea
Then, from \eqref{estimate 3 nonlocal} and \eqref{estimate K4 nonlocal}, up to enlarging $\xi_1$ if necessary, we always have %can find a small $\delta_0>0$ such that
\beaa
\mathcal{N}_0[{\mathcal{W}}_*-\mathcal R_1]\le -\varepsilon_1K_4e^{-\frac{\lambda_0}{2l}(\xi-\xi_1)}e^{-\lambda_0\xi}+K_1(\frac{\mathcal{R}_w^2}{2}+\mathcal{W}_*\mathcal{R}_w)+C_1\delta_0\mathcal{W}_*^2+o((\mathcal{W}_*)^2)\le 0
\eeaa
for all sufficiently small $\delta_0\ll\varepsilon_1$
since $\mathcal{R}_w^2(\xi)$, $\mathcal{W}_*\mathcal{R}_w(\xi)$, and  $\mathcal{W}_*^2(\xi)$ are $o(e^{-\frac{(2l+1)\lambda_0}{2l}\xi})$ for $\xi\ge \xi_1-\delta_1$ from \eqref{AS-U-infty-for-contradiction scalar nonlocal} and the definition of $\mathcal{R}_w$.
%Therefore, $\mathcal{N}_0[\overline{\mathcal{W}}]\leq 0$ for $\xi\ge \xi_1$.

The rest of Step 1 devotes to the verification $\mathcal{N}_0[\overline{\mathcal{W}}]\le 0$ for $\xi\in [\xi_1-\delta_1,\xi_1-\delta_1+L]$, where $\mathcal{R}_2$ defined on $[\xi_2,\xi_1-\delta_1]$ is also involved in the computation. From Remark \ref{rm: glue}, it suffices to find a $\delta_1$ such that
$\mathcal R_1\le \mathcal R_2$ for $\xi\in[\xi_1-\delta_1-L,\xi_1-\delta_1]$ and $\mathcal R_2\le \mathcal R_1$ for $\xi\in[\xi_1-\delta_1,\xi_1-\delta_1+L]$. 

%As in \S \ref{subsec-2-1}, we may assume that $|\xi_1-\xi_2|\gg 2L$. By enlarging $L^*$, we can set $\xi_1-\delta_1=L^*/2$. 
From now on, we fix $\xi_1-\delta_1$. To make sure that $\overline{\mathcal{W}}$ is continuous at $\xi_1-\delta_1$ where $\xi_1$ is decided by the above discussion,  we set
\begin{equation}\label{epsilon 2 nl}
\varepsilon_2=\varepsilon_2(\varepsilon_1,\delta_1,l)=-\frac{\varepsilon_1}{\Psi(L^*/2)}\Big(\frac{1}{\lambda_0^2}(e^{\frac{\lambda_0\delta_1}{2l}}-1)-\frac{\delta_1}{\lambda_0l}\Big)e^{-\lambda_0(\xi_1-\delta_1)},
\end{equation}
where $\Psi(L^*/2)=K_0\frac{L^*}{2}e^{-\lambda_0L^*/2}$.  From \eqref{est Psi'}, we assert the following:

%Let $\lambda_2>\lambda_0$ be the minimizer of the following variational formula 
%\begin{equation*}
%c_0^*<c_1^*=\min_{\lambda>0}\frac{1}{\lambda}\Big(\int_{\mathbb{R}}J(y)e^{\lambda y}dy-%1+K_2+\nu_0\Big)=\frac{1}{\lambda_2}\Big(\int_{\mathbb{R}}J(y)e^{\lambda_2 y}dy-%1+K_2+\nu_0\Big),
%\end{equation*}
%where $K_2$ is defined in \eqref{K1K2 nonlocal}.
%By setting $A_1=\alpha_1e^{\frac{\lambda_2L^*}{2}}$, it is easy to check that $\overline\Psi:=\min\{1,A_1e^{-\lambda_2\xi}\}$ is the super-solution of $\Psi$ on $[-L^*,L^*/2]$. It implies, by the comparison principle, that 
%\begin{equation}\label{upper est of Psi}
%\Psi(\xi)\le \overline\Psi(\xi)=\alpha_1e^{\frac{\lambda_2L^*}{2}}e^{-\lambda_2\xi}\ \ \text{for}\ \ \xi\in[\xi_1-\delta_1-L,\xi_1-\delta_1].
%\end{equation}
\begin{claim}\label{cl: delta1}
There exists a small $\delta_1>0$ such that $\mathcal R_2\le \mathcal R_1$ for $\xi\in[\xi_1-\delta_1,\xi_1-\delta_1+L]$.
\end{claim}
\begin{proof}
%Note that, we have a sub-solution $\underline\Psi(\xi):=-A_2\mathcal{W}'_*(\xi)$ for $\xi\in[L^*/2,L^*]$. From \eqref{AS-U-infty-for-contradiction scalar nonlocal}, by setting 
%$$A_2A_0\lambda_0(L^*/2)e^{-\frac{\lambda_0L^*}{2}}=\alpha_1,$$
%we have $\underline\Psi(L^*/2)\le \Psi(L^*/2)$, up to enlarging $\xi_1$ if necessary.
%Then, the comparison principle implies that 
%\begin{equation}\label{low est of Psi}
%\Psi(\xi)\ge \underline\Psi(\xi)=\frac{2\alpha_1}{L^*}e^{\frac{\lambda_0L^*}{2}}\xi e^{-%\lambda_0\xi}\ \ \text{for}\ \ \xi\in[\xi_1-\delta_1,\xi_1-\delta_1+L].
%\end{equation}
Note that $\mathcal R_2\le 0\le \mathcal R_1$ for $\xi\in[\xi_1,\xi_1-\delta_1+L]$, so it suffices to show 
$\mathcal R_2\le \mathcal R_1$ for $\xi\in[\xi_1-\delta_1,\xi_1]$. From \eqref{est Psi'} and \eqref{epsilon 2 nl}, we know that $\mathcal R_2\le \mathcal R_1$ is equivalent to
 $\mathcal R'_2\le \mathcal R'_1$, which leads to the inequality
$$-(e^{\frac{\lambda_0\delta_1}{2l}}-1-\frac{\lambda_0\delta_1}{l})(\frac{2\tilde L}{L^*}+1)\le \frac{1}{l}-\frac{1}{2l}e^{-\frac{\lambda_0}{2l}(\tilde L-\delta_1)}-e^{-\frac{\lambda_0}{2l}(\tilde L-\delta_1)}+1-\frac{\lambda_0(\tilde L-\delta_1)}{l},$$
where $\xi=\xi/1-\delta_1+\tilde L$, $\tilde L\in (0,\delta_1]$.
By setting $\delta_1$ sufficiently small, the asymptotic on the left-hand side is 
$$\frac{\lambda_0\delta_1}{2l}(\frac{2\tilde L}{L^*}+1)\to 0\ \ \text{as} \ \ \delta_1\to 0.$$
The asymptotic on the right-hand side is
$$\frac{1}{2l}\Big(1-\frac{\lambda_0}{2l}(\delta_1-\tilde L)+\lambda_0(\delta_1-\tilde L)\Big)\to \frac{1}{2l}\ \ \text{as} \ \ \delta_1\to 0.$$
Therefore, we can choose a small $\delta_1$ such that $\mathcal R'_2\le \mathcal R'_1$
remains valid on $[\xi_1-\delta_1,\xi_1]$. Thus, $\mathcal R_2\le-\varepsilon_2 \underline\Psi\le \mathcal R_1$ for $\xi\in[\xi_1-\delta_1,\xi_1]$. 
\end{proof}

\begin{claim}\label{cl: l}
There exists a small $\delta_1>0$ and a large $l>0$ such that $\mathcal R_2\ge \mathcal R_1$ for $\xi\in[\xi_1-\delta_1-L,\xi_1-\delta_1]$.
\end{claim}
\begin{proof}
From \eqref{epsilon 2 nl}, we know that, for $\xi\in [\xi_1-\delta_1-L,\xi_1-\delta_1]$, $\mathcal R_2\ge \mathcal R_1$ is equivalent to $\mathcal R'_2\le \mathcal R'_1$,  which leads to the inequality:
$$-(e^{\frac{\lambda_0\delta_1}{2l}}-1-\frac{\lambda_0\delta_1}{l})(-\frac{2\tilde L}{L^*}+1)\le \frac{1}{l}-\frac{1}{2l}e^{\frac{\lambda_0}{2l}(\tilde L+\delta_1)}-e^{\frac{\lambda_0}{2l}(\tilde L+\delta_1)}+1+\frac{\lambda_0(\tilde L+\delta_1)}{l},$$
where $\xi=\xi/1-\delta_1-\tilde L$, $\tilde L\in (0,L]$.
%By denoting $\xi=\xi_1-\delta_1-\tilde L$ where $\tilde L\in [0,L]$, the inequality above can be rewrited as 
%$$-\frac{\alpha_2}{\alpha_1}\Big(\frac{1}{\lambda_0}(e^{\frac{\lambda_0\delta_1}{2l}}-1)-\frac{\delta_1}{l}\Big)\le e^{\lambda_0\tilde L}\Big(\frac{1}{l}-\frac{(1+2l)}{2l}e^{\frac{\lambda_0}{2l}(\delta_1+\tilde L)}+1+\frac{\lambda_0(\delta_1+\tilde L)}{l}\Big).$$
By setting $l$ sufficiently large, the asymptotic on the left-hand side is 
$$I_1\sim \frac{\lambda_0\delta_1}{2l}(-\frac{2\tilde L}{L^*}+1).$$
The asymptotic on the right-hand side is
$$I_2\sim\frac{1}{2l}\Big(1-\frac{\lambda_0}{2l}(\delta_1+\tilde L)+\lambda_0(\delta_1+\tilde L)\Big).$$
By further setting $\delta_1$ sufficiently small,  we ensure that $I_1\le \frac{1}{4l}\le I_2$ for all $\tilde L\in [0,L]$.
Thus, $\mathcal R_2\ge \mathcal R_1$ for $\xi\in[\xi_1-\delta_1-L,\xi_1-\delta_1]$. 
\end{proof}

Now, we let $\delta_1$ and $l$ be determined in Claim \ref{cl: delta1} and Claim \ref{cl: l}. Follow the discussion in Remark \ref{rm: glue}, $\mathcal{N}_0[\overline{\mathcal{W}}]\le 0$ for $\xi\in [\xi_1-\delta_1,\xi_1-\delta_1+L]$. Consequently, we find some $\delta_0(\varepsilon_1)\ll \varepsilon_1$, not depending on $\xi_1-\delta_1$, such that $N_0[\overline{\mathcal{W}}]\leq 0$ for $\xi\ge \xi_1-\delta_1$.

\medskip

\noindent{\bf{Step 2:}} We consider $\xi\in[\xi_2,\xi_1-\delta_1]$ with $\xi_1,\delta_1>0$ fixed by Step 1, and $\xi_1-\delta_1-\xi_2=L^*/4$. In this case, we have $$\mathcal{R}_w(\xi)=\mathcal R_2(\xi)=-\varepsilon_2\Psi(\xi-\xi_1+\delta_1+L^*/2)<0,$$
where $\Psi$ is the eigenfunction defined in \eqref{eigenvalue} and $\varepsilon_2(\varepsilon_1)$ is fixed by \eqref{epsilon 2 nl}.
Note that the magnitude of $\mathcal{R}_2$ can be made arbitrarily small by reducing $\varepsilon_2$, which in turn can be achieved by taking a smaller value of $\varepsilon_1$ from \eqref{epsilon 2 nl}.

By \eqref{eigenvalue}, \eqref{K1K2 nonlocal} , and \eqref{N0 inequality step 1 nonlocal}, we have
\begin{equation*}
\begin{aligned}
&\mathcal{N}_0[{\mathcal W}_*-\mathcal R_2]\\
\le&-\varepsilon_2\nu_0\Psi-f(\mathcal{W}_*;q^*)+f(\mathcal{W}_*-\mathcal{R}_2;q^*)-f(\mathcal{W}_*-\mathcal{R}_2;q^*)+f(\mathcal{W}_*-\mathcal{R}_2;q^*+\delta_0)\\
\le &-\varepsilon_2\nu_0\Psi+o(\varepsilon_2\Psi)+K_2\delta_0\le 0
\end{aligned}
\end{equation*}
on the bounded interval $[\xi_2,\xi_1-\delta_1]$, after possibly reducing $\delta_0(\varepsilon_1,\xi_2,L^*)$ if necessary. Combining  with Claim \ref{cl: delta1}, we obtain $\mathcal{N}_0[\overline{\mathcal W}]\le 0$ for some $\delta_0(\varepsilon_1,L^*)\ll \varepsilon_1$ on $[\xi_2+L,\xi_1-\delta_1]$. 
Thus, in the rest of Step 2, we only need to compare $\mathcal R_2$ and $\mathcal R_3$ for $\xi\in[\xi_2-L,\xi_2+L]$.

From now on, we fix $L^*$ and $\xi_2$. To make sure $\overline{\mathcal W}$ is continuous at $\xi_2$, decided by the discussion above, we set 
\begin{equation}\label{epsilon 3 nl}
\varepsilon_3=\varepsilon_3(\varepsilon_1)=\varepsilon_2\Psi(L^*/4)e^{-\lambda_1\xi_2}=-\frac{\varepsilon_1\Psi(L^*/4)}{\Psi(L^*/2)}\Big(\frac{1}{\lambda_0^2}(e^{\frac{\lambda_0\delta_1}{2l}}-1)-\frac{\delta_1}{\lambda_0l}\Big)e^{-\lambda_0(\xi_1-\delta_1)-\lambda_1\xi_2},
\end{equation}
where $\Psi(L^*/4)=K_0\frac{L^*}{4}e^{-\lambda_0L^*/4}$. 
%To compare $\mathcal R_2$ with $\mathcal R_3$, we need some estimates on $\Psi$ for $\xi\in[\xi_2-L,\xi_2+L]$. Let $0<\mu_1<\mu_0$ be the single root of the following equation
%\begin{equation*}
%-\mu_1 c_0^*=\int_{\mathbb{R}}J(y)e^{-\mu_1 y}dy-1-K_3+\nu_0
%\end{equation*}
%where $K_3$ is defined in \eqref{condition on xi_2 scalar nonlocal}.
%By setting $A_3=\alpha_3e^{\frac{\mu_1L^*}{2}}$, it is easy to check that $\overline\Psi:=\min\{1,A_3e^{\mu_1\xi}\}$ is the super-solution of $\Psi$ on $[-L^*/2,L^*]$. It implies, by the comparison principle, that 
%\begin{equation}\label{upper est of Psi 1}
%\Psi(\xi)\le \overline\Psi(\xi)=\alpha_3e^{\frac{\mu_1L^*}{2}}e^{\mu_1\xi}\ \ \text{for}\ \ %\xi\in[\xi_2,\xi_2+L].
%\end{equation}
%On the other hand, we have a sub-solution $\underline\Psi(\xi):=-A_4\mathcal{W}'_*(\xi)$ for $\xi\in[-L^*,-L^*/2]$. From Proposition \ref{prop: asy tw - infty nonlocal}, there exists $A_5>0$ such that $\mathcal W'_*\sim -A_5e^{\mu_0\xi}$. Then,
%by setting 
%$$A_4A_5e^{-\frac{\mu_0L^*}{2}}=\alpha_3,$$
%we have $\underline\Psi(-L^*/2)\le \Psi(-L^*/2)$, up to enlarging $L^*$ if necessary.
%Then, the comparison principle implies that 
%\begin{equation}\label{low est of Psi 1}
%\Psi(\xi)\ge \underline\Psi(\xi)=\alpha_3e^{\frac{\mu_0L^*}{2}}e^{\mu_0\xi}\ \ \text{for}\ \ %\xi\in[\xi_2-L,\xi_2].
%\end{equation}
Then we assert the following:
\begin{claim}\label{cl:xi 2}
 $\mathcal R_3\le \mathcal R_2$ for $\xi\in[\xi_2,\xi_2+L]$ and  $\mathcal R_3\ge \mathcal R_2$ for $\xi\in[\xi_2-L,\xi_2]$.
\end{claim}
\begin{proof}
From \eqref{epsilon 3 nl}, we know that, for $\xi\in[\xi_2,\xi_2+L]$, $\mathcal R_2\ge \mathcal R_3$ is equivalent to
$\mathcal R'_2\ge \mathcal R'_3$. On the other hand,  for $\xi\in[\xi_2-L,\xi_2]$, $\mathcal R_2\le \mathcal R_3$ is also equivalent to
$\mathcal R'_2\ge \mathcal R'_3$.
By \eqref{est Psi'}, we have $\mathcal R'_2\ge 0\ge \mathcal R'_3$ for $\xi\in[\xi_2-L,\xi_2+L]$.
Thus, the proof of Claim \ref{cl:xi 2} is complete.
\end{proof}

Follow the discussion in Remark \ref{rm: glue}, $\mathcal{N}_0[\overline{\mathcal{W}}]\le 0$ for $\xi\in [\xi_2,\xi_2+L]$. Consequently, we find some $\delta_0(\varepsilon_1)\ll \varepsilon_1$ such that $N_0[\overline{\mathcal{W}}]\leq 0$ for $\xi\in [\xi_2,\xi_1-\delta_1]$. The choice of $\delta_0$ is only depending $\varepsilon_1$ by recalling that $\xi_2,L^*$ are fixed by $\xi_1-\delta_1-\xi_2=L^*/4$.

\medskip

\noindent{\bf{Step 3:}} We consider $\xi\in(-\infty,\xi_2]$. In this case, we have $$\mathcal{R}_w(\xi)=\mathcal R_3=-\varepsilon_3e^{\lambda_1\xi}<0.$$
Recall that we choose $0<\lambda_1<\mu_0$
and
$$1-\mathcal{W}_*(\xi)\sim C_2 e^{\mu_0\xi}\ \ \text{as}\ \ \xi\to-\infty.$$
Then, there exists $M_1>0$ such that
$$\overline{\mathcal W}=\min\{\mathcal{W}_*-\mathcal{R}_w,1\}\equiv 1\ \ \text{for all}\ \ \xi\le -M_1,$$
and thus
$$\mathcal{N}_0[\overline{\mathcal W}]\le 0\  \ \text{for all}\ \ \xi\le -M_1.$$ Therefore, we only need to show
$$\mathcal{N}_0[{\mathcal W}_*-\mathcal R_3]\le 0\ \  \text{for all}\ \ -M_1\le\xi\le -\xi_2-\delta_3.$$

Since the kernel $J$ is trivial outside of $[-L,L]$, by some straightforward computations, we have
\begin{equation*}
\begin{aligned}
\mathcal{N}_0[{\mathcal W}_*-\mathcal R_3]
\le&-(e^{\lambda_1L}+c_0^*\lambda_1-1)\mathcal{R}_3-f(\mathcal{W}_*;q^*)+f(\mathcal{W}_*-\mathcal{R}_3;q^*)\\
&-f(\mathcal{W}_*-\mathcal{R}_3;q^*)+f(\mathcal{W}_*-\mathcal{R}_3;q^*+\delta_0).
\end{aligned}
\end{equation*}
From \eqref{condition on xi_2 scalar nonlocal} and $\mathcal{R}_3\le 0$, we have
$$-f(\mathcal{W}_*;q^*)+f(\mathcal{W}_*-\mathcal{R}_3;q^*)< K_3\mathcal{R}_3<0.$$
Together with the assumption (A2), we have
\beaa
\mathcal{N}_0[{\mathcal W}_*-\mathcal R_3]\leq -(e^{\lambda_1L}+c_0^*\lambda_1-1-K_3)\mathcal{R}_3+L_0\delta_0\quad\text{for all}\quad \xi\in[-M,\xi_2-\delta_3].
\eeaa
In view of \eqref{lambda 2 nonlocal} and Claim \ref{cl:xi 2},
we can assert that
$$\mathcal{N}_0[\overline{\mathcal W}]\le 0\ \ \text{for all}\ \ \xi\in[-M,\xi_2-\delta_3],$$
provided that $\delta_0(\varepsilon_1)$ is sufficiently small.
This completes the construction of Step 3.

\subsection{Proof of Theorem \ref{th: threshold scalar equation nonlocal}}

We are ready to prove Theorem~\ref{th: threshold scalar equation nonlocal} as follows.

\begin{proof}[Proof of Theorem~\ref{th: threshold scalar equation nonlocal}]
In view of Lemma~\ref{lem: th1-part1 nonlocal}, we have obtained \eqref{def of threshold scalar nonlocal}.
It suffices to show that \eqref{asy tw threshold scalar nonlocal} holds if and only if $q=q^*$.
From the discussion from Step 1 to Step 4 in \S \ref{subsec-3-1},
we are now equipped with an auxiliary function $\mathcal{R}_w(\xi)$ %well defined for all
defined as in \eqref{definition of Rw nonlocal} such that
$$\overline{\mathcal W} (\xi)=\min \{\mathcal{W}_*(\xi)-\mathcal{R}_w(\xi),1\},$$
 which is independent of the choice of all sufficiently small $\delta_0>0$, forms
a super-solution satisfying \eqref{tw super solution scalar nonlocal}. By the comparison argument used in the proof of Theorem \ref{th: threshold scalar equation}, similarly we can show
\beaa%\label{th1:goal-1 nonlocal}
q=q^* \quad \Longrightarrow \quad \mbox{\eqref{asy tw threshold scalar nonlocal} holds}.
\eeaa
Therefore, it suffices to prove
\bea\label{th1:goal-2 nonlocal}
\mbox{\eqref{asy tw threshold scalar nonlocal} holds} \quad \Longrightarrow \quad q=q^*
\eea
by the sliding method.

We assume by contradiction that there exists $q_0\in(0,q^*)$ such that
the corresponding minimal traveling wave satisfies
\bea\label{W-S0+infty nonlocal}
\mathcal{W}_{q_0}(\xi)=B_0 e^{-\lambda_0\xi}+o(e^{-\lambda_0\xi})\quad\text{as}\quad \xi\to+\infty
\eea
for some $B_0>0$. For $\xi\approx -\infty$, from Proposition \ref{prop: asy tw - infty nonlocal}, we have
\bea\label{W-S0-infty nonlocal}
1-\mathcal{W}_{q_0}(\xi)=C_0 e^{\tilde\mu_0\xi}+o(e^{\tilde\mu_0\xi})\quad\text{as}\quad \xi\to-\infty
\eea
for some $C_0>0$, where $\tilde\mu_0=\mu_{s_0,c_0^*}$.
Recall that the asymptotic behavior of $\mathcal{W}_{q^*}$ at $\pm\infty$ satisfies
\bea\label{W-S*-pm-infty nonlocal}
\mathcal{W}_{q^*}(\xi)=B e^{-\lambda_0\xi}+o(e^{-\lambda_0\xi})\ \text{as}\ \xi\to+\infty;\ \ 1-\mathcal{W}_{q^*}(\xi)=C e^{\mu_0\xi}+o(e^{\mu_0\xi})\ \text{as}\ \xi\to-\infty
\eea
for some $B,C>0$, where $\mu_0=\mu_{q^*,c^*_0}$. In view of the assumption (A3), we have $\mu_0>\tilde\mu_0$ since $q^*>q_0$.
Combining \eqref{W-S0+infty nonlocal}, \eqref{W-S0-infty nonlocal} and \eqref{W-S*-pm-infty nonlocal}, there exists $0<L<\infty$ sufficiently large such that
$\mathcal{W}_{q^*}(\xi-L)> \mathcal{W}_{q_0}(\xi)$ for all $\xi\in\mathbb{R}$. Now, we define
\beaa
L^*:=\inf\{L\in\mathbb{R}\ |\ \mathcal{W}_{q^*}(\xi-L)\ge \mathcal{W}_{q_0}(\xi)\ \text{for all}\ \xi\in\mathbb{R}\}.
\eeaa
By the continuity, we have
$$\mathcal{W}_{q^*}(\xi-L^*)\geq \mathcal{W}_{q_0}(\xi)\ \text{for all}\ \ \xi\in\mathbb{R}.$$
 If there exists $\xi^*\in\mathbb{R}$ such that
$\mathcal{W}_{q^*}(\xi^*-L^*)= \mathcal{W}_{q_0}(\xi^*)$, by the strong maximum principle, we have
$$\mathcal{W}_{q^*}(\xi-L^*)=\mathcal{W}_{q_0}(\xi)\ \ \text{for all}\ \ \xi\in\mathbb{R},$$
which is impossible since $\mathcal{W}_{q^*}(\cdot-L^*)$ and $\mathcal{W}_{q_0}(\cdot)$ satisfy different equations. Consequently,
$$\mathcal{W}_{q^*}(\xi-L^*)> \mathcal{W}_{q_0}(\xi)\ \ \text{for all}\ \ \xi\in\mathbb{R}.$$
In particular, we have
\beaa
\lim_{\xi\to+\infty}\frac{\mathcal{W}_{q^*}(\xi-L^*)}{\mathcal{W}_{q_0}(\xi)}\geq1.
\eeaa
Furthermore, we can claim that
\bea\label{limit=1 nonlocal}
\lim_{\xi\to+\infty}\frac{\mathcal{W}_{q^*}(\xi-L^*)}{\mathcal{W}_{q_0}(\xi)}=1.
\eea
%\red{
%To prove this, we first claim that it either
%\beaa
%\lim_{\xi\to\infty}\frac{W_{s^*}(\xi-L^*)}{W_{s_0}(\xi)}=1\quad\text{or}\quad\lim_{\xi\to-\infty}\frac{1-W_{s^*}(\xi-L^*)}{1-W_{s_0}(\xi)}=1.
%\eeaa
Otherwise, if the limit in \eqref{limit=1 nonlocal} is strictly bigger than 1, together with $\mu_0>\tilde \mu_0$ and
\beaa%\label{(1-W)-decay nonlocal}
\lim_{\xi\to-\infty}\frac{1-\mathcal{W}_{q^*}(\xi-L^*)}{1-\mathcal{W}_{q_0}(\xi)}=0,
\eeaa
%we can use a standard argument (see, e.g., \cite{Hamel Roques}) to
we can easily  find $\varepsilon>0$ sufficiently small such that
$$\mathcal{W}_{q^*}(\xi-(L^*+\varepsilon))> \mathcal{W}_{q_0}(\xi)\ \ \text{for all}\ \ \xi\in\mathbb{R},$$
which contradicts the definition of $L^*$.
%Notice that,
%\beaa
%\lim_{\xi\to-\infty}\frac{1-W_{s^*}(\xi-L^*)}{1-W_{s_0}(\xi)}=1
%\eeaa
%can never happen since $1-W_{s^*}\sim C_1e^{\nu_1\xi}$, $1-W_{s_0}\sim C_2e^{\nu_2\xi}$, and $\nu_1>\nu_2$  which follows from $s^*>s_0$ and the assumptions (A1)(A3).
As a result, from \eqref{W-S0+infty nonlocal}, \eqref{W-S*-pm-infty nonlocal} and \eqref{limit=1 nonlocal}, we obtain $B_0=Be^{L^*}$.

On the other hand, we set $\widehat{\mathcal{W}}(\xi)=\mathcal{W}_{q^*}(\xi-L^*)-\mathcal{W}_{s_0}(\xi)$. Then $\widehat{\mathcal{W}}(\xi)$ satisfies
\bea\label{W-hat-eq2 nonlocal}
J\ast\widehat{\mathcal{W}}+c^*_0\widehat{\mathcal{W}}'+(f'(0)-1)\widehat{\mathcal{W}}+J(\xi)=0, \quad \xi\in\mathbb{R},
\eea
where
$$J(\xi)=f(\mathcal{W}_{s^*};s^*)- f'(0)\mathcal{W}_{s^*}-f(\mathcal{W}_{s_0};s_0)+ f'(0)\mathcal{W}_{s_0}.$$
By the assumption  (A1) and Taylor's Theorem, there exist $\eta_1\in(0, W_{s^*})$ and $\eta_2\in(0,W_{s_0})$ such that
\beaa
J(\xi)=J_1(\xi)+J_2(\xi)
\eeaa
where
$$J_1(\xi):=f''(\eta_1;q^*)(\mathcal{W}_{q^*}+\mathcal{W}_{q_0})\widehat{\mathcal{W}},$$
$$J_2(\xi):=[f''(\eta_1;q^*)-f''(\eta_2;q_0)]\mathcal{W}^2_{q_0}.$$
 It is easy to see that $J_1(\xi)=o(\widehat{\mathcal{W}})$ for $\xi\approx+\infty$. Next, we will show $J_2(\xi)=o(\widehat{\mathcal{W}})$ for $\xi\approx+\infty$.

Since $f''(0;s^*)>f''(0;s_0)$ (from the assumption (A3)), we can find small $\delta>0$ such that
$$\min_{\eta\in[0,\delta]}f''(\eta;q^*)>\max_{\eta\in[0,\delta]}f''(\eta;q_0)$$
and thus
there exist $\kappa_1,\kappa_2>0$ such that
\bea\label{J-lower bound nonlocal}
\kappa_1e^{-2\lambda_0\xi}\ge J_2(\xi)=[f''(\eta_1;q^*)-f''(\eta_2;q_0)]\mathcal{W}^2_{q_0}(\xi)\ge \kappa_2 e^{-2\lambda_0\xi}\quad \mbox{for all large $\xi$}.
\eea

We now claim that $J_2(\xi)=o(\widehat{\mathcal{W}})$ as $\xi\to+\infty$.
For contradiction, we assume that it is not true. Then there exists $\{\xi_n\}$ with
$\xi_n\to+\infty$ as $n\to\infty$ such that for some $\kappa_3>0$,
\bea\label{kappa3 nonlocal}
\frac{J_2(\xi_n)}{\widehat{\mathcal{W}}(\xi_n)}\geq \kappa_3\quad \mbox{for all $n\in\mathbb{N}$.}
\eea
Set $\widehat{\mathcal{W}}(\xi)=\alpha(\xi)e^{-2\lambda_0\xi}$, where $\alpha(\xi)>0$ for all $\xi$.
By substituting it into \eqref{W-hat-eq2 nonlocal},
we have
\begin{equation}\label{alpha-eq nonlocal}
\begin{aligned}
L(\xi):=&\Big(\int_{\mathbb{R}}J(y)\alpha(\xi-y)e^{2\lambda_0y}dy+(f'(0)-1-2\lambda_0c^*_0)\alpha(\xi)+ c^*_0\alpha'(\xi)\Big)e^{-2\lambda_0\xi}\\
&+J_1(\xi)+J_2(\xi)=0
\end{aligned}
\end{equation}
for all large $\xi$.
By \eqref{J-lower bound nonlocal} and \eqref{kappa3 nonlocal}, we have
\bea\label{alpha-bdd nonlocal}
0<\alpha(\xi_n)\leq \frac{\kappa_1}{\kappa_3}\quad  \mbox{for all $n\in\mathbb{N}$.}
\eea
Now, we will reach a contradiction by dividing the behavior of $\alpha(\cdot)$ into two cases:
\begin{itemize}
    \item[(i)] $\alpha(\xi)$ oscillates for all large $\xi$;
    \item[(ii)] $\alpha(\xi)$ is monotone for all large $\xi$.
\end{itemize}

For case (i), there exist local minimum points $\eta_n$ of $\alpha$ with $\eta_n\to\infty$ as $n\to\infty$ such that
\beaa
\alpha(\eta_n)>0\quad\text{and}\quad \alpha'(\eta_n)=0\quad  \mbox{for all $n\in\mathbb{N}$.}
\eeaa
Without loss of generality, we also assume that
\bea\label{local mini L}
\alpha(\eta_n)\ge\alpha(\xi)\quad\text{for all}\quad \xi\in[\eta_n-L,\eta_n+L].
\eea
Then from \eqref{formula of c_NL},  \eqref{alpha-eq nonlocal} yields that
\begin{equation*}%\label{alpha-eq nonlocal}
L(\eta_n)>\Big(\int_{\mathbb{R}}J(y)(\alpha(\eta_n-y)-\alpha(\eta_n))e^{2\lambda_0y}dy\Big)e^{-2\lambda_0\eta_n}+J_1(\xi_n)+J_2(\eta_n)
\end{equation*}
Together with \eqref{J-lower bound nonlocal} and $J_1(\xi)=o(\widehat{\mathcal{W}}(\xi))$,
from \eqref{alpha-eq nonlocal} and \eqref{local mini L}, we see that
\beaa
0=L(\eta_n)\geq o(1)\alpha(\eta_n)e^{-2\lambda_0\eta_n}+\kappa_2e^{-2\lambda_0\eta_n}>0
\eeaa
for all large $n$, which reaches a contradiction.

For case (ii),
due to \eqref{alpha-bdd nonlocal}, there exists $\alpha_0\in[0, \kappa_1/\kappa_3]$
such that $\alpha(\xi)\to \alpha_0$ as $\xi\to\infty$. Hence, we can find subsequence $\{\eta_j\}$ that tends to $\infty$ such that $\alpha'(\eta_j)\to0$ and
$\alpha(\eta_j)\to \alpha_0$ as $n\to\infty$.
From \eqref{alpha-eq nonlocal} we deduce that
\beaa
0=L(\eta_j)\geq (o(1)+ \kappa_2)e^{-2\lambda_0\eta_j}>0
\eeaa
for all large $j$, which reaches a contradiction.
Therefore, we have proved that
$J_2(\xi)=o(\widehat{\mathcal{W}})$ as $\xi\to\infty$.
Consequently, we have
\beaa%\label{J-small o nonlocal}
J(\xi)=J_1(\xi)+J_2(\xi)=o(\widehat{\mathcal{W}}(\xi))\quad \mbox{as $\xi\to\infty$.}
\eeaa

Now, by the proof of Proposition \ref{prop:correction-U-linear-decay}, we can assert that the asymptotic behavior of $\widehat{\mathcal{W}}(\xi)$ at $\xi=+\infty$ satisfies
\beaa
\widehat{\mathcal{W}}(\xi)=(C_1\xi+C_2)e^{-\beta \xi}+o(e^{-\beta\xi})\quad \mbox{as $\xi\to\infty$},
\eeaa
in which $C_1$ and $C_2$ can not be equal to $0$ simultaneously.
However, by $B_0=Be^{L^*}$, the asymptotic behaviors \eqref{W-S0+infty nonlocal} and \eqref{W-S*-pm-infty nonlocal} yield  $C_1=0$ and $C_2=0$, which reaches a contradiction.
Therefore, \eqref{th1:goal-2 nonlocal} holds, and the proof is complete.
\end{proof}

\section{Preliminary for the Lotka-Volterra competition system}

\subsection{Existence of traveling waves for \eqref{system} under {\bf(H)}}
\begin{proposition}\label{prop:existence-appendix}
Assume that {\bf(H)} holds. There exists the minimal speed $c_{LV}^{*}\in[2\sqrt{1-a},2]$
such that \eqref{system} admits a positive solution
$(u,v)(x,t)=(U,V)(x-ct)$ satisfying
\begin{equation}\label{TW-appendix}
\left\{
\begin{aligned}
&U''+cU'+U(1-U-aV)=0,\\
&dV''+cV'+rV(1-V-bU)=0,\\
&(U,V)(-\infty)=\omega,\ (U,V)(\infty)=(0,1),\\
&U'<0,\ V'>0,
\end{aligned}
\right.
\end{equation}
if and only if $c\ge c_{LV}^{*}$, where
\beaa
\omega=\begin{cases}
(1,0)&\quad \mbox{if $b\geq 1$},\\
\dps (u^*,v^*):=\Big(\frac{1-a}{1-ab},\frac{1-b}{1-ab}\Big)&\quad \mbox{if $0<b<1$}.
\end{cases}
\eeaa
Moreover, the minimal traveling wave speed $c_{LV}^{*}(b)$ is continuous and monotone increasing on $b\in(0,\infty)$.
\end{proposition}
\begin{proof}
For the existence of the minimal speed $c_{LV}^{*}$, it suffices to deal with the critical case $b=1$ since the case $b>1$ and $0<b<1$ have been proved in \cite{Kan-on1997} and \cite[Example 4.2]{Lewis Li Weinberger 2}, respectively.

\medskip
\begin{claim}\label{claim 1 AP}
Suppose that, for each $n\in \mathbb{N}$, $(\hat{c}, U_n, V_n)$ is a solution of \eqref{TW-appendix} with $b=b_n$ and
$b_n\searrow 1$ as $n\to\infty$. Then
\eqref{TW-appendix} has a monotone solution with $b=1$ and $c=\hat{c}$.
\end{claim}
\begin{proof}[Proof of Claim \ref{claim 1 AP}]
First, by translation, we may assume that $V_n(0)=1/2$ for all $n$. Also, by transferring the equation into integral equations (using a variation of the constants formula),
it is not hard to see that $U'_n$ and  $V'_n$ are uniformly bounded. Together with the fact that
$0\leq U_n(\xi), V_n(\xi)\leq 1$ for all $\xi\in\mathbb{R}$
and $n\in\mathbb{N}$, Arzel\`{a}-Ascoli Theorem allows us to take a subsequence that converges to a pair of limit functions $(U,V)\in [C(\mathbb{R})]^2$ with
$0\leq U,V\leq 1$, locally uniformly in $\mathbb{R}$. Moreover,
using Lebesgue's dominated convergence theorem to integral equations, we can conclude that
$(\hat{c},U,V)$ satisfies \eqref{TW-appendix} with $b=1$ (since $b_n\searrow 1$).
Moreover, we can see from the equations satisfied by $U$ and $V$ that $(U,V)\in [C^2(\mathbb{R})]^2$ and $U'\leq 0$ and $V'\geq0$ (since $U'_n\leq 0$ and $V'_n\geq0$ for all $n$), which implies that $(U,V)(\pm\infty)$ exists.

It remains to show that
\bea\label{BC-cond-Appendix}
(U,V)(-\infty)=(1,0),\quad  (U,V)(+\infty)=(0,1).
\eea
 Note that we must have
\bea\label{bdry-cond}
U(\pm\infty)[1-U(\pm\infty)-aV(\pm\infty)]=0,\quad V(\pm\infty)[1-V(\pm\infty)-U(\pm\infty)]=0.
\eea
Hence, $U(\pm\infty)$, $V(\pm\infty)\in\{0,1\}$. Since $V_n(0)=1/2$ for all $n$, we have $V(0)=1/2$ and thus
\bea\label{lim-of-V}
V(-\infty)=0,\quad V(+\infty)=1.
\eea
Also, note that from \eqref{bdry-cond} we see that $V(+\infty)=1$ implies that
\bea\label{lim-of-U}
U(+\infty)=0.
\eea

If $U(-\infty)=0$, then $U\equiv0$ due to $U'\leq 0$. However, by integrating the equation of $V$ over $(-\infty,+\infty)$,
it follows that
\beaa
\hat{c}+r\int_{-\infty}^{\infty}V(\xi)(1-V(\xi))d\xi=0,
\eeaa
which implies that $\hat{c}<0$. This contradicts with $\hat{c}>0$
(more precisely, from \cite{Kan-on1997} we see that $2\sqrt{1-a}\leq \hat{c}\leq 2$).
As a result, we have $U(-\infty)=1$, which together with \eqref{lim-of-V} and \eqref{lim-of-U} implies
\eqref{BC-cond-Appendix}.
We, therefore, obtain a monotone solution with $b=1$ and $c=\hat{c}$.
\end{proof}

Let us define
\beaa
c_{LV}^{*}:=\min\{\hat{c}>0|\, \mbox{\eqref{TW-appendix}  has a solution with $c=\hat{c}$}\}.
\eeaa
We write $c_{LV}^{*}=c_{LV}^{*}(b)$ to emphasize the dependence of $c_{LV}^{*}$ on $b$.
It follows from \cite{Kan-on1997} and \cite[Example 4.2]{Lewis Li Weinberger 2} that $c_{LV}^{*}(b)$ is well defined for all $b>0$ except $b=1$. We next prove the existence of $c_{LV}^{*}(1)$, {\it i.e.}, $c_{LV}^{*}(b)$ is continuous from both $b\to 1^+$ and $b\to 1^-$.

Let us define
$$\lim_{b\to 1^+}c_{LV}^{*}(b)=\overline c\ \ \text{and}\ \ \lim_{b\to 1^-}c_{LV}^{*}(b)=\underline c.$$
 Note that, by simple comparison argument, it holds
$$\underline c\le c_{LV}^{*}(1)\le \overline c.$$
Therefore, to complete the proof of Proposition \ref{prop:existence-appendix}, we only need to show $\underline c=\overline c$.

\begin{claim}\label{claim 2 AP}
It holds $\underline c=\overline c$.
\end{claim}
\begin{proof}[Proof of Claim \ref{claim 2 AP}]
Assume by contradiction that $\underline c<\overline c$, and hence by the continuity argument, there exists a traveling wave satisfying
 \begin{equation}\label{traveling wave c1}
    \left\{
        \begin{aligned}
        &U''+c_1U'+U(1-U-aV)=0,\\
        &dV''+c_1V'+rV(1-V-U)=0,\\
        &(U,V)(-\infty)=(1,0),\\
        &(U,V)(+\infty)=(0,1),
        \end{aligned}
        \right.
    \end{equation}
with $\underline c\le c_1<\overline c$. Remark that, in the following proof, we will use certain asymptotic estimates of the traveling wave $(U, V)$ with speed $c_1$, as defined in \eqref{traveling wave c1}. These estimates are provided in Lemma~\ref{lem:AS-infty:b=1} and Corollary~\ref{lm: behavior around - infty b=1}, and notably, they do not depend on the specific value of the wave speed. The proofs of these results will be given at the end of \S4.3.

We aim to find $(R_u,R_v)(\xi)$ like Figure \ref{Figure ap} such that
$$(\overline U,\underline{V})(\xi):=\Big(\min\{(U_1-R_u)(\xi),1\},\max\{(V_1+R_v)(\xi),0\}\Big)$$ become a super-solution
satisfying
\begin{equation}\label{def: super solution 2}
\left\{
\begin{aligned}
N_1[\overline U,\underline V]&:=\overline U''+c_2\overline U'+\overline U(1-\overline U-a\underline V)\le 0,\\
N_{2}[\overline U,\underline V]&:=d\underline V''+c_2\underline V'+r\underline V(1-\underline V-(1+\delta_0)\overline U)\ge 0,
\end{aligned}
\right.
\end{equation}
for some small $\delta_0>0$ and $c_1<c_2<\overline c$.
Moreover,
$\overline U'(\xi_0^{\pm})$
(resp. $\underline V'(\xi_0^{\pm})$) exists and
\begin{equation*}
\overline U'(\xi_0^+)\leq \overline U'(\xi_0^-)\quad (resp.\, \underline V'(\xi_0^+)\geq \underline V'(\xi_0^-))
\end{equation*}
if  $\overline U'$ (resp., $\underline V'$) is not continuous at $\xi_0$.

\begin{figure}
\begin{center}
\begin{tikzpicture}[scale = 1.1]
\draw[thick](-6,0) -- (6,0) node[right] {$\xi$};
\draw [ultra thick] (-6,-0.7)to [out=0,in=140] (-3,-1.5) -- (3,-1.5) to [out=60,in=185] (6,-0.5);
\draw [semithick] (-6, -0.2) to [ out=0, in=150] (-3,-0.7) -- (3,-0.7)  to [out= 40, in=180] (6,-0.1);
\node[above] at (3,0.5) {$M$};
\draw[dashed] [thick] (3,0)-- (3,0.5);
\node[above] at (-3,0.5) {$-M$};
\draw[dashed] [thick] (-3,0)-- (-3,0.5);
\draw [thin] (-2.8,-1.5) arc [radius=0.2, start angle=0, end angle= 135];
\node[above] at (-2.8,-1.4) {$\alpha_3$};
\draw [thin] (-2.8,-0.7) arc [radius=0.2, start angle=0, end angle= 155];
\node[above] at (-2.8,-0.5) {$\alpha_4$};
\draw [thin] (3.2,-1.25) arc [radius=0.2, start angle=60, end angle= 200];
\node[above] at (2.7,-1.4) {$\alpha_1$};
\draw [thin] (3.2,-0.55) arc [radius=0.2, start angle=50, end angle= 180];
\node[above] at (2.7,-0.5) {$\alpha_2$};
\node[below] at (0,-0.9) {$R_u$};
\node[below] at (0,-0.1) {$R_v$};
%\node[below] at (0,-2.3) {Figure \ref{Figure ap}: $(R_u,R_v)$.};
\end{tikzpicture}
\caption{ $(R_u,R_v)$.}\label{Figure ap}
\end{center}
\end{figure}

We now define $(R_u,R_v)(\xi)$ as following:
\begin{equation}\label{def rurv}
(R_u,R_v)(\xi):=\begin{cases}
(-\varepsilon_1e^{-\lambda_1\xi},-\eta_1e^{-\lambda_1\xi}),&\ \ \mbox{for}\ \ M\le\xi,\\
(-\varepsilon_1e^{-\lambda_1 M},-\eta_1e^{-\lambda_1 M}),&\ \ \mbox{for}\ \ -M\le\xi\le M,\\
(-\varepsilon_2(-\xi)^{1/2}[1-U_1(\xi)],-\eta_2(-\xi)^{1/2}V_1(\xi)),&\ \ \mbox{for}\ \ \xi\le -M,
\end{cases}
\end{equation}
where $\lambda_{1}>\max\{\Lambda(c_1),\lambda_v^-(c_1)\}>0$ which is defined in Lemma \ref{lm: behavior around + infty}. Here $\varepsilon_{1,2}>0$ and $\eta_{1,2}>0$, very small such that $|R_u|,|R_v|\ll 1$ , will be determined later.

\noindent{\bf{Step 1}}
We consider $\xi\in[M,\infty)$.
In this case, we have $(R_u,R_v)=(-\varepsilon_1e^{-\lambda_1\xi},-\eta_1e^{-\lambda_1\xi})$ with $\lambda_{1}>\max\{\Lambda(c_1),\lambda_v^-(c_1)\}$.

Recall that, $(U_1,V_1)$ is the minimal traveling wave satisfying \eqref{traveling wave c1}.
By some  straightforward computations,  we have
\begin{equation}\label{eq 1}
\begin{aligned}
N_1[\overline U,\underline V]=&(c_2-c_1)U'_1-(\lambda_1^2+c_2\lambda_1)R_u\\
&-R_u(1-2U_1+R_u-aV_1-aR_v)-aR_vU_1,
\end{aligned}
\end{equation}
and
\begin{equation*}%\label{eq 2}
\begin{aligned}
N_{2}[\overline U,\underline V] = &(c_2-c_1)V_1'+ (d\lambda_1^2+c_2\lambda_1)R_v+ rR_v(1-2V_1-R_v-U_1+R_u)\\
&-r\delta_0(V_1+R_v)(U_1-R_u).
\end{aligned}
\end{equation*}

By Lemma \ref{lm: behavior around + infty}, there exists $C_1>0$ such that
$$(c_2-c_1)U_1'\le -C_1(c_2-c_1)U_1\ \ \text{for all}\ \ \xi\in[M,\infty)$$
up to enlarging $M$ if necessary.
Then, from the definition of $(R_u,R_v)$ and $\lambda_{1}>\max\{\Lambda(c_1),\lambda_v^-(c_1)\}$. As $\xi\to+\infty$, we have
$$-(\lambda_1^2+c_2\lambda_1)R_u=o(U_1),\ -R_u(1-2U_1+R_u-aV_1-aR_v)=o(U_1),\ -aR_vU_1=o(U_1).$$
Therefore, by setting  $\varepsilon_1\ll 1$ and $\eta_1\ll 1$, from \eqref{eq 1}, we have $N_1[\overline U,\underline V]\le 0$ for all $\xi\in[M,\infty)$.

Next, we deal with the inequality of $N_{2}[W_u, W_v]$. Since $\lambda_{1}>\max\{\Lambda(c_1),\lambda_v^-(c_1)\}$, as $\xi\to+\infty$ we have
$$ (d\lambda_1^2+c_2\lambda_1)R_v=o(1-V_1), rR_v(1-2V_1-R_v-U_1+R_u)=o(1-V_1).$$
From Lemma \ref{lm: behavior around + infty}, there exists $C_2>0$ such that
$$(c_2-c_1)V'_1\ge C_2(c_2-c_1)(1-V_1).$$
By the asymptotic behavior of $U_1$ in Lemma \ref{lm: behavior around + infty}, we have
$$(c_2-c_1)V'_1-r\delta_0(V_1+R_v)(U_1-R_u)>(c_2-c_1)V'_1-r\delta_0V_1(U_1-R_u)>0,$$
provided that $\delta_0\ll (c_2-c_1)$ is sufficiently small.
Then, we have $N_{2}[W_u,W_v]\geq0$
for $\xi\geq M$ up to enlarging $M$ if necessary. The choice of $\delta_0$ is not depending on $M$.

\noindent{\bf{Step 2}}
We consider $\xi\in[-M,M]$. In this case, $(R_u,R_v)$ are constants. By the definition \eqref{def rurv}, $(R_u.R_v)$ is continuous at $\xi=M$. Moreover, it is easy to verify that
$$\lim_{\xi\to M^+}R'_u(\xi)>0=\lim_{\xi\to M^-}R'_u(\xi)\quad\text{and}\quad \lim_{\xi\to M^+}R'_v(\xi)>0=\lim_{\xi\to M^-}R'_v(\xi),$$
which implies $\angle \alpha_1,\angle \alpha_2<180^{\circ}$.

By some straightforward computation, we have
 \begin{equation*}
N_1[\overline U,\underline V]=(c_2-c_1)U'_1-R_u(1-2U_1+R_u-aV_1-aR_v)-aR_vU_1,
\end{equation*}
and
\begin{equation*}
N_{2}[\overline U,\underline V] \ge (c_2-c_1)V_1'+ rR_v(1-2V_1-R_v-U_1+R_u)-r\delta_0V_1(U_1-R_u).
\end{equation*}
Since $[-M,M]$ is a bounded interval and $|R_u|,|R_v|\ll 1$, by reducing  $\delta_0(M,\eta_1)\ll (c_2-c_1)$, $\varepsilon_1$,  $\eta_1$ if necessary, we have $N_1[\overline U,\underline V]\le 0$ and
$N_{2}[\overline U,\underline V]\ge 0$ for $\xi\in[-M,M]$. Note that the choice of $\delta_0$ is unaffected by reducing $\eta_1$.

\noindent{\bf{Step 3}}
We consider $\xi\in(-\infty,-M]$. In this case, we have
$$(R_u,R_v)=(-\varepsilon_2(-\xi)^{1/2}[1-U_1(\xi)],-\eta_2(-\xi)^{1/2}V_1(\xi)).$$
We take
$$\varepsilon_2=\frac{\varepsilon_1e^{-\lambda_1 M}}{M^{1/2}(1-U_1(-M))}\quad\text{and}\quad\eta_2=\frac{\eta_1e^{-\lambda_1 M}}{M^{1/2}V_1(-M)}$$
such that $(R_u.R_v)$ is continuous at $\xi=-M$. It is easy to verify that
$$\lim_{\xi\to -M^+}R'_u(\xi)=0>\lim_{\xi\to -M^-}R'_u(\xi)\quad\text{and}\quad \lim_{\xi\to -M^+}R'_v(\xi)=0>\lim_{\xi\to M^-}R'_v(\xi),$$
which implies $\angle \alpha_3,\angle \alpha_4<180^{\circ}$.

%By some straightforward computation, one has

Note that, from the definition of $(R_u,R_v)$, by adjusting $\varepsilon_1/\eta_1$,
we have $\varepsilon_2=\eta_2=(M')^{-\frac{1}{2}}$, which implies
 $\overline U(\xi)<1$, $\underline V(\xi)>0$ for all $\xi\in(-M',-M]$ and
$\overline U(\xi)=1$, $\underline V(\xi)=0$ for all $\xi\in(-\infty,-M']$, which implies that
$$N_1[\overline U,\underline V]\le 0\ \ \text{and}\ \ N_{2}[\overline U,\underline V]\ge 0\ \ \text{for}\ \ \xi\in(-\infty,-M'].$$

It suffices to consider $\xi\in[-M',-M]$.
By $R_u,R_v<0$ in $[-M',-M]$ and $U'_1<0$, we have
\begin{equation*}
\begin{aligned}
N_1[\overline U,\underline V]=&(c_2-c_1)U'_1+\varepsilon_2(-\xi)^{1/2}\Big(-U_1''-c_2U_1'-\frac{1}{4}(-\xi)^{-2}(1-U_1)+(-\xi)^{-1}U_1'\\&-\frac{c_2}{2}(-\xi)^{-1}(1-U_1)\Big)
-R_u(1-2U_1+R_u-a(V_1+R_v))-aU_1R_v\\
\leq&(c_2-c_1)(U'_1-\varepsilon_2(-\xi)^{1/2}U'_1)+\varepsilon_2(-\xi)^{1/2}\Big(U_1(1-U_1-aV_1)-\frac{c_2}{2}(-\xi)^{-1}(1-U_1)\Big)\\&-R_u(1-2U_1+R_u-a(V_1+R_v))-aU_1R_v.
\end{aligned}
\end{equation*}

Note that,  $1-U_1\ge -R_u$ in $\xi\in[-M',-M]$ and $1-U_1=-R_u$ on $\xi=-M'$.
By the asymptotic behavior in Lemma \ref{lem:AS-infty:b=1}, we have
$(1-U_1)'\ge -R'_u$ for $\xi\in[-M',-M]$, which implies
$$(c_2-c_1)(U'_1-\varepsilon_2(-\xi)^{1/2}U'_1)\le -\frac{(c_2-c_1)}{2}(-\xi)^{-1}R_u.$$
Then, by using
$\varepsilon_2(-\xi)^{1/2}U_1(1-U_1)=-R_uU_1$ and $\varepsilon_2=\eta_2$, from the computation above,
\beaa
N_1[\overline U,\underline V]&\le&-R_uU_1-a\varepsilon_2(-\xi)^{1/2}U_1V_1+\frac{c_2}{2}(-\xi)^{-1}R_u-R_u(1-2U_1-aV_1)
\\
&&-R_u^2+aR_uR_v+a\varepsilon_2(-\xi)^{1/2}U_1V_1 -\frac{(c_2-c_1)}{2}(-\xi)^{-1}R_u\\
&=&\frac{c_1}{2}(-\xi)^{-1}R_u-R_u(1-U_1-aV_1)-R_u^2+aR_uR_v.
\eeaa

Denote that
\beaa
I_1:=\frac{c_1}{2}(-\xi)^{-1}R_u,\quad
I_2:=-R_u(1-U_1-aV_1),\quad
I_3:=-R_u^2+aR_uR_v.
\eeaa
From Corollary \ref{lm: behavior around - infty b=1}, we have $1-U_1-aV_1>0$ for all $\xi\leq -M$. Therefore,
\beaa
I_3=-R_u^2+aR_uR_v\leq R_u\varepsilon_2(-\xi)^{1/2}(1-U_1-aV_1)(\xi)<0\quad \mbox{for}\quad \xi\in[-M',-M].
\eeaa
Moreover, in view of Corollary~\ref{lm: behavior around - infty b=1} again, we have
$I_2=o(I_1)$ as $\xi\to-\infty$.
Then, up to enlarging $M$ if necessary, we have $N_1[\overline U,\underline V]\le 0$ for $\xi\in[-M',-M]$. From now on, we fix $M$.

On the other hand, by some straightforward computations, we have
\beaa
N_{2}[\overline U,\underline V]&=&d\Big(V_1''+\frac{\eta_2}{4}(-\xi)^{-2/3}V_1+\eta_2(-\xi)^{-1/2}V'_1-\eta_2(-\xi)^{1/2}V''_1\Big)\\
&&+c_2\Big(V_1'+\frac{\eta_2}{2}(-\xi)^{-1/2}V_1-\eta_2(-\xi)^{1/2}V'_1\Big)\\&&
+r(V_1+R_v)(1-V_1-R_v-(1+\delta_0)(U_1-R_u)).\\
&\geq& r\eta_2(-\xi)^{1/2}V_1\Big((\eta_2(-\xi)^{1/2}-1)(1-U_1-V_1)
+\frac{c_2}{2r}(-\xi)^{-1}\Big)\\
&&+(c_2-c_1)(V_1'-\eta_2(-\xi)^{1/2}V_1')-r(U_1-R_u)(V_1+R_v)\delta_0.
\eeaa

In the discussion above, we fixed $\varepsilon_1/\eta_1$ to get $\varepsilon_2=\eta_2$. Now. we further reduce $\eta_1$ but keep $\varepsilon_1/\eta_1$ unchanged. Then by Corollary \ref{lm: behavior around - infty b=1}, we have
$(\eta_2(-\xi)^{1/2}-1)(1-U_1-V_1)>0$ for $\xi\le-M$.
Note that $V_1\ge -R_v$ in $\xi\in[-M',-M]$ and $V_1=-R_v$ on $\xi=-M'$. By Corollary \ref{lm: behavior around - infty b=1} again, we have
$V_1'\ge -R'_v$ in $\xi\in[-M',-M]$, which implies
$$(c_2-c_1)(V'_1-\eta_2(-\xi)^{1/2}V'_1)\ge \frac{(c_2-c_1)}{2}(-\xi)^{-1}R_v.$$
Thus, on the bounded interval $[-M',-M]$,
$$N_{2}[\overline U,\underline V]\ge -\frac{c_1}{2}(-\xi)^{-1}R_v-r(U_1-R_u)(V_1+R_v)\delta_0\ge 0$$
for all small $\delta_0(\eta_1)>0$.
Therefore
the construction of $(R_u,R_v)$ is complete.

Now we are equipped with a super-solution satisfying \eqref{def: super solution 2}. Let us consider the spreading speed of the solution of
\begin{equation}\label{eq 2 1}
    \left\{\begin{aligned}\relax
    & u_t=u_{xx}+u(1-u-av) \\
        & v_t=dv_{xx}+rv(1-v-(1+\delta_0)u),
    \end{aligned}\right.
\end{equation}
with initial datum \eqref{initial datum}.  It is known that the spreading speed is greater than or equal to $\overline c$.

On the other hand, it is easy to check that $(\overline u, \underline v)(t,x):=(\overline U,\underline V)(x-c_2t-x_0)$ is a super-solution of \eqref{eq 2 1}. Moreover, by setting $x_0>0$ large, one has $\overline u(0,x)\ge u_0(x)$ and $\underline v(0,x)\le v_0(x)$. Then, by the comparison principle, the spreading speed is smaller than or equal to $c_2$, which is impossible since $c_2<\overline c$.
\end{proof}
By Claim \ref{claim 1 AP} and Claim \ref{claim 2 AP}, we can assert that $c^*_{LV}(1)$ is well defined and $c^*_{LV}(b)$ is continuous for all $b\in(0,+\infty)$. The proof of Proposition \ref{prop:existence-appendix} is complete.
\end{proof}
\subsection{Asymptotic behavior of traveling waves of \eqref{system} near $+\infty$}\label{subsec:Asymp behavior}
%\blue{I think we should remove this subsection and finish the introduction by the last subsection since this result in this subsection is not beneficial for the reader to understand our main result.}

\noindent

In this subsection, we provide the asymptotic behavior of $(U_c,V_c)$ near $\pm\infty$
for $0<a<1$ and $b>0$, where $(U_c,V_c)$ satisfies either \eqref{tw solution weak} with speed $c$.
Some results are reported in \cite{MoritaTachibana2009}.

Hereafter, we denote
\beaa
&&\lambda_u^{\pm}(c):=\frac{c\pm\sqrt{c^2-4(1-a)}}{2}>0,\\ &&\lambda_v^{+}(c):=\frac{c+\sqrt{c^2+4rd}}{2d}>0>\lambda_v^{-}(c):=\frac{c-\sqrt{c^2+4rd}}{2d}.
\eeaa
The asymptotic behavior of $(U,V)$ near $+\infty$ for $0<a<1$ and $b>1$ can be found in \cite{MoritaTachibana2009}.
Note that the conclusions presented in \cite{MoritaTachibana2009} are still applicable for $b>0$ since $b$ is not present in the linearization at the unstable equilibrium $(0,1)$. Therefore, we have the following result.

\begin{lemma}[\cite{MoritaTachibana2009}]\label{lm: behavior around + infty}
Assume that $0<a<1$ and $b>0$.
Let $(c,U,V)$ be a solution of the system (\ref{tw solution weak}).
Then there exist positive constants $l_{i=1,\cdots,8}$ such that the following hold:
\begin{itemize}
\item[(i)]
For $c>2\sqrt{1-a}$,
\beaa
&&\lim_{\xi\rightarrow+\infty}\frac{U(\xi)}{e^{-\Lambda(c)\xi}}=l_1,\\
&&\lim_{\xi\rightarrow+\infty}\frac{1-V(\xi)}{e^{-\Lambda(c)\xi}}=l_2\quad \mbox{if $\lambda_v^{+}(c)>\Lambda(c)$},\\
&&\lim_{\xi\rightarrow+\infty}\frac{1-V(\xi)}{\xi e^{-\lambda_v^{+}(c)\xi}}=l_3\quad
\mbox{if $\lambda_v^{+}(c)=\Lambda(c)$},\\
&&\lim_{\xi\rightarrow+\infty}\frac{1-V(\xi)}{e^{-\lambda_v^{+}(c)\xi}}=l_4\quad
\mbox{if $\lambda_v^{+}(c)<\Lambda(c)$},
\eeaa
where $\Lambda(c)\in\{\lambda_u^+(c),\lambda_u^-(c)\}$. %\red{for nonlinear case, can we determine the value of $\Lambda(c)$?}.
\item[(ii)]
For $c=2\sqrt{1-a}$,
\beaa
&&\lim_{\xi\rightarrow+\infty}\frac{U(\xi)}{\xi^{p} e^{-\Lambda(c)\xi}}=l_5,\\
&&\lim_{\xi\rightarrow+\infty}\frac{1-V(\xi)}{\xi^p e^{-\Lambda(c)\xi}}=l_6\quad \mbox{if $\lambda_v^{+}(c)>\Lambda(c)$},\\
&&\lim_{\xi\rightarrow+\infty}\frac{1-V(\xi)}{\xi^{p+1} e^{-\Lambda(c)\xi}}=l_7\quad
\mbox{if $\lambda_v^{+}(c)=\Lambda(c)$},\\
&&\lim_{\xi\rightarrow+\infty}\frac{1-V(\xi)}{ e^{-\lambda_v^{+}(c)\xi}}=l_8\quad
\mbox{if $\lambda_v^{+}(c)<\Lambda(c)$},
\eeaa
where  $\Lambda(c)=\lambda_u^{\pm}(c)=\sqrt{1-a}$ %=\lambda_u^-(c)$}
and  $p\in\{0,1\}$.
\end{itemize}
\end{lemma}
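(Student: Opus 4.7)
The plan is to reduce the problem to a system of second-order ODEs whose leading behavior at $+\infty$ can be read off from its linearization at the equilibrium $(U,V)=(0,1)$. First I introduce $W:=1-V$, which is positive, strictly decreasing (because $V'>0$), and satisfies $W(+\infty)=0$. A direct substitution gives
\beaa
U''+cU'+(1-a)U &=& U^2-aUW,\\
dW''+cW'-rW &=& -rbU-rW^2+rbUW,
\eeaa
so the linearization decouples into the homogeneous equation $U''+cU'+(1-a)U=0$, whose characteristic roots are $\lambda_u^{\pm}(c)$, together with the nonhomogeneous equation $dW''+cW'-rW=-rbU$, whose homogeneous roots are $\lambda_v^{\pm}(c)$. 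Note in particular that $b$ does not appear in the coefficients of the linear $U$-equation, which is the reason the Morita--Tachibana conclusions extend verbatim from $b>1$ to general $b>0$.

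The first step is to determine the decay of $U$. Since the right-hand side $U^2-aUW=o(U)$ as $\xi\to+\infty$, classical asymptotic integration for linear second-order ODEs with small perturbations implies that the monotone positive solution $U$ must behave like $l_1\,e^{-\Lambda(c)\xi}$ with $\Lambda(c)\in\{\lambda_u^+(c),\lambda_u^-(c)\}$ when $c>2\sqrt{1-a}$, and like $l_5\,\xi^{p}e^{-\sqrt{1-a}\,\xi}$ with $p\in\{0,1\}$ in the critical case $c=2\sqrt{1-a}$, the two bases then being $e^{-\sqrt{1-a}\,\xi}$ and $\xi e^{-\sqrt{1-a}\,\xi}$. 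Positivity of the leading constant is inherited from $U>0$.

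The second step is to propagate the asymptotics of $U$ into those of $W$. Treating the $W$-equation as a linear ODE forced by a right-hand side of known decay, I would apply variation of parameters with the basis $\{e^{-\lambda_v^+(c)\xi},\,e^{-\lambda_v^-(c)\xi}\}$ of the homogeneous equation. The boundary condition $W(+\infty)=0$ suppresses the growing mode (since $\lambda_v^-(c)<0$), while the particular solution is obtained by convolving the forcing against the Green's kernel. Comparing exponents yields the three subcases: a particular-solution-dominated decay at rate $e^{-\Lambda(c)\xi}$ if $\lambda_v^+(c)>\Lambda(c)$, a homogeneous-solution-dominated decay at rate $e^{-\lambda_v^+(c)\xi}$ if $\lambda_v^+(c)<\Lambda(c)$, and the resonance $\lambda_v^+(c)=\Lambda(c)$ where the variation-of-parameters integral accrues an extra factor $\xi$, producing $\xi e^{-\lambda_v^+(c)\xi}$. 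In the critical case $c=2\sqrt{1-a}$, the forcing carries the additional $\xi^{p}$ prefactor, which passes through the same integral and accounts for the $\xi^{p+1}$ factor appearing in the resonant subcase. Positivity of $l_2,\ldots,l_8$ follows by tracking the sign of the coefficient in each Green's-function integral and using $W>0$.

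The main obstacle is making rigorous the step from formal linearization to genuine asymptotics of the nonlinear system, i.e.~verifying that the nonlinear remainders $U^2$, $UW$, $W^2$ cannot perturb the leading exponents. The cleanest way is a Banach fixed-point argument on $[\xi_0,\infty)$: write $U=\tilde U+R_U$, $W=\tilde W+R_W$ with $(\tilde U,\tilde W)$ the linear predictions, and look for the remainders in a weighted $C^0$-space whose weight decays strictly faster than both $\tilde U$ and $\tilde W$; choosing $\xi_0$ large enough makes the map a contraction. The most delicate point is the resonant case $\lambda_v^+(c)=\Lambda(c)$, where the relevant variation-of-parameters kernel is not absolutely integrable against the forcing and the polynomial correction has to be carried explicitly. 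An alternative, essentially the route of \cite{MoritaTachibana2009}, is to recast each second-order equation as a Riccati equation for the logarithmic derivative and invoke the asymptotic theory of Riccati equations at infinity.
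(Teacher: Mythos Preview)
The paper does not supply its own proof of this lemma: it simply cites \cite{MoritaTachibana2009} for the case $0<a<1<b$ and adds the single remark that the conclusions carry over to all $b>0$ because $b$ is absent from the linearization at $(0,1)$. Your proposal already contains exactly this observation, so at the level of what the present paper actually does, you are aligned with it.

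What you have written beyond that is a sketch of the underlying Morita--Tachibana argument itself. The outline --- pass to $W=1-V$, read off $\lambda_u^{\pm}$ from the decoupled linear $U$-equation, then feed the resulting decay of $U$ as forcing into the $W$-equation and sort cases by comparing $\Lambda(c)$ with $\lambda_v^+(c)$ --- is the standard route and is essentially correct. Two small cautions if you intend to flesh it out: first, the positivity of the constants $l_2,\dots,l_8$ is not automatic from ``tracking signs'' in the Green's-function integrals, since in the homogeneous-dominated subcase $\lambda_v^+(c)<\Lambda(c)$ the leading coefficient of $W$ is a free constant of the homogeneous solution, not a forced one, and its positivity must come from $W>0$ together with monotonicity rather than from the integral formula; second, the asymptotic-integration step for $U$ (passing from ``right-hand side is $o(U)$'' to precise exponential asymptotics) requires a quantitative hypothesis such as integrability of the perturbation, which here follows once you know $U,W$ decay at least exponentially --- so you should state that preliminary exponential bound before invoking Levinson-type results. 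With those points addressed, your plan would constitute a self-contained proof of what the paper merely quotes.
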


%\begin{remark}\label{rk:DuWangZhou}
%In Lemma~\ref{lm: behavior around + infty}, if $c=c_{LV}^*>2\sqrt{1-a}$ (pushed front case), then from \cite[Lemma~ 2.3]{DuWangZhou} we have $\Lambda(c)=\lambda_u^{+}(c)$. %If $c>c_{LV}^*>2\sqrt{1-a}$, we have $\Lambda(c)=\lambda_u^{+}(c)$.
%\end{remark}

When $c=2\sqrt{1-a}$, it is not clear whether $p=0$ or $p=1$.
By applying a similar argument used in \cite{Guo Wu} that considered the discrete version of \eqref{system},
we can derive an implicit criterion for determining whether $p=0$ or $p=1$, which is given in the following proposition.

\begin{proposition}\label{prop: implicit cond}
Assume that $0<a<1$ and $b>0$.
Let $(c,U,V)$ be a solution of \eqref{tw solution weak} with
$c=2\sqrt{1-a}$ and
$p$ be given in (ii) of Lemma~\ref{lm: behavior around + infty}.
Then
\bea\label{cond-p}
 p=
\begin{cases}
1 & \quad \mbox{if and only if}\quad  \int_{-\infty}^{\infty}e^{\Lambda(c)\xi}U(\xi)[a(1-V(\xi))-U(\xi)]d\xi\neq0,\\
0 & \quad \mbox{if and only if}\quad  \int_{-\infty}^{\infty}e^{\Lambda(c)\xi}U(\xi)[a(1-V(\xi))-U(\xi)]d\xi=0,
\end{cases}
\eea
where $\Lambda(c)=\lambda_u^{\pm}(c)=\sqrt{1-a}$.
\end{proposition}

The proof of Proposition~\ref{prop: implicit cond} is based on
a modified version of Ikehara's Theorem, which is given as follows:

\begin{proposition}[see Proposition 2.3 in \cite{Carr Chmaj}]\label{prop:ikehara}
For a positive non-increasing function $U$, we define
\begin{eqnarray*}
\dps F(\lambda):=\int_0^{+\infty} e^{-\lambda\xi} U(\xi) d\xi,\quad
\mbox{$\lambda\in\mathbb{C}$ with ${\rm Re}\lambda<0$}.
\end{eqnarray*}
If $F$ can be written as
$F(\lambda)={H(\lambda)}/{(\lambda+\gamma)^{p+1}}$
for some constants $p>-1, \gamma>0$, and some analytic function $H$
in the strip $-\gamma\leq {\rm Re}\lambda<0$, then
\begin{eqnarray*}
\lim_{\xi\rightarrow +\infty}
\frac{U(\xi)}{{\xi}^{p}e^{-\gamma\xi}}=\frac{H(-\gamma)}{\Gamma(\gamma+1)}.
\end{eqnarray*}
\end{proposition}

\begin{proof}[Proof of Proposition~\ref{prop: implicit cond}]
In fact, by modifying the process used in \cite{Guo Wu}, we can prove Lemma~\ref{lm: behavior around + infty} and
\eqref{cond-p} independently; however, the proof is quite long.
Instead of giving detailed proof,
we simply assume that Lemma~\ref{lm: behavior around + infty} hold and derive
\eqref{cond-p} by using Proposition~\ref{prop:ikehara}.

Let us  define the bilateral Laplace transform of $U$ as
\beaa
\mathcal{L}(\lambda):=\int_{-\infty}^{+\infty}e^{-\lambda \xi}U(\xi) d\xi,
\eeaa
which is well-defined for $-\Lambda(c)<{\rm Re} \lambda<0$
(since we have assumed that Lemma~\ref{lm: behavior around + infty} holds).
Using the equation of $U$ and integration by parts several times, we have
\bea\label{Phi-eq}
\Phi(\lambda)\mathcal{L}(\lambda)+I(\lambda)=0,\quad -\Lambda(c)<{\rm Re} \lambda<0,
\eea
where
\beaa
\Phi(\lambda):=c\lambda+\lambda^2+1-a,\quad I(\lambda):=\int_{-\infty}^{\infty}e^{-\lambda \xi}U[a(1-V)-U](\xi)d\xi.
\eeaa
To apply Ikehara's Theorem, we
rewrite \eqref{Phi-eq} as
\beaa
F(\lambda):=\int_{0}^{+\infty}e^{-\lambda \xi}U(\xi) d\xi
=-\frac{I(\lambda)}{\Phi(\lambda)}-\int_{-\infty}^{0}e^{-\lambda \xi}U(\xi) d\xi,
\eeaa
as long as $\Phi(\lambda)$ does not vanish.
Also, we define
\bea\label{Ikehara-from}
H(\lambda):=Q(\lambda)-[\lambda+\Lambda(c)]^{p+1}\int_{-\infty}^{0}e^{-\lambda \xi}U(\xi) d\xi,
\eea
where $\Lambda(c)=\sqrt{1-a}$, $p\in\mathbb{N}\cup\{0\}$, and
\bea\label{Q-func}
Q(\lambda):=-\frac{I(\lambda)}{\Phi(\lambda)/[\lambda+\Lambda(c)]^{p+1}}.
\eea

We now prove that $H$ is analytic in the strip $S:=\{-\Lambda(c)\leq {\rm Re}\lambda<0\}$.
Since the second term on the right-hand side of \eqref{Ikehara-from} is always analytic for ${\rm Re}\lambda<0$,
it suffices to show that $Q$ is analytic in the strip $S$.
Since $\mathcal{L}$ is well-defined for $-\Lambda(c)< {\rm Re}\lambda<0$, we see that
$Q$ is analytic for $-\Lambda(c)< {\rm Re}\lambda<0$. Therefore, it suffices to prove the
analyticity of $Q$ on $\{{\rm Re}\lambda=-\Lambda(c)\}$.
For this, we claim that the only root of $\Phi(\lambda)=0$ is the real root $\lambda=-\Lambda(c)$.
To see this, let $\lambda=\alpha +\beta i$ for $\alpha,\beta\in\mathbb{R}$ and $i:=\sqrt{-1}$. If
$\Phi(\alpha +\beta i)=0$, then by simple calculations we see that $\beta=0$ and $\alpha=-\Lambda(c)$.
Therefore, from \eqref{Q-func} we see that $Q$ is analytic on $\{{\rm Re}\lambda=-\Lambda(c)\}$ and is also analytic in $S$. Then, Ikehara's Theorem can be applied to assert that
\beaa
\lim_{\xi\to+\infty}\frac{U(\xi)}{{\xi}^{p}e^{\Lambda(c)\xi}}=\frac{H(-\Lambda(c))}{\Gamma(\Lambda(c)+1)}=
\frac{Q(-\Lambda(c))}{\Gamma(\Lambda(c)+1)}.
\eeaa

Finally, we need to prove $Q(-\Lambda(c))\neq0$ by taking suitable $p$.
To do so, note that \eqref{Q-func} and the fact that $\Phi(\lambda)=0$ imply that $\lambda=-\Lambda(c)$.
We see that, if $I(-\Lambda(c))\neq0$, then $Q(-\Lambda(c))\neq0$ if and only if $p=1$.
On the other hand, when $I(-\Lambda(c))=0$, then $\lambda=-\Lambda(c)$ must be simple root of $I(\lambda)=0$. Otherwise,
we have $Q(-\Lambda(c))=0$ for any $p\in\mathbb{N}\cup\{0\}$, which contradicts
the conclusion (ii) of Lemma~\ref{lm: behavior around + infty}. Therefore, when $I(-\Lambda(c))=0$,
we have $Q(-\Lambda(c))\neq0$ if and only if $p=0$, so \eqref{cond-p} holds.
This completes the proof.
\end{proof}

\subsection{Asymptotic behavior of traveling waves of \eqref{system} near $-\infty$}%\label{subsec:Asymp behavior -}
To describe the asymptotic behavior of $(U,V)$ near $-\infty$, we define
\beaa
&&\mu_{u}^{-}(c):=\frac{-c-\sqrt{c^2+4}}{2}<0<\mu_{u}^{+}(c):=\frac{-c+\sqrt{c^2+4}}{2},\\
&&\mu_v^{-}(c):=\frac{-c-\sqrt{c^2+4rd(b-1)}}{2d}<0<\mu_v^{+}(c):=\frac{-c+\sqrt{c^2+4rd(b-1)}}{2d}.
\eeaa

\begin{lemma}[\cite{MoritaTachibana2009}]\label{lem:AS-infty:b>1}
Assume that $0<a<1$ and $b>1$.
Let $(c,U,V)$ be a solution of the system (\ref{tw solution weak}). Then
there exist two positive constants $l_{i=9,\cdots,12}$ such that
\beaa
&& \lim_{\xi\rightarrow-\infty}\frac{V(\xi)}{e^{\mu_v^+(c)\xi}}=l_9,\\
&&\lim_{\xi\rightarrow-\infty}\frac{1-U(\xi)}{e^{\mu_v^+(c)\xi}}=l_{10}\quad \mbox{if $\mu_{u}^{+}(c)>\mu_v^+(c)$},\\
&&\lim_{\xi\rightarrow-\infty}\frac{1-U(\xi)}{|\xi|e^{\mu_v^+(c)\xi}}=l_{11}\quad
\mbox{if $\mu_{u}^{+}(c)=\mu_v^+(c)$},\\
&&\lim_{\xi\rightarrow-\infty}\frac{1-U(\xi)}{e^{\mu_u^+(c)\xi}}=l_{12}\quad \mbox{if $\mu_{u}^{+}(c)<\mu_v^+(c)$}.
\eeaa
\end{lemma}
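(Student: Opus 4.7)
The plan is to linearize the traveling wave system at the equilibrium $(U,V)=(1,0)$ reached as $\xi\to-\infty$, identify the admissible exponential modes from the characteristic polynomials, and handle the nonlinear coupling by variation of parameters. Setting $u:=1-U$ and $v:=V$, which both tend to $0^+$ as $\xi\to-\infty$, the system \eqref{tw solution} takes the form
\beaa
u''+cu'-u+av&=&-u^2+auv,\\
dv''+cv'+r(1-b)v&=&rv^2-rbuv.
\eeaa
The characteristic roots of the two linear parts are $\mu_u^{\pm}$ and $\mu_v^{\pm}$; since $b>1$ all four are real. Because $(u,v)(-\infty)=(0,0)$, only the positive roots $\mu_u^+,\mu_v^+$ are admissible decay rates at $-\infty$, and they span the 2D unstable manifold of $(1,0,0,0)$ in the 4D first-order phase space.

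First I would treat the $v$-equation. Since the right-hand side is quadratic, a standard contraction argument on $(-\infty,\xi_0]$ in the exponentially weighted norm $\|\phi\|_\beta:=\sup_{\xi\le\xi_0}e^{-\beta\xi}|\phi(\xi)|$ with $\mu_v^+<\beta<2\mu_v^+$, combined with variation of parameters, yields $v(\xi)=l_9\,e^{\mu_v^+\xi}+o(e^{\mu_v^+\xi})$ as $\xi\to-\infty$ for some $l_9\in\mathbb{R}$. Strict positivity $V>0$ together with the nondegeneracy of the trajectory on the unstable manifold forces $l_9>0$: otherwise $v$ would decay strictly faster than $e^{\mu_v^+\xi}$, so the trajectory would lie in the $u$-direction of the unstable manifold, forcing $v\equiv 0$ there, a contradiction.

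Next I would view the $u$-equation as a linear nonhomogeneous ODE with forcing $-av+(\text{quadratic})\sim -al_9\,e^{\mu_v^+\xi}$. Variation of parameters decomposes admissible solutions at $-\infty$ into a homogeneous mode $C_h e^{\mu_u^+\xi}$ and a particular response. If $\mu_v^+\neq\mu_u^+$, the leading particular mode is $\frac{-al_9}{(\mu_v^+)^2+c\mu_v^+-1}e^{\mu_v^+\xi}$, whereas at resonance $\mu_v^+=\mu_u^+$ the factor $\xi$ appears and the response is $\frac{-al_9}{2\mu_u^++c}\,\xi e^{\mu_u^+\xi}$. The three stated cases correspond to comparing these two modes. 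When $\mu_u^+>\mu_v^+$ the particular mode dominates; the denominator $(\mu_v^+)^2+c\mu_v^+-1$ is negative (since $\mu_v^+$ lies strictly between the two roots of $\mu^2+c\mu-1$), so $l_{10}>0$. At resonance, $2\mu_u^++c=\sqrt{c^2+4}>0$ and $\xi<0$, giving $u\sim l_{11}|\xi|e^{\mu_v^+\xi}$ with $l_{11}>0$. When $\mu_u^+<\mu_v^+$, the denominator flips sign and the particular mode is negative, so positivity $u>0$ forces the homogeneous amplitude $C_h=l_{12}>0$, and the homogeneous mode dominates.

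The main obstacle is the rigorous control of the quadratic remainders so that these formal leading-order expansions survive. This can be handled either by a direct Banach fixed-point argument on the integrated (variation of parameters) form of the system in the weighted spaces above, with weights chosen strictly between successive eigenvalues, or by invoking the $C^k$ unstable manifold theorem for the first-order 4D system: the 2D unstable manifold of $(1,0,0,0)$ admits a smooth parametrization by two tangential coordinates, from which all three asymptotic regimes—including the polynomial correction in $\xi$ at resonance—can then be read off from the corresponding Poincar\'e normal form.
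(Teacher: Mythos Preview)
The paper does not give its own proof of this lemma: it is stated in the Appendix with a citation to Morita--Tachibana \cite{MoritaTachibana2009} and used as a black box. Your outline---linearize at $(U,V)=(1,0)$, note that the $v$-equation decouples at linear order so that $V\sim l_9 e^{\mu_v^+\xi}$ with $l_9>0$ (using that $\{V=V'=0\}$ is invariant to exclude $l_9=0$), and then resolve $1-U$ by variation of parameters against the forcing $-aV$, splitting into the three cases $\mu_u^+\gtrless\mu_v^+$ with the sign analysis of $p(\mu_v^+)=(\mu_v^+)^2+c\mu_v^+-1$---is exactly the standard argument that underlies the cited result, and each step is correct as written. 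In short: there is nothing to compare against in this paper, and your approach is the right one.
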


\bigskip
\begin{lemma}\label{lem:AS-infty:b<1}
Assume that $0<a,b<1$.
Let $(c,U,V)$ be a solution of the system (\ref{tw solution weak}).
Then there exist two positive constants $l_{13}$ and $l_{14}$ such that
\beaa
\lim_{\xi\to-\infty}\frac{u^*-U(\xi)}{e^{\nu\xi}}=l_{13},\quad
\lim_{\xi\to-\infty}\frac{V(\xi)-v^*}{e^{\nu\xi}}=l_{14}
\eeaa
where
$\nu$ is the smallest positive zero of
\bea\label{chara-poly}
\rho(\lambda):=(\lambda^2+c\lambda-u^*)(d\lambda^2+c\lambda-rv^*)-rabu^*v^*.
\eea
\end{lemma}
\begin{proof}
Set
$g_u(\lambda):=\lambda^2+c\lambda-u^*$ and $g_v(\lambda):=d\lambda^2+c\lambda-rv^*$.
Then $g_u$ (resp., $g_v$) has two zeros $\mu^{u}_{\pm}$ (resp. $\mu^{v}_{\pm}$) with
$\mu^{u}_{-}<0< \mu^{u}_{+}$ (resp., $\mu^{v}_{-}<0< \mu^{v}_{+}$). More precisely, we have
\beaa
\mu^u_{\pm}=\frac{-c\pm\sqrt{c^2+4u^*}}{2},\quad \mu^v_{\pm}=\frac{-c\pm\sqrt{c^2+4drv^*}}{2d}.
\eeaa

Note that
$\rho(\lambda)=g_u(\lambda)g_v(\lambda)-rabu^*v^*$. Since $\rho(\pm\infty)=+\infty$,
$\rho(\mu^{u}_{\pm})<0$, $\rho(\mu^{v}_{\pm})<0$, and $\rho(0)=ru^*v^*(1-hk)>0$, we see that
$\rho$ has exactly four distinct real zeros $\lambda=\nu_i$ $(i=1,2,3,4)$, two negative and two positive zeros, such that
\beaa
\nu_4<\min\{\mu^{u}_{-},\mu^{v}_{-} \}\leq \max\{\mu^{u}_{-},\mu^{v}_{-} \}<\nu_3<0<\nu_2
<\min\{\mu^{u}_{+},\mu^{v}_{+} \}\leq \max\{\mu^{u}_{+},\mu^{v}_{+} \}<\nu_1.
\eeaa

%\red{It is better to change $(X_1,X_2,X_3,X_4)$ to $(U,P,V,Q)$ like (1.16) and that in Page 28
%Set $(X_1,X_2,X_3,X_4)(\xi)=(U,U',V,V')(\xi)$.
Set $P=U'$ and $Q=V'$.
Then from \eqref{system}, we have
\bea\label{X-sys}
\qquad U'= P, \ P'=-c P-U(1-U-a V),\ V'= Q,
\ Q'=-\frac{c}{d}Q-\frac{r}{d}V(1-V-bU).
\eea

Linearizing \eqref{X-sys} at $(U,P,V,Q)=(u^*,0,v^*,0)$ yields that ${\bf Y}'=J {\bf Y}$, where
${\bf Y}=(Y_1,Y_2,Y_3,Y_4)^T$ and
\beaa
J:=
\left(\begin{array}{cccc} 0 & 1 & 0 & 0\\
                       u^* & -c & au^* & 0\\
                       0 & 0 & 0 & 1 \\
                       -\frac{rb}{d}v^* & 0 & \frac{r}{d}v^* & -\frac{c}{d}
                       \end{array}\right).
\eeaa
Using cofactor expansions, one has ${\rm det}(J-\lambda I)=\rho(\lambda)$,
where $\rho(\lambda)$ is defined in \eqref{chara-poly}. Hence,
$J$ has four distinct real eigenvalues $\nu_4<\nu_3<0<\nu_2<\nu_1$.
By straightforward calculations, for each eigenvalue $\nu_i$, the corresponding eigenvector ${\bf w}_i$ is given by
\beaa%\label{eigenv}
{\bf w}_i:=\Big(1,\nu_i,\frac{g_u(\nu_i)}{au^*},\nu_i \frac{g_u(\nu_i)}{au^*}\Big)^T,\quad i=1,2,3,4.
\eeaa
Therefore, the general solution of ${\bf Y}'=J {\bf Y}$ with ${\bf Y}(-\infty)={\bf 0}$ is given by
${\bf Y}(\xi)=\sum_{i=1}^{2}K_i  e^{\nu_i\xi}{\bf w}_i$ for some constants $K_i\in\mathbb{R}$, $i=1,2$.
By standard ODE theory, as $\xi\to-\infty$,
\bea\label{expansion}
\left(\begin{array}{c} U(\xi)\\ U'(\xi) \\ V(\xi) \\V'(\xi) \end{array}\right)=
%\left(\begin{array}{c} X_1(\xi) \\ X_2(\xi) \\X_3(\xi)\\ X_4(\xi) \end{array}\right)=
\left(\begin{array}{c} u^*+K_1  e^{\nu_1\xi}+K_2  e^{\nu_2\xi} \\
                            K_1  \nu_1 e^{\nu_1\xi}+ K_2 \nu_2 e^{\nu_2\xi}\\
                         v^*+K_1 \frac{g_u(\nu_1)}{au^*}   e^{\nu_1\xi}+K_2 \frac{g_u(\nu_2)}{au^*}   e^{\nu_2\xi}\\
                              K_1 \nu_1\frac{g_u(\nu_1)}{au^*} e^{\nu_1\xi}+K_2 \nu_2\frac{g_u(\nu_2)}{au^*}   e^{\nu_2\xi}
                              \end{array}\right)+h.o.t.
\eea
Clearly, $K_1^2+K_2^2\neq0$. If $K_2=0$, then
$K_1\neq0$  and it follows from
\eqref{expansion} that
$$U'(\xi)\sim K_1  \nu_1 e^{\nu_1\xi}\ \ \text{and}\ \ V'(\xi)\sim K_1 \nu_1\frac{g_1(\nu_1)}{au^*} e^{\nu_1\xi}\ \ \text{as}\ \ \xi\to-\infty.$$
Since $\nu_1>\max\{\mu^u_+, \mu^v_+\}$, we see that $g_1(\nu_1)>0$.
This implies that $U'$ and $V'$ have the same sign as $\xi\to -\infty$,
which is impossible since
$U'<0$ and $V'>0$ in $\mathbb{R}$. Therefore, we obtain $K_2\neq0$. Moreover,
we have $K_2<0$ due to the monotonicity of $U$ and $V$.
The proof is thus complete by taking $\nu=\nu_2$, $l_{13}=-K_2$ and $l_{14}=K_2 g_u(\nu_2)/au^*$.
\end{proof}

For the strong-weak competition case ($b>1$) (resp.,
the weak competition case ($b<1$)), Lemma \ref{lem:AS-infty:b>1} and Lemma~\ref{lem:AS-infty:b<1} show that $(U,V)(\xi)$ converges to $(1,0)$ (resp., $(u^*,v^*)$) exponentially as $\xi\to-\infty$. %Furthermore, such asymptotic behaviors played an important role in the construction of $(R_u,R_v)(\xi)$ near $-\infty$.
However, in the critical case ($b=1$), the convergence rates may be of polynomial orders due to the degeneracy of the principal eigenvalue.

We now apply the center manifold theory to establish the decay rate of $U$ and $V$ at $\xi=-\infty$ when $b=1$. Let $W(\xi)=1-U(\xi)$. Then by simple calculations, $(W,V)$ satisfies
\bea\label{WVsys-b=1}
\begin{cases}
W''+cW'-(1-W)(W-aV)=0,&\quad \xi\in\mathbb{R},\\
dV''+cV'+rV(W-V)=0&\quad \xi\in\mathbb{R},\\
(W, V)(-\infty)=(0,0), \quad(W,V)(+\infty)=(1,1)
\end{cases}
\eea
To reduce \eqref{WVsys-b=1} to first-order ODEs, we introduce
\beaa
X_1(\xi)=V(\xi),\quad X_2(\xi)=V'(\xi), \quad X_3(\xi)=W(\xi),\quad X_4(\xi)=W'(\xi).
\eeaa
Then $X:=(X_1,X_2,X_3,X_4)(\xi)$ satisfies
$X'=G(X)$, which is described as
\bea\label{X1X2X3X4sys}
\begin{cases}
\dps X_1'=X_2,&\quad \xi\in\mathbb{R},\\
\dps X_2'=-\frac{c}{d}X_2-\frac{r}{d}X_1(X_3-X_1),&\quad \xi\in\mathbb{R},\\
\dps X_3'=X_4,&\quad \xi\in\mathbb{R},\\
\dps X_4'=-cX_4+(1-X_3)(X_3-aX_1),&\quad \xi\in\mathbb{R},
\end{cases}
\eea
By linearizing \eqref{X1X2X3X4sys} at $(0,0,0,0)$, we obtain
${\bf Y}'=J {\bf Y}$, where
${\bf Y}=(Y_1,Y_2,Y_3,Y_4)^T$ and
\beaa
J:=
\left(\begin{array}{cccc} 0 & 1 & 0 & 0\\
0 & -\frac{c}{d} & 0 & 0\\
0 & 0 & 0 & 1 \\
-a & 0 & 1 & -c
\end{array}\right).
\eeaa
It is easy to calculate that $J$ has four eigenvalues
\beaa
\mu_1=0,\quad \mu_2=-\frac{c}{d}, \quad \mu_3:=\frac{-c-\sqrt{c^2+4}}{2}<0,\quad \mu_4:=\frac{-c+\sqrt{c^2+4}}{2}>0,
\eeaa
and the corresponding eigenvector $v_i$ with respect to $\mu_i$ is given by
\beaa
v_1=(1,0,a,0)^T,\ v_2=\Big(\omega, -\frac{c}{d}\omega,-ad,ac\Big)^T,\
v_3=(0,0,1,\mu_3)^T,\ v_4=(0,0,1,\mu_4)^T,
\eeaa
where
\bea\label{omega}
\omega:=-d-c^2+\frac{c^2}{d}.
\eea
%By linearizing \eqref{WVsys-b=1} at $(0,0)$, we have
%\beaa
%\begin{cases}
%\hat{W}''+c\hat{W}'-%\hat{W}+a\hat{V}=0,&\quad %\xi\in\mathbb{R},\\
%d\hat{V}''+c\hat{V}'=0&\quad \xi\in\mathbb{R},
%\end{cases}
%\eeaa
To reduce \eqref{X1X2X3X4sys} into the normal form, we set $Z=Q^{-1}X$, where $Z:=(Z_1,Z_2,Z_3,Z_4)^T$ and
$Q:=(v_1\ v_2\ v_3\ v_4)\in \mathbb{R}^{4\times 4}$.
Through some tedious computations, we have
\bea\label{X-Z}
\begin{cases}
X_1=Z_1+\omega Z_2,&\quad X_2=-\frac{c}{d}\omega Z_2,\\
X_3=aZ_1-adZ_2+Z_3+Z_4,&\quad X_4=acZ_2+\mu_3Z_3+\mu_4 Z_4,
\end{cases}
\eea
and
\bea\label{Q-1}
Q^{-1}:=
\left(\begin{array}{cccc}
1 & \frac{d}{c} & 0 & 0\\
0 & -\frac{d}{c\omega} & 0 & 0\\
\frac{a\mu_4}{\mu_3-\mu_4} & \frac{da(c+\omega\mu_4 +d\mu_4)}{\omega c (\mu_3-\mu_4)} & -\frac{\mu_4}{\mu_3-\mu_4} & \frac{1}{\mu_3-\mu_4} \\
-\frac{a\mu_3}{\mu_3-\mu_4} &
-\frac{da(c+\omega\mu_3 +d\mu_3)}{\omega c (\mu_3-\mu_4)} & \frac{\mu_3}{\mu_3-\mu_4} & -\frac{1}{\mu_3-\mu_4}
\end{array}\right),
\eea
where $\omega$ is defined in \eqref{omega}.
By \eqref{X1X2X3X4sys}, \eqref{X-Z} and \eqref{Q-1}, some tedious computations yield that
\bea\label{Zsys}
\begin{cases}
Z_1'= g_1(Z),\\
Z_2'=-\frac{c}{d}Z_2+g_2(Z),\\
Z_3'=\mu_3Z_3+g_3(Z),\\
Z_4'=\mu_4Z_4+g_4(Z),
\end{cases}
\eea
where
\beaa
&&g_1(Z):=-\frac{r}{c}(Z_1+\omega Z_2)h_1(Z),\quad g_2(Z):=\frac{r}{\omega c}(Z_1+\omega Z_2)h_1(Z),\\
%&&g_3(Z):=\omega\Big[-q_{31}\frac{c}{d}+q_{32}\frac{c^2}{d^2}\Big]Z_2-q_{32}\frac{r}{d}(Z_1+\omega Z_2)h_1(Z)+q_{33}[acZ_2+\mu_4 Z_4]\\
%&&\qquad+q_{34}\Big\{-ac^2Z_2-c\mu_4Z_4+(1+h_2(Z))[-(a\omega+ad)Z_2+Z_4]+h_2(Z)Z_3\Big\},\\
&&g_3(Z):=-q_{32}\frac{r}{d}(Z_1+\omega Z_2)h_1(Z)+q_{34}h_2(z)h_3(z),\\
%&&g_4(Z):=\omega\Big[-q_{41}\frac{c}{d}+q_{42}\frac{c^2}{d^2}\Big]Z_2-q_{42}\frac{r}{d}(Z_1+\omega Z_2)h_1(Z)+q_{43}[acZ_2+\mu_3 Z_3]\\
%&&\qquad+q_{44}\Big\{-ac^2Z_2-c\mu_3Z_3+(1+h_2(Z))[-(a\omega-ad)Z_2+Z_3]+h_2(Z)Z_4\Big\},\\
&&g_4(Z):=-q_{42}\frac{r}{d}(Z_1+\omega Z_2)h_1(Z)+q_{44}h_2(z)h_3(z),\\
&&h_1(Z):=(a-1)Z_1-(\omega+ad)Z_2+Z_3+Z_4,\\
%&&h_2(Z):=-aZ_1+adZ_2-Z_3-Z_4.
&&h_2(Z):=aZ_1-adZ_2+Z_3+Z_4,\\
&&h_3(Z):=a(\omega+d)Z_2-Z_3-Z_4.\\
\eeaa
Here $q_{ij}$ is defined as the $i,j$ entry of the matrix $Q^{-1}$.
Note from the definition of $g_i$ and $h_i$, %with more detailed computations (using $q_{33}=-\mu_4q_{34}$, $q_{43}=-\mu_3 q_{44}$, $\mu_3+\mu_4=c$ and $\mu_3\mu_4=-1$),
we see that $g_i$ does have no linear term of $Z_i$ for $i=1,2,3,4$, and thus
\beaa
g_i({\bf0})=0,\quad Dg_i({\bf0})={\bf0},\quad i=1,2,3,4.
\eeaa
Therefore, we can apply the center manifold theory (see \cite[Chapter 18]{Wiggins}) to conclude that there exists a one-dimensional center manifold for \eqref{Zsys}, and $Z_i$, $i=2,3,4$ can be represented by a smooth function $Z_i=H_i(Z_1)$, $i=2,3,4$, for small $Z_1$.
We assume that
\beaa
H_i(Z_1)=C_iZ_i^2+o(|Z_1|^2),\quad i=2,3,4,
\eeaa
for some $C_i\in\mathbb{R}$. Indeed, $C_i$ is determined such that
\bea
&&H_2'(Z_1)g_1(Z)-\Big[-\frac{c}{d}Z_2+g_2(Z)\Big]=o(|Z_1|^2),\label{H2-eq}\\
&&H_3'(Z_1)g_1(Z)-(\mu_3Z_3+g_3(Z))=o(|Z_1|^2),\label{H3-eq}\\
&&H_4'(Z_1)g_1(Z)-(\mu_4Z_4+g_4(Z))=o(|Z_1|^2).\label{H4-eq}
\eea
By comparing the coefficients in front of $Z_1^2$ on the both sides of \eqref{H2-eq},
we need $C_2=-\frac{rd}{\omega c^2}(1-a)$.
Also, from \eqref{H3-eq} and \eqref{H4-eq},  with some tedious computations, we see that $C_3=C_4=0$.
Moreover, the flow on the center manifold is defined by
\beaa
Z_1'=g_1(Z_1,H_2(Z_1),H_3(Z_1),H_4(Z_1))=\frac{r}{c}(1-a)Z_1^2+o(|Z_1|^2),
\eeaa
for sufficiently small $Z_1(\xi)$,
which implies that
\beaa
Z_1(\xi)=\frac{c}{r(1-a)}|\xi|^{-1}+o(|\xi|^{-1})\quad \mbox{as $\xi\to-\infty$}.
\eeaa
Therefore, the center manifold theory yields that if $0<Z_1(\xi)\ll1$, we have
\beaa
Z_1(\xi)\sim \frac{c}{r(1-a)}|\xi|^{-1},\quad
Z_2(\xi)\sim-\frac{d}{r\omega(1-a)}|\xi|^{-2}\quad
\mbox{as $\xi\to-\infty$.}
\eeaa
Therefore,
in view of \eqref{X-Z} and the definition of $X_i$,
together with the fact that $0<U,V<1$ in $\mathbb{R}$,
we see that there exists  $l_{15}>0$ such that
\bea\label{AS-infty:b=1}
\lim_{\xi\to-\infty}\frac{V(\xi)}{|\xi|^{-1}}=l_{15},\quad \lim_{\xi\to-\infty}\frac{1-U(\xi)}{|\xi|^{-1}}=a l_{15},
\eea
%\red{or there exists  $l_{16}>0$,
%\bea\label{AS-incorrect}
%\lim_{\xi\to-\infty}\frac{1-U(\xi)}{e^{\mu_4\xi}}=l_{16},
%\eea
%since we have a one-dimensional manifold on which the solution decays to zero
%exponentially as $\xi\to-\infty$.
%If \eqref{AS-incorrect} holds, we have $V\equiv 0$, which contradicts the fact $0<V<1$, so we have \eqref{AS-infty:b=1}. I don't know the reason %why we get \eqref{AS-incorrect} from \eqref{X-Z}.}
Furthermore, it holds that
\bea\label{AS-infty2:b=1}
\lim_{\xi\to-\infty}\frac{1-U(\xi)}{V(\xi)}=a<1.
\eea
Combining \eqref{AS-infty:b=1} and \eqref{AS-infty2:b=1}, we have the following result.

\begin{lemma}\label{lem:AS-infty:b=1}
Assume that $0<a<1$ and $b=1$.
Let $(c,U,V)$ be a solution of the system (\ref{tw solution weak}).
Then there exist a positive constant $l_{15}$ such that
\beaa
\lim_{\xi\to-\infty}\frac{V(\xi)}{|\xi|^{-1}}=l_{15},\quad \lim_{\xi\to-\infty}\frac{1-U(\xi)}{|\xi|^{-1}}=a l_{15}, \quad \lim_{\xi\to-\infty}\frac{1-U(\xi)}{V(\xi)}=a<1.
\eeaa
\end{lemma}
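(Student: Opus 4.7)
The plan is to change variables and analyze the resulting degenerate linearization at the equilibrium $(1,0)$. Set $u(\xi):=1-U(\xi)$ and $v(\xi):=V(\xi)$, so that the system \eqref{tw solution} with $b=1$ becomes
\begin{align*}
u'' + cu' &= (1-u)(u-av), \\
dv'' + cv' &= rv(v-u),
\end{align*}
with $(u,v)\to(0^{+},0^{+})$ as $\xi\to-\infty$ and $u'>0$, $v'>0$. The essential feature of the critical case $b=1$ is that the linearization of the $v$-equation at the origin, $dv''+cv'=0$, has characteristic roots $\lambda=0$ and $\lambda=-c/d$. The zero root precludes exponential decay of $v$, and the polynomial rate must instead be fixed by the quadratic coupling $rv(v-u)$.

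The formal asymptotic balance runs as follows. If $u$ and $v$ decay algebraically, then $u''$ and $cu'$ are of strictly lower order than $u$, so the leading balance in the $u$-equation forces $u\sim av$ as $\xi\to-\infty$. Feeding this into the $v$-equation yields $dv''+cv'\sim r(1-a)v^{2}$; with the ansatz $v\sim A(-\xi)^{-\alpha}$, the term $cv'$ dominates the left-hand side and one obtains $cA\alpha(-\xi)^{-\alpha-1}\sim r(1-a)A^{2}(-\xi)^{-2\alpha}$. Matching exponents forces $\alpha=1$, and matching coefficients fixes $A=c/(r(1-a))$. This identifies the constant $l_{15}=A$ and simultaneously accounts for all three limits in the statement, including $u/v\to a$.

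To upgrade this formal balance to rigorous convergence, I would construct a super-solution $\overline{v}(\xi)=A_{+}(-\xi)^{-1}$ and a sub-solution $\underline{v}(\xi)=A_{-}(-\xi)^{-1}$ with $A_{-}<A<A_{+}$, paired with corresponding $\overline{u}$, $\underline{u}$ built from $u\approx av$ plus a correction of order $(-\xi)^{-2}$, and then sandwich $(u,v)$ via a comparison argument exploiting the monotone/competitive structure already used in the proof of Theorem \ref{th: classification}. Letting $A_{\pm}\to A$ by shrinking the correction delivers $v(\xi)\sim A|\xi|^{-1}$ and hence $1-U(\xi)\sim aA|\xi|^{-1}$, which are precisely the three conclusions.

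The main obstacle is this rigorous passage: the zero eigenvalue in the $v$-direction defeats the usual stable/unstable manifold theorem near $(u,v)=(0,0)$, so one cannot simply read off decay rates from a local normal form. In the super- and sub-solution construction, the cross-term $rvu$ in the $v$-equation must be controlled using the relation $u\sim av$, which is itself only known a posteriori, so the arguments for the two components must be run simultaneously rather than sequentially. An alternative I would keep in reserve is to introduce $w(\xi):=(-\xi)v(\xi)$ and recast the problem as a non-autonomous ODE system for $(u,w)$ in which the degeneracy is resolved at the cost of $\xi^{-1}$ forcing; classical limit theorems for slowly varying non-autonomous ODEs would then force $w\to A$ along the monotone trajectory.
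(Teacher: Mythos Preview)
The paper does not prove this lemma: it is quoted verbatim from the reference \cite{WU XIAO ZHOU LV} and stated in the appendix without argument. So there is no in-paper proof to compare your proposal against.

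On its merits, your formal computation is correct and identifies the mechanism precisely. After the substitution $u=1-U$, $v=V$, the $v$-equation linearizes at the origin to $dv''+cv'=0$, whose zero root kills exponential decay; the quadratic right-hand side $rv(v-u)$ together with $u\sim av$ (forced by the $u$-equation at leading order) then fixes $v\sim A(-\xi)^{-1}$ with $A=c/(r(1-a))$. This is exactly the expected heuristic, and your identification of $l_{15}=c/(r(1-a))$ is a bonus the lemma statement does not make explicit.

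That said, what you have written is a plan, not a proof. The passage from the formal balance to the three limits is entirely deferred to a barrier construction that you do not carry out, and you correctly flag the circularity: the $v$-barriers need $u\sim av$, which itself rests on the decay of $v$. Resolving this requires either a genuinely simultaneous super/sub-solution pair for the cooperative $(u,v)$ system with matched $(-\xi)^{-1}$ leading terms and $(-\xi)^{-2}$ corrections, or the alternative you mention (a center-manifold or normal-form reduction handling the zero eigenvalue). Neither is routine, and neither is executed here. Since the paper outsources the result to \cite{WU XIAO ZHOU LV}, the honest comparison is that your sketch points at the right phenomenon but stops well short of the analysis that reference presumably contains.
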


%Therefore, to describe the asymptotic behavior of $(U_*,V_*)$ near $-\infty$ for the case $b=1$, we define
%\bea\label{def of tau u v}
%1-U_*(\xi)\sim \tau_u(\xi)\quad\text{and}\quad V_*(\xi)\sim \tau_v(\xi)\quad\text{as}\quad\xi\to-\infty.
%\eea
%Then, by linearization of \eqref{tw solution} at $(u,v)=(1,0)$, we have
%\begin{equation}\label{system tau u v}
%\begin{cases}
%\tau''_u+c^*\tau'_u-\tau_u+a\tau_v=0,\\
%d\tau''_v+c^*\tau'_v-r(b-1)\tau_v+r\tau_v(b\tau_u-%\tau_v)=0.
%\end{cases}
%\end{equation}
%It is important to keep the term $r\tau_v(\tau_u-\tau_v)$ in the equation of $\tau_v$ for the case $b=1$.
%By comparing the order of both sides of the equations, $(\tau_u,\tau_v)(\xi)$ can not be exponential decay.

Hence, we immediately obtain a Lemma as follows:

Thanks to Lemma~\ref{lem:AS-infty:b>1}, Lemma~\ref{lem:AS-infty:b<1} and Lemma~\ref{lem:AS-infty:b=1}, we immediately obtain

\begin{corollary}\label{lm: behavior around - infty b=1}
Assume that $0<a<1$ and $b>0$.
Let $(U,V)$ be a solution of the system (\ref{tw solution weak}) with speed $c$. Then it holds that
\beaa
1-U(\xi)-aV(\xi)=o(|\xi|^{-1}).
\eeaa
In particular, for the case $b=1$,
%%we have $\tau_u(\xi)=O(|\xi|^{-1})$ and %$\tau_v(\xi)=O(|\xi|^{-1})$. Moreover,
there exists $\xi_0$ near $-\infty$ such that $(1-U-V)(\xi)< 0$ for all $\xi\in(-\infty,\xi_0]$.
\end{corollary}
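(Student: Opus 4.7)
The plan is to split into the three competition regimes and apply the preceding asymptotic lemmas case by case; in each case the claim reduces to reading off the leading-order behavior of $1-U$ and $V$ at $-\infty$.

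In the strong–weak case $b>1$, Lemma~\ref{lem:AS-infty:b>1} shows that $V(\xi)$ and $1-U(\xi)$ both decay exponentially at $-\infty$ (of order $e^{\mu_v^+(c)\xi}$ or $e^{\mu_u^+(c)\xi}$, possibly with a linear prefactor in the degenerate case). Hence $|1-U-aV|\le (1-U)+aV$ is exponentially small, in particular $o(|\xi|^{-1})$.

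In the weak case $b<1$, the key identity to exploit is the algebraic relation at the coexistence state: by direct substitution,
\begin{equation*}
1-u^* - av^* \;=\; 1-\frac{1-a}{1-ab}-a\,\frac{1-b}{1-ab} \;=\;\frac{(1-ab)-(1-a)-a(1-b)}{1-ab}\;=\;0.
\end{equation*}
Therefore $1-U-aV=(u^*-U)-a(V-v^*)$, and Lemma~\ref{lem:AS-infty:b<1} gives $u^*-U,\ V-v^*=O(e^{\nu\xi})$ with $\nu>0$, so the combination is exponentially small and the $o(|\xi|^{-1})$ bound follows.

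In the critical case $b=1$, Lemma~\ref{lem:AS-infty:b=1} yields the precise asymptotics $V(\xi)\sim l_{15}|\xi|^{-1}$ and $1-U(\xi)\sim a\,l_{15}|\xi|^{-1}$ as $\xi\to-\infty$. Dividing by $|\xi|^{-1}$ and passing to the limit gives
\begin{equation*}
\frac{1-U(\xi)-aV(\xi)}{|\xi|^{-1}}\ \longrightarrow\ a\,l_{15}-a\,l_{15}=0,
\end{equation*}
which is exactly $1-U-aV=o(|\xi|^{-1})$. For the "in particular" assertion I would then write $1-U-V=(1-U)-V\sim(a-1)l_{15}|\xi|^{-1}$, which is strictly negative for $\xi$ sufficiently close to $-\infty$ since $a<1$ and $l_{15}>0$; this gives the required $\xi_0$.

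The only non-mechanical point is the algebraic identity $1-u^*-av^*=0$ in the weak case, which is what forces the two $O(e^{\nu\xi})$ contributions to align and cancel the leading constant term; the remaining three verifications are direct substitutions into the appendix lemmas.
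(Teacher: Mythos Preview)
Your proof is correct and is exactly what the paper has in mind: the corollary is stated as an immediate consequence of Lemmas~\ref{lem:AS-infty:b>1}, \ref{lem:AS-infty:b<1}, and \ref{lem:AS-infty:b=1}, and you have simply supplied the three case-by-case substitutions (including the key identity $1-u^*-av^*=0$ for $b<1$) that the paper leaves implicit.
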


\section{Threshold of the Lotka-Volterra competition system}\label{sec:threshold-system}

This section is devoted to the proof of Theorem~\ref{th:threshold}.
Let us fix the parameters $a\in(0,1)$, $d>0$, and $r>0$.
It is well known (cf. \cite[Lemma 5.6]{Kan-on1997}) that the minimal traveling wave speed $c_{LV}^{*}(b)$ is a continuous function on $(0,+\infty)$.
Moreover, by Theorem 1.1 of \cite{Wu Xiao Zhou} and a simple comparison argument, we see that $c_{LV}^{*}(b)$ is nondecreasing on $b$.
We first introduce a crucial proposition which implies $0<b^*<\infty$ is well-defined.

%we conclude that there exists a threshold $0< b^*< \infty$ such that $c^{*}(b)=2\sqrt{1-a}$ for $b\in(1,b^*]$, and $c^{*}(b)>2\sqrt{1-a}$ for $b\in(b^*,+\infty)$.

\subsection{The well-defined threshold}%\label{sec:prop3.1}

\noindent

Let us start by briefly recalling the competitive comparison principle.
Consider a domain $
\Omega:=(t_1,t_2)\times(x_1,x_2)$ with  $0\le t_1<t_2\le + \infty$ and $-\infty\le x_1<x_2\le+\infty$. A (classical) super-solution is a pair  $(\overline{u},\underline{v})\in \Big[C^1\Big((t_1,t_2),C^2((x_1,x_2))\Big)\cap C_b\left(\overline \Omega\right)\Big]^2$
satisfying
\begin{equation*}
\overline u_t-\overline u_{xx}-\overline u(1-\overline u-a\underline v)\ge 0\quad \text{ and } \quad \overline v_t-d\overline v_{xx}-r\overline v(1-\overline v-b\overline u)\le 0\ \ \text{in}\ \ \Omega.
\end{equation*}
Similarly, a (classical) sub-solution $(\underline u, \overline v)$ requires
\begin{equation*}
\overline u_t-\overline u_{xx}-\overline u(1-\overline u-a\overline v)\le 0\quad \text{ and } \quad \overline v_t-d\overline v_{xx}-r\overline v(1-\overline v-b\underline u)\ge 0\ \ \text{in}\ \ \Omega.
\end{equation*}

\begin{proposition}[Comparison Principle]%\label{prop: cp}
Let $(\overline{u},\underline{v})$ and $(\underline{u},\overline{v})$ be a super-solution and sub-solution of system (\ref{system}) in $\Omega$, respectively. If
$$
\overline{u}(t_1,x)\ge \underline{u}(t_1,x) \quad \text{and} \quad \underline{v}(t_1,x)\le\overline{v}(t_1,x),\quad\text{for all } x\in (x_1,x_2),
$$
and, for $i=1,2$,
$$
\overline{u}(t,x_i)\ge \underline{u}(t,x_i) \quad \text{and} \quad \underline{v}(t,x_i)\le\overline{v}(t,x_i),\quad\text{for all } t\in(t_1,t_2),
$$
then, it holds
$$
\overline{u}(t,x) \ge\underline{u}(t,x) \quad \text{ and } \quad \underline{v}(t,x)\le\overline{v}(t,x),\quad  \text{for all } (t,x)\in\Omega.
$$
If $x_1=-\infty$ or $x_2=+\infty$, the hypothesis on the corresponding boundary condition can be omitted.
\end{proposition}

We refer to the clear exposition of  {\it generalized} sub- and super-solutions in \cite[\S 2.1]{Girardin Lam} for more details. In particular,  if $(\underline {u}_1,\overline{v})$  and $(\underline {u}_2, \overline{v})$ are both classical sub-solutions, then $(\max(\underline {u}_1,\underline{u}_2),\overline v)$ is a generalized sub-solution. Also,
if $(\underline u,\overline{v}_1)$    and $(\underline u, \overline{v}_2)$ are both classical sub-solutions, then $(\underline u,\min(\overline{v}_1,\overline{v}_2))$ is a generalized sub-solution.

\begin{proposition}\label{prop:0<b*<infty}
For any fixed $a\in(0,1)$, $d>0$, and $r>0$, there exists $b_1>0$ very small such that $c_{LV}^{*}(b)=2\sqrt{1-a}$ for all $0\le b\le b_1$. On the other hand, there exists $b_2>0$ sufficiently large such that $c_{LV}^{*}(b)>2\sqrt{1-a}$ for all $b>b_2$.
\end{proposition}
\begin{proof}
We first show $b_2<\infty$ by applying the continuity argument. 
To do this, we assume by contradiction that $b_2=\infty$.
Due to the monotonicity of $c_{LV}^{*}(b)$, we have $c_{LV}^{*}(b)=2\sqrt{1-a}$ for all $b>0$.
To reach a contradiction, we take a sequence $b_n\uparrow \infty$ and write $(U_n,V_n)$ as the solution of \eqref{tw solution weak} with
$$c=c_{LV}^{*}(b_n)=2\sqrt{1-a}\ \ \text{and}\ \ b=b_n.$$
By a translation, we may assume that $U_n(0)=1/2$ for all $n$.
Since $0\leq U_n,V_n\leq 1$ in $\mathbb{R}$, by standard elliptic estimates, we have
$\|U_n\|_{C^{2+\alpha}(\mathbb{R})}\leq C$ for some $C>0$ independent of $n$.

We now fix $R>0$. Then there exists $\varepsilon>0$ such that
\bea\label{U-lower-bdd}
U_n(\xi)\geq \varepsilon\quad \mbox{for all $\xi\in[-R,R]$ and $n\in\mathbb{N}$}.
\eea
Next, we define an auxiliary function
\beaa
\overline{V}_n(\xi)=\frac{e^{-\lambda_n(\xi+2R)}+e^{\lambda_n(\xi-2R)}}{1+e^{-4\lambda_nR}},\quad \xi\in[-2R,2R],
\eeaa
where
\beaa
\lambda_n:=\frac{-c+\sqrt{c^2+4dr(\varepsilon b_n-1)}}{2d}\to\infty\quad\mbox{as $n\to\infty$ and $c=2\sqrt{1-a}$.}
\eeaa
Clearly, $\overline{V}_n(\pm 2R)=1$, $0\leq \overline{V}_n(\xi)\leq 1$ for all $\xi\in[-2R,2R]$ and $n\in\mathbb{N}$, and $\overline{V}_n\to0$ uniformly in $[-R,R]$ as $n\to\infty$.
Furthermore, by direct computation, for all large $n$ we have
\beaa
c\overline{V}'_n+d\overline{V}''_n+r\overline{V}_n(1-\overline{V}_n)-rb_n\varepsilon \overline{V}_n\leq 0,\quad \xi\in[-2R,2R].
\eeaa
Together with \eqref{U-lower-bdd}, one can apply the comparison principle to conclude that $V_n\leq \overline{V}_n$ in $[-2R,2R]$ for all large $n$. In particular, we have
\bea\label{Vn-to-0}
\sup_{\xi\in[-R,R]}|V_n(\xi)|\to0\quad \mbox{as $n\to\infty$}.
\eea
Thanks to \eqref{Vn-to-0} and the $C^{2+\alpha}$ bound of $U_n$, up to subtract a subsequence, we may assume that
$U_n\to U_{R}$ uniformly in $[-R,R]$ as $n\to\infty$,
where $U_{R}$ is defined in $[-R,R]$ and satisfies
$U_{R}(0)=1/2$, $U'_{R}\leq0$ in $[-R,R]$ and
\beaa
cU_R'+U_R''+U_R(1-U_R)=0, \quad \xi\in[-R,R].
\eeaa
Next, by standard elliptic estimates and taking $R\to\infty$, up to subtract a subsequence, we may assume that $U_R\to U_{\infty}$ locally uniformly in $\mathbb{R}$ as $n\to\infty$, where $U_{\infty}$ satisfies
\beaa
cU_{\infty}'+U_{\infty}''+U_{\infty}(1-U_{\infty})=0, \quad \xi\in\mathbb{R},\quad U_{\infty}(0)=1/2,\quad  U'_{\infty}\leq 0.
\eeaa
It is not hard to see that $U_{\infty}(-\infty)=1$ and $U_{\infty}(+\infty)=0$. Therefore, $U_{\infty}$ forms a traveling front with speed $c=2\sqrt{1-a}$,
which is impossible since such solutions exist only for $c\geq 2$ (see \cite{KPP}). This contradiction shows that $b_2<\infty$.

\medskip
Next, we prove $b_1>0$. To do this, we assume by contradiction that $b_1=0$ and let $W_*(\xi)$ be the minimal traveling wave satisfying
\beaa
\begin{cases}
W_*''+2\sqrt{1-a}W_*'+W_*(1-a-W_*)=0,\quad \xi\in\mathbb{R}\\
W_*(-\infty)=1,\quad W_*(+\infty)=0.
\end{cases}
\eeaa

We look for continuous functions $(R_u(\xi),R_v(\xi))$
defined in $\mathbb{R}$,
such that
\beaa
(W_u,W_v)(\xi):=\Big(\min\{(W_*-R_u)(\xi),1\},1+R_v(\xi)\Big)
\eeaa
forms a super-solution satisfying
\begin{equation}\label{super solution b=0}
\left\{
\begin{aligned}
&N_1[W_u,W_v]:=W_u''+cW_u'+W_u(1-W_u-aW_v)\le0,\ \mbox{a.e. in $\mathbb{R}$},\\
&N_2[W_u,W_v]:=dW_v''+cW_v'+rW_v(1-W_v-\delta_0W_u)\ge 0,\ \mbox{a.e. in $\mathbb{R}$},
%&W_u'\leq 0,\ W_v'\geq 0, \ \mbox{a.e. in $\mathbb{R}$},
\end{aligned}
\right.
\end{equation}
for $c=2\sqrt{1-a}$ and some sufficiently small $\delta_0>0$. By some straightforward computations, we have
\bea\label{N3 b=0}
N_1[W_u,W_v]=-R_u''-cR_u'-R_u(1-a-2W_*+R_u-aR_v)-aW_*R_v,
\eea
and
\bea\label{N4 b=0}
N_2[W_u,W_v]=dR_v''+cR_v'-rR_v(1+R_v)-\delta_0r(1+R_v)(W_*-R_u).
\eea
\begin{figure}
\begin{center}
\begin{tikzpicture}[scale = 1.1]
\draw[thick](-6,0) -- (6,0) node[right] {$\xi$};
\draw [semithick] (-6,-0.5)--  (1,-0.5) to [out=10,in=180] (6,-0.1);
\draw [ultra thick] (-6, -1) -- (-4,-1) to [out=30, in=260] (-3.5,0)  to [out= 70, in=180] (-3,0.6) to [out=20, in=220] (1,2.5) to [out=70,in=180] (2,3)to [out=0,in=170] (6,0.2);
\node[below] at (1,1) {$\xi_{1}+\delta_1$};
%\node[below] at (0,-0.1) {$\xi_{\sigma}$};
\draw[dashed] [thick] (-3.5,0)-- (-3.5,0.4);
\node[below] at (-3.5,0.9) {$\xi_2$};
\draw[dashed] [thick] (-3,0.)-- (-3,0.6);
\node[above] at (-2.8,-0.5) {$\xi_2+\delta_2$};
\draw[dashed] [thick] (-4,-0.5)-- (-4,1);
\node[above] at (-4,0.8) {$\xi_2-\delta_4$};
\draw[dashed] [thick] (1,0)-- (1,0.55);
\draw[dashed] [thick] (1,0.9)-- (1,2.5);
\draw [thin] (1.2,-0.46) arc [radius=0.2, start angle=20, end angle= 170];
\node[above] at (1,-0.42) {$\alpha_2$};
\draw [thin] (-3.82,-0.83) arc [radius=0.2, start angle=50, end angle= 185];
\node[above] at (-4,-0.87) {$\alpha_4$};
\draw [thin] (-2.8,0.655) arc [radius=0.2, start angle=30, end angle= 175];
\node[above] at (-3,0.7) {$\alpha_3$};
\draw [thin] (1.1,2.7) arc [radius=0.2, start angle=70, end angle= 220];
\node[above] at (0.6,2.5) {$\alpha_1$};
%\draw [thin] (1.1,3.13) arc [radius=0.2, start angle=50, end angle= 180];
%\node[above] at (1,3.13) {$\alpha_2$};
%\node[above] at (2,1.5) {\Large{$v$}};
\node[above] at (2,1.8) {\Large{$R_u$}};
\node[above] at (2,-1) {\Large{$R_v$}};
%\node[below] at (1,-1.7) {Figure \ref{Figure prop}: $(R_u,R_v)$ to prove Proposition \ref{prop:0<b*<infty}.};
\end{tikzpicture}
\caption{$(R_u,R_v)$ to prove Proposition \ref{prop:0<b*<infty}.}\label{Figure prop}
\end{center}
\end{figure}

We consider $(R_u,R_v)(\xi)$ defined  as (see Figure \ref{Figure prop})
\begin{equation*}
(R_u,R_v)(\xi):=\begin{cases}
(\varepsilon_1\sigma(\xi) e^{-\lambda_u\xi},-\eta_1 e^{-\lambda_1\xi}),&\ \ \mbox{for}\ \ \xi\ge\xi_1+\delta_1,\\
(\varepsilon_2e^{\lambda_2\xi},-\delta_v),&\ \ \mbox{for}\ \ \xi_2+\delta_2\le\xi\le\xi_1+\delta_1,\\
(\varepsilon_3\sin(\delta_3(\xi-\xi_2)),-\delta_v),&\ \ \mbox{for}\ \ \xi_2-\delta_4\le\xi\le\xi_{2}+\delta_2,\\
(-\delta_u,-\delta_v),&\ \ \mbox{for}\ \ \xi\le\xi_2-\delta_4,
\end{cases}
\end{equation*}
where $\lambda_u:=\sqrt{1-a}$, and $\xi_1>M$ and $\xi_2<-M$ are fixed points. Since $|R_u|,|R_v|\ll 1$, up to enlarging $M$, for all $\xi\in(-\infty,\xi_2]$, it holds
\bea\label{xi 2 b=0}
1-2W_*-a<-1+a+\rho,
\eea
with arbitrarily small $\rho>0$. We also set $0<\lambda_{1}<\lambda_u$ satisfies
\begin{equation}\label{condition on lambda 1 b=0}
d\lambda_1^2-2\sqrt{1-a}\,\lambda_1-r=:-C_0<0,
\end{equation}
and $\lambda_{2}$ very large satisfies
\begin{equation}\label{condition on lambda 2 b=0}
\lambda_2^2+2\sqrt{1-a}\,\lambda_2-3=:C_1>0.
\end{equation}
Here
$\varepsilon_{1,2,3}>0$ and $\eta_1>0$ make $(R_u,R_v)$ continuous on $\mathbb{R}$, while $\delta_{1,\cdots,4}>0$ will be determined later such that $(W_u,W_v)$ satisfies \eqref{super solution b=0}.  Moreover, we set
\bea\label{delta u v b=0}
\delta_u=\varepsilon_3\sin(\delta_3\delta_4)\quad\text{and}\quad \delta_v=\eta_1e^{-\lambda_1(\xi_1+\delta_1)},
\eea
which yield $(R_u,R_v)(\xi)$ are continuous on $\mathbb{R}$.

Next, we will divide the construction into several steps.

\noindent{\bf{Step 1}:} We consider $\xi\in[\xi_{1}+\delta_1,\infty)$ with $\xi_1>M$ and some small $\delta_1$ satisfying
\bea\label{cond delta 1 b=0}
0<\delta_1<\frac{1}{2(\lambda_2+\lambda_u)}.
\eea
In Step 1, we aim to verify that $(W_u,W_v)(\xi)=(U_*-R_u,1+R_v)(\xi)$, with
\beaa
(R_u,R_v)(\xi)=(\varepsilon_1\sigma(\xi)e^{-\lambda_u\xi},-\eta_1 e^{-\lambda_1\xi}),
\eeaa
satisfies \eqref{super solution b=0} by setting  $\delta_0$ sufficiently small.

Similar as the construction of $R_w(\xi)$ for scalar equation problem, we define
$$\sigma(\xi):=\frac{4}{\lambda_1^2}e^{-\frac{\lambda_1}{2}(\xi-\xi_1)}-\frac{4}{\lambda_1^2}+\frac{4}{\lambda_1}\xi-\frac{4}{\lambda_1}\xi_1$$
which satisfies
$$\sigma(\xi_1)=0,\ \sigma'(\xi)=\frac{4}{\lambda_1}-\frac{2}{\lambda_1}e^{-\frac{\lambda_1}{2}(\xi-\xi_1)},\ \sigma''(\xi)=e^{-\frac{\lambda_1}{2}(\xi-\xi_1)},$$
and $\sigma(\xi)= O(\xi)$ as $\xi\to\infty$.
From \eqref{N3 b=0}, we have
\beaa
N_1[W_u,W_v]\le -e^{-\frac{\lambda_1}{2}(\xi-\xi_1)}R_u+R_u(2W_*-R_u+aR_v)-aW_*R_v.
\eeaa
Since $W_*(\xi)= O(\xi e^{-\lambda_u\xi})$ as $\xi\to\infty$ and  $0<\lambda_1<\lambda_u$, we obtain $N_1[W_u,W_v]\le 0$ for all $\xi\in[\xi_1+\delta_1,\infty)$ up to enlarging $M$ if necessary.

Next, we deal with the inequality of $N_2[W_u, W_v]$.
From \eqref{N4 b=0} and \eqref{condition on lambda 1 b=0}, we have
\begin{equation*}
N_2[W_u,W_v]\ge -C_0R_v-rR_v^2-\delta_0r(1+R_v)W_*.
\end{equation*}
Since $0<\lambda_1<\lambda_u$ and $R_v<0$, by setting $\delta_0\ll \eta_1$ sufficiently small, then we have $N_2[W_u,W_v]\geq0$ for all $\xi\in[\xi_1+\delta_1,\infty)$.

\medskip

\noindent{\bf{Step 2}:} We consider $\xi\in[\xi_{2}+\delta_2,\xi_1+\delta_1]$ with $\xi_1+\delta_1$ fixed by Step 1.
In this case, we have
$(R_u,R_v)(\xi)=(\varepsilon_2e^{\lambda_2\xi},-\delta_v)$
with $\lambda_2$ satisfying \eqref{condition on lambda 2 b=0} and $\delta_v$ defined as \eqref{delta u v b=0}. It is easy to see that $R_v(\xi)$ is continuous at $\xi=\xi_1+\delta_1$, and $\angle \alpha_2<180^{\circ}$ since
$$R'_v((\xi_1+\delta_1)^+)>0=R'_v((\xi_1+\delta_1)^-).$$

On the other hand, we set
$$\varepsilon_2=\varepsilon_2(\varepsilon_1,\lambda_2)=\frac{\varepsilon_1\sigma(\xi_1+\delta_1)e^{-\lambda_u(\xi_1+\delta_1)}}{e^{\lambda_2(\xi_1+\delta_1)}}$$
such that $R_u(\xi)$ is continuous at $\xi=\xi_1+\delta_1$. Then, by some straightforward computations, we have
\beaa
&&R_u'((\xi_1+\delta_1)^+)=\varepsilon_1\sigma'(\xi_1+\delta_1)e^{-\lambda_u(\xi_1+\delta_1)}-\varepsilon_1\lambda_u\sigma(\xi_1+\delta_1)e^{-\lambda_u(\xi_1+\delta_1)},\\
&&R_u'((\xi_1+\delta_1)^-)=\lambda_2R_u(\xi_1+\delta_1).
\eeaa
Thus, $R_u'((\xi_1+\delta_1)^+)>R_u'((\xi_1+\delta_1)^-)$ %and $\angle \alpha_1<180^{\circ}$ are
is
equivalent to
$$(\lambda_2+\lambda_u)\sigma(\xi_1+\delta_1)<\sigma'(\xi_1+\delta_1),$$
which holds %since
since
\eqref{cond delta 1 b=0}. Hereafter, $\delta_1$ is fixed.

From \eqref{N3 b=0}, \eqref{condition on lambda 2 b=0}, and $R_v<0$, we have
\beaa
N_1[W_u,W_v]\le-C_1R_u+aW_*\delta_v.
\eeaa
Notice that, we can set $\eta_1\ll \varepsilon_1$ such that $\delta_v\ll|R_u|$ for all $\xi\in[\xi_2+\delta_2,\xi_1+\delta_1]$. Therefore, we have $N_1[W_u,W_v]\le 0$ for all $\xi\in[\xi_2+\delta_2,\xi_1+\delta_1]$.
On the other hand, from \eqref{N4 b=0} and $R_v<0$, we have
\beaa
N_2[W_u,W_v]=r\delta_v(1-\delta_v)-\delta_0r(1-\delta_v)(W_*-R_u).
\eeaa
Therefore, up to reducing $\delta_0\ll \eta_1$ if necessary, we have $N_2[W_u,W_v]\ge 0$ for all
$\xi\in[\xi_2+\delta_2,\xi_1+\delta_1]$. Moreover, it is easy to see that $N_2[W_u,W_v]\ge 0$ for all $\xi\in(-\infty,\xi_1+\delta_1]$ as long as $\delta_0\ll \eta_1$ is sufficiently small since $W_*-R_u\leq 1$ in $\mathbb{R}$. Therefore, hereafter it suffices to verify the inequality of $N_1[W_u,W_v]$.

\medskip

\noindent{\bf{Step 3}:} We consider $\xi\in[\xi_{2}-\delta_4,\xi_2+\delta_2]$ with $\xi_2+\delta_2$ fixed by Step 2 and
\bea\label{cond delta 2 b=0}
\delta_2>\frac{1}{\lambda_2}.
\eea
In this case, we have $(R_u,R_v)=(\varepsilon_3\sin(\delta_3(\xi-\xi_2)),-\delta_v)$. We first set
$$\varepsilon_3=\varepsilon_3(\varepsilon_1,\delta_2,\delta_3,\lambda_2)=\frac{\varepsilon_2e^{\lambda_2(\xi_2+\delta_2)}}{\sin(\delta_2\delta_3)}=\frac{\varepsilon_1\sigma(\xi_1+\delta_1)e^{\lambda_2(\xi_2+\delta_2)-\lambda_u(\xi_1+\delta_1)}}{\sin(\delta_2\delta_3)e^{\lambda_2(\xi_1+\delta_1)}}$$
such that $R_u(\xi)$ is continuous at $\xi=\xi_2+\delta_2$.
Then, by some straightforward computations, we have
$$R_u'((\xi_2+\delta_2)^+)=\lambda_2R_u(\xi_2+\delta_2)\quad\text{and}\quad R_u'((\xi_2+\delta_2)^-)=\varepsilon_3\delta_3\cos(\delta_2\delta_3).$$
Thus, from $\frac{x\cos x}{\sin x}\to 1$ as $x\to 0$,
$$R_u'((\xi_2+\delta_2)^+)>R_u'((\xi_2+\delta_2)^-)\ \ \text{and}\ \ \angle \alpha_3<180^{\circ},$$
follows by taking
%are equivalent to
$\delta_3$ sufficiently small and $\delta_2$ satisfying \eqref{cond delta 2 b=0}.

It suffices to only verify the inequality of $N_1[W_u,W_v]$. From \eqref{N3 b=0}, we have
\beaa
N_1[W_u,W_v]=\delta_3^2R_u-c^*\delta_3\varepsilon_3\cos(\delta_3(\xi-\xi_2))-R_u(1-a-2W_*+R_u-aR_v)-aW_*R_v.
\eeaa
For $\xi\in[\xi_2,\xi_2+\delta_2]$, we have
$$N_1[W_u,W_v]\le (\delta_3^2+1+2a)\varepsilon_3\sin(\delta_2\delta_3)-c^*\delta_3\varepsilon_3\cos(\delta_2\delta_3).$$
Note that, from $\frac{x\cos x}{\sin x}\to 1$ as $x\to 0$,
$$(\delta_3^2+1+2a)\varepsilon_3\sin(\delta_2\delta_3)-c^*\delta_3\varepsilon_3\cos(\delta_2\delta_3)\le 0$$
is equivalent to $\delta_2<\frac{\delta_3c^*}{\delta_3^2+1+2a}$ which holds since $\lambda_2$ in \eqref{cond delta 2 b=0} can be chosen arbitrarily large.
For $\xi\in[\xi_2-\delta_4,\xi_2]$, from $R_u\le 0$ and \eqref{xi 2 b=0}, up to enlarging $M_0$, we have
$$N_1[W_u,W_v]\le -c^*\delta_3\varepsilon_3\cos(\delta_2\delta_3)-aW_*R_v.$$
Then, by setting
\bea\label{delta 4 b=0}
0<\delta_4<\frac{1}{\lambda_2}<\delta_2<\frac{\delta_3c^*}{\delta_3^2+1+a},
\eea
we have $N_1[W_u,W_v]\le 0$ for all $\xi\in[\xi_2-\delta_4,\xi_2+\delta_2]$.

\medskip

\noindent{\bf{Step 4}:} We consider $\xi\in(-\infty,\xi_{2}-\delta_4]$ with $\xi_2-\delta_4$ fixed in Step 3. In this case, we have $(R_u,R_v)=(-\delta_u,-\delta_v)$.
From \eqref{delta u v b=0}, $R_u(\xi)$ is continuous at $\xi=\xi_2-\delta_4$. It is easy to see that
$$R_u'((\xi_2-\delta_4)^+)>0=R_u'((\xi_2-\delta_4)^-)\ \ \text{and}\ \ \angle \alpha_4<180^{\circ}.$$
 Moreover, from $\delta_v\ll R_u(\xi_2+\delta_2)$, we assert that $\delta_v\ll\delta_u$ up to reducing $\eta_1/\varepsilon_1$ if necessary.

From \eqref{N3 b=0} and \eqref{xi 2 b=0}, we have
\beaa
N_1[W_u,W_v]=\delta_u(1-a-2W_*-\delta_u+a\delta_v)+aW_*\delta_v\le 0
\eeaa
since $\delta_v\ll\delta_u$. The construction of $(R_u,R_v)(\xi)$ is complete.

\medskip

We are ready to complete the proof of Proposition~\ref{prop:0<b*<infty}.
From Step 1 to Step 4, we are equipped with a super-solution $(W_u,W_v)(\xi)$.
Next, we consider the Cauchy problem
\beaa
\left\{
\begin{aligned}
&u_t=u_{xx}+u(1-u-av),\\
&v_t=dv_{xx}+rv(1-v-\delta_0u),
\end{aligned}
\right.
\eeaa
with initial datum given by \eqref{initial datum}. By setting $x_0$ very large, the function $(\overline u,\underline v)(t,x):=(W_u,W_v)(x-2\sqrt{1-a}t-x_0)$ is a super-solution, propagating with the speed $2\sqrt{1-a}$. However, this contradicts the assumption $b_1=0$, which implies the actual propagation speed must be strictly greater than $2\sqrt{1-a}$ for all $b>0$. 
Hence, the case $b_1=0$ is impossible. This completes the proof of Proposition \ref{prop:0<b*<infty}.
\end{proof}

Together with Proposition \ref{prop:0<b*<infty},
we immediately obtain Lemma~\ref{lm: existence b*}.
\begin{lemma}\label{lm: existence b*}
For any $d>0$, $r>0$ and $a\in(0,1)$, there exists $0<b^*<\infty$  such that
\begin{equation*}
c_{LV}^{*}(b)=2\sqrt{1-a}\ \ \mbox{for}\ \ b\in(0,b^*]\ \  \mbox{and}\ \  c_{LV}^{*}(b)>2\sqrt{1-a} \ \ \mbox{for}\ \  b\in(b^*,+\infty).
\end{equation*}
\end{lemma}

\subsection{Construction of the super-solution}

\medskip

Now, we are ready to state the most important part of our argument. Let $(c_{LV}^{*}, U_*,V_*)$ be the minimal traveling wave of system \eqref{tw solution weak} with $b=b^*>0$
and $c_{LV}^{*}=c_{LV}^{*}(b^*)=2\sqrt{1-a}$.
Hereafter, for simplicity we denote
\beaa
\lambda_u:=\lambda_u^-(c_{LV}^{*}(b^*))>0,\ \lambda_v:=\lambda_v^-(c_{LV}^{*}(b^*))>0,\
%\mu_u:=\mu_u^+(c^*(b^*))>0,\ \mu_v:=\mu_v^+(c^*(b^*))>0,
\eeaa
 where
$\lambda_u^-$ and $\lambda_v^-$ are defined in the \S~\ref{subsec:Asymp behavior}.

The first and most involved step is to show $(i)\Rightarrow (ii)$, {\it i.e.}, if $b=b^*$, then $A=0$ in \eqref{1}. We shall use a contradiction argument to establish the following result.
\begin{proposition}\label{Prop-supersol-system}
Assume that {\bf(H)} holds. In addition, if
\bea\label{AS-U-infty-for-contradiction}
\lim_{\xi\rightarrow+\infty}\frac{U_{*}(\xi)}{\xi e^{-\lambda_u\xi}}=A_0\quad \mbox{for some $A_0>0$,}
\eea
then there exist two continuous functions $R_u(\xi)$ and $R_v(\xi)$
defined in $\mathbb{R}$ with
\bea\label{decay rate of Ru Rv}
R_u(\xi)=O(\xi e^{-\lambda_u\xi}) \quad \mbox{as $\xi\to\infty$},
\eea
such that
$$(W_u,W_v)(\xi):=\Big(\min\{(U_*-R_u)(\xi),1\},\max\{(V_*+R_v)(\xi),0\}\Big)$$
is a super-solution satisfying%forms a super-solution satisfying
\begin{equation}\label{tw super solution system}
\left\{
\begin{aligned}
&N_3[W_u,W_v]:=W_u''+2\sqrt{1-a}W_u'+W_u(1-W_u-aW_v)\le0,\ \mbox{a.e. in $\mathbb{R}$},\\
&N_4[W_u,W_v]:=dW_v''+2\sqrt{1-a}W_v'+rW_v(1-W_v-(b^*+\delta_0)W_u)\ge 0,\ \mbox{a.e. in $\mathbb{R}$},
%&W_u'\leq 0,\ W_v'\geq 0, \ \mbox{a.e. in $\mathbb{R}$}
\end{aligned}
\right.
\end{equation}
for some small $\delta_0>0$, where
$W'_u(\xi_0^{\pm})$
(resp. $W'_v(\xi_0^{\pm})$) exists and
\beaa
W'_u(\xi_0^+)\leq W'_u(\xi_0^-)\quad (resp.\, W'_v(\xi_0^+)\geq W'_v(\xi_0^-))
\eeaa
if  $W'_u$ (resp., $W'_v$) is not continuous at $\xi_0$.
%if $W'_u$ (resp., $W'_v$) is not continuous at $\xi_0$, then $W'_u(\xi_0^+)\leq W'_u(\xi_0^-)$
%(resp., $W'_v(\xi_0^+)\geq W'_v(\xi_0^-)$).
\end{proposition}

%We now show that for any given small $\sigma>0$,
%there exists two auxiliary functions $R_u(\xi)$ and $R_v(\xi)$ with
%\beaa
%R_u(\xi)=O(\xi e^{-\lambda_u\xi}),\quad R_v(\xi)=O(\xi e^{-\lambda_u\xi}) \quad \mbox{as $\xi\to\infty$},
%\eeaa
%such that $(W_u,W_v)(\xi):=(U_*-R_u,V_*+R_v)(\xi)$ forms a super-solution satisfying
%\begin{equation}\label{tw super solution case 2}
%\left\{
%\begin{aligned}
%&N_3[W_u,W_v]:=W_u''+(2\sqrt{1-a}+\sigma)W_u'+W_u(1-W_u-aW_v)\le0,\\
%&N_4[W_u,W_v]:=dW_v''+(2\sqrt{1-a}+\sigma)W_v'+rW_v(1-W_v-(b^*+\delta_0)W_u)\ge 0,\\
%&(W_u,W_v)(-\infty)=(1,0),\ (W_u,W_v)(+\infty)=(0,1),\\
%&W_u'< 0,\ W_v'> 0,
%\end{aligned}
%\right.
%\end{equation}
%for any sufficiently small $\delta_0>0$ which is independent of $\sigma$.

%%%%%%%%%%%%%%%%%%%%%%%%%%%%%%%%%%%%%%%%%%%%%%%%%%%%%%%%%%%%%%%
%By Lemma~\ref{lm: existence b*}, it suffices to prove \eqref{threshold rate}.
%For this, we assume by contradiction that \eqref{threshold rate} is not true. Then, by Lemma~\ref{lm: behavior around + infty}, we can find $A_0>0$ such that

In the following discussion, we divide the construction of
$(R_u,R_v)(\xi)$ into two subsections: $b^*\ge 1$ (the strong-weak competition case and the critical case); $0<b^*<1$ (the weak competition case).
%into 3 cases: Case (S1)(S2), Case (S3) and Case (S4).

\subsubsection{For the case $b^*\ge 1$}\label{sec:3-1}

\noindent

In this subsection, we always assume $b^*\ge 1$.
First, since
$(U_*,V_*)(-\infty)=(1,0)$ and
$(U_*,V_*)(+\infty)=(0,1)$, for any given small $\rho>0$, we can take $M_0>0$ sufficiently large such that
\bea\label{rho}
\begin{cases}
0< U_*(\xi)<\rho,\quad 1-\rho<V_*(\xi)<1&\quad\mbox{for all}\quad\xi\geq M_0,\\
0< V_*(\xi)<\rho,\quad 1-\rho<U_*(\xi)<1&\quad\mbox{for all}\quad\xi\leq -M_0.
\end{cases}
\eea

For $\xi$ being close to $\infty$, we have the following for later use. First,
due to \eqref{AS-U-infty-for-contradiction}, up to enlarging $M_0$ if necessary, we may assume that for some positive constant $A_0$,
\beaa%\label{U-M}
U_*(\xi) \leq 2A_0\xi e^{-\lambda_u\xi}\quad\mbox{for all}\quad\xi\geq M_0.
\eeaa
Moreover, due to and Lemma~\ref{lm: behavior around + infty}(ii), we may also assume there exists $C_0>0$ such that
\beaa%\label{V-lower-bound-infty}
V_*(\xi) \geq 1-C_0\xi^2 e^{-\min\{\lambda_u,\lambda_v\}\xi}\quad
\mbox{for all}\quad\xi\geq M_0.
\eeaa

%\red{Let $\xi_1>M_0$ be a large positive point and $\xi_2<-M_0$ be a large negative point.}
\medskip
We now define $(R_u,R_v)(\xi)$ as (see Figure~\ref{Figure:b>1})
\begin{equation*}
(R_u,R_v)(\xi):=\begin{cases}
(\varepsilon_1\sigma(\xi) e^{-\lambda_u\xi},\eta_1 (\xi-\xi_1)e^{-\lambda_u\xi}),&\ \ \mbox{for}\ \ \xi_1+\delta_1\le\xi,\\
(\varepsilon_2\sin(\delta_2(\xi-\xi_1+\delta_3)),\eta_2e^{\lambda_{1}\xi}),&\ \ \mbox{for}\ \ \xi_1-\delta_4\le\xi\le\xi_{1}+\delta_1,\\
(-\varepsilon_3,\eta_2e^{\lambda_{1}\xi}),&\ \ \mbox{for}\ \ \xi_2+\delta_5\le\xi\le\xi_1-\delta_4,\\
(-\varepsilon_4(-\xi)^{\theta}[1-U_*(\xi)],\eta_3\sin(\delta_6(\xi-\xi_2))),&\ \ \mbox{for}\ \ \xi_2-\delta_7\le\xi\le\xi_2+\delta_5,\\
(-\varepsilon_4(-\xi)^{\theta}[1-U_*(\xi)],-\eta_4(-\xi)^{\theta}V_*(\xi),&\ \ \mbox{for}\ \ \xi\le\xi_2-\delta_7,
\end{cases}
\end{equation*}
where
$0<\theta<1$, and $\lambda_{1}>0$ is very large such that
\begin{equation}\label{condition on lambda 1 system}
d\lambda_1^2+2\sqrt{1-a}\,\lambda_1-r(2+b^*)>0\ \ {\rm and}\ \ \lambda_1>\frac{2r(b^*+1)}{\sqrt{1-a}}.
\end{equation}
Here $\xi_1>M_0$, $\xi_2<-M_0$, $\varepsilon_{i=1,\cdots,4}>0$ and $\eta_{j=1,\cdots,4}>0$, $\delta_{k=1,\cdots,7}>0$,
%make $(W_u,W_v)$ be continuous functions on $\mathbb{R}$
and $\sigma(\xi)$ will be determined later.

%such that $(W_u,W_v)$ satisfies \eqref{tw super solution system} for $\xi\in\mathbb{R}$.

\begin{figure}
\begin{center}
\begin{tikzpicture}[scale = 1.1]
\draw[thick](-6,0) -- (6,0) node[right] {$\xi$};
\draw [ultra thick] (-6,-0.7)to [out=0,in=140] (-3,-1.5) -- (-1.7,-1.5)  to [out=30,in=270]   (-1,0)  to [out=90,in=185]  (1,3) to [out=45,in=190] (1.5,3.3) to [out=0,in=160] (6,0.5);
\draw [semithick] (-6, -0.1) to [ out=0, in=150] (-4,-0.5) to [out=30, in=230] (-3.5,0)  to [out= 40, in=180] (-3,0.2) to [out=15, in=220] (1,1.6) to [out=70,in=180] (2,2)to [out=0,in=170] (6,0.2);
\node[below] at (1,0) {$\xi_{1}+\delta_1$};
%\node[below] at (0,-0.1) {$\xi_{\sigma}$};
\draw[dashed] [thick] (-3.5,0)-- (-3.5,-0.5);
\node[below] at (-3.5,-0.4) {$\xi_2$};
\draw[dashed] [thick] (-3,0.2)-- (-3,-0.5);
\node[below] at (-2.7,-0.4) {$\xi_2+\delta_5$};
\draw[dashed] [thick] (-4,0)-- (-4,1);
\node[above] at (-4,0.8) {$\xi_2-\delta_7$};
\draw[dashed] [thick] (1,0)-- (1,1.6);
\draw[dashed] [thick] (1,2)-- (1,3);
\draw[dashed] [thick] (-1,0)-- (-1,-1.1);
\draw[dashed] [thick] (-1.7,-0.9)-- (-1.7,1.7);
\node[above] at (-1.7,1.55) {$\xi_1-\delta_4$};
%\node[below] at (-2.5,-0.2) {$\xi_2$};
%\draw[dashed] [thick] (-2.5,-0.3)-- (-2.5,0.3);
%\draw[dashed] [thick] (-2.5,-0.7)-- (-2.5,-1);
\node[below] at (-0.7,-1) {$\xi_1-\delta_3$};
\draw [thin] (-3.85,-0.41) arc [radius=0.2, start angle=40, end angle= 140];
\node[above] at (-4,-0.43) {$\alpha_6$};
\draw [thin] (-2.8,-1.5) arc [radius=0.2, start angle=0, end angle= 135];
\node[above] at (-2.8,-1.4) {$\alpha_5$};
\draw [thin] (-2.8,0.255) arc [radius=0.2, start angle=30, end angle= 175];
\node[above] at (-3,0.3) {$\alpha_4$};
\draw [thin] (-1.5,-1.35) arc [radius=0.2, start angle=20, end angle= 200];
\node[above] at (-1.7,-1.3) {$\alpha_3$};
\draw [thin] (1.1,1.8) arc [radius=0.2, start angle=70, end angle= 220];
\node[above] at (0.7,1.7) {$\alpha_1$};
\draw [thin] (1.1,3.13) arc [radius=0.2, start angle=50, end angle= 180];
\node[above] at (1,3.13) {$\alpha_2$};
%\node[above] at (2,1.5) {\Large{$v$}};
\node[above] at (2,2.3) {\Large{$R_u$}};
\node[above] at (2,1) {\Large{$R_v$}};
%\node[below] at (1,-2.3) {Figure~\ref{Figure:b>1}: $(R_u,R_v)$ for the case $b^*\ge 1$.};
\end{tikzpicture}
\caption{$(R_u,R_v)$ for the case $b^*\ge 1$.}\label{Figure:b>1}
\end{center}
\end{figure}

\medskip

%On the other hand, due to Lemma \ref{lm: behavior around - infty b=1}, we may assume
%\bea\label{U-V at -infty}
%1-U_*(\xi)\sim\tau_u(\xi)\quad\text{and}\quad V_*(\xi)\sim \tau_v(\xi)\quad\text{for}\quad\xi\le-M_0,
%\eea
%with $(\tau_u,\tau_v)(\xi)$ satisfies %\eqref{system tau u v}.

%By shifting the coordinates, we may assume $\xi_1<M$ such that
%statment (1) and (3) holds
%for all $\xi\ge \xi_1$.

Next, we divide the proof into several steps.
\medskip

\noindent{\bf{Step 1}:} We consider $\xi\in[\xi_{1}+\delta_1,\infty)$ with $\xi_1>M_0$ ($M_0$ is defined in \eqref{rho}) and  some small $\delta_1$ satisfying
\bea\label{cond delta 1}
0<\delta_1<\frac{1}{\lambda_u+\lambda_1}\quad\text{and}\quad\frac{1}{\lambda_u}+\frac{3}{\lambda_u}(1-e^{-\frac{\lambda_u\delta_1}{2}})-2\delta_1>0.
\eea
In Step 1, we aim to verify that $(W_u,W_v)(\xi)=(U_*-R_u,V_*+R_v)(\xi)$ with
\beaa
(R_u,R_v)(\xi)=(\varepsilon_1\sigma(\xi)e^{-\lambda_u\xi},\eta_1(\xi-\xi_{1}) e^{-\lambda_u\xi}),
\eeaa
satisfies \eqref{tw super solution system} by setting $\varepsilon_1\ll A_0$ and $\eta_1\ll \varepsilon_1$, and
\bea\label{cond delta 0 system case 1}
0<\delta_0=\delta_0(\varepsilon_1,\eta_1,\xi_1+\delta_1)<\frac{\varepsilon_1rb^*-2rC_1\lambda_u\eta_1-2|1-d|\lambda^2_u\eta_1}{rA_0\lambda_u},
\eea
where  
\bea\label{C1}
C_1:=\max_{\xi\in[\xi_1+\delta_1,\infty)}\Big|\frac{(d-2)(1-a)}{r}+1-2V_*(\xi)-(b^*+\delta_0)U_*(\xi)\Big|>0.
\eea
Note that, according to \eqref{cond delta 0 system case 1}, the choice of $\delta_0$ remains vaild regardless of enlarging $\xi_1+\delta_1$ or reducing $\eta_1$.

We define
$$\sigma(\xi):=\frac{4}{\lambda_u^2}e^{-\frac{\lambda_u}{2}(\xi-\xi_1)}-\frac{4}{\lambda_u^2}+\frac{4}{\lambda_u}\xi-\frac{4}{\lambda_u}\xi_1$$
which satisfies
$$\sigma(\xi_1)=0,\ \sigma'(\xi)=\frac{4}{\lambda_u}-\frac{2}{\lambda_u}e^{-\frac{\lambda_u}{2}(\xi-\xi_1)},\ \sigma''(\xi)=e^{-\frac{\lambda_u}{2}(\xi-\xi_1)},$$
and $\sigma(\xi)= O(\xi)$ as $\xi\to\infty$.
Therefore, $R_u$ satisfies the assumption \eqref{decay rate of Ru Rv}.
Moreover, by some  straightforward computations, we obtain $R'_u((\xi_1+\delta_1)^+)>0$ and
$R'_v((\xi_1+\delta_1)^+)>0$  from \eqref{cond delta 1}.

Recall that, $(U_*,V_*)$ is the minimal traveling wave satisfying \eqref{tw solution weak} with $c=2\sqrt{1-a}$.
By some  straightforward computations,  we have
$$N_3[W_u,W_v]=-\varepsilon_1\sigma''(\xi)e^{-\lambda_u\xi}-R_u(a-2U_*+R_u-aV_*-aR_v)-aR_vU_*,$$
and
\begin{equation*}
\begin{aligned}
N_4[W_u,W_v] = & rR_v\Big[\frac{(d-2)(1-a)}{r}+1-2V_*-R_v-(b^*+\delta_0)U_*+(b^*+\delta_0)R_u\Big]\\
&+2(1-d)\lambda_u\eta_1e^{-\lambda_u\xi}+rV_*[(b^*+\delta_0)R_u-\delta_0U_*].
\end{aligned}
\end{equation*}
 Then, from \eqref{AS-U-infty-for-contradiction}, by setting  $\varepsilon_1>0$ and $\eta_1>0$ relatively small to $A_0$,
for all $\xi\in[\xi_1+\delta_1,\infty)$, it holds
$$-2U_*+R_u-aR_v = o(e^{-\frac{\lambda_u}{2}\xi})\quad\text{and}\quad a-aV_*\ge 0.$$
Then, up to enlarging $\xi_1$ if necessary, since $aR_vU_*>0$, we obtain that $N_3[W_u,W_v]\leq0$
for all $\xi\in[\xi_1+\delta_1,\infty)$.

Next, we deal with the inequality of $N_4[W_u, W_v]$.
%\red{First, we denote
%\[
%C_1:=\max_{\xi\in[M_0,\infty)}\Big|\frac{(d-2)(1-a)}{r}+1-2V_*-(b^*+\delta_0)U_*\Big|>0.
%\]
%} Then,
For $\xi\in[\xi_1+\delta_1,\infty)$, from \eqref{rho} and \eqref{C1}, we have
\begin{equation*}
N_4[W_u,W_v]\geq -rR_v(C_1+R_v)+2(1-d)\lambda_u\eta_1e^{-\lambda_u\xi}+r(1-\rho)(b^*+\delta_0)R_u-r\delta_0U_*.
\end{equation*}
From the definition of $\sigma(\xi)$, we can find a $M_1>\xi_1$ such that $\sigma(\xi)\sim \frac{4}{\lambda_u}\xi$,
 and
$$2(1-d)\lambda_u\eta_1e^{-\lambda_u\xi}=o(R_u)\ \ \text{for} \ \ \xi\ge M_1.$$
By further choosing ${\eta_1}/{\varepsilon_1}$ sufficiently small and $\delta_0$ satisfying \eqref{cond delta 0 system case 1}, we have $N_4[W_u,W_v]\geq0$
for $\xi\geq M_1$. For $\xi\in[\xi_1+\delta_1,M_1]$, by reducing ${\eta_1}/{\varepsilon_1}$ if necessary, we have
$$-rR_v(C_1+R_v)+2(1-d)\lambda_u\eta_1e^{-\lambda_u\xi}+r(1-\rho)(b^*+\delta_0)R_u> 0.$$
Since $U_*$ is bounded on $[\xi_1+\delta_1,M_1]$, by setting $\delta_0(\varepsilon_1,\eta_1,\xi_1+\delta_1)\ll \varepsilon_1$ sufficiently small, then we have $N_4[W_u,W_v]\geq0$ for all $\xi\geq \xi_1+\delta_1$.

%On the other hand, for $\xi\in[\xi_1+\delta_1,M]$, since $R_u>0$, we can re-choose $\eta_1>0$ relatively smaller than $\varepsilon_1$ and $\delta_0$ sufficiently small such that $N_4[W_u,W_v]\geq0$ for  $\xi\geq [\xi_1+\delta_1,M]$.

\medskip

\noindent{\bf{Step 2:}} We consider $\xi\in[\xi_1-\delta_4,\xi_1+\delta_1]$ with $\xi_1+\delta_1$ fixed by Step 1. In this case, we have
$$(R_u,R_v)(\xi):=(\varepsilon_2\sin(\delta_2(\xi-\xi_1+\delta_3)),\eta_2e^{\lambda_{1}\xi})$$
with $0<\delta_2<1$ sufficiently small,
\bea\label{cond delta 4}
0<\delta_3<\delta_4\quad\text{satisfying}\quad|\delta_3-\delta_4|\ll 1,
\eea
 and
\bea\label{cond delta 3 4}
\delta_1+\delta_3=\frac{\sqrt{1-a}}{2+a}<\frac{\pi}{2\delta_2}.
\eea

We first verify the following claim: %as follows:
\begin{claim}\label{cl: alpha 1 aplha 2}
There exist %$\xi_2<\xi_1$, $\delta_1>0$ and
$\varepsilon_2>0$ and $\eta_2>0$ sufficiently small
such that
\beaa
&&R_u((\xi_{1}+\delta_1)^+)=R_u((\xi_{1}+\delta_1)^-)\quad \text{and}\quad \angle\alpha_1<180^{\circ}, \\
&&R_v((\xi_{1}+\delta_1)^+)=R_v((\xi_{1}+\delta_1)^-)\quad\text{and}\quad \angle\alpha_2<180^{\circ},
\eeaa
provided that  $\delta_1,\delta_3$ satisfy \eqref{cond delta 1} and \eqref{cond delta 3 4},  and $\delta_2$ is sufficiently small.
\end{claim}
\begin{proof}
By some  straightforward computations, we have
\beaa
&&R_u((\xi_{1}+\delta_1)^+)=\varepsilon_1\sigma(\xi_1+\delta_1)e^{-\lambda_u(\xi_{1}+\delta_1)},\quad R_u((\xi_{1}+\delta_1)^-)=\varepsilon_2\sin(\delta_2(\delta_1+\delta_3)),\\
&&R'_u((\xi_{1}+\delta_1)^+)=\varepsilon_1\sigma'(\xi_1+\delta_1)e^{-\lambda_u(\xi_{1}+\delta_1)}-\lambda_uR_u(\xi_{1}+\delta_1),\\
&&R'_u((\xi_{1}+\delta_1)^-)=\varepsilon_2\delta_2\cos(\delta_2(\delta_1+\delta_3)).
\eeaa
We first choose %sufficient small and
$$\varepsilon_2=\varepsilon_2(\varepsilon_1,\delta_1,\delta_2,\delta_3)=\varepsilon_1\sigma(\xi_1+\delta_1)e^{-\lambda_u(\xi_{1}+\delta_1)}/\sin(\delta_2(\delta_1+\delta_3))$$
such that
\beaa
R_u((\xi_{1}+\delta_1)^+)=R_u((\xi_{1}+\delta_1)^-).
\eeaa
Then, by applying \eqref{cond delta 3 4} and the fact $\frac{x\cos x}{\sin x}\to 1$ as $x\to 0$, we have
$$R'_u((\xi_{1}+\delta_1)^+)-R'_u((\xi_{1}+\delta_1)^-)>0$$
 is equivalent to
\beaa
\frac{2}{\lambda_u}+\frac{2}{\lambda_u}(1-e^{-\frac{\lambda_u\delta_1}{2}})>(\frac{1}{\delta_1+\delta_3}+\lambda_u)\sigma(\xi_1+\delta_1),
\eeaa
which holds since $\sigma(\xi_1+\delta_1)\to 0$ as $\delta_1\to 0$ and \eqref{cond delta 3 4}. It follows that $\angle\alpha_1<180^{\circ}$.

On the other hand, by some  straightforward computations, we have
\beaa
&&R_v((\xi_{1}+\delta_1)^-)=\eta_2e^{\lambda_1(\xi_{1}+\delta_1)},\ R_v((\xi_{1}+\delta_1)^+)=\eta_1\delta_1e^{-\lambda_u(\xi_{1}+\delta_1)},\\
&&R'_v((\xi_{1}+\delta_1)^-)=\lambda_1\eta_2e^{\lambda_1(\xi_{1}+\delta_1)},\ R'_v((\xi_{1}+\delta_1)^+)=\eta_1(1-\delta_1\lambda_u)e^{-\lambda_u(\xi_{1}+\delta_1)},
\eeaa
where $\lambda_1$ satisfies \eqref{condition on lambda 1 system}.
We take
\bea\label{eta 2 system}
\eta_2=\eta_2(\eta_1,\delta_1,\lambda_1)=\eta_1\delta_1e^{-(\lambda_u+\lambda_1)(\xi_{1}+\delta_1)}>0,
\eea
which implies $R_v((\xi_{1}+\delta_1)^-)=R_v((\xi_{1}+\delta_1)^+)$. Then, from \eqref{cond delta 1}, we have
\beaa
R_v'((\xi_{1}+\delta_1)^+)-R_v'((\xi_{1}+\delta_1)^-)=\eta_1e^{-\lambda_u(\xi_1+\delta_1)}(1-\delta_1\lambda_u-\delta_1\lambda_1)>0,
\eeaa
which yields that $\angle\alpha_2<180^{\circ}$.
The proof of Claim \ref{cl: alpha 1 aplha 2} is complete.
\end{proof}

To finish Step 2, it suffices to take a small $\delta_2>0$ and suitable $0<\delta_3<\delta_4$ such that
\bea\label{goal: step 3}
N_3[W_u,W_v]\leq 0\quad\text{and}\quad N_4[W_u,W_v]\geq 0\quad \mbox{for}\quad \xi\in[\xi_1-\delta_4,\xi_1+\delta_1].
\eea
By some  straightforward computations, for $\xi\in[\xi_2-\delta_3,\xi_1+\delta_1]$ we have
\beaa
N_3[W_u,W_v]&=&-2\sqrt{1-a}\delta_2\varepsilon_2\cos(\delta_2(\xi-\xi_1+\delta_3))-a(U_*-R_u)R_v\\
&&-R_u(1-\delta_2^2-2U_*+R_u-aV_*),\\
N_4[W_u,W_v]&=&R_v\Big[d\lambda_1^2+2\sqrt{1-a}\lambda_1+r[1-2V_*-R_v-(b^*+\delta_0)(U_*-R_u)]\Big]
            \\&&+rV_*[(b^*+\delta_0)R_u-\delta_0U_*].
\eeaa
%For $\xi\in[\xi_1,\xi_1+\delta_2]$, since $R_u>0$, from  \eqref{mu_1 v}, one has $N_4[W_u,W_v]\ge 0$, up to decrease $\varepsilon_0$ if it is necessary.  Moreover, by applying the same argument in Step 2 of Case 1, one also has $N_3[W_u,W_v]\le 0$, up to decrease $\delta_2$ if it is necessary.
To estimate $N_3[W_u,W_v]$,
we consider $\xi\in[\xi_1-\delta_3,\xi_1+\delta_1]$ and $\xi\in[\xi_1-\delta_4,\xi_1-\delta_3]$ separately as follows:
\begin{itemize}
\item For $\xi\in[\xi_1-\delta_3,\xi_1+\delta_1]$, we have
$$0\le R_u(\xi)\le \varepsilon_2\sin(\delta_2(\delta_1+\delta_3))$$
 and $R_v(\xi)\geq0$.
%\leq R_u(\xi_1+\delta_1)=\varepsilon_2\sin(\delta_2(\xi_1-\xi_2+\delta_1))$.
Then, from \eqref{cond delta 3 4} and $U_*-R_u\ge 0$,  we have
\beaa
N_3[W_u,W_v]&\leq& -R_u({\xi})\Big(2\sqrt{1-a}\frac{\delta_2\cos(\delta_2(\delta_1+\delta_3))}{\sin(\delta_2(\delta_1+\delta_3))}+1-\delta_2^2-2U_*-aR_v-aV_*\Big)\\
&\le & -R_u({\xi})\Big(\frac{2\sqrt{1-a}}{\delta_1+\delta_3}-2-a\Big)\leq 0,
\eeaa
provided that $\delta_2$ is sufficiently small.

\item For $\xi\in[\xi_1-\delta_4,\xi_1-\delta_3]$, we have $R_u(\xi)\leq 0$, $R'_u(\xi)\geq 0$, and $R_v(\xi)\ge 0$.
Note that, we can set
$$|R_u(\xi)|\ll R_v(\xi)\quad\text{and}\quad |R_u(\xi)|\ll U_*(\xi)$$
for $\xi\in[\xi_1-\delta_4,\xi_1-\delta_3]$ since
\bea\label{delta3-4 to 0}
\max_{\xi\in[\xi_1-\delta_4,\xi_1-\delta_3]}|R_u(\xi)|\to 0\quad\text{as}\quad|\delta_3-\delta_4|\to 0.
\eea
Then, since $a(U_*-R_u)R_v> 0$, we have
%$$-2\sqrt{1-a}\delta_2\varepsilon_2\cos(\delta_2(\xi-\xi_2))=-\frac{2\sqrt{1-a}}{\xi-\xi_2}R_u< (1+R_u)R_u\quad\text{as} \quad\delta_2(\xi-\xi_2)\to 0,$$
%there exist $\xi_2$ close to $\xi_1$ and $\delta_5$ very small such that $N_3[W_u,W_v]\le 0$ for all $\xi\in[\xi_2,\xi_1]$.
\beaa
N_3[W_u,W_v]\leq -a(U_*-R_u)R_v-R_u(1-\delta_2^2-2U_*+R_u-aV_*)\leq 0,
\eeaa
provided $|\delta_3-\delta_4|>0$ is chosen sufficiently small.
\end{itemize}
From the above discussion, we asset that
$$N_3[W_u,W_v]\leq 0\ \ \text{for}\ \ \xi\in[\xi_1-\delta_4,\xi_1+\delta_1],$$
provided that $\delta_1, \delta_3, \delta_4$ satisfy \eqref{cond delta 4} and \eqref{cond delta 3 4}, and
$\delta_2$ is small enough.

From now on, $\delta_1$, $\delta_2$, and $\delta_3$ are fixed. On the other hand, thanks to \eqref{condition on lambda 1 system}, we have
\bea\label{N4-Step3}
N_4[W_u,W_v]&\geq&R_v\Big[r(2+b^*)+r[1-2V_*-R_v-(b^*+\delta_0)(U_*-R_u)]\Big]
            \\&&+rV_*[(b^*+\delta_0)R_u-\delta_0U_*]\notag\\
           &\geq&  \frac{r}{2}R_v+rV_*[(b^*+\delta_0)R_u-\delta_0U_*].\notag
\eea
Note that $R_u(\xi)\geq0$ for $\xi\in[\xi_1-\delta_3,\xi_1+\delta_1]$; $R_u(\xi)<0$ but satisfies \eqref{delta3-4 to 0} for $\xi\in[\xi_1-\delta_4,\xi_1-\delta_3)$.
Consequently, we assert that
$$N_4[W_u,W_v]\ge 0\ \ \text{for all}\ \ \xi\in[\xi_2-\delta_3,\xi_1+\delta_1]$$
 up to decreasing $|\delta_3-\delta_4|$ and $\delta_0(\varepsilon_1, \eta_1, |\delta_3-\delta_4|)$ if necessary.
This completes the proof of \eqref{goal: step 3}, and Step 2 is finished. 
Note that $|\delta_3-\delta_4|$ can be further reduced to get a smaller $\varepsilon_3$ in the following steps.

\medskip

%Next, let us fix $\xi_2$ sufficiently close to $-\infty$.
%Since $U_*(-\infty)=1$
%and $R_u(\xi)=-\varepsilon_2\sin(\delta_2(\delta_3-\delta_4))$ for all $\xi\leq \xi_1-\delta_4$,
%we can take $\xi_0\gg1$ such that $U(\xi)-R_u=U(\xi)+\varepsilon_2\sin(\delta_2(\delta_3-\delta_4))>1$  for all $\xi<-\xi_0$ and then
%\bea\label{xi0}
%W_u(\xi):=\min\{U_*(\xi)-R_u(\xi),1\}=1\quad \mbox{for all $\xi<-\xi_0$}.
%\eea
%Hereafter, let us fix $\xi_2$ close to $-\infty$ such that $\xi_2+\delta_5<\min\{-\xi_0,-M_0\}$, where $M_0$ is defined in \eqref{rho}. Therefore, we have
%\bea\label{Wu=1}
%W_u(\xi)=1\quad \mbox{for all $\xi\leq \xi_2+\delta_5$}
%\eea

\noindent{\bf{Step 3:}} We consider $\xi\in[\xi_2+\delta_5,\xi_1-\delta_4]$ with $\xi_2+\delta_5<-M_0$. From \eqref{condition on lambda 1 system}, we can set $\delta_5$ to satisfy
\bea\label{cond delta 5}
\frac{1}{\lambda_1}<\delta_5<\frac{\sqrt{1-a}}{2r(b^*+1)}\quad\text{and}\quad \Big|\delta_5-\frac{1}{\lambda_1}\Big|\ \text{is sufficiently small}.
\eea
In this case, we have
\beaa
(R_u,R_v)(\xi)=(-\varepsilon_3,\eta_2e^{\lambda_1\xi}).
\eeaa

First, we choose 
$$\varepsilon_3=\varepsilon_3(\varepsilon_1,\delta_3-\delta_4)=R_u(\xi_1-\delta_4)=\varepsilon_1\sigma(\xi_1+\delta_1)e^{-\lambda_u(\xi_{1}+\delta_1)}\frac{\sin(\delta_2(\delta_3-\delta_4))}{\sin(\delta_2(\delta_1+\delta_3))}$$
such that $R_u(\xi)$ is continuous at $\xi=\xi_1-\delta_4$.
Clearly, by setting $|\delta_3-\delta_4|$ very small as in Step 2, we have
$$R'_u((\xi_1-\delta_{4})^+)>0=R'_u((\xi_1-\delta_{4})^-),\  {\it i.e.},\  \angle\alpha_3<180^{\circ}.$$

By some straightforward computations, we have
\beaa
N_3[W_u,W_v]=-R_u(1-2U_*+R_u-a(V_*+R_v))-aU_*R_v,
\eeaa
and $N_4[W_u,W_v]$ satisfies \eqref{N4-Step3}.
Note that, $|\delta_3-\delta_4|\to0$ implies that $\varepsilon_3\to 0$, and $|R_v(\xi)|$
does not depend on $|\delta_3-\delta_4|$.
It follows that
$$|R_u(\xi)|\ll |R_v(\xi)|\ \ \text{for all}\ \ \xi\in[\xi_2+\delta_5,\xi_1-\delta_4]$$
up to decreasing $|\delta_3-\delta_4|$ if necessary. Also, we have $\min_{\xi\in(-\infty,\xi_1-\delta_4]}U_*(\xi)$ is positive and bounded from below on $\xi\in(-\infty,\xi_1-\delta_4]$. Therefore,
we see that
$$N_3[W_u,W_v]\leq 0\ \ \text{for}\ \ \xi\in[\xi_2+\delta_5,\xi_1-\delta_4]$$
by taking $|\delta_3-\delta_4|$ sufficiently small.
On the other hand, by the same argument in Step 2, we see that
$$N_4[W_u, W_v]\geq 0\ \ \text{for}\ \ \xi\in[\xi_2+\delta_5,\xi_1-\delta_4]$$
up to decreasing $|\delta_3-\delta_4|$ and $\delta_0(\varepsilon_1,\eta_1, |\delta_3-\delta_4|)$ is necessary. Moreover, the choice of $\xi_2+\delta_5$ remains vaild regardless of reducing $|\delta_3-\delta_4|$.
%Hereafter, we fix $\delta_4$.

\medskip

\noindent{\bf{Step 4:}} We consider $\xi\in[\xi_2-\delta_7,\xi_2+\delta_5]$ with $\xi_2+\delta_5$ fixed by Step 3.
In this case, we have
\beaa
(R_u,R_v)(\xi)=\Big(-\varepsilon_4(-\xi)^{\theta}[1-U_*(\xi)],\eta_3\sin(\delta_6(\xi-\xi_2))\Big),
\eeaa
where $\theta\in(0,1)$ is fixed,
while $\varepsilon_4>0$, $\eta_3>0$,
$\delta_6>0$, and $\delta_7>0$ are determined below.

%In view of Lemma~\ref{lem:AS-infty:b=1} and Lemma~\ref{lem:AS-infty:b>1}, we see that $1-U$ decays to zero either exponentially or algebraically as $\xi\to-\infty$. Therefore,
%without loss of generality, we may assume that
%\beaa
%(-\xi)^{\theta}[1-U_*(\xi)]<\varepsilon_3\quad \mbox{for all $\xi\leq -M_0$.}
%\eeaa

%Note that, $R_u(\xi)\le 0$ on $(-\infty,\xi_2+\delta_5]$ since \blue{$\xi_2+\delta_5\le -M_0$}.

We first choose
\bea\label{epsilon4}
\varepsilon_4=\varepsilon_4(\varepsilon_1,\delta_3-\delta_4)=\frac{\varepsilon_1\sigma(\xi_1+\delta_1)e^{-\lambda_u(\xi_{1}+\delta_1)}}{(-\xi_2-\delta_5)^{\theta}[1-U_*(\xi_2+\delta_5)]}\frac{\sin(\delta_2(\delta_3-\delta_4))}{\sin(\delta_2(\delta_1+\delta_3))}
\eea
such that $R_u(\xi)$ is continuous at $\xi=\xi_2+\delta_5$.
Then, from the asymptotic behavior of $1-U_*$ for both cases $b^*>1$ and $b^*=1$ as $\xi\to-\infty$, $R'_u(\xi)<0$ for all $\xi\leq \xi_2+\delta_5$ by $-(\xi_2+\delta_5)>M_0$ vary large. In particular,
%Lemma \ref{lm: behavior around - infty b=1},
we have
\beaa
R'_u((\xi_2+\delta_5)^+)=0>R'_u((\xi_2+\delta_5)^-),
\eeaa
and thus $\angle\alpha_5<180^{\circ}$.
Next, we verify the continuity of $R_v$ at $\xi_2+\delta_5$ and the right angle of $\alpha_6$:
\begin{claim}\label{cl 5}
For any $\delta_5$ satisfying \eqref{cond delta 5}, there exist
% $\gamma_6=\frac{\eta_5}{\varepsilon_5}$, such that $R_v(\xi_3+)=R_v(\xi_3-)$ and $\angle\alpha_4<180^{\circ}$.
$\eta_3>0$ and small $\delta_6>0$
such that $R_v(\xi)$ is continuous at $\xi=\xi_2+\delta_5$ and $\angle\alpha_6<180^{\circ}$.
\end{claim}
\begin{proof}
First, we  take
\bea\label{eta4}
\eta_3=\eta_3(\eta_1,\delta_5,\delta_6)= \eta_1\delta_1e^{-(\lambda_u+\lambda_1)(\xi_{1}+\delta_1)}\frac{e^{\lambda_1(\xi_2+\delta_5)}}{\sin(\delta_5\delta_6)}>0
\eea
such that $R_v((\xi_2+\delta_5)^+)=R_v((\xi_2+\delta_5)^-)$.

By some straightforward computations, we have
$$R'_v((\xi_2+\delta_5)^+)=\lambda_1\eta_2e^{\lambda_1(\xi_2+\delta_5)}.$$
Then from \eqref{eta4},
\beaa
R'_v((\xi_2+\delta_5)^-)=\eta_3\delta_6\cos(\delta_5\delta_6)=\eta_2e^{\lambda_1(\xi_2+\delta_5)}\frac{\delta_6\cos(\delta_5\delta_6)}{\sin(\delta_5\delta_6)},
\eeaa
% We first let $R_v(\xi_3+)=R_v(\xi_3-)$. Then, since $\sin(\delta_3(\xi_3-\xi_4))=\delta_3(\xi_3-\xi_4)\cos(\delta_3(\xi_3-\xi_4))$ as $\delta_3\to 0$, $R'_v(\xi_3+)>R'_v(\xi_3-)$ is equivalent to $(\xi_3-\xi_4)>\frac{1}{\mu_1}$.
which yields that
$$R'_v((\xi_2+\delta_5)^-)\rightarrow \eta_2e^{\lambda_1(\xi_2+\delta_5)}/\delta_5\quad\text{as}\quad \delta_6\to0.$$
Thus, $R'_v((\xi_2+\delta_5)^+)>R'_v((\xi_2+\delta_5)^-)$ is equivalent to $\delta_5>\frac{1}{\lambda_1}$ by setting $\delta_6$ sufficiently small.
This completes the proof of Claim~\ref{cl 5}.
\end{proof}

From now on, we fix $\delta_5$, which is unaffected by the reduction of $\delta_6$. The next claim shows how to determine $\delta_7$.
Note that the choice of $\delta_7$ is rather technical and crucial in verifying the differential inequalities later.

\begin{claim}\label{cl 6}
There exists $0<\delta_7\le \delta_5$ such that
$$R_v(\xi_2-\delta_7)=-\varepsilon_4(-\xi_2+\delta_7)^{\theta}V_*(\xi_2-\delta_7)$$
and
\bea\label{claim3.6}
-\varepsilon_4(-\xi)^{\theta}V_*(\xi)<R_v(\xi)<0\quad\text{for all}\quad \xi\in(\xi_2-\delta_7,\xi_2).
\eea
\end{claim}
\begin{proof}
Recall from Step 3 and \eqref{epsilon4}
that
$$R_v(\xi_2+\delta_5)\gg \varepsilon_3=\varepsilon_4(-\xi_2-\delta_5)^{\theta}[1-U_*(\xi_2+\delta_5)].$$
We also assume
%and decreasing $\delta_7$,
\bea\label{Rv>V}
R_v(\xi_2+\delta_5)>\varepsilon_4(-\xi_2-\delta_5)^{\theta}V_*(\xi_2+\delta_5)
\eea
 by reducing $\varepsilon_3$ if necessary. This actually can be done by reducing $|\delta_3-\delta_4|$.
Furthermore, by the asymptotic behavior of $V_*(\xi)$ as $\xi\to -\infty$,  $-\varepsilon_4(-\xi)^{\theta}V_*(\xi)$ is strictly decreasing for all $\xi<\xi_2+\delta_5$ since $-(\xi_2+\delta_5)>M_0$ very large.  Together with \eqref{Rv>V}, we obtain that
\beaa
-\eta_3\sin(\delta_5\delta_6)=-R_v(\xi_2+\delta_5)<
-\varepsilon_4(-\xi_2-\delta_5)^{\theta}V_*(\xi_2+\delta_5)<-\varepsilon_4(-\xi_2+\delta_5)^{\theta}V_*(\xi_2-\delta_5).
\eeaa

Define $$F(\xi):=\eta_3\sin(\delta_6(\xi-\xi_2))+\varepsilon_4(-\xi)^{\theta}V_*(\xi).$$
Clearly, $F$ is continuous and strictly increasing for $\xi\in[\xi_2-\delta_5,\xi_2]$. Also,
we have $F(\xi_2)>0$ and $F(\xi_2-\delta_5)<0$.
Then, by the intermediate value theorem, there exists a unique $\delta_7\in(0,\delta_5)$ such that Claim~\ref{cl 6} holds.
\end{proof}

Let $\delta_5$ and $\delta_7$ be fixed by Claim \ref{cl 5} and Claim \ref{cl 5}. We now verify the differential inequalities.
Note that it suffices to assume $V_*+R_v\ge 0$.
By some straightforward computations, $N_3[W_u,W_v]$ satisfies
\begin{equation}\label{N3 inequalifty -infty}
\begin{aligned}
N_3[W_u,W_v]=&\varepsilon_4(-\xi)^{\theta}\Big(-U_*''-c^*U_*'-\theta(1-\theta)(-\xi)^{-2}(1-U_*)+2\theta(-\xi)^{-1}U_*'\\&-c^*\theta(-\xi)^{-1}(1-U_*)\Big)
-R_u(1-2U_*+R_u-a(V_*+R_v))-aU_*R_v\\
\leq&\varepsilon_4(-\xi)^{\theta}\Big(U_*(1-U_*-aV_*)-c^*\theta(-\xi)^{-1}(1-U_*)\Big)\\&-R_u(1-2U_*+R_u-a(V_*+R_v))-aU_*R_v.
\end{aligned}
\end{equation}
The last inequality holds due to $\theta\in(0,1)$ and $U'_*<0$.

We next divide our discussion into two parts: $\xi\in[\xi_2,\xi_2+\delta_5]$ and $\xi\in[\xi_2-\delta_7,\xi_2]$.
Notice that, $R_u(\xi)<0<R_v(\xi)$ and $(V_*+R_v)(\xi)\ge 0$ for $\xi\in[\xi_2,\xi_2+\delta_5]$.
Then, For $\xi\in[\xi_2,\xi_2+\delta_5]$,
\eqref{N3 inequalifty -infty} reduces to
\beaa
N_3[W_u,W_v]\leq \varepsilon_4(-\xi)^{\theta}\Big(U_*(1-U_*-aV_*)+(1-2U_*-c^*\theta(-\xi)^{-1})(1-U_*)\Big).
\eeaa
\begin{itemize}
    \item For $b^*=1$, we see from Lemma~\ref{lem:AS-infty:b=1} and Corolloary~\ref{lm: behavior around - infty b=1} that
    $$U_*(1-U_*-aV_*)=o((-\xi)^{-1})$$
     and $1-U_*\sim (-\xi)^{-1}$ as $\xi\to-\infty$. By \eqref{rho},
     $$1-2U_*-c^*\theta(-\xi)^{-1}<-\frac{1}{2}.$$
      Therefore
    we conclude that $N_3[W_u,W_v]\leq0$
for $\xi\in[\xi_2,\xi_2+\delta_5]$ as long as $M_0$ in \eqref{rho} is chosen sufficiently large at the beginning.
     \item For $b^*>1$,
we have
\beaa
N_3[W_u,W_v]\leq \varepsilon_4(-\xi)^{\theta}[(1-U_*)-c^*\theta(-\xi)^{-1}](1-U_*).
\eeaa
By Lemma \ref{lem:AS-infty:b>1}, since $1-U_*$ decays exponentially as $\xi\to-\infty$, we obtain $N_3[W_u,W_v]\leq0$
for $\xi\in[\xi_2,\xi_2+\delta_5]$ as long as $M_0$ is chosen sufficiently large at the beginning.
\end{itemize}

%from \eqref{system tau u v}, we have
%\beaa
%N_3[W_u,W_v]\le-2R_u\tau_u-a\varepsilon_4(-\xi)^{\theta}\tau_v.
%\eeaa

%By Lemma \ref{lm: behavior around - infty b=1}, we assert $N_3[W_u,W_v]\le 0$ for $\xi\in[\xi_2,\xi_2+\delta_5]$ up to enlarging $M_0$ if necessary.

On the other hand, for $\xi\in[\xi_2-\delta_7,\xi_2]$, %it suffices to verify $N_3[W_u,W_v]\le 0$ at $\xi=\xi_2-\delta_7$.
%by setting $\delta_7$ as that in
by using \eqref{claim3.6} and
$$\varepsilon_4(-\xi)^{\theta}U_*(1-U_*)=-R_uU_*,$$
from \eqref{N3 inequalifty -infty} we have
\beaa
N_3[W_u,W_v]&\le&-R_uU_*-a\varepsilon_4(-\xi)^{\theta}U_*V_*+c^*\theta(-\xi)^{-1}R_u-R_u(1-2U_*-aV_*)
\\
&&-R_u^2+aR_uR_v+a\varepsilon_4(-\xi)^{\theta}U_*V_*\\
&=&c^*\theta(-\xi)^{-1}R_u-R_u(1-U_*-aV_*)
-R_u^2+aR_uR_v.
\eeaa

Denote that
\beaa
I_1:=c^*\theta(-\xi)^{-1}R_u,\quad
%&&I_2:=-a\varepsilon_4(-\xi)^{\theta}V_*-R_u(1-U_*)+a\varepsilon_4(-\xi)^{\theta}U_*V_*;\\
I_2:=-R_u(1-U_*-aV_*),\quad
I_3:=-R_u^2+aR_uR_v.
%&&I_4:=-R_u(1-U_*-aV_*).
\eeaa
\begin{itemize}
\item For the case $b^*=1$, by the equation satisfied by $U_*$ in \eqref{tw solution weak} and Lemma \ref{lm: behavior around - infty b=1},  $1-U_*-aV_*>0$ for all $\xi\leq -M_0$ (if necessary, we may choose $M_0$ larger). Therefore,
\beaa
I_3=-R_u^2+aR_uR_v\leq R_u\varepsilon_4(-\xi)^{\theta}(1-U_*-aV_*)(\xi)<0\quad \mbox{for}\quad \xi\in[\xi_2-\delta_7,\xi_2].
\eeaa
Moreover, in view of Corollary~\ref{lm: behavior around - infty b=1}, we have
$I_2=o(I_1)$ as $\xi\to-\infty$.
%Moreover, we have
%$$I_2=-a\varepsilon_4(-\xi)^{\theta}\tau_v\tau_u-R_u(1-U_*)=\varepsilon_4(-\xi)^{\theta}\tau_u(\tau_u-a\tau_v)=o(I_1).$$
\item For the case $b^*>1$, since $1-U_*-aV_*\to 0$ exponentially (See Lemma \ref{lem:AS-infty:b>1}), we have $I_2,I_3\sim o((-\xi)^{-1})R_u$.
\end{itemize}
Then, as long as $M_0$ is chosen sufficiently large at the beginning, we have $N_3[W_u,W_v]\le 0$ for $\xi\in[\xi_2-\delta_7,\xi_2]$.

We next deal with the inequality of $N_4[W_u,W_v]$.
By some straightforward computations, we have
\begin{equation}\label{N4 inequality step 5}
\begin{aligned}
N_4[W_u,W_v]=& rR_v\Big(1-2V_*-R_v-(b^*+\delta_0)(U_*-R_u)
-\frac{d}{r}\delta_6^2\Big)\\&+2\sqrt{1-a}\delta_6\eta_3\cos(\delta_6(\xi-\xi_2))+rV_*((b^*+\delta_0)R_u-\delta_0U_*).
\end{aligned}
\end{equation}
For $\xi\in[\xi_2,\xi_2+\delta_5]$, \eqref{eta4} and the fact $\frac{x\cos x}{\sin x}\to 0$ as $x\to 0$ yield that
\beaa
\min_{\xi\in[\xi_2,\xi_2+\delta_5]}\delta_6\eta_3\cos(\delta_6(\xi-\xi_2))\to\frac{\eta_2e^{\lambda_1(\xi_2+\delta_5)}}{\delta_5}
=\frac{R_v(\xi_2+\delta_5)}{\delta_5}
\quad\text{as}\quad \delta_6\to0.
\eeaa
%The condition in Claim \ref{cl 5} implies $N_4[W_u,W_v]\ge 0$, provided that $\frac{2\sqrt{1-a}}{\xi_3-\xi_4}\ge %2\sqrt{1-a}\mu_1>r(b+1)$, and $|\xi_3-\xi_4|\gamma_6$  and $\delta_0$ are sufficiently small.
%By taking $\delta_5$ sufficiently close to $1/\lambda_1$ and applying
In view of \eqref{cond delta 5},
we have
\bea\label{Rv-estimate2}
\min_{\xi\in[\xi_2,\xi_2+\delta_5]}\delta_6\eta_3\cos(\delta_6(\xi-\xi_2))>\lambda_1R_v(\xi_2+\delta_5)>R_v(\xi_2+\delta_5)\frac{2r(b^*+1)}{\sqrt{1-a}}.
\eea

By applying \eqref{rho}, \eqref{N4 inequality step 5}, and \eqref{Rv-estimate2}, we have
\begin{equation*}%\label{1}
\begin{aligned}
N_4[W_u,W_v]\ge & -rR_v(\xi_2+\delta_5)\Big(1+R_v+(b^*+\delta_0)+\frac{d}{r}\delta_6^2\Big)+2r(b^*+1)R_v(\xi_2+\delta_5)\\&
+r\rho(b^*+\delta_0)R_u(\xi_2+\delta_5)-r\rho\delta_0.
%&+rV_*[(b^*+\delta_0)R_u(\xi_2+\delta_5)-\delta_0U_*].
\end{aligned}
\end{equation*}
Recall that,
$$|R_u(\xi)|\ll R_v(\xi_2+\delta_5)\ \ \text{for all}\ \ \xi\in[\xi_2,\xi_2+\delta_5]$$
up to decreasing $|\delta_3-\delta_4|$.
Therefore, we assert that $N_4[W_u,W_v]\geq 0$ for $\xi\in[\xi_2,\xi_2+\delta_5]$,
and for all small $\delta_0(\varepsilon_1,\eta_1, |\delta_3-\delta_4|,\delta_6)>0$.
%up to decreasing $\delta_0$ and $|\delta_3-\delta_4|$ if necessary.

For $\xi\in[\xi_2-\delta_7,\xi_2]$, since $R_v<0$ and $\xi_2<-M_0$, by applying \eqref{rho}, \eqref{N4 inequality step 5}, and a similar discussion as for
$[\xi_2,\xi_2+\delta_5]$, we have
\begin{equation*}%\label{2}
\begin{aligned}
N_4[W_u,W_v]\ge &rR_v(1-R_v)+2\sqrt{1-a}\delta_6\eta_3\cos(\delta_6(\xi-\xi_2))+rV_*((b^*+\delta_0)R_u-\delta_0U_*)\\
\ge &rR_v(\xi_2-\delta_7)\Big(1-R_v(\xi_2-\delta_7)\Big)+2r(b^*+1)R_v(\xi_2+\delta_5)\\
&+r\rho(b^*+\delta_0)R_u(\xi_2+\delta_5)-r\rho\delta_0.
%+rV_*[(b^*+\delta_0)R_u(\xi_2-\delta_7)-\delta_0U_*].
\end{aligned}
\end{equation*}
%From \eqref{1} and \eqref{2},
Since $0<\delta_7\le \delta_5$ and $|R_v(\xi_2-\delta_7)|\le R_v(\xi_2+\delta_5)$, $N_4[W_u,W_v]\ge 0$ holds in $[\xi_2-\delta_7,\xi_2]$ for all small $\delta_0(\varepsilon_1,\eta_1, |\delta_3-\delta_4|,\delta_6)>0$. From the above discussion,
%by setting $\delta_7\le \delta_5$. Hence,
the construction for Step 4 is finished. Hereafter, we fix $\delta_4$ and $\delta_6$.

\medskip

\noindent{\bf{Step 5:}} We consider $\xi\in(-\infty,\xi_2-\delta_7]$ with $\xi_2-\delta_7$ fixed by Step 4.
In this case, we have
\beaa
(R_u,R_v)(\xi)=\Big(-\varepsilon_4(-\xi)^{\theta}[1-U_*(\xi)],-\eta_4(-\xi)^{\theta}V_*(\xi)\Big).
\eeaa
Let us take $$\eta_4=\eta_4(\eta_1)=\eta_1\delta_1e^{-(\lambda_u+\lambda_1)(\xi_{1}+\delta_1)}\frac{\sin(\delta_6\delta_7)e^{\lambda_1(\xi_2+\delta_5)}}{\sin(\delta_5\delta_6)(\delta_7-\xi_2)^{\theta}V_*(\xi_2-\delta_7)}$$
 such that
$R_v(\xi)$ is continuous at $\xi=\xi_2-\delta_7$.
Also, since $0<\delta_7\le \delta_5$ and $-\eta_4(-\xi)^{\theta}V_*(\xi)$ is decreasing on $\xi$ for $\xi<\xi_2$, we have
$$R'_v((\xi_2-\delta_7)^+)>0>R'_v((\xi_2-\delta_7)^-),$$ %follows from Lemma \ref{lm: behavior around - infty b=1},
and hence $\angle\alpha_6<180^{\circ}$.

%We choose $0<\mu_2=\mu_3<\min\{\mu_u,\mu_v\}$.
%$\angle\alpha_5<180^{\circ}$ is trivial.
Finally, we verify the differentiable inequalities.
Since $\theta>0$, there exists $M_1>M_0$ sufficiently large such that $W_u=1$ and $W_v=0$ for all $\xi\in(-\infty,-M_1]$.
More precisely, Claim \ref{cl 6} implies $\eta_4=\varepsilon_4$. Then, from the definition of $(R_u,R_v)$, we may define $M_1$ satisfying $1-\eta_4(M_1)^{\theta}= 0$. Thus
$W_u(\xi)=1$, $W_v(\xi)=0$ for all $\xi\in(-\infty,-M_1]$, which implies that
%Therefore, it is enough to let $W_u=1$ and $W_v=0$, and hence
$$N_3[W_u,W_v]\le 0\ \ \text{and}\ \ N_4[W_u,W_v]\ge 0\ \ \text{for}\ \ \xi\in(-\infty,-M_1].$$

It suffices to deal with the computation for $\xi\in[-M_1,\xi_2-\delta_7]$. Without loss of generality, we may assume $\xi_2-\delta_7<\xi_0$,
where $\xi_0$ %which
is defined in Corollary~\ref{lm: behavior around - infty b=1}. Additionally, %since $\eta_4=\varepsilon_4$, we also assume
by the definition of $M_1$ and $\eta_4=\varepsilon_4$, we have
\bea\label{eta 4 epsilon 4}
1-\varepsilon_4(-\xi)^{\theta}=1-\eta_4(-\xi)^{\theta}> 0\quad\text{for all}\quad \xi\in(-M_1,\xi_2-\delta_7],
\eea
which yields $W_u<1$ and $W_v>0$ on $(-M_1,\xi_2-\delta_7]$.
Note that, $R_u,R_v<0$ in $[-M_1,\xi_2-\delta_7]$, and $N_3[W_u,W_v]$ satisfies \eqref{N3 inequalifty -infty}.
By applying the same argument as that in Step 4 for $\xi\in[\xi_2-\delta_7,\xi_2]$, we obtain
$N_3[W_u,W_v]\le 0$ for all $\xi\in[-M_1,\xi_2-\delta_7]$.

On the other hand, by some straight computations, we have
\beaa
N_4[W_u,W_v]&=&d\Big(V_*''+\theta(1-\theta)\eta_4(-\xi)^{\theta-2}V_*+2\theta\eta_4(-\xi)^{\theta-1}V'_*-\eta_4(-\xi)^{\theta}V''_*\Big)\\
&&+c^*\Big(V_*'+\theta\eta_4(-\xi)^{\theta-1}V_*-\eta_4(-\xi)^{\theta}V'_*\Big)\\&&
+r(V_*+R_v)(1-V_*-R_v-(b^*+\delta_0)(U_*-R_u)).
\eeaa

Then, by $V_*'>0$, $\varepsilon_4=\eta_4$, and $\theta\in(0,1)$, we further have
\bea\label{Step5-N4}
N_4[W_u,W_v]&\geq& r\eta_4(-\xi)^{\theta}V_*\Big(V_*-b^*(1-U_*)
+\frac{c^*\theta}{r}(-\xi)^{-1}+R_v-b^*R_u\Big)\notag\\
& &-r(U_*-R_u)(V_*+R_v)\delta_0.
\eea
\begin{itemize}
\item For the case $b^*>1$,  both $1-U_*\to 0$ and $V_*\to 0$ exponentially as $\xi\to-\infty$. Thus $1-U_*=o((-\xi)^{-1})$ and $R_v=o((-\xi)^{-1})$ for $\xi\in[-M_1,\xi_2-\delta_7]$.
\item For the case $b^*=1$, \eqref{Step5-N4} reduces to
\beaa
N_4[W_u,W_v]&\geq& r\eta_4(-\xi)^{\theta}V_*\Big((\eta_4(-\xi)^{\theta}-1)(1-U_*-V_*)
+\frac{c^*\theta}{r}(-\xi)^{-1}\Big)\\
&&-r(U_*-R_u)(V_*+R_v)\delta_0.
\eeaa
By Corollary \ref{lm: behavior around - infty b=1} and \eqref{eta 4 epsilon 4}, %and \eqref{M1},
as long as $M_0$ is chosen large at the beginning, we have
$(\eta_4(-\xi)^{\theta}-1)(1-U_*-V_*)>0$ for $\xi\in[-M_1,\xi_2-\delta_7]$.
\end{itemize}
It follows that
$N_4[W_u,W_v]\ge 0$ for $\xi\in[-M_1,\xi_2-\delta_7]$
for very small $\delta_0(\varepsilon_1,\eta_1)>0$.
Therefore, the construction for Step 5 is finished.

%%%%%%%%%%%%%%%%%%%%%%%%%%%%%%%%

\subsubsection{For the case $b^*<1$}\label{sec:3-2}

\noindent

In this subsection, we always assume $0<b^*<1$.
Let $(c_{LV}^*,U_*,V_*)$ be the minimal traveling wave of \eqref{tw solution weak} with $b=b^*$ and $c_{LV}^*=2\sqrt{1-a}$.
Different from the strong-weak competition case and the critical case, since $(U_*,V_*)(+\infty)=(0,1)$ and
$$(U_*,V_*)(-\infty)=(\frac{1-a}{1-ab^*},\frac{1-b^*}{1-ab^*}):=(\hat u,\hat v),$$
and $U'_*<0<V_*'$, for any given small $\rho>0$, we have
\bea\label{rho 2}
\begin{cases}
0< U_*(\xi)<\rho,\quad 1-\rho<V_*(\xi)<1&\quad\mbox{for all}\quad\xi\geq M_0,\\
\hat u-\rho< U_*(\xi)<\hat u,\quad \hat v<V_*(\xi)<\hat v+\rho&\quad\mbox{for all}\quad\xi\leq -M_0,
\end{cases}
\eea
up to enlarging $M_0>0$ if necessary.

\begin{figure}
\begin{center}
\begin{tikzpicture}[scale = 1.1]
\draw[thick](-6,0) -- (6,0) node[right] {$\xi$};
\draw [ultra thick] (-6,-1.5)--  (-1.7,-1.5)  to [out=30,in=270]   (-1,0)  to [out=90,in=185]  (1,3) to [out=45,in=190] (1.5,3.3) to [out=0,in=160] (6,0.5);
\draw [semithick] (-6, -0.5) -- (-4,-0.5) to [out=30, in=230] (-3.5,0)  to [out= 40, in=180] (-3,0.2) to [out=15, in=220] (1,1.6) to [out=70,in=180] (2,2)to [out=0,in=170] (6,0.2);
\node[below] at (1,0) {$\xi_{1}+\delta_1$};
%\node[below] at (0,-0.1) {$\xi_{\sigma}$};
\draw[dashed] [thick] (-3.5,0)-- (-3.5,-0.5);
\node[below] at (-3.5,-0.4) {$\xi_2$};
\draw[dashed] [thick] (-3,0.2)-- (-3,-0.5);
\node[below] at (-2.7,-0.4) {$\xi_2+\delta_5$};
\draw[dashed] [thick] (-4,0)-- (-4,1);
\node[above] at (-4,0.8) {$\xi_2-\delta_7$};
\draw[dashed] [thick] (1,0)-- (1,3);
\draw[dashed] [thick] (-1,0)-- (-1,-1.1);
\draw[dashed] [thick] (-1.7,-0.9)-- (-1.7,1.7);
\node[above] at (-1.7,1.55) {$\xi_1-\delta_4$};
%\node[below] at (-2.5,-0.2) {$\xi_2$};
%\draw[dashed] [thick] (-2.5,-0.3)-- (-2.5,0.3);
%\draw[dashed] [thick] (-2.5,-0.7)-- (-2.5,-1);
\node[below] at (-0.7,-1) {$\xi_1-\delta_3$};
\draw [thin] (-3.85,-0.41) arc [radius=0.2, start angle=40, end angle= 170];
\node[above] at (-4,-0.43) {$\alpha_2$};
%\draw [thin] (-2.3,0.355) arc [radius=0.2, start angle=30, end angle= 175];
%\node[above] at (-2.5,0.35) {$\alpha_4$};
\draw [thin] (-2.8,0.255) arc [radius=0.2, start angle=30, end angle= 175];
\node[above] at (-3,0.3) {$\alpha_3$};
\draw [thin] (-1.5,-1.35) arc [radius=0.2, start angle=30, end angle= 190];
\node[above] at (-1.6,-1.3) {$\alpha_1$};
%\draw [thin] (1.1,1.8) arc [radius=0.2, start angle=70, end angle= 220];
%\node[above] at (0.7,1.7) {$\alpha_1$};
%\draw [thin] (1.1,3.13) arc [radius=0.2, start angle=50, end angle= 180];
%\node[above] at (1,3.13) {$\alpha_2$};
%\node[above] at (2,1.5) {\Large{$v$}};
\node[above] at (2,2.3) {\Large{$R_u$}};
\node[above] at (2,1) {\Large{$R_v$}};
%\node[below] at (1,-2.3) {Figure \ref{Figure b<1}: $(R_u,R_v)$ for the case $0<b^*< 1$.};
\end{tikzpicture}
\caption{$(R_u,R_v)$ for the case $0<b^*< 1$.}\label{Figure b<1}
\end{center}
\end{figure}

We consider $(R_u,R_v)(\xi)$ defined  as (see Figure \ref{Figure b<1})
\begin{equation*}
(R_u,R_v)(\xi):=\begin{cases}
(\varepsilon_1\sigma(\xi) e^{-\lambda_u\xi},\eta_1(\xi-\xi_1) e^{-\lambda_u\xi}),&\ \ \mbox{for}\ \ \xi\ge\xi_1+\delta_1,\\
(\varepsilon_2\sin(\delta_2(\xi-\xi_1+\delta_3)),\eta_2e^{\lambda_{1}\xi}),&\ \ \mbox{for}\ \ \xi_1-\delta_4\le\xi\le\xi_{1}+\delta_1,\\
(-\delta_u,\eta_2e^{\lambda_{1}\xi}),&\ \ \mbox{for}\ \ \xi_2+\delta_5\le\xi\le\xi_1-\delta_4,\\
(-\delta_u,\eta_3\sin(\delta_6(\xi-\xi_2))),&\ \ \mbox{for}\ \ \xi_2-\delta_7\le\xi\le\xi_2+\delta_5,\\
(-\delta_u,-\delta_v),&\ \ \mbox{for}\ \ \xi\le\xi_2-\delta_7,
\end{cases}
\end{equation*}
where $\xi_1>M_0$ and $\xi_2<-M_0$ are fixed points, and $\lambda_{1}$ satisfies
\begin{equation}\label{condition on lambda 3}
d\lambda_1^2+2\sqrt{1-a}\,\lambda_1-r(2+b^*)>0\ \ {\rm and}\ \ \lambda_1>\frac{r(\hat v+1)}{2\sqrt{1-a}}.
\end{equation}
Here
$\varepsilon_{1,2}>0$, $\eta_{1,2,3}>0$, and $\delta_{i=1,\cdots,7}>0$ are chosen as same as that in \S \ref{sec:3-1}.  Therefore, from \eqref{rho 2} and  $|R_u|,|R_v|\ll 1$, up to enlarging $M_0$, there exist $C_2>0$ and $C_3>0$ such that, for all $\xi\in(-\infty,\xi_2+\delta_5]$, it holds
\bea\label{xi 2 weak}
1-2U_*+R_u-aV_*-aR_v<-C_2<0,
\eea
and
\bea\label{xi 2 weak 2}
-(\hat v+\hat u\delta_0+C_3\rho)<1-2V_*-R_v-(b^*+\delta_0)U_*< C_3\rho.
\eea
Moreover, we set
\bea\label{delta u v}
\delta_u:=\varepsilon_2\sin(\delta_2(\delta_4-\delta_3))\quad\text{and}\quad \delta_v:=\eta_3\sin(\delta_6\delta_7),
\eea
which yield that $(R_u,R_v)(\xi)$ is continuous on $\mathbb{R}$. Furthermore, up to enlarging $M_0$ if necessary, we can set
\bea\label{rho 3}
\hat u-2\rho+a\delta_u\delta_v>\sqrt{a}b^*\hat u.
\eea

Note that, for the construction in \S \ref{sec:3-1} (see Step 2), we only set $|\delta_3-\delta_4|$  sufficiently small to obtain
$$|R_u(\xi)|\ll|R_v(\xi_2+\delta_5)|\ \ \text{in}\ \ [\xi_2+\delta_5,\xi_1-\delta_3].$$
 However, for the weak competition case,
%and the critical case,
we will subtly set $\delta_u$ and $\delta_v$ to satisfy
\bea\label{delta u v 1}
\delta_v=\frac{b^*}{\sqrt{a}}\delta_u,
\eea
which can be done by  adjusting $|\delta_3-\delta_4|$ and $|\delta_7|$.

Now, we define
\beaa
(W_u,W_v)(\xi):=\Big(\min\{(U_*-R_u)(\xi),1\},\max\{(V_*+R_v)(\xi),0\}\Big),
\eeaa
and
%start to verify
show that $(W_u,W_v)$
%=(U_*-R_u,V_*+R_v)$
satisfies \eqref{tw super solution system}.
%for all $\xi\in\mathbb{R}$.
In fact, thanks to \eqref{rho 2} and the first condition in \eqref{condition on lambda 3},
for $\xi\in[\xi_1-\delta_4,\infty)$, $N_3[W_u,W_v]\le 0$ and $N_4[W_u, W_v]\ge 0$ follow from the same argument as that in \S \ref{sec:3-1}. Therefore, it suffices to deal with $\xi\in (-\infty,\xi_1-\delta_4]$.
Next, we divide the discussion into three steps as follows.

\noindent{\bf{Step 1:}} We consider $\xi\in[\xi_2+\delta_5,\xi_1-\delta_4]$ with $\xi_1>M_0$ fixed by the discussion similar to Step 1 and Step 2 in \S\ref{sec:3-1}.
In this case, we have
\beaa
(R_u,R_v)(\xi)=(-\delta_u,\eta_2e^{\lambda_1\xi}),
\eeaa
where $\lambda_1$ satisfies \eqref{condition on lambda 3},  $\delta_u=\delta_u(\varepsilon_1, |\delta_3-\delta_4|)$ is fixed as that in \eqref{delta u v}, and $\eta_2=\eta_2(\eta_1)$ is chosen like \eqref{eta 2 system}.
Note that $\delta_u\to 0$ as $|\delta_3-\delta_4|\to 0$, and thus
$$R'_u((\xi_1-\delta_{4})^+)>0=R'_u((\xi_1-\delta_{4})^-),\ {\it i.e.},\ \angle \alpha_1<180^{\circ}.$$

By some straightforward computations, since $R_v\ge 0$,
\bea\label{N3-step1 weak}
N_3[W_u,W_v]=\delta_u(1-2U_*-\delta_u-a(V_*+R_v))-aU_*R_v,
\eea
and $N_4[W_u,W_v]$ satisfies
\beaa
N_4[W_u,W_v]\geq \frac{r}{2}R_v+rV_*[(b^*+\delta_0)R_u-\delta_0U_*].
\eeaa

Note that $\lambda_1$ and $\eta_2$ have already been determined by the construction on $\xi\in[\xi_1-\delta_4,\infty)$. Since $\frac{\rho}{2}\le U_*\le \hat u$ and $\hat v\le V_*\le 1$ for $\xi\in[\xi_2+\delta_5,\xi_1-\delta_4]$, by setting $|\delta_3-\delta_4|$ small enough such that
\bea\label{delta u 2}
\delta_u<\min\Big\{\frac{a\rho}{2\hat u},\frac{1}{4b^*}\Big\}\eta_2e^{\lambda_1(\xi_2+\delta_5)},
\eea
we have
$$N_3[W_u,W_v]\le \delta_u(1-aV_*)-aU_*R_v\leq 0$$
 for $\xi\in[\xi_2+\delta_5,\xi_1-\delta_4]$.
Up to reducing $|\delta_3-\delta_4|$ and $\delta_0(\varepsilon_1,\eta_1,|\delta_3-\delta_4|)$ if necessary, $N_4[W_u,W_v]\geq 0$ follows immediately from  \eqref{delta u 2}. Thus, Step 1 is finished.

\medskip

\noindent{\bf{Step 2:}} We consider $\xi\in[\xi_2-\delta_7,\xi_2+\delta_5]$ with $\xi_2+\delta_5$ fixed by Step 1 and $\delta_7>0$ very small satisfying
\bea\label{cond delta 7 system}
\frac{2\sqrt{1-a}}{r\delta_7}-C_3\rho>2.
\eea
In this case, we have
\beaa
(R_u,R_v)(\xi)=\Big(-\delta_u,\eta_3\sin(\delta_6(\xi-\xi_2))\Big),
\eeaa
with
$\delta_6, \delta_7>0$ very small, and $\delta_5$ satisfying
\bea\label{cond delta 5 2}
\frac{r(\hat v+1))}{2\sqrt{1-a}}<\frac{1}{\delta_5}<\lambda_1.
\eea
It follows from the same argument as Claim \ref{cl 5} that, there exist $\eta_3=\eta_3(\eta_1,\delta_5,\delta_6)>0$ and small $\delta_6>0$
such that $R_v(\xi)$ is continuous at $\xi=\xi_2+\delta_5$ and $\angle\alpha_3<180^{\circ}$.

Note that, in this interval $N_3[W_u,W_v]$ still satisfies \eqref{N3-step1 weak}.
Then, from \eqref{xi 2 weak} and  $R_v(\xi)>0$ for $\xi\in[\xi_2,\xi_2+\delta_5]$, we have $N_3[W_u,W_v]\le 0$  for $\xi\in[\xi_2,\xi_2+\delta_5]$. 
We next deal with the inequality of $N_4[W_u,W_v]$. For $\xi\in[\xi_2,\xi_2+\delta_5]$, by applying the same argument as Step 5 in \S \ref{sec:3-1}, from \eqref{N4 inequality step 5}, \eqref{Rv-estimate2},  \eqref{xi 2 weak 2}, and \eqref{cond delta 5 2}, we have
\begin{equation*}
\begin{aligned}
N_4[W_u,W_v]\ge & rR_v\Big(1-2V_*-R_v-(b^*+\delta_0)U_*-\frac{d}{r}\delta_6^2\Big)+2\sqrt{1-a}\delta_6\eta_3\cos(\delta_6(\xi-\xi_2))\\
&+rV_*((b^*+\delta_0)R_u-\delta_0U_*)\\
\ge &r\Big(\frac{2\sqrt{1-a}}{r}\frac{1}{\delta_5}-\hat v-\hat u\delta_0-C_3\rho-\frac{d}{r}\delta_6^2\Big)R_v(\xi_2+\delta_5)\\
&-r(\hat v+\rho)(b^*+\delta_0)\delta_u-r\delta_0\\
\ge & r(1-\hat u\delta_0-C_3\rho-\frac{d}{r}\delta_6^2)R_v(\xi_2+\delta_5)-r(\hat v+\rho)(b^*+\delta_0)\delta_u-r\delta_0
\end{aligned}
\end{equation*}
Recall that,
$\delta_u\ll R_v(\xi_2+\delta_5)$ up to decreasing $|\delta_3-\delta_4|$, and $\delta_6$ can be chosen sufficiently small such that $\frac{d}{r}\delta_6^2<\frac{1}{4}$. Then, as long as  $M_0$ is chosen sufficiently large at the beginning, we have $N_4[W_u,W_v]\ge 0$ for $\xi\in[\xi_2,\xi_2+\delta_5]$, up to decreasing $\delta_0(\varepsilon_1,\eta_1,|\delta_3-\delta_4|)$ if necessary.

From now on, we fix $\delta_6$ and $|\delta_3-\delta_4|/\delta_7$ to get \eqref{delta u v 1}. For $\xi\in[\xi_2-\delta_7,\xi_2]$, since $-\delta_v\le R_v\le 0$,
from \eqref{rho 2}, \eqref{rho 3}, \eqref{delta u v 1}, \eqref{N3-step1 weak}, and $b^*<1$, we obtain that $N_3[W_u,W_v]\le 0$.
Since $R_v<0$ and $\xi_2<-M_0$, from \eqref{N4 inequality step 5}, \eqref{xi 2 weak 2}, and \eqref{cond delta 7 system}, we have
\begin{equation*}
\begin{aligned}
N_4[W_u,W_v]\ge & r(\frac{2\sqrt{1-a}}{r\delta_7}-C_3\rho)\delta_v-r(\hat v+\rho)(b^*+\delta_0)\delta_u-r\delta_0\hat u\\
\ge & 2r\delta_v-r(\hat v+\rho)(b^*+\delta_0)\delta_u-r\delta_0\hat u.
\end{aligned}
\end{equation*}
Then, from \eqref{delta u v 1}, as long as  $M_0$ is chosen sufficiently large at the beginning, we assert that $N_4[W_u,W_v]\ge 0$
up to decreasing $\delta_0(\varepsilon_1,\eta_1,|\delta_3-\delta_4|)$ if necessary.
Hence, the construction for Step 2 is finished. Hereafter, we fix $\delta_5$, and thus $\xi_2$.

\medskip

\noindent{\bf{Step 3:}} We consider $\xi\in(-\infty,\xi_2-\delta_7]$ with $\xi_2$ fixed by Step 2 and $\delta_7$ determined later.
In this case, we have
\beaa
(R_u,R_v)(\xi)=(-\delta_u,-\delta_v).
\eeaa
We first reduce $|\delta_3-\delta_4|$ and $\delta_7$ simultaneously to get $\delta_7\ll\frac{\pi}{2\delta_6}$, which implies
$$R'_v((\xi_2-\delta_7)^-)=0<R'_v((\xi_2-\delta_7)^+),\ {\it i.e.},\ \angle\alpha_2<180^{\circ}.$$

From now on, we fix $delta_7$. By applying the same argument as Step 2 above, $N_3[W_u,W_v]\le 0$ for $\xi\in(-\infty,\xi_2-\delta_7]$. Therefore, it suffices to verify the inequality of $N_4[W_u,W_v]$.  By some straightforward computations, from \eqref{rho 2}, we have
\begin{equation*}
\begin{aligned}
N_4[W_u,W_v]= & -rV_*(b^*+\delta_0)\delta_u-r\delta_v\Big[1-2V_*+\delta_v-(b^*+\delta_0)(U_*+\delta_u)\Big]-r\delta_0U_*V_*\\
\ge & -r(\hat v+\rho)(b^*+\delta_0)\delta_u-r\delta_v\Big[1-2\hat v+\delta_v-(b^*+\delta_0)(\hat u-\rho+\delta_u)\Big]-r\delta_0U_*V_*.
\end{aligned}
\end{equation*}
Then, from \eqref{delta u v 1} and $0<b^*<1$, we have $N_4[W_u,W_v]\ge 0$ up to decreasing $\delta_0(\varepsilon_1,\eta_1)$ if necessary. The construction for Step 3 is complete.

\bigskip
\subsection{Proof of Theorem~\ref{th:threshold}}

\noindent

We first prove Proposition~\ref{Prop-supersol-system}.

\begin{proof}[Proof of Proposition~\ref{Prop-supersol-system}]
Combining the construction of $(R_u,R_v)$ in \S~\ref{sec:3-1} and \S~\ref{sec:3-2},
we are now equipped with a super-solution
\beaa
(W_u,W_v)=(\min\{U_*-R_u,1\},\max\{V_*+R_v,0\})
\eeaa
satisfying \eqref{tw super solution system}.
Moreover, at the points of discontinuity of $W'_u$ and $W'_v$,
the corresponding one-sided derivatives have the right sign.
Therefore, we complete the proof of Proposition~\ref{Prop-supersol-system}.
\end{proof}

We are now ready to prove Theorem~\ref{th:threshold}.

%%%%%%%%%%%%%%%%%%%%%%%%%%%%%%%%%%%%%%%%%%%%%%%%%%%%%%%%%%%%%%%%%%%%%%%%%%%%%%%%%%%%%%
\begin{proof}[Proof of Theorem~\ref{th:threshold}]
In view of Lemma~\ref{lm: existence b*}, it suffices to show that conditions %(i) and (ii) 
(i), (ii), (iii) are equivalent.
We now deal with ${\rm(i)}\Leftrightarrow {\rm(ii)}$. To prove ${\rm(i)}\Rightarrow{\rm(ii)}$, we use the contradiction argument and assume that (ii) is not true, by Lemma~\ref{lm: behavior around + infty} (ii), we see that $U_*$ satisfies \eqref{AS-U-infty-for-contradiction} and thus Proposition~\ref{Prop-supersol-system} is available.

To reach a contradiction, we consider the Cauchy problem
\begin{equation}\label{system2}
\left\{
\begin{aligned}
&\partial_tu=u_{xx}+u(1-u-av), & t>0,\ x\in\mathbb{R},\\
&\partial_tv=dv_{xx}+rv(1-v-(b^*+\delta_0)u), & t>0,\ x\in \mathbb{R},\\
&u(0,x)=u_0(x),\quad  v(0,x)\equiv1, & \ x\in \mathbb{R},
\end{aligned}
\right.
\end{equation}
where $u_0(x)$ %and $v_0(x)$ are
is the compactly supported continuous function. Additionally, we assume
$$\max_{x\in\mathbb{R}} |u_0(x)|<\frac{1-a}{1-ab^*}$$
and $\delta_0>0$ is sufficiently small such that
$b^*+\delta_0\neq 1$ if $b^*<1$.
%\red{Then} the corresponding critical front $(c^*,U_*,V_*)$ satisfies \eqref{tw solution} with $b=b^*+\delta_0$ and \red{$c^{**}(b)=c^{**}(b^*+\delta_0)$}.
By the definition of $b^*$, we see that the minimal traveling wave speed $c^{*}_{LV}(b^*+\delta_0)$ corresponding to the system \eqref{tw solution weak} with $b=b^*+\delta_0$ satisfies $c^{*}_{LV}(b^*+\delta_0)>2\sqrt{1-a}$.
Then, according to results from \cite{Lewis Li Weinberger 1, Lewis Li Weinberger 2},
%Thus, from Theorem 1.4 of \cite{Wu Xiao Zhou},
the spreading speed of \eqref{system2} is
exactly $c^{*}_{LV}(b^*+\delta_0)$, strictly greater than $2\sqrt{1-a}$.

Let $(W_u, W_v)$ be constructed in Proposition~\ref{Prop-supersol-system}. Then, thanks to Proposition~\ref{Prop-supersol-system}, it is easy to see that
$(\overline{u},\underline{v})(t,x):=(W_u,W_v)(x-(2\sqrt{1-a})t-\eta)$, forms a super-solution for \eqref{system2} for all $t\ge 0$ and $x\in\mathbb{R}$, where $\eta\in\mathbb{R}$ is chosen large enough to have $\overline{u}(0,x)\geq u_0(x)$ and $\underline{v}(0,x)\leq v_0(x)$ for $x\in\mathbb{R}$.
By applying the comparison principle, we assert that the spreading speed of \eqref{system2} is smaller than or equal to $2\sqrt{1-a}$,
which reaches a contradiction.
%since $\sigma$ is arbitrarily small.
The proof of ${\rm(i)}\Rightarrow {\rm(ii)}$ is finished.

Next, we show ${\rm(ii)}\Rightarrow {\rm(i)}$. Note that for $b>b^*$, the speed is nonlinearly selected, which together with Lemma~\ref{lm: behavior around + infty} implies that (ii) cannot hold. Therefore, it suffices to show that (ii) cannot happen with
$b<b^*$. We assume by contradiction that that there exists $b^\dag\in(0,b^*)$ such that
\beaa
U_{b^\dag}(\xi)=B^\dag e^{-\lambda_u \xi}+o(e^{-\lambda_u \xi})\quad \mbox{as $\xi\to+\infty$}
\eeaa
for some $B^\dag>0$.
In view of the asymptotic behavior of $(U_b,V_b)$ at $\pm\infty$ given in Section 2, we can define
\beaa
L^*:=\inf\{L\in\mathbb{R}|\, U_{*}(\xi-L)\geq U_{b^\dag}(\xi),\ V_{*}(\xi-L)\leq V_{b^\dag}(\xi),\ \forall \xi\in\mathbb{R}\}.
\eeaa
Note that, the discussion should be divided into several cases: $b^*>1$ and $b^\dag>1,=1,\text{or}<1$; $b^*=1$ and $b^\dag<1$; $b^*<1$ and $b^\dag<1$. But to define $L^*<\infty$ we only need $0<b^\dag<b^*$.

Next, we will apply the sliding method to reach a contradiction. By the continuity, we have
$$U_{b^*}(\xi-L^*)\geq U_{b^\dag}(\xi)\ \ \text{and}\ \ 1-V_{b^*}(\xi-L^*)\geq 1-V_{b^\dag}(\xi)\ \ \text{for all}\ \ \xi\in\mathbb{R}.$$
 If there exists $\xi^*\in\mathbb{R}$ such that
$$U_{b^*}(\xi^*-L^*)= U_{b^\dag}(\xi^*)\ \ \text{or}\ \ 1-V_{b^*}(\xi^*-L^*)=1- V_{b^\dag}(\xi^*),$$
by the strong maximum principle, we have $(U_{b^*},V_{b^*})(\xi-L^*)= (U_{b^\dag},V_{b^\dag})(\xi)$ for all $\xi\in\mathbb{R}$,
which is impossible since they satisfy different equations. Consequently,
\bea\label{U*>U+}
U_{b^*}(\xi-L^*)> U_{b^\dag}(\xi),\quad V_{b^*}(\xi-L^*)< V_{b^\dag}(\xi)\quad \mbox{ for all $\xi\in\mathbb{R}$}.
\eea

Furthermore, we claim that the touch point cannot happen at $-\infty$.
\begin{claim}\label{claim 1 to proof th 1.5}
It holds
\beaa
\text{{\rm(I)}}\lim_{\xi\to-\infty}\frac{1-U_{b^*}(\xi-L^*)}{1-U_{b^\dag}(\xi)}<1\quad\text{and}\quad\text{{\rm(II)}}\lim_{\xi\to-\infty}\frac{V_{b^*}(\xi-L^*)}{V_{b^\dag}(\xi)}<1.
\eeaa
\end{claim}
\begin{proof}
Without loss of generality, we only deal with the case $1<b^\dag<b^*$. The others, {\it i.e.}, $b^*>1$ and $b^\dag=1\ \text{or}<1$; $b^*=1$ and $b^\dag<1$; $b^*<1$ and $b^\dag<1$, can be proved by the same argument.
Recall that $\mu^+_u(c^*)>0$ and $\mu^+_v(c^*)>0$ defined as that in Lemma \ref{lem:AS-infty:b>1}.
Let us denote for simplicity that
\beaa
\mu_u=\mu^+_u(c^*),\quad  \mu_{v,1}=\mu^+_v(c^*,b^*),\quad  \mu_{v,2}=\mu^+_v(c^*,b^\dag).
\eeaa
Note that $\mu_u$ is independent on $b$.
Clearly, it follows from the definition of $\mu_{v,i}$, $i=1,2$, that $\mu_{v,1}>\mu_{v,2}$.
Then (II) immediately follows from Lemma \ref{lem:AS-infty:b>1}.
%  and  since $V_{b^*}$ and $V_{b^\dag}$ have different decay rates.

Next, we deal with (I).
First, we consider the case $\mu_u\ge \mu_{v,2}$. Since $\mu_{v,1}>\mu_{v,2}$,
(I) follows immediately from Lemma~\ref{lem:AS-infty:b>1} since $1-U_{b^*}(\xi)$ decays faster than
$1-U_{b^\dag}(\xi)$ as $\xi\to-\infty$.

%For the cases $\mu_u\ge \mu_{v,1}>\mu_{v,2}$ and $\mu_{v,1}>\mu_u\ge \mu_{v,2}$, it follows immediately from Lemma \ref{lem:AS-infty:b>1} since $1-U_{b^*}$ and $1-U_{b^\dag}$ have different decay rates.
For the case $\mu_{v,2}>\mu_u$,
%by some straightforward computations, from $U$-equation there exist $C_1>0$ such that $1-U_{b^*}(\xi-L^*)=C_1V_{b^*}(\xi-L^*)$ and $1-U_{b^\dag}(\xi-L^*)=C_1V_{b^\dag}(\xi-L^*)$
we assume by the  contradiction that
$$\lim_{\xi\to-\infty}\frac{1-U_{b^*}(\xi-L^*)}{1-U_{b^\dag}(\xi)}=1.$$
Then from Lemma \ref{lem:AS-infty:b>1}, there exist $C_1,C_2>0$ satisfying $C_1=C_2e^{\mu_u L^*}$ such that
$$1-U_{b^*}(\xi)\sim C_1e^{\mu_u\xi}\ \ \text{and}\ \ 1-U_{b^\dag}(\xi)\sim C_2e^{\mu_u\xi}\ \ \text{as}\ \ \xi\to-\infty.$$
To reach a contradiction,
we set
\beaa
U_1(\xi)=(1-U_{b^\dag}(\xi))-(1-U_{b^*}(\xi-L^*)), \quad V_1(\xi):= V_{b^\dag}(\xi)-V_{b^*}(\xi-L^*).
\eeaa
Then, by \eqref{U*>U+},
$U_1(\xi)>0$ and $V_1(\xi)>0$ for all $\xi\in\mathbb{R}$, Moreover, $U_1$ satisfies
\bea\label{U1-eq}
U_1''+c^* U_1'-U_1+g_1+g_2=0,\quad \xi\in\mathbb{R}.
\eea
where
\beaa
&&g_1(\xi)=[2-U_{b^\dag}(\xi)-U_{b^*}(\xi-L^*)-aV_{b^*}(\xi-L^*)]U_1(\xi),\\
&&g_2(\xi)=aU_{b^\dag}(\xi)V_1(\xi).
\eeaa
It is clear that $g_1(\xi)=o(U_1(\xi))$ as $\xi\to-\infty$. Next, we show that  $g_2(\xi)=o(U_1(\xi))$ holds as $\xi\to-\infty$.

By using
%$C_1=C_2e^{\mu_u L^*}$,
%$1-U_{b^\dag}(\xi)\sim C_2e^{\mu_u\xi}$ and
$V_1(\xi)\sim C_3e^{\mu_{v,2}\xi}$ (for some $C_3>0$) as $\xi\to-\infty$,
there exist $\kappa_1,\kappa_2>0$ and $\mu_0\ge \mu_{v,2}$ such that
\bea\label{kappa 12}
\kappa_2e^{\mu_{0}\xi}\le g_2(\xi)\le \kappa_1e^{\mu_0\xi}.
\eea
We now assume by contradiction that there exists $\{\xi_n\}$ with $\xi_n\to-\infty$ as $n\to\infty$ such that for some $\kappa_3>0$,
\bea\label{kappa3 1}
\frac{g_2(\xi_n)}{U_1(\xi_n)}\ge \kappa_3\ \ \text{for all}\ \ n\in\mathbb{N}.
\eea
 Set $U_1(\xi)=\alpha(\xi)e^{\mu_0\xi}$, where $\alpha(\xi)>0$ for all $\xi$.
By substituting it into \eqref{U1-eq},
we have
\bea\label{alpha-eq 1}
L(\xi):=\Big(\alpha''(\xi)+(2\mu_0+c^*\mu_0)\alpha'(\xi)+(\mu_0^2+c^*\mu_0-1)\alpha(\xi)\Big)e^{\mu_0\xi}+g_1(\xi)+g_2(\xi)=0
\eea
for $\xi\approx-\infty$.
By \eqref{kappa 12} and \eqref{kappa3 1}, we have
\bea\label{alpha-bdd 1}
0<\alpha(\xi_n)\leq \frac{\kappa_1}{\kappa_3}\quad  \mbox{for all $n\in\mathbb{N}$.}
\eea
Now, we will reach a contradiction by dividing the behavior of $\alpha(\cdot)$ into two cases:
\begin{itemize}
    \item[(a)] $\alpha(\xi)$ oscillates for all large $\xi$;
    \item[(b)] $\alpha(\xi)$ is monotone for all large $\xi$.
\end{itemize}
For case (a), there exist local minimum points $\eta_n$ of $\alpha$ with $\eta_n\to\infty$ as $n\to\infty$ such that
\beaa
\alpha(\eta_n)>0,\quad \alpha'(\eta_n)=0,\quad \alpha''(\eta_n)\geq 0\quad  \mbox{for all $n\in\mathbb{N}$.}
\eeaa
Together with \eqref{kappa 12} and $g_1(\xi)=o(U_1(\xi))$,
from \eqref{alpha-eq 1} we see that
\beaa
0=L(\eta_n)\geq (\mu_0^2+c^*\mu_0-1)\alpha(\eta_n)e^{\mu_0\eta_n}+o(1)\alpha(\eta_n)e^{\mu_0\eta_n}+\kappa_2e^{\mu_0\eta_n}>0
\eeaa
for all large $n$, which reaches a contradiction since $\mu_0\ge \mu_{v,2}>\mu_u$.

For case (b),
due to \eqref{alpha-bdd 1}, there exists $\alpha_0\in[0, \kappa_1/\kappa_3]$
such that $\alpha(\xi)\to \alpha_0$ as $\xi\to\infty$. Hence, we can find subsequence $\{\eta_j\}$ that tends to $\infty$ such that $\alpha'(\eta_j)\to0$, $\alpha''(\eta_j)\to0$ and
$\alpha(\eta_j)\to \alpha_0$ as $n\to\infty$.
From \eqref{alpha-eq 1} we deduce that
\beaa
0=L(\eta_j)\geq (o(1)+(\mu_0^2+c^*\mu_0-1)\alpha(\eta_j)+ \kappa_2)e^{\mu_0\eta_j}>0
\eeaa
for all large $j$, which reaches a contradiction.
Therefore, we have proved that
$g_2(\xi)=o(U_1(\xi))$ as $\xi\to-\infty$.
Consequently, we have
\bea\label{J-small o 1}
g_1(\xi)+g_2(\xi)=o(U_1(\xi))\quad \mbox{as $\xi\to-\infty$.}
\eea

Thanks to \eqref{J-small o 1}, we can apply
%Then, applying
\cite[Chapter 3, Theorem 8.1]{CoddingtonLevison} to assert that
 the asymptotic behavior of $U_1(\xi)$ at $\xi=-\infty$ satisfies $U_1(\xi)\sim e^{\mu_u \xi}$ which contradicts with $C_1=C_2e^{\mu_u L^*}$. 
 The proof of Claim \ref{claim 1 to proof th 1.5} is complete.
\end{proof}

Now, we are ready to prove that the touch point always happens on $U$-equation at $+\infty$.
\begin{claim}\label{claim 2 of proof of th 1.5}
It holds
\beaa
\lim_{\xi\to+\infty}\frac{U_{b^*}(\xi-L^*)}{U_{b^\dag}(\xi)}=1.
\eeaa
\end{claim}
\begin{proof}
Let $\lambda_v^-(c^*)<0$ be defined as in Lemma \ref{lm: behavior around + infty}. For the case $\lambda_v^-(c^*)\le -\sqrt{1-a}$,
we are going to prove
\bea\label{aaaaa}
\lim_{\xi\to+\infty}\frac{U_{b^*}(\xi-L^*)}{U_{b^\dag}(\xi)}>1\Longrightarrow \lim_{\xi\to+\infty}\frac{1-V_{b^*}(\xi-L^*)}{1-V_{b^\dag}(\xi)}>1.
\eea
%o do this, it is suffice to assume $U_{b^*}(\xi-L^*)=(1+\delta)U_{b^\dag}(\xi)$ with very small $\delta>0$. By some %straightforward computations and Lemma \ref{lm: behavior around + infty}, we have
We divide our discussion into three cases:
\begin{itemize}
    \item[(1)] if $\lambda_v^-(c^*)< -\sqrt{1-a}$, then by Lemma \ref{lm: behavior around + infty},
    we see that $U_b(\xi)$ and $1-V_b(\xi)$ have the same decay rate at $+\infty$ and there exists a positive constant $A_1$ such that
    \beaa
    \lim_{\xi\to+\infty}\frac{U_{b}(\xi)}{1-V_b(\xi)}=A_1.
    \eeaa
    Therefore, we have
    \beaa
     \lim_{\xi\to+\infty}\frac{1-V_{b^*}(\xi-L^*)}{1-V_{b^\dag}(\xi)}&=&
     \lim_{\xi\to+\infty}\Big[\frac{1-V_{b^*}(\xi-L^*)}{U_{b^*}(\xi-L^*)}\frac{U_{b^*}(\xi-L^*)}{U_{b^\dag}(\xi)}
     \frac{U_{b^\dag}(\xi)}{1-V_{b^\dag}(\xi)}\Big]\\
     &=&\frac{1}{A_1}\Big(\lim_{\xi\to+\infty}\frac{U_{b^*}(\xi-L^*)}{U_{b^\dag}(\xi)}\Big)A_1>1.
    \eeaa
    Hence \eqref{aaaaa} holds.

   %$ $$1-V_{b}(\xi-L^*)=\frac{rb}{(2-d)(1-a)+r}U_{b}(\xi-L^*);$$
    \item[(2)] if $\lambda_v^-(c^*)= -\sqrt{1-a}$, then by Lemma \ref{lm: behavior around + infty}, there exists a positive constant $A_2$ such that
    \beaa
    \lim_{\xi\to+\infty}\frac{\xi U_{b}(\xi)}{1-V_b(\xi)}=A_2.
    \eeaa
     Therefore, we have
    \beaa
     \lim_{\xi\to+\infty}\frac{1-V_{b^*}(\xi-L^*)}{1-V_{b^\dag}(\xi)}&=&
     \lim_{\xi\to+\infty}\Big[\frac{1-V_{b^*}(\xi-L^*)}{\xi U_{b^*}(\xi-L^*)}\frac{ U_{b^*}(\xi-L^*)}{U_{b^\dag}(\xi)}
     \frac{\xi U_{b^\dag}(\xi)}{1-V_{b^\dag}(\xi)}\Big]\\
     &=&\frac{1}{A_2}\Big(\lim_{\xi\to+\infty}\frac{U_{b^*}(\xi-L^*)}{U_{b^\dag}(\xi)}\Big)A_2>1,
    \eeaa
    which yields \eqref{aaaaa}.
    %$$1-V_{b}(\xi-L^*)=\frac{rb\xi}{2\sqrt{1-a}(d-1)}U_{b}(\xi-L^*).$$

    \item[(3)] if $\lambda_v^-(c^*)> -\sqrt{1-a}$, we assume by contradiction that
\bea\label{case-iii-lim}
\lim_{\xi\to+\infty}\frac{1-V_{b^*}(\xi-L^*)}{1-V_{b^\dag}(\xi)}=1.
\eea
Then from Lemma \ref{lm: behavior around + infty} and \eqref{case-iii-lim}, there exist $C_1,C_2>0$ satisfying $C_1=C_2e^{\lambda_v^-(c^*) L^*}$ such that
$$1-V_{b^*}(\xi)\sim C_1e^{\lambda_v^-(c^*)\xi}\ \ \text{and}\ \ 1-V_{b^\dag}(\xi)\sim C_2e^{\lambda_v^-(c^*)\xi}.$$
To reach a contradiction, similar to the proof of Claim~\ref{claim 1 to proof th 1.5}, we set
\beaa
U_1(\xi):=U_{b^*}(\xi-L^*)-U_{b^\dag}(\xi), \quad V_1(\xi):= (1-V_{b^*}(\xi))-(1-V_{b^\dag}(\xi-L^*)).
\eeaa
Considering the equation satisfied by the positive function $V_1$:
\beaa
cV_1'+dV_1''-rV_1+h_1(\xi)+h_2(\xi)=0,\quad \xi\in\mathbb{R},
\eeaa
where
\beaa
&&h_1(\xi)=r[2-V_{b^*}(\xi)-V_{b^\dag}(\xi-L^*)]V_1(\xi),\\
&&h_2(\xi)=r bV_{b^\dag}(\xi-L^*)U_1(\xi).
\eeaa
Using a similar argument as in Claim \ref{claim 1 to proof th 1.5}, we can reach a contradiction, and thus \eqref{aaaaa} holds.
\end{itemize}

%Therefore, by $b^{\dag}<b^*$, \eqref{aaaaa} is proved.

%For the case $\lambda_v^-(c^*)> -\sqrt{1-a}$,
%we are going to prove
%$$\lim_{\xi\to+\infty}\frac{1-V_{b^*}(\xi-L^*)}{1-V_{b^\dag}(\xi)}>1.$$
%We assume by contradiction that
%$$\lim_{\xi\to+\infty}\frac{1-V_{b^*}(\xi-L^*)}{1-V_{b^\dag}(\xi)}=1.$$
%Then from Lemma \ref{lm: behavior around + infty}, there exist $C_1,C_2>0$ satisfying $C_1=C_2e^{\lambda_v^-(c^*) L^*}$ such that $1-V_{b^*}(\xi-L^*)\sim C_1e^{\lambda_v^-(c^*)\xi}$ and $1-V_{b^\dag}(\xi)\sim C_2e^{\lambda_v^-(c^*)\xi}$.
%On the other hand, we set $V_1(\xi)=(1-V_{b^*}(\xi))-(1-V_{b^\dag}(\xi-L^*))$. Then $V_1(\xi)$ is a positive solution of the following linear equation
%\beaa
%dV_1''+c^* V_1'-rV_1\Big(1-\frac{b^*(U_{b^*}(\xi-L^*)-b^\dag U_{b^\dag}(\xi))}{V_1}+o(1)\Big) =0.
%\eeaa
%In particular, we have
%$$\frac{b^*(U_{b^*}(\xi-L^*)-b^\dag U_{b^\dag}(\xi))}{V_1}=o(1).$$
%Otherwise, there exists $C_3>0$ such that $V_1(\xi)\sim C_3e^{-\sqrt{1-a}\xi}$ which implies
%\beaa
%dV_1''+c^* V_1'-rV_1\Big(1-\frac{b^*(U_{b^*}(\xi-L^*)-b^\dag U_{b^\dag}(\xi))}{V_1}+o(1)\Big) >0
%\eeaa
%since $\lambda_v^-(c^*)> -\sqrt{1-a}$.
%Then, applying \cite[Chapter 3, Theorem 8.1]{CoddingtonLevison} and similar argument in Claim \ref{claim 1 to proof th 1.5},
%we reach a contradiction.

As a result, if Claim \ref{claim 2 of proof of th 1.5} is not true,
from Claim \ref{claim 1 to proof th 1.5} and \eqref{aaaaa}, %we can use standard argument to find
it is easy to see that there exists
$\varepsilon>0$ sufficiently small such that
$U_{b^*}(\xi-(L^*+\varepsilon))> U_{b^{\dag}}(\xi)$ for $\xi\in\mathbb{R}$,
which contradicts the definition of $L^*$. Therefore,
the proof of Claim \ref{claim 2 of proof of th 1.5} is finished.
\end{proof}

Now, we are ready to finish the proof of ${\rm(ii)}\Rightarrow {\rm(i)}$
%Theorem \ref{th:threshold}
by the help of Claim \ref{claim 1 to proof th 1.5} and  Claim  \ref{claim 2 of proof of th 1.5}.
For this, we set
\beaa
U_2(\xi):=U_{b^*}(\xi-L^*)-U_{b^\dag}(\xi), \quad V_2(\xi):= (1-V_{b^*}(\xi))-(1-V_{b^\dag}(\xi-L^*)).
\eeaa
Then we focus on the equation satisfied by $U_2$ and
use a similar argument as in Claim \ref{claim 1 to proof th 1.5}, we can again reach a contradiction.
%We set $U_2(\xi)=U_{b^*}(\xi-L^*)-U_{b^{\dag}}(\xi)$. Then $U_2(\xi)$ satisfies
%\beaa
%U_2''+c^* U_2'+U_2\Big(1-a+\frac{a(U_{b^*}(1-V_{b^*})(\xi-L^*)- U_{b^\dag}(1-V_{b^\dag})(\xi))}{U_2}+o(1)\Big)=0.
%\eeaa
%In particular, we have
%$$\frac{a(U_{b^*}(1-V_{b^*})(\xi-L^*)- U_{b^\dag}(1-V_{b^\dag})(\xi))}{U_2}=o(1).$$
%Otherwise, from the asymptotic behavior of $U_2(+\infty)$ and the fact $U_{b^*}(1-V_{b^*})(\xi-L^*)- U_{b^\dag}(1-V_{b^\dag})%(\xi)>0$, we have
%\beaa
%U_2''+c^* U_2'+U_2\Big(1-a+\frac{a(U_{b^*}(1-V_{b^*})(\xi-L^*)- U_{b^\dag}(1-V_{b^\dag})(\xi))}{U_2}+o(1)\Big)>0.
%\eeaa
%Then, applying \cite[Chapter 3, Theorem 8.1]{CoddingtonLevison},
%%{Maybe we need more details or references here}
%the asymptotic behavior of $U_2(\xi)$ at $\xi=+\infty$ satisfies
%\beaa
%U_2(\xi)=(C_1\xi+C_2)e^{-\beta \xi}+o(e^{-\beta\xi})\quad \mbox{as $\xi\to\infty$},
%\eeaa
%where  $C_1\geq0$, and $C_2>0$ if $C_1=0$. From \eqref{bbbb} and the asymptotic behavior of $U_{b^*}$, we see that $C_1=0$.
%Thus, it holds that $C_2>0$. However, Claim \ref{claim 2 of proof of th 1.5} implies $B_\dag=Be^{\sqrt{1-a} L^*}$ implies that %$C_2=0$, which reaches a contradiction.
Consequently, we obtain
${\rm(ii)}\Rightarrow {\rm(i)}$.  

\medskip

Finally, we prove ${\rm(i)}\Leftrightarrow {\rm(iii)}$.
In view of Proposition~\ref{prop: implicit cond}, we have
\begin{itemize}
\item[(1)] $\int_{-\infty}^{\infty} e^{\lambda_u\xi} U_b(\xi)[a(1-V_b)-U_b](\xi)d\xi=0\quad \mbox{ for }\quad b=b^*$;
\item[(2)] $\int_{-\infty}^{\infty} e^{\lambda_u\xi} U_b(\xi)[a(1-V_b)-U_b](\xi)d\xi\neq0\quad \mbox{ for }\quad b<b^*$.
\end{itemize}
It suffices to prove that 
\bea\label{goal-iii}
\int_{-\infty}^{\infty} e^{\lambda_u\xi} U_b(\xi)[a(1-V_b)-U_b](\xi)d\xi\neq0\quad \text{ for }\quad b>b^*.
\eea
Since $b>b^*$, we have $c^*_{LV}(b)>2\sqrt{1-a}$. In this case, the minimal traveling wave $U_b(\xi)$ exhibits fast decay as $\xi\to+\infty$ (see \cite{Kan-On} or \cite[Lemma 2.3]{DuWangZhou}). Specifically, we have 
$U_b(\xi)\sim e^{-\lambda_u^+\xi}$ as $\xi\to\infty$, where 
\beaa
\lambda_u^+=\frac{c^*_{LV}(b)+\sqrt{(c^*_{LV}(b))^2-4(1-a)}}{2}>\sqrt{1-a}=\lambda_u.
\eeaa
Hence, the bilateral Laplace transform of $U_b$ is well-defined for $ -\lambda_u^+<{\rm Re} \lambda<0$, given by 
\beaa
\mathcal{L}(\lambda):=\int_{-\infty}^{+\infty}e^{-\lambda \xi}U_b(\xi) d\xi, \quad -\lambda_u^+<{\rm Re} \lambda<0.
\eeaa
By the equation satisfied by $U_b$ and integration by parts, we have
\bea\label{Phi-eq-2}
\int_{-\infty}^{\infty}e^{-\lambda \xi}U_b[a(1-V_b)-U_b](\xi)d\xi=- \Phi(\lambda)\mathcal{L}(\lambda),\quad -\lambda_u^+<{\rm Re} \lambda<0,
\eea
where
\beaa
\Phi(\lambda):=c_{LV}^*(b)\lambda+\lambda^2+1-a.
\eeaa
In particular, since $0<\lambda_u<\lambda_u^+$, we may substitute $\lambda=-\lambda_u$ into \eqref{Phi-eq-2} to obtain
 \beaa
\int_{-\infty}^{\infty}e^{\lambda_u\xi}U_b[a(1-V_b)-U_b](\xi)d\xi=- \Phi(-\lambda_u)\mathcal{L}(-\lambda_u)> 0,
\eeaa
since $\Phi(-\lambda_u)<0$ and $\mathcal{L}(-\lambda_u)>0$. Therefore, \eqref{goal-iii} holds.

This completes the proof of Theorem \ref{th:threshold}.
%Finally, note that ${\rm(ii)}\Leftrightarrow {\rm(iii)}$ follows from Lemma~\ref{lm: behavior around + infty} and Proposition~\ref{prop: implicit cond}.
\end{proof}

\bigskip

%%%%%%%%%%%%%%%%%%%%%%%%%%%%%%%%%%%%%%%%%%%%%%%%%%%%%%%%%%%%%%%%%%%%%%%%%%

\section{Classification of the traveling waves}%\label{sec:classification}

In this section, we conclude the main results of this paper and complete the proof of Theorem \ref{th: classification scalar nonlocal} and Theorem \ref{th: classification}.

In Section \ref{sec:threshold scalar}, we studied the process of how the linear selection on speed %evolves
transitions to  nonlinear selection by considering the scalar local diffusion equation
$$w_t=w_{xx}+f(w;s)$$
with a family of continuously increasing nonlinearity $f(w;s)$ satisfying assumptions (A1)-(A5).
The characteristic equation %eigenfunction
$\lambda^2-c\lambda+f'(0;s)=0$, derived from the linearization of %minimal traveling wave solutions
$$W''+cW'+f(W;s)=0$$ at the %invasion
unstable state $W=0$, admits
\begin{itemize}
\item One double root $\lambda=\sqrt{f'(0)}$ if $c=c^*(s)=2\sqrt{f'(0)}$,
\item Two simple roots
$$\lambda_s^{\pm}=\frac{c\pm\sqrt{c^2-4f'(0)}}{2}\ \ \text{if}\ \ c\ge c^*(s)>2\sqrt{f'(0)}.$$
\end{itemize}

For the case $s\in (0,s^*]$, the spreading speed is linearly selected, namely $c^*(s)=2\sqrt{f'(0)}$.
By the classical ODE argument (see, e.g., \cite{Aronson Weinberger}),  the asymptotic behavior of the pulled front is given by the linear combination of %eigenvectors
$\xi e^{-\sqrt{f'(0)}\xi}$ and  $e^{-\sqrt{f'(0)}\xi}$. More importantly, we %found
proved
that the decay rate of the minimal traveling wave changes to $e^{-\sqrt{f'(0)}\xi}$ if and only if $s=s^*$ which is the threshold between 
linear 
and nonlinear selection on speed. 
%Therefore, (1) and (2) in Proposition \ref{cor: different minimal tw} follow immediately from Theorem \ref{th: threshold scalar equation}.
On the other hand, for $s>s^*$, the spreading speed is nonlinearly selected $c^*(s)>2\sqrt{f'(0)}$. It has been %can be
proved in \cite{Aronson Weinberger} by the basic phase plane analysis that the asymptotic behavior of the pushed front is given by the fast decay $e^{-\lambda_s^+\xi}$, {\it i.e.}. Furthermore, for $c>c^*(s)$, it follows from the basic sliding method that the asymptotic behavior is given by the slow decay $e^{-\lambda_s^-\xi}$, {\it i.e.}, (3) in Proposition \ref{prop: classification scalar}.

In Section \ref{sec:threshold-system}, we studied the transition between linear selection and nonlinear selection on speed for the Lotka-Volterra competition system
\begin{equation*}
\left\{
\begin{aligned}
&u_t=u_{xx}+u(1-u-av),\\
&v_t=dv_{xx}+rv(1-v-bu).
\end{aligned}
\right.
\end{equation*}
Note that,  in Remark \eqref{rm: c nonlinear selection}, by numerical simulation, we established two conditions under which the speed is nonlinearly selected for certain values of $0<a,b<1$. Therefore, to fully capture the entire process of how the speed transitions from linear selection to nonlinear selection, it is crucial to consider this problem within the extended parameter range of $0<a<1$ and $b>0$, rather than just $0<a<1$ and $b>1$ (the so-called strong-weak competition case).

%The eigenfunction
Note that the characteristic equation $\lambda^2-c\lambda+1-a=0$, derived from the linearization %of %minimal traveling wave solutions
$$U''+cU'+U(1-U-aV)=0$$
at the %invasion
unstable state $(U,V)= (0,1)$, admits
\begin{itemize}
\item One double root $\lambda=\sqrt{1-a}$ if $c=c^*_{LV}(b)=2\sqrt{1-a}$,
\item Two simple roots
\bea\label{two roots LV}
\lambda_u^{\pm}=\frac{c\pm\sqrt{c^2-4(1-a)}}{2}\ \ \text{if}\ \ c\ge c_{LV}^*(b)>2\sqrt{1-a}.
\eea
\end{itemize}

For the case $b\in (0,b^*]$,  we have $c^*_{LV}(b)=2\sqrt{1-a}$, {\it i.e.}, the spreading speed is linearly selected. The asymptotic behavior of the pulled front is given by the linear combination of %eigenvectors
$\xi e^{-\sqrt{1-a}\xi}$ and  $e^{-\sqrt{1-a}\xi}$. Importantly, we proved that the decay rate of the minimal traveling wave transits from $\xi e^{-\sqrt{1-a}\xi}$
%changes
to $e^{-\sqrt{1-a}\xi}$ %if and only if
as $b$ evolves to $b^*$ from the left hand side,
%$b=b^*$
which is the threshold between %speed
linear speed selection and nonlinear speed selection.
On the other hand, for $b>b^*$, the spreading speed is nonlinearly selected.
Namely, $c_{LV}^*(b)>2\sqrt{1-a}$.
By super and sub-solution argument, we will show in \S\ref{sec:pushed front LV} that the asymptotic behavior of the pushed front is given by the fast decay $e^{-\lambda_u^+\xi}$ (see (2) in Theorem \ref{th: classification}). Furthermore, for $c>c^*_{LV}(b)$, it follows from the standard %basic
sliding method that the asymptotic behavior is given by the slow decay $e^{-\lambda_u^-\xi}$, {\it i.e.}, (3) in Theorem \ref{th: classification}. The proof will also be given in \S\ref{sec:pushed front LV}.

In Section \ref{sec: threshold scalar nonlocal}, we extended our observation to the integro-differential equation which has a nonlocal diffusion kernel
$$w_t=J\ast w-w+f(w;q)$$
with a family of continuously increasing nonlinearity $f(w;q)$ satisfying assumptions (A1)-(A3) and (A6)-(A7). Different with the local diffusion equation, the linearly selected speed is given by a variational formula
$$c_0^*:=\min\frac{1}{\lambda}\Big(\int_{\mathbb{R}}J(x)e^{\lambda x}dx+f'(0;q)-1\Big),$$
which is also derived from the linearization of %minimal traveling wave solutions
$$J\ast \mathcal{W}+c\mathcal{W}'+f(\mathcal{W};q)=0$$ at the %invasion
unstable state $\mathcal{W}= 0$. Furthermore, since the function
$$h(\lambda):=\int_{\mathbb{R}}J(x)e^{\lambda x}dx+f'(0;q)-1$$
is positive and strictly convex,  the characteristic equation
$c\lambda=\int_{\mathbb{R}}J(x)e^{\lambda x}dx+f'(0;q)-1$ admits
\begin{itemize}
\item One double root $\lambda=\lambda_0$ if $c=c^*_{NL}( q)=c_0^*$,
\item Two simple roots $\lambda^{\pm}_q(c)$ satisfying
\begin{equation}\label{two roots}
0<\lambda^-_q(c)<\lambda_0<\lambda^+_q(c)\ \ \text{if}\ \ c\ge c^*_{NL}(q)>c_0^*.
\end{equation}
\end{itemize}

For the case $q\in (0,q^*]$, the spreading speed is linearly selected $c^*_{NL}(q)=c_0^*$.
By Ikehara's Theorem,  the asymptotic behavior of the pulled front is given by the linear combination of %eigenvectors
$\xi e^{-\lambda_0\xi}$ and  $e^{-\lambda_0\xi}$. %More importantly,
We established results parallel to those of the scalar reaction-diffusion equation. More precisely,
we found that the decay rate of the minimal traveling wave changes to $e^{-\lambda_0\xi}$ if and only if $q=q^*$, which establishes the difference between the pulled front and the pulled-to-pushed front. Furthermore, if $c>c^*_{NL}$, Coville et al. showed in \cite{Coville} that the asymptotic behavior is given by the slow decay $e^{-\lambda_q^-\xi}$, {\it i.e.}, (3) in Theorem \ref{th: classification scalar nonlocal}. However, when the spreading speed is nonlinearly selected $c_{NL}^*(s)>c_0^*$,
the asymptotic behavior of the pushed front %is still
remains an open problem in the literature. We will prove that pushed front always decays with the fast rate $e^{-\lambda_q^+\xi}$ (see (2) in Theorem \ref{th: classification scalar nonlocal}) in \S\ref{sec: pushed front nonlocal}.

\subsection{The asymptotic behavior of the pushed front of the nonlocal diffusion equation}\label{sec: pushed front nonlocal}
This subsection is devoted to completing the proof of Theorem \ref{th: classification scalar nonlocal}. We show that the asymptotic behavior of the pushed front is also given by the fast decay $e^{-\lambda^+\xi}$. As a matter of fact, if the pushed front decays with the slow rate $e^{-\lambda^-\xi}$, then we can always construct a traveling wave solution with speed $c<c^*_{NL}$, which contradicts the definition of the minimal speed $c^*_{NL}$.

Hereafter, we always assume $c^*_{NL}>c^*_0$, and denote the pushed front by $\mathcal{W}_*(\xi)$ and $c^*=c^*_{NL}$ for simplicity.
Then by assuming
\bea\label{assume}
\mathcal{W}_*(\xi)\sim A_0e^{-\lambda^-\xi},
\eea
in which $\lambda^-$ is the smaller root of \eqref{two roots} with $c=c^*$,
we can find a sup-solution $\overline{\mathcal{W}}(\xi)$ of
\begin{equation}\label{N 1}
\mathcal N_1[\mathcal W]:=J\ast \mathcal W+(c^*-\delta_0)\mathcal W'-\mathcal W+f(\mathcal W)=0.
\end{equation}
As a result, we can assert that the propagation speed of the corresponding Cauchy problem is at most $c^*-\delta_0$, and get the contradiction.
\begin{proposition}\label{prop: super sol nl}
Let $\mathcal W_*$ be the minimal traveling wave solution satisfying \eqref{scalar nonlocal tw-parameter s} with $c^*_{NL}>c^*_0$.
Assume that $\mathcal W_*(\xi)\sim A_0e^{-\lambda^-(c)(\xi)}$ as $\xi\to+\infty$. Then, there exists a small $\delta_0>0$, such that the propagation speed of 
$$w_t=J\ast w-w+f(w), \ \ t>0,\ x\in\mathbb{R},$$
starting from a compactly supported initial datum, is at most $c^*_{NL}-\delta_0$. This contradicts  the well-known result that the propagation speed is equal to the minimal traveling wave $c^*_{NL}$. As a result, $\mathcal W_*(\xi)\sim e^{-\lambda^+(c)(\xi)}$ as $\xi\to+\infty$.
\end{proposition}

\subsubsection{Construction of the super-solution}
We first construct the super-solution of \eqref{N 1} which satisfies $\mathcal N_1[\overline{\mathcal W}]\le 0$. The construction is similar to the super-solution \eqref{definition of Rw nonlocal} provided in \S \ref{subsec-3-1}.

Let $\xi_1,\xi_2$ be chosen like that in Lemma \ref{lm: divide R to 3 parts}.
We consider a super-solution in the form of (see Figure \ref{Figure sup solution})
\begin{equation*}
\overline{\mathcal W}(\xi)=\left\{
\begin{aligned}
&\overline{\mathcal W}_1(\xi):=\varepsilon_1e^{-\lambda_0\xi},&\ \ \text{for}\ \ \xi\ge \xi_1,\\
&\overline{\mathcal W}_2(\xi):=\mathcal{W}_*(\xi)-\mathcal{R}_w(\xi),&\ \ \text{for}\ \ \xi< \xi_1,
\end{aligned}
\right.
\end{equation*}
in which $\lambda_0\in(\lambda^-(c^*-\delta_0),\lambda^+(c^*-\delta_0))$ is the double root obtained in Remark \ref{rm:lambda_0}. $\mathcal{W}_*$ is the pushed front satisfying \eqref{assume},  and $\mathcal R_w(\xi)$ defined as

\begin{equation*}%\label{definition of Rw pushed}
\mathcal{R}_w(\xi)=\begin{cases}
\mathcal{R}_1(\xi):=-\varepsilon_2 \Psi(\xi-\xi_1+\frac{L^*}{2}),&\ \ \mbox{for}\ \ \xi_{2}\le\xi\le\xi_{1},\\
\mathcal{R}_2(\xi):=-\varepsilon_3e^{\lambda_1\xi},&\ \ \mbox{for}\ \ \xi\le\xi_2.
\end{cases}
\end{equation*}
Here $\Psi>0$ is the eigenfunction defined on $[-L^*,L^*]$ as \eqref{eigenvalue} with $c^*_0$ replaced by $c^*$. Since $\nu_0\to 0$ and $\Psi(\xi)\to -\mathcal W'_*(\xi)$ uniformly as $L^*\to\infty$, we choose sufficiently large $L^*$ such that 
\begin{equation}\label{est Psi' 1}
\Psi(\xi)\sim K_0 e^{-\lambda^-\xi}\ \ \text{and}\ \ \Psi'(\xi)\sim -\lambda^-K_0 e^{-\lambda^-\xi}\ \ \text{for}\ \ \xi\in[\frac{L^*}{4}-L,\frac{L^*}{2}+L],
\end{equation}
where $[-L,L]$ is the support of $J$.
Hereafter, we always fix $\xi_1-\xi_2=L^*/4$. Moreover, we should choose very small
$\varepsilon_{2,3}>0$ such that $\overline{\mathcal{W}}(\xi)$ is continuous for all $\xi\in\mathbb{R}$.

\begin{figure}
\begin{center}
\begin{tikzpicture}[scale = 1.25]
\draw[thick](-6,0) -- (4,0) node[right] {$\xi$};
\draw [thick] (-6, 2.5) to(-4,2.5) to [out=-30, in=170] (1.5,1)  to [out=-40,in=170] (4,0.2);
\node[below] at (1.5,0) {$\xi_{1}$};
\node[left] at (-6,2.5) {$1$};
\draw[dashed]  (-6,2.5) -- (4,2.5);
%\node[above] at (-3.5,0.3) {$\xi_2$};
%\draw[dashed] [thick] (-3,0.2)-- (-3,-0.5);
%\draw[dashed] [thick] (-4,0)-- (-4,1);
%\node[above] at (-4,0.8) {$\xi_2-\delta_3$};
%\draw[dashed] [thick] (1,0)-- (1,0.65);
\draw[dashed] [thick] (1.5,0)-- (1.5,0.5);
%\node[below] at (-3,-0.4) {$\xi_2+\delta_2$};
%\draw [thin] (-3.85,-0.31) arc [radius=0.2, start angle=40, end angle= 140];
%\node[above] at (-4,-0.36) {$\alpha_3$};
%\draw [thin] (-2.3,0.355) arc [radius=0.2, start angle=30, end angle= 175];
%\node[above] at (-2.5,0.35) {$\alpha_4$};
%\draw [thin] (-2.8,0.255) arc [radius=0.2, start angle=30, end angle= 175];
%\node[above] at (-3,0.3) {$\alpha_2$};
%\draw [thin] (1.35,1.03) arc [radius=0.2, start angle=175, end angle= 310];
%\node[below] at (1.5,0.8) {$\alpha_1$};
%\node[above] at (2.5,0.5) {\Large{$R_w$}};
%\node[below] at (1,-1.3) {Figure \ref{Figure scalar}: the construction of $R_w(\xi)$.};
\end{tikzpicture}
\caption{the super-solution $\overline{\mathcal{W}}(\xi)$.}\label{Figure sup solution}
\end{center}
\end{figure}

Since $f(\cdot)\in C^2$, there exists $K_1>0$  such that
\bea\label{K1K2 nonlocal pushed}
|f'({\mathcal{W}}_*(\xi))|< K_1\quad \mbox{for all}\quad\xi\in\mathbb{R}.
\eea
We set $\lambda_1>0$
large enough such that
\begin{equation}\label{condition on lambda 1 nonlocal pushed}
\lambda_1>\max\{\frac{4K_1}{c^*},\frac{K_1+1}{c^*}\}.
\end{equation}
Furthermore, there exists $K_2>0$ such that
\begin{equation*}%\label{condition on xi_2 scalar nonlocal pushed}
f'({\mathcal{W}}_*(\xi))\le -K_2<0\quad\text{for all}\quad\xi\le \xi_2.
\end{equation*}
Without loss of generality, we assume $J\ge0$ on $[-L,L]$, and $J=0$ for $x\in(-\infty,-L)]\cup[L,\infty)$.
Let $\mu_0$ be the unique positive root obtained from Proposition \ref{prop: asy tw - infty nonlocal} with $c=c^*$.
Additionally, we set
\beaa\label{lambda 2 nonlocal pushed}
0<\lambda_1<\mu_0\ \ \text{and}\ \ 1+K_2-e^{\lambda_1L}-c^*\lambda_1>0.
\eeaa

We now divide the proof into 3 steps as \S\ref{subsec-3-1}. For $\xi\in (-\infty, \xi_1]$, the construction of $\overline{\mathcal{W}}$ is absorutly same as that in \S\ref{subsec-3-1}. Therefore, in the rest, we only need to verify the super-solution for $\xi\in[\xi_1,\infty)$.
\medskip

We consider $\xi\in[\xi_1,\infty)$
In this case, we have
$\overline{\mathcal{W}}(\xi)=\varepsilon_1e^{-\lambda_0\xi}$
for some large $\varepsilon_1$ satisfying
\begin{equation}\label{varepsilon 1 pushed}
\varepsilon_1e^{-\lambda_0\xi_1}>\mathcal{W}_*(\xi_1)
\end{equation}
and $\lambda_0\in(\lambda^-(c^*-\delta_0),\lambda^+(c^*-\delta_0))$, where $\lambda^{\pm}(c^*-\delta_0)$ is defined in \eqref{two roots}.

By Lemma \ref{lm: divide R to 3 parts} and some  straightforward computations,
we have
\begin{equation*}%\label{N1 inequality step 1 nonlocal}
\begin{aligned}
\mathcal{N}_1[\overline{\mathcal{W}}_1]=&\int_{\mathbb{R}}J(y)\varepsilon_1e^{-\lambda_0(\xi-y)}dy-\varepsilon_1e^{-\lambda_0\xi}-\lambda_0(c^*-\delta_0)\varepsilon_1e^{-\lambda_0\xi}+f(\varepsilon_1e^{-\lambda_0\xi})\\
=&\varepsilon_1e^{-\lambda_0\xi}\Big(\int_{\mathbb{R}}J(y)e^{\lambda_0y}dy-1-\lambda_0(c^*-\delta_0)+f'(0)+o(1)\Big).
\end{aligned}
\end{equation*}
Since $c^*>c^*_0$, by setting $\delta_0<c^*-c^*_0$, we have
$$\lambda_0\in(\lambda^-(c^*-\delta_0),\lambda^+(c^*-\delta_0)),$$
which implies
$$\int_{\mathbb{R}}J(y)e^{\lambda_0y}dy-1-\lambda_0(c^*-\delta_0)+f'(0)+o(1)<0.$$
Therefore, $\mathcal{N}_1[\overline{\mathcal{W}}_1]\leq 0$ for $\xi\ge \xi_1$.

The rest of Step 1 devotes to the verification $\mathcal{N}_1[\overline{\mathcal{W}}]\le 0$ for $\xi\in [\xi_1,\xi_1+L]$, where $\overline{\mathcal{W}}_2$ defined on $(-\infty,\xi_1]$ is also involved in the computation. From Remark \ref{rm: glue}, it suffices to show that
$\overline{\mathcal{W}}_1\ge \overline{\mathcal{W}}_2$ for $\xi\in[\xi_1-L,\xi_1]$ and $\overline{\mathcal{W}}_1\le \overline{\mathcal{W}}_2$ for $\xi\in[\xi_1,\xi_1+L]$. 

From now on, we fix $\xi_1$, and choose a very large $\varepsilon_1$ such that \eqref{varepsilon 1 pushed} holds. To make sure that $\overline{\mathcal{W}}$ is continuous at $\xi_1$,  we set
\begin{equation}\label{epsilon 2 nl 1}
\varepsilon_2=\varepsilon_2(\varepsilon_1)=\frac{\varepsilon_1 e^{-\lambda_0\xi_1}-\mathcal W_*(\xi_1)}{\Psi(L^*/2)}
\end{equation}
where $\Psi(L^*/2)=K_0e^{-\frac{\lambda^-L^*}{2}}$.  Recall that
$\overline{\mathcal{W}}'_1=-\lambda_0 \overline{\mathcal{W}}_1$ and $\overline{\mathcal{W}}'_2=-\lambda^-\overline{\mathcal{W}}_2$ from \eqref{est Psi' 1} and \eqref{assume}. By $\overline{\mathcal{W}}_1=\overline{\mathcal{W}}_2$ at $\xi_1$ and $\lambda_0>\lambda^-$, we assert that $\overline{\mathcal{W}}_1\ge \overline{\mathcal{W}}_2$ for $\xi\in[\xi_1-L,\xi_1]$ and $\overline{\mathcal{W}}_1\le \overline{\mathcal{W}}_2$ for $\xi\in[\xi_1,\xi_1+L]$.

Follow the discussion in Remark \ref{rm: glue}, $\mathcal{N}_1[\overline{\mathcal{W}}]\le 0$ for $\xi\in [\xi_1,\xi_1+L]$. Consequently, we find some $\delta_0<c^*-c^*_0$, not depending on $\xi_1$, such that $\mathcal N_1[\overline{\mathcal{W}}]\leq 0$ for $\xi\ge \xi_1$.

%Moreover, by \eqref{lambda 2 nonlocal pushed}, we obtain a super-solution  %$\overline{\mathcal{W}}$ satisfying the boundary condition
%$$\overline{\mathcal{W}}(-\infty)=1\ \ \text{and}\ \ \overline{\mathcal{W}}(+\infty)=0.$$
In the end of this subsection, we complete the proof of Proposition \ref{prop: super sol nl}.
\begin{proof}[Proof of Proposition \ref{prop: super sol nl}]
From the discussion above, we have constructed a super-solution $\overline w(t,x)=\overline{\mathcal{W}}(x-(c^*-\delta_0)-x_0)$. Consider the Cauchy problem of
$$w_t=J\ast w-w+f(w), \ \ t>0,\ x\in\mathbb{R},$$
with a compactly supported initial datum. It is well-known that, the propagation speed is equal to the minimal traveling wave speed $c^*$. However, by setting $x_0$ sufficiently large, the comparison principle implies that $w(t,x)\le \overline w(t,x)$, which means the propagation speed of $w(t,x)$ is at most $c^*-\delta_0$. This contradiction completes the proof.
\end{proof}

\subsection{Proof of Theorem \ref{th: classification scalar nonlocal}}
In this subsection, we complete the proof of Theorem \ref{th: classification scalar nonlocal}, {\it i.e.}, the statement (3). The statements (1) and (2) follow from Proposition \ref{prop:correction-U-linear-decay} and Proposition \ref{prop: super sol nl}, respectively. Let $\hat{\mathcal{W}}$ be the traveling wave with speed $c>c^*_{NL}\ge c^*_0$. We will prove that the asymptotic behavior of $\hat{\mathcal{W}}$ is given by the slow decay, {\it i.e.}, $\hat{\mathcal{W}}(\xi)\sim e^{-\lambda_q^-\xi}$ as $\xi\to+\infty$. We assume by contradiction that 
\bea\label{assume hat w}
\hat{\mathcal{W}}(\xi)\sim e^{-\lambda_q^+\xi}\quad\text{as}\quad \xi\to+\infty.
\eea
With the assumption \eqref{assume hat w}, we claim that there exists a finite $h$ such that 
\begin{equation}\label{W*>hat W}
\mathcal W_*(\xi-h)\ge \hat{\mathcal{W}}(\xi)\quad\text{for all}\quad\xi\in\mathbb{R},
\end{equation}
where $\mathcal W_*(\xi)$ is the minimal traveling wave with $c=c^*_{NL}$.

With (1) and (2) in Theorem \ref{th: classification scalar nonlocal}, 
as $\xi\to+\infty$ we have
$$\mathcal W_*(\xi)\sim e^{-\lambda_q^+(c^*_{NL})\xi}\ \ \text{if}\ \ c^*_{NL}>c^*_0\quad\text{or}\quad \mathcal W_*(\xi)\sim A\xi e^{-\lambda_0\xi}+Be^{-\lambda_0\xi}\ \ \text{if}\ \ c^*_{NL}=c^*_0.$$
On the other hand, with the assumption \eqref{assume hat w},
we have
$$\hat{\mathcal{W}}(\xi)\sim e^{-\lambda_q^+(c)\xi}.$$
Since $\lambda_q^+(c)$ is strictly increasing on $c>0$, we can assert that 
\bea\label{ff5}
\hat{\mathcal{W}}(\xi)=o(\mathcal W_*(\xi))\quad\text{as}\quad\xi\to+\infty.
\eea

Define $\mu_q^+(c)$ as the positive root of
$$\int_{\mathbb{R}}J(x)e^{-\mu x}dx-1+f'(1)+c\mu=0,$$
which is decreasing on $c>0$. Then it holds 
$$1-\mathcal W_*(\xi)\sim e^{\mu_q^+(c^*_{NL})\xi}\ \ \text{and}\ \ 1-\hat{\mathcal{W}}(\xi)\sim e^{\mu_q^+(c)\xi}\ \ \text{as} \ \ \xi\to-\infty.$$
Thus, with \eqref{ff5}, there exists a finite $h$ such that \eqref{W*>hat W} holds.

However, this is impossible. To see this, we may consider the initial value problem to 
$$w_t=J\ast w-w+f(w), \ \ t>0,\ x\in\mathbb{R},$$
with initial datum 
$$w_1(0,x)=\mathcal W_{*}(x-h)\ \ \text{and}\ \ w_2(0,x)=\hat{\mathcal{W}}(x),$$
respectively. By \eqref{W*>hat W},
we have $w_1(t,x)>w_2(t,x)$ for all $t\geq0$ and $x\in\mathbb{R}$. However, $w_2(t,x)$ propagates to the right with speed $c$, which is strictly greater than
the speed $c_{NL}^*$ of $w_1(t,x)$. Consequently, it is impossible to have $w_1(t,x)>w_2(t,x)$ for all $t \geq 0$ and $x\in\mathbb{R}$.
Thus, we reach a contradiction, and hence $\hat{\mathcal{W}}(\xi)\sim e^{-\lambda_q^-\xi}$ as $\xi\to+\infty$.
This completes the proof of (3) in Theorem \ref{th: classification scalar nonlocal}.

\subsection{The asymptotic behavior of the pushed front of the Lotka-Volterra competition system}\label{sec:pushed front LV}
This subsection is devoted to completing the proof of Theorem \ref{th: classification}. We show that the asymptotic behavior of the pushed front is given by the fast decay $e^{-\lambda_u^+(c^*_{LV})\xi}$. As a matter of fact, if the pushed front decays with the slow rate $e^{-\lambda_u^-(c^*_{LV})\xi}$, then we can always construct a traveling wave solution with speed $c<c^*_{LV}$, which contradicts the definition of the minimal speed $c^*_{LV}$. 

Hereafter, we always assume $c^*_{LV}>2\sqrt{1-a}$, and denote the pushed front as $(U_*,V_*)(\xi)$ and $c^*=c^*_{LV}$, $\lambda_u^{\pm}=\lambda_u^{\pm}(c^*)$ for simplicity.
Let us assume that
\bea\label{assume u}
U_*(\xi)\sim A_0e^{-\lambda_u^-\xi},
\eea
in which $\lambda_u^-$ is the smaller root of \eqref{two roots LV} with $c=c^*$. Consequently, from Lemma \ref{lm: behavior around + infty},
\bea\label{assume v}
1-V_*(\xi)\sim A_1\xi^{p}e^{-\Lambda_v\xi},
\eea
where $\Lambda_v=\min\{\lambda_u^-,\lambda_v^+\}$,  $p=0$ if $\lambda_u^-\neq\lambda_v^+$, and $p=1$ if $\lambda_u^-=\lambda_v^+$.

With conditions \eqref{assume u} and \eqref{assume v}, we can construct a super-solution $(\overline U, \underline V)(\xi)$ of
\begin{equation*}%\label{N 78}
\left\{
\begin{aligned}
&N_5[U,V]:=U''+(c^*-\delta_0)U'+U(1-U-aV)=0,\\
&N_6[U,V]:=dV''+(c^*-\delta_0)V'+rV(1-V-bU)=0.
\end{aligned}
\right.
\end{equation*}
As a result, the spreading speed of the solution to \eqref{system} with initial datum \eqref{initial datum}
is at most $c^*-\delta_0$, which yields the contradiction. 
\begin{proposition}\label{lm 1}
Let $(c^*,U_*,V_*)$ be the traveling wave solution defined as \eqref{tw solution weak} with $c^*>2\sqrt{1-a}$.
Assume that $U_*(\xi)\sim e^{-\lambda_u^-\xi}$ as $\xi\to+\infty$. Let $(u,v)(t,x)$ be the solution to the Cauchy problem of \eqref{system} with initial datum \eqref{initial datum}.
Then, there exists a $\delta_0>0$ such that 
\begin{equation}\label{b4}
\lim_{t\to\infty}u(t,(c^*-\frac{\delta_0}{2})t)=0.
\end{equation}
This contradicts the fact that the propagation speed is equal to the minimal traveling wave speed $c^*$. As a result, $U_*(\xi)\sim e^{-\lambda_u^+\xi}$ as $\xi\to+\infty$. 
\end{proposition}

\subsubsection{Construction of the super-solution for $b>1$}\label{sec:classification super b>1}
Assume $b>1$. We look for continuous function $(R_u(\xi),R_v(\xi))$
defined in $\mathbb{R}$,
such that
\beaa
(\overline U,\underline V)(\xi):=\Big(\min\{(U_*-R_u)(\xi),1\},\max\{(V_*+R_v)(\xi),0\}\Big)
\eeaa
forms a super-solution satisfying $N_5[\overline U,\underline V]\le 0$ and $N_6[\overline U,\underline V]\ge 0$
for some sufficiently small $\delta_0>0$. 
By some straightforward computations, we have
\bea\label{N7}
N_5[\overline U,\underline V]=-\delta_0U_*'-R_u''-(c^*-\delta_0)R_u'-R_u(1-2U_*+R_u-a(V_*+R_v))-aU_*R_v,
\eea
\begin{equation}\label{N8}
\begin{aligned}
N_6[\overline U,\underline V]=&-\delta_0V_*'+dR_v''+(c^*-\delta_0)R_v'+rR_v(1-2V_*-R_v-b(U_*-R_u))\\
&+rbV_*R_u.
\end{aligned}
\end{equation}

\begin{figure}
\begin{center}
\begin{tikzpicture}[scale = 1.1]
\draw[thick](-6,0) -- (6,0) node[right] {$\xi$};
\draw [semithick] (-6,-0.5)--  (4,-0.5) to [out=10,in=180] (6,-0.1);
\draw [ultra thick] (-6, -1) -- (-4,-1) to [out=30, in=260] (-3.5,0)  to [out= 70, in=180] (-3,0.6) to [out=20, in=220] (1,2.5) to [out=70,in=180] (2,3)to [out=0,in=120] (4,1) to [out=-30, in=160] (6,0.2);
\node[below] at (1,1) {$\xi_{1}+\delta_1$};
%\node[below] at (0,-0.1) {$\xi_{\sigma}$};
\draw[dashed] [thick] (4,0)-- (4,0.25);
\node[below] at (4,0.7) {$\xi_*$};
\draw[dashed] [thick] (4,1)-- (4,0.65);
\draw[dashed] [thick] (-3.5,0)-- (-3.5,0.4);
\node[below] at (-3.5,0.9) {$\xi_2$};
\draw[dashed] [thick] (-3,0)-- (-3,0.6);
\node[above] at (-2.8,-0.5) {$\xi_2+\delta_2$};
\draw[dashed] [thick] (-4,-0.5)-- (-4,1);
\node[above] at (-4,0.8) {$\xi_2-\delta_4$};
\draw[dashed] [thick] (1,0)-- (1,0.55);
\draw[dashed] [thick] (1,0.9)-- (1,2.5);
\draw [thin] (4.2,-0.46) arc [radius=0.2, start angle=20, end angle= 170];
\node[above] at (4,-0.42) {$\alpha_2$};
\draw [thin] (4.2,0.9) arc [radius=0.2, start angle=-20, end angle= 120];
\node[above] at (4.2,1.2) {$\alpha_1$};
\draw [thin] (-3.82,-0.83) arc [radius=0.2, start angle=50, end angle= 185];
\node[above] at (-4,-0.87) {$\alpha_5$};
\draw [thin] (-2.8,0.655) arc [radius=0.2, start angle=30, end angle= 175];
\node[above] at (-3,0.7) {$\alpha_4$};
\draw [thin] (1.1,2.7) arc [radius=0.2, start angle=70, end angle= 220];
\node[above] at (0.6,2.5) {$\alpha_3$};
%\draw [thin] (1.1,3.13) arc [radius=0.2, start angle=50, end angle= 180];
%\node[above] at (1,3.13) {$\alpha_2$};
%\node[above] at (2,1.5) {\Large{$v$}};
\node[above] at (2,1.8) {\Large{$R_u$}};
\node[above] at (4,-1.2) {\Large{$R_v$}};
%\node[below] at (1,-1.7) {Figure \ref{Figure prop}: $(R_u,R_v)$ to prove Proposition \ref{prop:0<b*<infty}.};
\end{tikzpicture}
\caption{$(R_u,R_v)$ for $b>1$.}\label{Figure pushed front LV}
\end{center}
\end{figure}

We consider $(R_u,R_v)(\xi)$ defined  as (see Figure \ref{Figure pushed front LV})
\begin{equation*}
(R_u,R_v)(\xi):=\begin{cases}
(U_*-\varepsilon_1e^{-\lambda_1\xi},-\eta_1 e^{-\lambda_2\xi}),&\ \ \mbox{for}\ \ \xi>\xi_*,\\
(\varepsilon_2(\xi-\xi_1)e^{-\lambda_3\xi},-\delta_v),&\ \ \mbox{for}\ \ \xi_1+\delta_1<\xi\le\xi_*,\\
(\varepsilon_3e^{\lambda_4\xi},-\delta_v),&\ \ \mbox{for}\ \ \xi_2+\delta_2\le\xi\le\xi_1+\delta_1,\\
(\varepsilon_4\sin(\delta_3(\xi-\xi_2)),-\delta_v),&\ \ \mbox{for}\ \ \xi_2-\delta_4\le\xi\le\xi_{2}+\delta_2,\\
(-\delta_u,-\delta_v),&\ \ \mbox{for}\ \ \xi\le\xi_2-\delta_4,
\end{cases}
\end{equation*}
where $\xi_*>\xi_1>M_0$ and $\xi_2<-M_0$ are fixed points.
Since $a<1$ and $b>1$, up to enlarging $M_0$ if necessary, we can find $\rho>0$ such that
\bea\label{M0 aa}
1-2U_*-aV_*<-1+2\rho<0\ \text{and}\ 1-2V_*-bU_*<-(1-b)+b\rho<0\ \text{for all}\ \xi<-M_0.
\eea
We also set
$\lambda_1=\sqrt{1-a}$, $\lambda_2\in(0,\Lambda_v)$ with $\Lambda_v=\min\{\lambda_u^-,\lambda_v^+\}$, $\lambda_3$ to satisfy
\bea\label{condition on lambda 3 pf}
0<\lambda_3<\min\{\lambda_u^-,\frac{c^*-\delta_0}{2}\},
\eea
and $\lambda_{4}$ to satisfy
\begin{equation}\label{condition on lambda 4 pf}
\lambda_4^2+2\sqrt{1-a}\,\lambda_4-3=C_1>0.
\end{equation}
Here,
$\varepsilon_{i=1,\cdots,4}>0$, $\eta_1>0$, and 
\bea\label{delta u v pf}
\delta_u=\varepsilon_4\sin(\delta_3\delta_4)\quad\text{and}\quad \delta_v=\eta_1 e^{-\lambda_2\xi_*}
\eea
make $(R_u,R_v)$ continuous on $\mathbb{R}$, while $\delta_{j=1,\cdots,4}>0$ will be determined later.

Next, we will divide the construction into several steps.

\noindent{\bf{Step 1}:} We consider $\xi\in(\xi_*,+\infty)$ with $\xi_*>\xi_1+\delta_1>M_0$.
In this case, we have 
\beaa
(R_u,R_v)(\xi)=(U_*-\varepsilon_1e^{-\lambda_1\xi},-\eta_1 e^{-\lambda_2\xi}),
\eeaa
with $\lambda_1\in(\lambda_u^-,\lambda_u^+)$ and $\lambda_2\in(0,\Lambda_v)$.

By some straightforward computations, we have
\beaa
N_5[\overline U,\underline V]\le \Big((\lambda_1^2-\lambda_1(c^*-\delta_0)+1-a)+a(1-V_*-R_v)\Big)\varepsilon_1e^{-\lambda_1\xi}.
\eeaa
Since  $\lambda_1=\sqrt{1-a}\in(\lambda_u^-,\lambda_u^+)$, by setting $\delta_0<c^*-2\sqrt{1-a}$ very small, there exists $C_2>0$ such that 
$$\lambda_1^2-\lambda_1(c^*-\delta_0)+1-a<-C_2.$$ 
Then, from $1-V_*(\xi)=o(1)$ and $R_v(\xi)=o(1)$ as $\xi\to+\infty$ , we obtain $N_5[\overline U,\underline V]\le 0$ for all $\xi\in[\xi_*,+\infty)$ up to enlarging $\xi_*$ if necessary.

Next, we deal with the inequality of $N_6[\overline U, \underline V]$.
From \eqref{N8}, we have
\begin{equation*}
N_6[\overline U,\underline V]\ge-\delta_0V_*'-\eta_1e^{-\lambda_2\xi}\Big(d\lambda_2^2-\lambda_2(c^*-\delta_0)-r+r(2-2V_*-R_v) \Big).   
\end{equation*}
Since $0<\lambda_2<\Lambda_v$, by setting $\delta_0<c^*-2\sqrt{1-a}$, there exists $C_3>0$ such that 
$$d\lambda_2^2-\lambda_2(c^*-\delta_0)-r\le -C_3.$$
Note that $2-2V_*(\xi)-R_v(\xi)=o(1)$ as $\xi\to+\infty$.
Therefore, from \eqref{assume v},  we obtain $N_6[\overline U,\underline V]\ge 0$ for all $\xi\in[\xi_*,+\infty)$ up to enlarging $\xi_*$ if necessary. Note that reducing $\delta_0$ does not affect the choice of $\xi_*$. Hence, we fix $\xi_*$.

\medskip

\noindent{\bf{Step 2}:} We consider $\xi\in[\xi_1+\delta_1,\xi_*)$ with $\xi_*$ fixed by Step 1.
In this case, we have 
\beaa
(R_u,R_v)(\xi)=(\varepsilon_2(\xi-\xi_1)e^{-\lambda_3\xi},-\delta_v),
\eeaa
with  $\lambda_3$ satisfying \eqref{condition on lambda 3 pf} and $\delta_v$ defined as \eqref{delta u v pf}.

We first set 
\bea\label{condition aa}
\varepsilon_2=\varepsilon_2(\varepsilon_1,\xi_1)=\frac{U_*(\xi_*)-\varepsilon_1e^{-\lambda_1\xi_*}}{(\xi_*-\xi_1)e^{-\lambda_3\xi_*}}
\eea
which implies $R_u(\xi)$ is continuous at $\xi=\xi_*$. By some straightforward computations, 
$$R_u'(\xi_*^+)=U_*'(\xi_*)+\lambda_1\varepsilon_1e^{-\lambda_1\xi_*},$$
$$R_u'(\xi_*^-)=\varepsilon_2(1-\lambda_3(\xi_*-\xi_1))e^{-\lambda_3\xi_*}.$$
With \eqref{assume u} and the condition \eqref{condition aa}, $R_u'(\xi_*^+)>R_u'(\xi_*^-)$ is equivalent to 
\beaa
(\lambda_1-\lambda_u^-)U_*(\xi_*)>\varepsilon_2e^{-\lambda_3\xi_*}(1+(\lambda_1-\lambda_3)(\xi_*-\xi_1))e^{-\lambda_3\xi_*}.
\eeaa
Note that, from \eqref{condition aa}, $\varepsilon_2$ can be set enough small by reducing $|U_*(\xi_*)-\varepsilon_1 e^{-\lambda_1\xi_*}|$.
Thus, this condition is admissable since $\lambda_1>\lambda_u^-$.  Consequently, we verified $\angle \alpha_1<180^{\circ}$.
 $\angle \alpha_2<180^{\circ}$ follows immediately from 
$R'_v(\xi_*^+)>0=R'_v(\xi_*^-)$.

\medskip
From \eqref{N7}, we have
\beaa
N_5[\overline U,\underline V]=-\delta_0 U_*'-(\lambda_3^2-\lambda_3(c^*-\delta_0)+1-a)R_u-\varepsilon_2(c^*-\delta_0-2\lambda_3)e^{-\lambda_3\xi}+o(R_u).
\eeaa
By \eqref{condition on lambda 3 pf}, $\lambda_3^2-\lambda_3(c^*-\delta_0)+1-a>0$ and $c^*-\delta_0-2\lambda_3>0$. Therefore, up to reducing $\delta_0(\varepsilon_1,\eta_1,\xi_1+\delta_1)$ if necessary, we have $N_5[\overline U,\underline V]\le 0$ for all $\xi\in[\xi_1+\delta_1,\xi_*)$.   

Next, we deal with the inequality of $N_6[\overline U, \underline V]$.
From \eqref{N8}, we have
\begin{equation*}
N_6[\overline U,\underline V]=-\delta_0V_*'-r\delta_v(1-2V_*-R_v-b(U_*-R_u))+rbV_*R_u.
\end{equation*}
Since $R_u>0$, by setting $\eta_1\ll \varepsilon_2$ such that $\delta_v\ll |R_u|$ for all $\xi\in[\xi_1+\delta_1,\xi_*)$, we have $N_6[\overline U,\underline V]\ge 0$, up to reducing $\delta_0(\varepsilon_1,\eta_1,\xi_1+\delta_1)$ if necessary.

\medskip

\noindent{\bf{Step 3}:} We consider $\xi\in[\xi_{2}+\delta_2,\xi_1+\delta_1)$ with $\xi_1+\delta_1$ fixed by Step 2 and $\delta_1$ satisfying
\bea\label{condition delta 1 pf}
\delta_1<\frac{1}{\lambda_3+\lambda_4}.
\eea
In this case, we have
$(R_u,R_v)(\xi)=(\varepsilon_3e^{\lambda_4\xi},-\delta_v)$
with $\lambda_4$ satisfying \eqref{condition on lambda 4 pf}. 

We first set
$$\varepsilon_3=\varepsilon_3(\varepsilon_1,\xi_1)=\frac{U_*(\xi_*)-\varepsilon_1e^{-\lambda_1\xi_*}}{(\xi_*-\xi_1)e^{-\lambda_3\xi_*}}\frac{\delta_1e^{-\lambda_3(\xi_1+\delta_1)}}{e^{\lambda_4(\xi_1+\delta_1)}}$$
such that $R_u(\xi)$ is continuous at $\xi=\xi_1+\delta_1$. Then, by some straightforward computations, we have
\beaa
&&R_u'((\xi_1+\delta_1)^+)=\varepsilon_2e^{-\lambda_3(\xi_1+\delta_1)}-\varepsilon_2\lambda_3\delta_1e^{-\lambda_3(\xi_1+\delta_1)},\\
&&R_u'((\xi_1+\delta_1)^-)=\lambda_4R_u(\xi_1+\delta_1).
\eeaa
Thus, $R_u'((\xi_1+\delta_1)^+)>R_u'((\xi_1+\delta_1)^-)$ 
is
equivalent to \eqref{condition delta 1 pf}. %Hereafter, we fix $\delta_1$, and thus $\xi_1$.

From \eqref{N7} and \eqref{condition on lambda 4 pf}, we have
\beaa
N_5[\overline U,\underline V]\le-\delta_0U_*'-C_1R_u+aU_*\delta_v.
\eeaa
Notice that, we can set $\eta_1\ll \varepsilon_2$ such that $\delta_v\ll|R_u|$ for all $\xi\in[\xi_2+\delta_2,\xi_1+\delta_1]$. Therefore, we have $N_5[\overline U,\underline V]\le 0$ for $\xi\in[\xi_2+\delta_2,\xi_1+\delta_1]$ up to decreasing $\delta_0(\varepsilon_1,\eta_1,\xi_2+\delta_2)$ if necessary.  $N_6[\overline U,\underline V]\ge 0$ is easy to verify using the same argument as in Step 2.

\medskip

\noindent{\bf{Step 4}:} We consider $\xi\in[\xi_{2}-\delta_4,\xi_2+\delta_2)$ with $\xi_2+\delta_2$ fixed by Step 4, and $\delta_2$ satisfying
\bea\label{cond delta 2 pf}
\frac{1}{\lambda_4}<\delta_2<  \frac{c^*-\delta_0}{\delta_3^2+1+2a}.
\eea
This condition is admissible since we can reduce $\delta_1$ in \eqref{condition delta 1 pf}.
In this case, we have $(R_u,R_v)=(\varepsilon_4\sin(\delta_3(\xi-\xi_2)),-\delta_v)$. 

To make $R_u(\xi)$ be continuous at $\xi=\xi_2+\delta_2$, we set
$$\varepsilon_4=\varepsilon_4(\varepsilon_1,\xi_1,\delta_1,\delta_2,\lambda_3,\lambda_4)=\frac{U_*(\xi_*)-\varepsilon_1e^{-\lambda_1\xi_*}}{(\xi_*-\xi_1)e^{-\lambda_3\xi_*}}\frac{\delta_1e^{-\lambda_3(\xi_1+\delta_1)}}{e^{\lambda_4(\xi_1+\delta_1)}}\frac{e^{\lambda_4(\xi_2+\delta_2)}}{\sin(\delta_2\delta_3)}.$$
Then, by some straightforward computations, we have
$$R_u'((\xi_2+\delta_2)^+)=\lambda_4R_u(\xi_2+\delta_2)\quad\text{and}\quad R_u'((\xi_2+\delta_2)^-)=\varepsilon_4\delta_3\cos(\delta_2\delta_3).$$
Thus, from $\frac{x\cos x}{\sin x}\to 1$ as $x\to 0$, 
$$R_u'((\xi_2+\delta_2)^+)>R_u'((\xi_2+\delta_2)^-)\ \ \text{and}\ \ \angle \alpha_4<180^{\circ}$$ 
follow by taking
%are equivalent to
$\delta_3$ sufficiently small and $\delta_2>1/\lambda_4$.

From now on, we fix $\delta_1$, $\delta_2$, $\delta_3$, $\lambda_3$, and $\lambda_4$ satisfying both \eqref{condition delta 1 pf} and  \eqref{cond delta 2 pf}.
We first verify the inequality of $N_5[\overline U,\underline V]$. From \eqref{N7}, we have
\beaa
N_5[\overline U,\underline V]\le\delta_3^2R_u-(c^*-\delta_0)\delta_3\varepsilon_4\cos(\delta_3(\xi-\xi_2))-R_u(1-aV_*-2U_*)+aU_*\delta_v.
\eeaa
For $\xi\in[\xi_2,\xi_2+\delta_2]$, we have
$$N_5[\overline U,\underline V]\le (\delta_3^2+1+2a)\varepsilon_4\sin(\delta_2\delta_3)-(c^*-\delta_0)\delta_3\varepsilon_4\cos(\delta_2\delta_3)+a\delta_v.$$
Note that, from $\frac{x\cos x}{\sin x}\to 1$ as $x\to 0$,
$$(\delta_3^2+1+2a)\sin(\delta_2\delta_3)-(c^*-\delta_0)\delta_3\cos(\delta_2\delta_3)< 0$$
is equivalent to \eqref{cond delta 2 pf}. $N_5[\overline U,\underline V]\le 0$ follows by setting $\delta_v\ll R_u(\xi_2+\delta_2)$.

For $\xi\in[\xi_2-\delta_4,\xi_2]$, from $R_u\le 0$ and \eqref{M0 aa}, up to reducing $\xi_2$, we have
$$N_5[\overline U,\underline V]\le -(c^*-\delta_0)\delta_3\varepsilon_4\cos(\delta_2\delta_3)+aU_*\delta_v.$$
Then, by setting
\bea\label{delta 4 pf}
0<\delta_4<\delta_2<\frac{c^*-\delta_0}{\delta_3^2+1+a},
\eea
we have $N_5[\overline U,\underline V]\le 0$ for all $\xi\in[\xi_2-\delta_4,\xi_2+\delta_2]$ up to decreasing $\delta_v(\eta_1)$ if necessary.

Next, we verify the inequality of $N_6[\overline U,\underline V]$. Since $R_u\ge 0$ for $\xi\in[\xi_2,\xi_2+\delta_2]$,   we have
\begin{equation*}
N_6[\overline U,\underline V]\ge-\delta_0V_*'-r\delta_v(1-2V_*-R_v-b(U_*-R_u)).
\end{equation*}
By \eqref{M0 aa}, we obtain $N_6[\overline U,\underline V]\ge 0$ for $\xi\in[\xi_2,\xi_2+\delta_2]$ up to reducing $\delta_0(\varepsilon_1,\eta_1)$ if necessary.

On the other hand, for $\xi\in[\xi_2-\delta_4,\xi_2]$, we have
\begin{equation}\label{ssss}
N_6[\overline U,\underline V]=-\delta_0V_*'-r\delta_v(1-2V_*-R_v-b(U_*-R_u))+rbV_*R_u.
\end{equation}
From \eqref{delta 4 pf}, by adjusting $\delta_4$, we can set 
\bea\label{condition delta u delta v}
a\delta_v<(1-2\rho-a\delta_v)\delta_u\quad\text{and}\quad b\rho\delta_u<(b-1-b\rho)\delta_v,
\eea
where $\rho$ is determined by $\xi_2$ as in \eqref{M0 aa}. From now on, we fix $\delta_4$.
Then, up to reducing $\delta_0(\varepsilon_1,\eta_1)$ if necessary,  $N_6[\overline U,\underline V]\ge 0$ follows from \eqref{ssss} and the second condition in \eqref{condition delta u delta v}.

\medskip

\noindent{\bf{Step 5}:} We consider $\xi\in(-\infty,\xi_{2}-\delta_4)$ with $\xi_{2}-\delta_4$ fixed by Step 4. In this case, we have $(R_u,R_v)=(-\delta_u,-\delta_v)$.
From \eqref{delta u v pf}, $R_u(\xi)$ is continuous at $\xi=\xi_2-\delta_4$. It is easy to see that 
$$R_u'((\xi_2-\delta_4)^+)>0=R_u'((\xi_2-\delta_4)^-)\ \ \text{and}\ \ \angle \alpha_5<180^{\circ}.$$
 
From \eqref{N7}, \eqref{M0 aa}, and the first condition in \eqref{condition delta u delta v}, we have
\beaa%\label{sssss}
N_5[\overline U,\underline V]\le -\delta_0U'_*+\delta_u(-1+2\rho+a\delta_v)+a\delta_v\le 0
\eeaa
provided $\delta_0(\varepsilon_1,\eta_1)$ is very small. $N_6[\overline U,\underline V]\ge 0$ follows by the same argument as that in Step 4.
The construction of $(R_u,R_v)(\xi)$ is complete.

\subsubsection{Construction of the super-solution for $b\le 1$}
The auxiliary function $(R_u,R_v)$ constructed in \S\ref{sec:classification super b>1} depends on the value $b>1$ (see the second condition of \eqref{condition delta u delta v}). For $b<1$, 
we consider $(R_u,R_v)(\xi)$ defined  as 
\begin{equation*}
(R_u,R_v)(\xi):=\begin{cases}
(U_*-\varepsilon_1e^{-\lambda_1\xi},-\eta_1 e^{-\lambda_2\xi}),&\ \ \mbox{for}\ \ \xi>\xi_*,\\
(\varepsilon_2(\xi-\xi_1)e^{-\lambda_3\xi},-\delta_v),&\ \ \mbox{for}\ \ \xi_1+\delta_1<\xi\le\xi_*,\\
(\varepsilon_3e^{\lambda_4\xi},-\delta_v),&\ \ \mbox{for}\ \ \xi_2+\delta_2\le\xi\le\xi_1+\delta_1,\\
(\varepsilon_4\sin(\delta_3(\xi-\xi_2)),-\delta_v),&\ \ \mbox{for}\ \ \xi_2-\delta_4\le\xi\le\xi_{2}+\delta_2,\\
(-\delta_u,-\delta_v),&\ \ \mbox{for}\ \ \xi\le\xi_2-\delta_4,
\end{cases}
\end{equation*}
in which $\xi_*>\xi_1+\delta_1>M_0$ and $\xi_2<-M_0$, with $M_0$ very large, are fixed points.
Since $a<1$ and $b<1$, up to enlarging $M_0$ if necessary, from Lemma \ref{lem:AS-infty:b<1}, we can find $\rho>0$ such that
\bea\label{M0 aaa}
1-2U_*-aV_*<\frac{a-1}{1-ab}+2\rho<0\ \text{and}\ 1-2V_*-bU_*<\frac{b-1}{1-ab}+b\rho<0\ \text{for}\ \xi<-M_0.
\eea
Similar to the construction in the case $b>1$, we  set
$\lambda_1=\sqrt{1-a}\in(\lambda_u^-,\lambda_u^+)$, $\lambda_2\in(0,\Lambda_v)$, $\lambda_3$ and $\lambda_4$ satisfying
\beaa%\label{condition on lambda 3 pf}
0<\lambda_3<\min\{\lambda_u^-,\frac{c^*-\delta_0}{2}\}\quad\text{and}\quad\lambda_4^2+2\sqrt{1-a}\,\lambda_4-3>0.
\eeaa
Moreover, we set
\beaa%\label{delta u v pf}
\delta_u=\varepsilon_4\sin(\delta_3\delta_4)\quad\text{and}\quad \delta_v=\eta_1 e^{-\lambda_2\xi_*},
\eeaa
which yield $(R_u,R_v)(\xi)$ are continuous on $\mathbb{R}$.
We also set
$\varepsilon_{i=1,\cdots,4}>0$, $\eta_1>0$, and $\delta_{j=1,2,3}>0$ like that in \S\ref{sec:classification super b>1}.

However, different with the construction in \S\ref{sec:classification super b>1} (see \eqref{condition delta u delta v}), for any $\delta_v$,  by adjusting $\delta_4\in(0,\delta_2)$, we always set 
$$\delta_v=b\delta_u/a,$$ which yields
\begin{equation}\label{sss}
\delta_v(\frac{1-b}{1-ab}-b\rho)>b\delta_u(\frac{1-b}{1-ab}+\rho)\quad\text{and}\quad \delta_u(\frac{1-a}{1-ab}-2\rho)>a\delta_v\frac{1-a}{1-ab},
\end{equation}
up to enlarging $M_0$ if necessary. Moreover, in the proof below, we always set $|\delta_u|,|\delta_v|$ to be very small, but satisfy \eqref{sss}.

To prove both $N_5[\overline U,\underline V]\le 0$ and $N_6[\overline U,\underline V]\ge 0$ for $\xi\in(\xi_2-\delta_4,+\infty)$, we refer to the same verification as \S\ref{sec:classification super b>1}. The only difference is that, to verify $N_6[\overline U,\underline V]\ge 0$ for $\xi\in[\xi_2-\delta_4,\xi_2]$, we use \eqref{M0 aaa} and \eqref{sss}. 
More precisely, by some straightforward computations, we have
\beaa
N_6[\overline U,\underline V]\ge -\delta_0V_x'-r\delta_v(1-2V_*-bU_*+\delta_v)-b\delta_uV_*\ge 0,
\eeaa
up to reducing $\delta_0$ and $|\delta_v|$  (i.e. $\eta_1$) if necessary.
For the same reason, we also obtain  $N_6[\overline U,\underline V]\ge 0$ for $\xi\in(-\infty,\xi_2-\delta_4]$.
Therefore, to finish the construction, it suffices to verify $N_5[\overline U,\underline V]\le 0$  for $\xi\in(-\infty,\xi_2-\delta_4]$. %with \eqref{M0 aaa} and \eqref{sss}, we can 
By some straightforward computations, and thanks to \eqref{M0 aaa} and \eqref{sss} again, we have
\beaa
N_5[\overline U,\underline V]\le -\delta_0U_x'+\delta_u(1-2U_*-aV_*+a\delta_v)+a\delta_vU_*\le 0,
\eeaa
up to reducing $\delta_0$ and $|\delta_v|$ (i.e. $\eta_1$) if necessary.

\bigskip

For the critical case $b=1$, 
we consider $(R_u,R_v)(\xi)$ defined  as 
\begin{equation*}
(R_u,R_v)(\xi):=\begin{cases}
(U_*-\varepsilon_1e^{-\lambda_1\xi},-\eta_1 e^{-\lambda_2\xi}),&\ \ \mbox{for}\ \ \xi>\xi_*,\\
(\varepsilon_2(\xi-\xi_1)e^{-\lambda_3\xi},-\delta_v),&\ \ \mbox{for}\ \ \xi_1+\delta_1<\xi\le\xi_*,\\
(\varepsilon_3e^{\lambda_4\xi},-\delta_v),&\ \ \mbox{for}\ \ \xi_2+\delta_2\le\xi\le\xi_1+\delta_1,\\
(\varepsilon_4\sin(\delta_3(\xi-\xi_2)),-\eta_2(-\xi)^{\theta}V_*(\xi)),&\ \ \mbox{for}\ \ \xi_2-\delta_4\le\xi\le\xi_{2}+\delta_2,\\
(-\varepsilon_5(-\xi)^{\theta}(1-U_*(\xi)),-\eta_2(-\xi)^{\theta}V_*(\xi)),&\ \ \mbox{for}\ \ \xi\le\xi_2-\delta_4,
\end{cases}
\end{equation*}
in which $\theta\in(0,1)$, and $\xi_*>\xi_1>M_0$ and $\xi_2<-M_0$ are fixed points.

Like the construction for $b>1$ and $b<1$, we still set
$\lambda_1\in(\lambda_u^-,\lambda_u^+)$, $\lambda_2\in(0,\Lambda_v)$, $\lambda_3$ and $\lambda_4$ satisfying
\beaa%\label{condition on lambda 3 pf}
0<\lambda_3<\min\{\lambda_u^-,\frac{c^*-\delta_0}{2}\}\quad\text{and}\quad\lambda_4^2+2\sqrt{1-a}\,\lambda_4-3>0.
\eeaa
Moreover, we  set
$\varepsilon_{i=1,\cdots,4}>0$, $\eta_1>0$, and $\delta_{j=1,2,3}>0$ like that in \S\ref{sec:classification super b>1},
and set
\bea\label{b5}
\varepsilon_5=\frac{\varepsilon_4\sin(\delta_3\delta_4)}{(-\xi_2+\delta_4)^{\theta}(1-U_*(\xi_2-\delta_4))}\quad\text{and}\quad \eta_2=\frac{\eta_1 e^{-\lambda_2\xi_*}}{(-\xi_2-\delta_2)^{\theta}V_*(\xi_2+\delta_2)},
\eea
which yield $(R_u,R_v)(\xi)$ are continuous on $\mathbb{R}$. The inequalities $N_5[\overline U,\underline V]\le 0$ for $\xi\in(\xi_2-\delta_4,+\infty)$ and $N_6[\overline U,\underline V]\ge 0$ for $\xi\in(\xi_2+\delta_2,+\infty)$ follows by the same verification as \S\ref{sec:classification super b>1}. 

Without loss of generality, we may assume $\xi_2+\delta_2<\xi_0$,
where $\xi_0$ %which
is defined in Corollary~\ref{lm: behavior around - infty b=1}.
The next claim shows how to choose $\delta_4$ such that $\varepsilon_5$ and $\eta_2$ determined in \eqref{b5} satisfy $\varepsilon_5=\eta_2$.
Note that the choice of $\delta_4$ is rather technical and crucial for the construction on  $\xi\in(-\infty,\xi_2-\delta_4)$.

\begin{claim}\label{cl 61}
There exists $0<\delta_4\le \delta_2$ such that 
$$R_u(\xi_2-\delta_4)=-\eta_2(-\xi_2+\delta_4)^{\theta}(1-U_*(\xi_2-\delta_4))$$ 
and
\bea\label{claim3.61}
-\eta_2(-\xi)^{\theta}(1-U_*(\xi))<R_u(\xi)<0\quad\text{for all}\quad \xi\in(\xi_2-\delta_4,\xi_2).
\eea
\end{claim}
\begin{proof}
Recall from Step 4 in \S \ref{sec:classification super b>1}, up to reducing $\eta_1$,
that 
$$R_u(\xi_2+\delta_2)\gg \delta_v=\eta_2(-\xi_2-\delta_2)^{\theta}V_*(\xi_2+\delta_2).$$
We also assume, up to reducing $\eta_1$ if necessary, that
%and decreasing $\delta_7$,
\bea\label{Rv>V1}
R_u(\xi_2+\delta_2)>\eta_2(-\xi_2-\delta_2)^{\theta}[1-U_*(\xi_2+\delta_2)].
\eea

Furthermore, by the asymptotic behavior of $1-U_*(\xi)$ as $\xi\to -\infty$ and setting $\theta$ small,  
$$(-\xi)^{\theta}[1-U_*(\xi)]>0\quad\text{is strictly increasing for all}\quad\xi<\xi_2+\delta_2.$$ 
Together with \eqref{Rv>V1}, we obtain that
\beaa
-\varepsilon_4\sin(\delta_2\delta_3)=-R_u(\xi_2+\delta_2)&<
&-\eta_2(-\xi_2-\delta_2)^{\theta}[1-U_*(\xi_2+\delta_2)]\\
&<&-\eta_2(-\xi_2+\delta_2)^{\theta}[1-U_*(\xi_2-\delta_2)].
\eeaa
Define $$F(\xi):=\varepsilon_4\sin(\delta_3(\xi-\xi_2))+\eta_2(-\xi)^{\theta}[1-U_*(\xi)].$$
Clearly, from Corollary~\ref{lm: behavior around - infty b=1}, $F$ is continuous and strictly increasing for $\xi\in[\xi_2-\delta_2,\xi_2]$. Also,
we have $F(\xi_2)>0$ and $F(\xi_2-\delta_2)<0$.
Then, by the intermediate value theorem, there exists a unique $\delta_4\in(0,\delta_2)$ such that Claim~\ref{cl 61} holds.
\end{proof}

Since $\theta>0$ and $\varepsilon_5=\eta_2$, there exists $M_1>M_0$ sufficiently large such that $\overline U=1$ and $\underline V=0$ for all $\xi\in(-\infty,-M_1]$.
Then, from the definition of $(R_u,R_v)$, we may define $M_1$ satisfying $1-\eta_2(M_1)^{\theta}= 0$. Thus
$\overline U(\xi)=1$, $\underline V(\xi)=0$ for all $\xi\in(-\infty,-M_1]$, which implies that
%Therefore, it is enough to let $W_u=1$ and $W_v=0$, and hence
$$N_5[\overline U,\underline V]\le 0\ \ \text{and}\ \ N_6[\overline U,\underline V]\ge 0\ \ \text{for}\ \ \xi\in(-\infty,-M_1].$$
Additionally,  we have
\bea\label{eta 4 epsilon 41}
1-\varepsilon_4(-\xi)^{\theta}=1-\eta_4(-\xi)^{\theta}> 0\quad\text{for all}\quad \xi\in(-M_1,\xi_2-\delta_4],
\eea
which yields $\overline U<1$ and $\underline V>0$ on $(-M_1,\xi_2-\delta_4]$.

We first verify the inequalities  $N_6[\overline U,\underline V]\ge 0$ for $\xi\in(-M_1,\xi_2+\delta_2)$. 
By some straight computations, we have
\beaa
N_6[\overline U,\underline V]&=&d\Big(V_*''+\theta(1-\theta)\eta_2(-\xi)^{\theta-2}V_*+2\theta\eta_2(-\xi)^{\theta-1}V'_*-\eta_2(-\xi)^{\theta}V''_*\Big)\\
&&+c^*\Big(V_*'+\theta\eta_2(-\xi)^{\theta-1}V_*-\eta_2(-\xi)^{\theta}V'_*\Big)-\delta_0(V'_*+R'_v)\\&&
+r(V_*+R_v)(1-V_*-R_v-(U_*-R_u)).
\eeaa
Notice that, in Claim \ref{cl 61}, we choose a suitable $\delta_4$ such that $\varepsilon_5=\eta_2$.
Then, from $V_*'>0$, $\theta\in(0,1)$, and $R_u(\xi)\ge -\eta_2(-\xi)^{\theta}[1-U_*(\xi)]$, we further have
\beaa%\label{Step5-N4}
N_6[\overline U,\underline V]&\geq& r\eta_2(-\xi)^{\theta}V_*\Big(V_*-(1-U_*)
+\frac{c^*\theta}{r}(-\xi)^{-1}+R_v-R_u\Big)-\delta_0(V'_*+R'_v)\\
&\geq&r\eta_2(-\xi)^{\theta}V_*\Big((\eta_2(-\xi)^{\theta}-1)(1-U_*-V_*)
+\frac{c^*\theta}{r}(-\xi)^{-1}\Big)-\delta_0(V'_*+R'_v).
\eeaa
By Corollary \ref{lm: behavior around - infty b=1} and \eqref{eta 4 epsilon 41}, 
as long as $M_0$ is chosen very large at the beginnig, we have
$(\eta_2(-\xi)^{\theta}-1)(1-U_*-V_*)>0$ for $\xi\in[-M_1,\xi_2+\delta_2]$. It follows that
$N_6[\overline U,\underline V]\ge 0$ for $\xi\in[-M_1,\xi_2+\delta_2]$
for all small $\delta_0>0$.

\medskip
To complete the construction, we verify the inequalities  $N_5[\overline U,\underline V]\le 0$ for $\xi\in(-M_1,\xi_2-\delta_4)$. Due to $\theta\in(0,1)$ and $U'_*<0$, $N_5[W_u,W_v]$ satisfies
\begin{equation}\label{N3 inequalifty -infty1}
\begin{aligned}
N_5[\overline U,\underline V]
\leq&-\delta_0(U'_*-R'_u)+\varepsilon_5(-\xi)^{\theta}\Big(U_*(1-U_*-aV_*)-c^*\theta(-\xi)^{-1}(1-U_*)\Big)\\&-R_u(1-2U_*+R_u-a(V_*+R_v))-aU_*R_v.
\end{aligned}
\end{equation}
By using \eqref{claim3.61} and
$$\varepsilon_5(-\xi)^{\theta}U_*(1-U_*)=-R_uU_*,$$
from \eqref{N3 inequalifty -infty1} we have
\beaa
N_5[\overline U,\underline V]&\le&-R_uU_*-a\varepsilon_5(-\xi)^{\theta}U_*V_*+c^*\theta(-\xi)^{-1}R_u-R_u(1-2U_*-aV_*)
\\
&&-R_u^2+aR_uR_v+a\varepsilon_5(-\xi)^{\theta}U_*V_*-\delta_0(U'_*-R'_u)\\
&=&c^*\theta(-\xi)^{-1}R_u-R_u(1-U_*-aV_*)
-R_u^2+aR_uR_v-\delta_0(U'_*-R'_u).
\eeaa

Denote that
\beaa
I_1:=c^*\theta(-\xi)^{-1}R_u,\quad
%&&I_2:=-a\varepsilon_4(-\xi)^{\theta}V_*-R_u(1-U_*)+a\varepsilon_4(-\xi)^{\theta}U_*V_*;\\
I_2:=-R_u(1-U_*-aV_*),\quad
I_3:=-R_u^2+aR_uR_v.
%&&I_4:=-R_u(1-U_*-aV_*).
\eeaa
By the equation satisfied by $U_*$ in \eqref{tw solution weak} and Lemma \ref{lem:AS-infty:b=1},  $1-U_*-aV_*>0$ for all $\xi\leq -M_0$. Therefore,
\beaa
I_3=-R_u^2+aR_uR_v\leq R_u\varepsilon_5(-\xi)^{\theta}(1-U_*-aV_*)(\xi)<0\quad \mbox{for}\quad \xi\in(-M_1,\xi_2-\delta_4].
\eeaa
Moreover, in view of Corollary~\ref{lm: behavior around - infty b=1}, we have
$I_2=o(I_1)$ as $\xi\to-\infty$.
Then, up to reducing $\delta_0$ if necessary, we have $N_5[\overline U,\underline V]\le 0$ for $\xi\in(-M_1,\xi_2-\delta_4]$.

The construction of the super-solution is complete.

\medskip

\begin{proof}[Proof of Proposition \ref{lm 1}]
We consider the solution $(u,v)(t,x)$ to be the Cauchy problem of \eqref{system} with the initial datum \eqref{initial datum}. Define $(\overline u,\underline v)(t,x)=(\overline U,\underline V)(x-(c^*-\delta_0)t-x_0)$ in which $(\overline U,\underline V)(\xi)$ is the super-solution constructed above. By setting $x_0$ very large, we have $\overline u(0,x)\ge u(0,x)$ and $\underline v(0,x)\le v(0,x)$. Then, by the comparison principle, we obtain $\overline u(t,x)\ge u(t,x)$ and $\underline v(t,x)\le v(t,x)$ for all $t> 0$ and $x\in\mathbb{R}$. Thus, we can conclude that
$$\lim_{t\to\infty}u(t,(c^*-\frac{\delta_0}{2}))\le \lim_{t\to\infty}\overline u(t,(c^*-\frac{\delta_0}{2}))=0.$$
This finishes the proof of Proposition \ref{lm 1}.
\end{proof}

\subsection{Proof of Theorem \ref{th: classification}}
In this subsection, we complete the proof of Theorem \ref{th: classification}, {\it i.e.}, the statement (3). Let $(\hat U,\hat V)$ be the traveling wave satisfying \eqref{tw solution weak} with speed $c>c^*_{LV}\ge 2\sqrt{1-a}$. We will prove that the asymptotic behavior of $\hat U$ is given by the slow decay, {\it i.e.}, $\hat U(\xi)\sim e^{-\lambda_u^-\xi}$ as $\xi\to+\infty$. We assume by contradiction that
\bea\label{assume hat u}
\hat U(\xi)\sim e^{-\lambda_u^+\xi}\quad\text{as}\quad \xi\to+\infty.
\eea
With the assumption \eqref{assume hat u}, we can find finite $h$ such that
\begin{equation}\label{U*>hat U}
U_*(\xi-h)\ge \hat U(\xi)\quad\text{and}\quad V_*(\xi-h)\le \hat V(\xi)\quad\text{for all}\quad\xi\in\mathbb{R}.
\end{equation}
To verify \eqref{U*>hat U}, it suffices to compare the decay rate of $(U_*,V_*)$ and $(\hat U,\hat V)$ at $\xi=\pm\infty$.

With (2) in Theorem \ref{th: classification} and Lemma \ref{lm: behavior around + infty},
as $\xi\to+\infty$ we have
$$U_*(\xi)\sim e^{-\lambda_u^+(c^*_{LV})\xi}\quad\text{or}\quad U_*(\xi)\sim \xi e^{-\lambda_u\xi},$$
$$1-V_*(\xi)\sim \xi^{p}e^{-\Lambda_v(c^*_{LV})\xi}\ \text{with}\ p\in\{0,1,2\},$$
in which $\Lambda_v(c)$ is defined \eqref{assume v}. Note that, $\lambda_u^+(c^*_{LV})=\lambda_u$ if $c^*_{LV}=2\sqrt{1-a}$. On the other hand, with the assumption \eqref{assume hat u} and Lemma \ref{lm: behavior around + infty},
we have
$$\hat U(\xi)\sim e^{-\lambda_u^+(c)\xi}\ \text{and}\ 1-\hat V(\xi)\sim \xi^{p}e^{-\Lambda_v(c)\xi}\ \text{with}\ p\in\{0,1\}.$$
Since $\lambda_u^+(c)$ and $\Lambda_v(c)$ are strictly increasing on $c>0$, we can assert that
\bea\label{ff1}
\hat U(\xi)=o(U_*(\xi))\quad{and}\quad 1-\hat V(\xi)=o(1-V_*(\xi))\quad\text{as}\quad\xi\to+\infty.
\eea

Next, we compare the decay rate of $(U_*,V_*)$ and $(\hat U,\hat V)$ at $-\infty$.
\begin{itemize}
\item for $b>1$, from Lemma \ref{lem:AS-infty:b>1}, since $\mu_u^+(c)$ and $\mu_v^+(c)$ are strictly decreasing on $c$, as $\xi\to-\infty$ we have
\bea\label{ff2}
1-U_*(\xi)\sim o(1-\hat U(\xi))\quad\text{and}\quad V_*(\xi)\sim o(\hat V(\xi)).
\eea
\item for $b=1$, from Lemma \ref{lem:AS-infty:b=1}, as $\xi\to-\infty$ we have
\bea\label{ff3}
1-U_*(\xi)\sim O(1-\hat U(\xi))\quad\text{and}\quad V_*(\xi)\sim O(\hat V(\xi)).
\eea
\item for $b<1$, from Lemma \ref{lem:AS-infty:b<1}, since $\mu_u^+(c)$ and $\mu_v^+(c)$ are strictly decreasing on $c$, as $\xi\to-\infty$ we have
\bea\label{ff4}
u^*-U_*(\xi)\sim o(u^*-\hat U(\xi))\quad\text{and}\quad V_*(\xi)-v*\sim o(\hat V(\xi)-v*).
\eea
\end{itemize}

In conclusion, from \eqref{ff1}, \eqref{ff2}, \eqref{ff3}, and \eqref{ff4},
there exists a finite $h$ such that \eqref{U*>hat U} holds.
However, this is impossible. To see this, we may consider the initial value problem to \eqref{system} with initial datum
$$(u_1,v_1)(0,x)=(U_{*},V_{*})(x-h)\ \ \text{and}\ \ (u_2,v_2)(0,x)=(\hat U,\hat V)(x),$$
respectively. By \eqref{U*>hat U},
we have $u_1(t,x)>u_2(t,x)$ and $v_1(t,x)<v_2(t,x)$ for all $t\geq0$ and $x\in\mathbb{R}$. However, $(u_2,v_2)(t,x)$ propagates to the right with speed $c$, which is strictly greater than
the speed $c_{LV}^*$ of $(u_1,v_1)(t,x)$. Consequently, it is impossible to have $u_1(t,x)>u_2(t,x)$ for all $t \geq 0$ and $x\in\mathbb{R}$.
Thus, we reach a contradiction, and hence $\hat U(\xi)\sim e^{-\lambda_u^-\xi}$ as $\xi\to+\infty$.
This completes the proof of (3) in Theorem \ref{th: classification}.
%%%%%%%%%%%%%%%%%%%%%%%%%%%%%%%%%%%%%%%%%%%%%%%%%%%%
%\appendix
%\section{Appendix}

%\subsection{Asymptotic behavior of traveling waves of \eqref{nonlocal equation} near $\pm\infty$}\label{subsec:Asymp behavior}
%In this subsection, we provide the asymptotic behavior of $U_c$ near $\pm\infty$, where $U_c$ satisfies \eqref{scalar nonlocal tw-parameter s} with speed $c\ge c^*_0$ defined as \eqref{formula of c_NL}. Hereafter, for simplicity, we write $U_c$ as $U$.
%Some results are reported in \cite{Carr Chmaj,Coville}.
%We further establish an implicit condition that determines the asymptotic behavior of the {\em pushed transition front} $U_{c_{LV}^*}$ at $+\infty$.

%Let $(c,U,V)$ be a solution of the system (\ref{tw solution}).
%To describe the asymptotic behavior of $U$ near $+\infty$,
%we define
%\beaa
%g(\mu)=\frac{1}{\mu}\Big(\int_{\mathbb{R}}J(y)e^{\mu y}dy+f'(0)-1\Big).
%\eeaa
%Notice that, this function is positive for all

%\beaa
%&&\lambda_u^{\pm}(c):=\frac{-c\pm\sqrt{c^2-4(1-a)}}{2}<0,\\ &&\lambda_v^{-}(c):=\frac{-c-\sqrt{c^2+4rd}}{2d}<0<\lambda_v^{+}(c):=\frac{-c+\sqrt{c^2+4rd}}{2d}.
%\eeaa

%The asymptotic behavior of $(U,V)$ near $+\infty$ for $0<a<1$ and $b>1$ can be found in \cite{MoritaTachibana2009}.
%Note that the conclusions presented in \cite{MoritaTachibana2009} are still applicable for $b>0$ since $b$ is not present in the linearization at the unstable equilibrium $(0,1)$. Therefore, we have the following result.

\bigskip

\noindent{\bf Acknowledgement.}
Maolin Zhou is supported by the National Key Research and Development Program of China (2021YFA1002400).
Chang-Hong Wu is supported by the Ministry of Science and Technology of Taiwan.
Dongyuan Xiao is supported by the Japan Society for the Promotion of Science P-23314.

\end{document}